%
%

\documentclass[12pt] {amsart}

\usepackage{amssymb,amsmath,amscd,amsfonts,a4,psfrag,graphicx,
latexsym,epsfig,color}
\usepackage[all]{xy}

\setlength{\textwidth}{155mm} \setlength{\textheight}{225mm}
\setlength{\oddsidemargin}{0mm} \setlength{\evensidemargin}{10mm}

\setcounter{tocdepth}{1}

%
%
%
%



\newtheorem{theo}{Theorem}[section]
\newtheorem{defi}[theo]{Definition}

\newtheorem{lem}[theo]{Lemma}

\newtheorem{prop}[theo]{Proposition}

\newtheorem{rem}[theo]{Remark}
\newtheorem{coro}[theo]{Corollary}

\newtheorem{exam}[theo]{Example}


\newcommand{\agot}{\ensuremath{\mathfrak{a}}}
\newcommand{\bgot}{\ensuremath{\mathfrak{b}}}

\newcommand{\kgot}{\ensuremath{\mathfrak{k}}}
\newcommand{\hgot}{\ensuremath{\mathfrak{h}}}
\newcommand{\ggot}{\ensuremath{\mathfrak{g}}}

\newcommand{\tgot}{\ensuremath{\mathfrak{t}}}
\newcommand{\ngot}{\ensuremath{\mathfrak{n}}}
\newcommand{\pgot}{\ensuremath{\mathfrak{p}}}
\newcommand{\qgot}{\ensuremath{\mathfrak{q}}}
\newcommand{\sgot}{\ensuremath{\mathfrak{s}}}

\newcommand{\Rgot}{\ensuremath{\mathfrak{R}}}

\newcommand{\Xgot}{\ensuremath{\mathfrak{X}}}


\newcommand{\Bcal}{\ensuremath{\mathcal{B}}}

\newcommand{\Ecal}{\ensuremath{\mathcal{E}}}
\newcommand{\Fcal}{\ensuremath{\mathcal{F}}}

\newcommand{\Ncal}{\ensuremath{\mathcal{N}}}

\newcommand{\Rcal}{\ensuremath{\mathcal{R}}}
\newcommand{\Ucal}{\ensuremath{\mathcal{U}}}
\newcommand{\Vcal}{\ensuremath{\mathcal{V}}}
\newcommand{\Wcal}{\ensuremath{\mathcal{W}}}
\newcommand{\Xcal}{\ensuremath{\mathcal{X}}}



\newcommand{\Qbb}{\ensuremath{\mathbb{Q}}}

\newcommand{\Cbb}{\ensuremath{\mathbb{C}}}
\newcommand{\Rbb}{\ensuremath{\mathbb{R}}}
\newcommand{\Zbb}{\ensuremath{\mathbb{Z}}}
\newcommand{\Lbb}{\ensuremath{\mathbb{L}}}
\newcommand{\Pbb}{\ensuremath{\mathbb{P}}}
\newcommand{\Jbb}{\ensuremath{\mathbb{J}}}
\newcommand{\Tbb}{\ensuremath{\mathbb{T}}}


\newcommand{\mini}{\ensuremath{\hbox{\scriptsize \rm s}}}

\newcommand{\grad}{{\ensuremath{\rm grad}}}
\newcommand{\crit}{{\ensuremath{\rm crit}}}

\newcommand{\croc}{\ensuremath{\hookrightarrow}}
\newcommand{\T}{\ensuremath{\hbox{\bf T}}}

\newcommand{\tr}{\operatorname{Tr}}





\newcommand{\infrp}{\operatorname{\hbox{\rm \tiny inf-RP}}}

\newcommand{\rp}{\operatorname{\hbox{\rm \tiny RP}}}

\newcommand{\reg}{\operatorname{\hbox{\rm \tiny reg}}}

\newcommand{\maxx}{\operatorname{\hbox{\rm \tiny max}}}




\date{December 20, 2019}

\author[Paul-Emile Paradan]{Paul-Emile Paradan}
\address{IMAG, Univ Montpellier, CNRS, Montpellier, France} 
\email{paul-emile.paradan@umontpellier.fr}

\title{Ressayre's pairs in the K\"{a}hler setting}

\begin{document}

\begin{abstract}
The aim of this article is to explain how to parameterize the equations of the facets of the Kirwan polyhedron using the notion of Ressayre's pairs.
\end{abstract}

\maketitle

\tableofcontents

%
%
%
%
%
%
%
%
%
%
%
%
%
%

\section{Introduction}

Let $M$ be a complex manifold, not necessarily compact, and let $K_\Cbb$ be a connected reductive complex Lie group acting holomorphically on $M$. We think of $K_\Cbb$ as being the complexification of 
a connected compact real Lie group $K$. We will denote by $\Jbb$ the complex structure on the tangent space $\T M$.

Let $\Omega$ be a $K$-invariant K\"{a}hler form on $M$. It means that $\Omega$ is a closed $2$-form and that the bilinear map $(v,w)\mapsto \Omega(v,\Jbb w)$ defines a Riemannian metric on $M$. We suppose that the $K$-action on $(M,\Omega)$ is Hamiltonian, so there exists a moment map $\Phi:M\to\kgot^*$ satisfying  the relations
\begin{equation}\label{eq:kostant=rel}
\iota(X_M)\Omega=d\langle\Phi,X\rangle,\quad \forall X\in\kgot.
\end{equation}
Here $X_M : m\mapsto X\cdot m:=\frac{d}{dt}\vert_{t=0}e^{tX}\cdot m$ is the vector field generated by $X\in\kgot$.

We assume that the moment map $\Phi$ is {\bf proper}. Let $T\subset K$ be a maximal torus with Lie algebra $\tgot$, and let $\tgot^*_{\geq 0}\subset\tgot^*$ 
be a closed Weyl chamber. The Convexity Theorem \cite{Atiyah82,Guillemin-Sternberg82.bis,Kirwan.84.2,L-M-T-W} tells us that the set 
$$
\Delta(\Phi):= \Phi(M)\cap \tgot^*_{\geq 0}
$$
is a convex, locally polyhedral set, called the Kirwan polyhedron. The purpose of the present paper is to explain how to parameterize the equations of the facets 
of the Kirwan polyhedron using the notion of Ressayre's pairs \cite{Ressayre10}.

\medskip

We start by introducing the notion of admissible elements. The group ${\rm Hom}(U(1),T)$ admits a natural identification with the lattice $\wedge:=\frac{1}{2\pi}\ker(\exp : \tgot\to T)$. 
A vector $\gamma\in \tgot$ is called {\em rational} if it belongs to the $\Qbb$-vector space $\tgot_\Qbb$ generated by $\wedge$. 

The stabilizer subgroup of $m\in M$ is denoted by $K_m:=\{k\in K ; k m=m\}$, and its Lie algebra by $\kgot_m$.


\begin{defi} Let us define $\dim_K(\Xcal):=\min_{m\in\Xcal}\dim(\kgot_m)$ for any subset $\Xcal\subset M$. 
A non-zero element $\gamma\in\tgot$ is called {\em admissible} if $\gamma$ is rational, and if 
$\dim_K(M^\gamma)-\dim_K(M)\in\{0,1\}$.
\end{defi}

\begin{rem}\label{rem:cas-generic-stabilizer}
$\dim_K(M)=0$ when the infinitesimal generic stabilizer of the $K$-action is reduced to $\{0\}$. In this case, a rational element $\gamma$ is admissible if 
$\dim_K(M^\gamma)\!=\!1$.
\end{rem}

For any $m\in M$, the infinitesimal action of $\kgot_\Cbb$ on $M$ defines a 
$K_m$-equivariant complex linear map 
\begin{eqnarray}\label{eq:rho-m}
\rho_m:\kgot_\Cbb & \longrightarrow & \T_m M\\
X & \longmapsto & X\cdot m .\nonumber
\end{eqnarray}

\begin{defi}
\begin{enumerate}
\item[a)] Consider the linear action $\rho: G\to {\rm GL}_\Cbb(E)$ of a compact Lie group on a complex vector space $E$. For any $(\gamma,a)\in\ggot\times\Rbb$, we define the vector subspace
$E^{\gamma=a}=\{v\in E, d\rho(\gamma)v=i av\}$.  Thus, for any $\gamma\in\ggot$, we have the decomposition
$E=E^{\gamma>0}\oplus E^{\gamma=0}\oplus E^{\gamma<0}$ where $E^{\gamma>0}=\sum_{a>0}E^{\gamma=a}$, and $E^{\gamma<0}=\sum_{a<0}E^{\gamma=a}$.
\item[b)] The real number $\tr_{\gamma}(E^{\gamma>0})$ is defined as the sum $\sum_{a>0}a\,\dim(E^{\gamma=a})$.
\item[c)] Let $\varphi:E\to F$ be a linear equivariant morphism between two $G$-module. For any $\gamma\in\ggot$, the linear map $\varphi$ specializes to a linear map
$E^{\gamma>0}\longrightarrow F^{\gamma>0}$.
\end{enumerate}
\end{defi}

\medskip

A choice of positive roots  $\Rgot^+$, induces a decomposition $\kgot_\Cbb=\ngot\oplus\tgot_\Cbb\oplus \overline{\ngot}$ 
where $\ngot=\sum_{\alpha\in\Rgot^+}(\kgot_\Cbb)_\alpha$. We denote by $B\subset K_\Cbb$ the Borel subgroup with Lie algebra 
$\bgot:=\tgot_\Cbb\oplus \ngot$.


Consider $(x,\gamma)\in M\times\tgot$ such as $x\in M^\gamma$. The $K_x$-equivariant morphism
(\ref{eq:rho-m}) induces a complex linear map 
\begin{equation}\label{eq:rho-gamma}
\rho_x^\gamma:\ngot^{\gamma>0}\longrightarrow  (\T_x M)^{\gamma>0}.
\end{equation}

\medskip

\begin{defi}\label{def:infinitesimal-B-ressayre-pair}
Let $\gamma\in\tgot$ be a non-zero element, and let $C\subset M^\gamma$ be a connected component. The data $(\gamma,C)$ is called 
an {\bf infinitesimal $B$-Ressayre's pair} if $\exists x\in C$, such as $\rho_x^\gamma$ is an isomorphism. 
If furthermore we have  $\dim_K(C)-\dim_K(M)\in\{0,1\}$, and $\lambda$ is rational, we call $(\gamma,C)$ a {\em regular infinitesimal $B$-Ressayre's pair}.
\end{defi}

\medskip

\begin{rem}
Since we work with rational vectors, for any $q\in \Qbb^{>0}$, we will identify the pairs $(\gamma,C)$ and $(q\gamma,C)$.
\end{rem}

\newpage

The first result of this article is the following theorem.

\begin{theo}\label{th:infinitesimal-ressayre-pairs} 
Let $(M,\Omega)$ be a K\"{a}hler $K$-manifold, equipped with a {\em proper} moment map $\Phi$.  
For $\xi\in\tgot^*_{\geq 0}$, the following statements are equivalent:
\begin{enumerate}
\item $\xi\in\Delta(\Phi)$.
\item For any regular infinitesimal $B$-Ressayre's pair $(\gamma,C)$, we have $\langle \xi,\gamma\rangle\geq \langle \Phi(C),\gamma\rangle$.
\item For any regular infinitesimal $B$-Ressayre's pair $(\gamma,C)$ such as $\Phi(C)\cap \tgot^*_{\geq 0}\neq\emptyset$, we have 
$\langle \xi,\gamma\rangle\geq \langle \Phi(C),\gamma\rangle$.
\end{enumerate}
\end{theo}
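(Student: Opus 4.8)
The plan is to prove the cycle of implications $(1)\Rightarrow(2)\Rightarrow(3)\Rightarrow(1)$, observing first that $(2)\Rightarrow(3)$ is immediate because $(3)$ only tests a subfamily of the inequalities appearing in $(2)$. At the outset I would record that the number $\langle\Phi(C),\gamma\rangle$ is well defined: the function $\langle\Phi,\gamma\rangle$ is a moment map for the real one-parameter action generated by $\gamma$, its critical set is $M^\gamma$, so $\langle\Phi,\gamma\rangle$ is constant on the connected component $C$. The backbone is the Convexity Theorem, which tells us that $\Delta(\Phi)$ is closed, convex and \emph{locally polyhedral}; the whole statement then amounts to identifying the supporting hyperplanes of $\Delta(\Phi)$, and in particular its facets, with the affine hyperplanes $\{\langle\cdot,\gamma\rangle=\langle\Phi(C),\gamma\rangle\}$ attached to regular infinitesimal $B$-Ressayre's pairs.

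For $(1)\Rightarrow(2)$ (validity of the inequalities) I would fix a regular pair $(\gamma,C)$ and show that $\{\langle\cdot,\gamma\rangle=\langle\Phi(C),\gamma\rangle\}$ supports $\Delta(\Phi)$, i.e. $\langle\Phi(m),\gamma\rangle\geq\langle\Phi(C),\gamma\rangle$ whenever $\Phi(m)\in\tgot^*_{\geq 0}$. The two tools are the gradient flow of $\langle\Phi,\gamma\rangle$, whose trajectories lie inside the orbits of $\exp(i\Rbb\gamma)\subset K_\Cbb$ and along which $\langle\Phi,\gamma\rangle$ decreases, as $t\to-\infty$, to the value on the $\alpha$-limit component of $M^\gamma$; and the unipotent flow of $N_\gamma:=\exp(\ngot^{\gamma>0})$. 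The isomorphism hypothesis $\rho_x^\gamma:\ngot^{\gamma>0}\xrightarrow{\sim}(\T_xM)^{\gamma>0}$ says precisely that $N_\gamma$ sweeps out the full bundle of expanding normal directions of $C$, so that the unstable manifold $W^-(C)$ is exactly the $N_\gamma$-orbit bundle over $C$; every point of $W^-(C)$ therefore has $C$ as its $\alpha$-limit, whence $\langle\Phi,\gamma\rangle\geq\langle\Phi(C),\gamma\rangle$ on $W^-(C)$. That this monotonicity controls all \emph{dominant} values, and not merely those on $W^-(C)$, is then extracted by combining $\Ad^*(K)$-invariance of $\Phi(M)$ with the convexity of $\Delta(\Phi)$ (and, in the precise argument, the cross-section below); but the conceptual core is the monotonicity along $W^-(C)$ granted by the covering condition.

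The substance is $(3)\Rightarrow(1)$, which I would prove contrapositively by separation. If $\xi\in\tgot^*_{\geq 0}\setminus\Delta(\Phi)$, convexity yields a separating hyperplane, and local polyhedrality lets us take it to be a facet hyperplane $\{\langle\cdot,\gamma\rangle=c\}$ with $\langle\xi,\gamma\rangle<c\leq\langle\eta,\gamma\rangle$ for all $\eta\in\Delta(\Phi)$. The aim is to manufacture from this facet a regular infinitesimal $B$-Ressayre's pair $(\gamma,C)$ with $\Phi(C)\cap\tgot^*_{\geq 0}\neq\emptyset$ and $c=\langle\Phi(C),\gamma\rangle$, contradicting $(3)$. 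I would pass to a symplectic cross-section adapted to a point in the relative interior of the facet: this replaces the non-abelian picture by a Hamiltonian space for the centralizer of $\gamma$ (the Levi of the parabolic with nilradical $\kgot_\Cbb^{\gamma>0}$), inside which the facet becomes a \emph{minimum} of $\langle\Phi,\gamma\rangle$. In that slice the normal direction $\gamma$ appears as a weight of the normal representation, hence is rational, and Morse--Bott theory of $\langle\Phi,\gamma\rangle$ identifies the minimum with the moment image of a connected component $C$ of $M^\gamma$, giving $c=\langle\Phi(C),\gamma\rangle$ and $\Phi(C)$ meeting the chamber. Finally, the fact that the face has codimension exactly one translates, through the slice and the Borel structure responsible for the intersection with $\tgot^*_{\geq 0}$, into the infinitesimal covering condition that $\rho_x^\gamma$ be an isomorphism, together with the bookkeeping $\dim_K(C)-\dim_K(M)\in\{0,1\}$; thus $(\gamma,C)$ is a regular pair.

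The main obstacle is exactly this last construction in $(3)\Rightarrow(1)$: converting the purely convex-geometric datum of a facet into the geometric datum of a component $C$ of $M^\gamma$ together with the infinitesimal isomorphism $\rho_x^\gamma$. The delicate points are the rationality of the facet normals, the correct identification through the cross-section of the minimizing face of $\langle\Phi,\gamma\rangle$ with a single component of $M^\gamma$ under a \emph{proper} — rather than compact — moment map, and the precise dictionary ``codimension-one facet'' $\Longleftrightarrow$ ``$\rho_x^\gamma$ isomorphism and $\dim_K(C)-\dim_K(M)\in\{0,1\}$''. By comparison, the necessity direction $(1)\Rightarrow(2)$ is the more accessible half once the gradient-flow/unipotent-sweep dictionary is set up.
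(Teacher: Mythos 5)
Your skeleton (validity of the inequalities plus sufficiency via separation) matches the paper's, but both load-bearing steps have genuine gaps. In $(1)\Rightarrow(2)$, the monotonicity of $\langle\Phi,\gamma\rangle$ along the flow $e^{-it\gamma}$ controls only the points of $C^-$, and $\Phi(C^-)$ need not meet the Weyl chamber at all; since $\Phi$ is only $K$-equivariant (not $K_\Cbb$-equivariant), the inequality is \emph{not} inherited by $BC^-$ or $K_\Cbb C^-$, so knowing it on $C^-$ says nothing about a given $\xi\in\Delta(\Phi)$, and no combination of $\Ad^*(K)$-invariance and convexity can propagate an inequality from an uncontrolled subset to the whole polyhedron. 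The bridge the paper uses, and which your sketch is missing, is the shifting trick combined with semistability: for $\xi\in\Delta(\Phi)$ one passes to $N=M\times\overline{K\xi}$, where the analytically semistable set $N^{ss}$ is dense and $K_\Cbb$-invariant; the covering condition makes $K_\Cbb C^-_N$ have nonempty interior (Lemma \ref{lem:KCinterior} --- note this transfer from $M$ to $N$ itself needs the fact that $\ngot$ lies in the isotropy algebra of $[e]\in K_\Cbb/P_\xi$), hence $C^-_N\cap N^{ss}\neq\emptyset$; then the decomposition $K_\Cbb=KP_\gamma$ together with the flow monotonicity (Proposition \ref{prop:N-ss-gamma}) yields $\langle\Phi_N(C_N),\gamma\rangle\leq 0$, which is the desired inequality. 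Without this input the implication does not close.

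In $(3)\Rightarrow(1)$ there are two problems. First, your reduction "a separating hyperplane can be taken to be a facet hyperplane" fails whenever $\Delta(\Phi)$ has empty interior in $\tgot^*$ (nontrivial generic infinitesimal stabilizer, principal face $\tau$ smaller than the chamber): a separating hyperplane may then be transverse to the affine hull of $\Delta(\Phi)$ and parallel to no facet. This is why the paper (\S \ref{sec:step-2}) needs three families of pairs, not one: $(\gamma_\tau,C_\tau)$ forcing $\Delta(\Phi)\subset\bar\tau$, $(\gamma^\pm_l,C^\pm_l)$ forcing containment in the affine span $\Pi$, and $(\gamma_F,C_F)$ for the genuine facets. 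Second, the dictionary "codimension-one facet $\Longrightarrow$ covering condition $\rho_x^\gamma$ isomorphism" --- which you correctly single out as the main obstacle --- is asserted, not proved, and a Morse--Bott analysis of $\langle\Phi,\gamma\rangle$ in a cross-section will not produce it: the paper instead obtains it from Kirwan's theorem on the Ness stratification (Theorem \ref{Kirwan-stratification}, valid for proper moment maps by \cite{HSS08}) applied to the shifted manifold $M\times\overline{Ka}$ for $a$ regular and slightly outside $\Delta(\Phi)$ across the facet: the open dense minimal stratum is $K_\Cbb$-equivariantly isomorphic to $K_\Cbb\times_{P_\lambda}C^-_{\lambda,0}$ (Propositions \ref{prop:minimal-type-non-zero} and \ref{prop:ressayre-pair-gamma-a}), which delivers at once the dominance and generic injectivity of ${\rm q}_\gamma$, i.e.\ a full $B$-Ressayre's pair; the regularity $\dim_K(C_F)-\dim_K(M)\in\{0,1\}$ then comes from a stabilizer computation via the principal cross-section, and the rationality of $\gamma_F=\eta_F+\gamma_\tau$ from the fact that $\eta_F$ spans the Lie algebra of a stabilizer subtorus and $\gamma_\tau$ is a sum of coroots, not from "weights of the normal representation". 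So the construction you defer is precisely where the deep input (the stratification theorem) enters, and as it stands your proof of both directions is incomplete.
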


Vergne and Walter obtained the same kind of result when $M$ is a vector space \cite{Vergne-Walter17}.

Let us now introduce a more restrictive notion, that of $B$-Ressayre's pair. Let $\gamma\in\tgot$ be a non-zero element, 
and let $C\subset M^\gamma$ be a connected component. 
We consider the Bialynicki-Birula's complex submanifold 
\begin{equation}\label{eq:BB}
C^-:=\{m\in M,  \lim_{t\to\infty} \exp(-it\gamma) m\ \in C\}.
\end{equation}
We see that for any $x\in C$, $(\T_x M)^{\gamma\leq 0}=\T_x C^-$. 
Consider now the parabolic subgroup $P_\gamma\subset K_\Cbb$ defined by
\begin{equation}\label{eq:P-gamma}
P_\gamma=\{g\in K_\Cbb, \lim_{t\to\infty}\exp(-it\gamma)g\exp(it\gamma)\ {\rm exists}\}.
\end{equation}
Note that $C^-$ is invariant under the action of $P_\gamma$, hence we can  consider the complex manifold 
$B\times_{B\cap P_\gamma} C^-$ and the holomorphic map 
$$
{\rm q}_\gamma: B\times_{B\cap P_\gamma} C^-\to M
$$
that sends $[b,x]$ to $bx$. We immediately see that for any $x\in C$, the tangent map 
$\T{\rm q}_\gamma\vert_x$ is an isomorphism if and only if $\rho_x^\gamma$ is an isomorphism.

\begin{defi}\label{def:B-ressayre-pair}
Let $\gamma\in\tgot$ be a non-zero element, and let $C\subset M^\gamma$ be a connected component. The data $(\gamma,C)$ is called a {\bf $B$-Ressayre's pair} if the following conditions hold
\begin{itemize}
\item The holomorphic map ${\rm q}_{\gamma}: B\times_{B\cap P_{\gamma}}C^-\to M$ is dominant\footnote{The image of ${\rm q}_{\gamma}$ contains a dense open subset.}.
\item There exists a $B\cap P_{\gamma}$-invariant, open and dense subset $U\subset C^-$, intersecting $C$, 
so that ${\rm q}_{\gamma}$ defines a diffeomorphism $B\times_{B\cap P_{\gamma}}U\simeq B U$.
\end{itemize}
If furthermore we have  $\dim_K(C)-\dim_K(M)\in\{0,1\}$, and $\lambda$ is rational, we call $(\gamma,C)$ a regular $B$-Ressayre's pair.
\end{defi}

The second result of this article is the following theorem

\medskip

\begin{theo}\label{th:ressayre-pairs} 
Let $(M,\Omega)$ be a K\"{a}hler $K$-manifold, equipped with a {\em proper} moment map $\Phi$. 
For $\xi\in\tgot^*_{\geq 0}$, the following statements are equivalent:
\begin{enumerate}
\item $\xi\in\Delta(\Phi)$.
\item For any regular $B$-Ressayre's pair $(\gamma,C)$, we have $\langle \xi,\gamma\rangle\geq \langle \Phi(C),\gamma\rangle$.
\item For any regular $B$-Ressayre's pair $(\gamma,C)$ such as $\Phi(C)\cap \tgot^*_{\geq 0}\neq\emptyset$, we have $\langle \xi,\gamma\rangle\geq \langle \Phi(C),\gamma\rangle$.
\end{enumerate}
\end{theo}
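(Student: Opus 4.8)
\textbf{Proof plan for Theorem \ref{th:ressayre-pairs}.}
The plan is to reduce this statement to the already-available Theorem \ref{th:infinitesimal-ressayre-pairs} by comparing the two notions of Ressayre's pair. The key observation is that a $B$-Ressayre's pair is a geometrically strengthened version of an infinitesimal $B$-Ressayre's pair: the dominance and generic-diffeomorphism conditions in Definition \ref{def:B-ressayre-pair} are \emph{global} integrability hypotheses on the map ${\rm q}_\gamma$, whereas Definition \ref{def:infinitesimal-B-ressayre-pair} only requires that the tangent map $\rho_x^\gamma$ be an isomorphism at a single point. Since the excerpt already records that $\T{\rm q}_\gamma\vert_x$ is an isomorphism precisely when $\rho_x^\gamma$ is, I would first establish the implication that \emph{every regular $B$-Ressayre's pair is a regular infinitesimal $B$-Ressayre's pair}. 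This direction is the one needed to transport the inequalities.

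Granting that implication, the logical structure falls out as follows. The equivalence $(1)\Leftrightarrow(2)$ in Theorem \ref{th:infinitesimal-ressayre-pairs} says $\xi\in\Delta(\Phi)$ iff the inequality $\langle\xi,\gamma\rangle\geq\langle\Phi(C),\gamma\rangle$ holds for \emph{all} regular infinitesimal $B$-Ressayre's pairs. First I would prove $(1)\Rightarrow(2)$ of the present theorem: if $\xi\in\Delta(\Phi)$, then by Theorem \ref{th:infinitesimal-ressayre-pairs} the inequality holds for all regular infinitesimal pairs, and since each regular $B$-Ressayre's pair \emph{is} such a pair, the inequality holds a fortiori. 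The implication $(2)\Rightarrow(3)$ is trivial since the pairs quantified over in $(3)$ form a subfamily of those in $(2)$. The implication $(3)\Rightarrow(1)$ is the substantive converse, and for it I would run the contrapositive: assume $\xi\notin\Delta(\Phi)$ and produce a \emph{regular $B$-Ressayre's pair} $(\gamma,C)$, satisfying in addition $\Phi(C)\cap\tgot^*_{\geq 0}\neq\emptyset$, that violates the inequality.

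For this converse I expect to lean on the convexity/proper-moment-map machinery underlying the Kirwan polyhedron. If $\xi\notin\Delta(\Phi)$, then $\xi$ lies strictly on the wrong side of a facet of the locally polyhedral set $\Delta(\Phi)$; the supporting hyperplane of that facet supplies a rational direction $\gamma$ and a component $C\subset M^\gamma$ with $\langle\xi,\gamma\rangle<\langle\Phi(C),\gamma\rangle$. The crux is to show that the facet data can be realized not merely by an infinitesimal pair but by a genuine $B$-Ressayre's pair: one must verify that the associated Bialynicki-Birula stratum $C^-$ and the map ${\rm q}_\gamma:B\times_{B\cap P_\gamma}C^-\to M$ are dominant and generically a diffeomorphism onto $BU$. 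I anticipate this upgrade from the infinitesimal to the global condition to be \textbf{the main obstacle}: the pointwise isomorphism of $\T{\rm q}_\gamma$ only gives a local diffeomorphism near $C$, so one needs a properness or injectivity argument—presumably exploiting the Kähler structure, the holomorphicity of ${\rm q}_\gamma$, and the parabolic geometry of $P_\gamma$ acting on $C^-$—to promote this to the open-dense diffeomorphism onto $BU$ required by Definition \ref{def:B-ressayre-pair}. Once that realization is secured, the chosen pair contradicts $(3)$, completing the contrapositive and hence the cycle $(1)\Rightarrow(2)\Rightarrow(3)\Rightarrow(1)$.
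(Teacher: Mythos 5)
Your logical skeleton matches the paper's: (1)$\Rightarrow$(2) because every regular $B$-Ressayre's pair is a regular infinitesimal one, (2)$\Rightarrow$(3) trivially, and (3)$\Rightarrow$(1) by exhibiting, for $\xi\notin\Delta(\Phi)$, a violating regular pair with $\Phi(C)\cap\tgot^*_{\geq 0}\neq\emptyset$. The first two implications are fine, and there is no circularity in quoting Theorem \ref{th:infinitesimal-ressayre-pairs}: the direction you need ($\Delta(\Phi)\subset\Delta_{\infrp}$) is proved in \S\ref{sec:step-1} by a semistability argument independent of the present theorem. The genuine gap is in (3)$\Rightarrow$(1), precisely where you flag ``the main obstacle'': you never supply the mechanism that produces an actual $B$-Ressayre's pair, and the mechanism you anticipate (a properness/injectivity argument promoting the local diffeomorphism of ${\rm q}_\gamma$ near $C$ to a generic one) is not how the paper does it, nor would it work as sketched. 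In the paper, dominance and the generic-diffeomorphism property are not extracted from local analysis of ${\rm q}_\gamma$ at all: they come for free from Kirwan's stratification theorem. Given a \emph{rational} regular $a\notin\Delta(\Phi)$ with projection $a'$ on $\Delta(\Phi)$, the shifted manifold $M\times\overline{Ka}$ has non-zero minimal type $\gamma_a=a'-a$, its open dense stratum is $K_\Cbb$-equivariantly isomorphic to $K_\Cbb\times_{P_{\gamma_a}}C^-_{\gamma_a,0}$ (Theorem \ref{Kirwan-stratification}, Proposition \ref{prop:minimal-type-non-zero}), and Proposition \ref{prop:K-B-RP} translates exactly this into the statement that $(\gamma_a,C_a)$ is a $B$-Ressayre's pair on $M$ (Proposition \ref{prop:ressayre-pair-gamma-a}). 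The direct upgrade you propose is instead the content of Proposition \ref{prop:RP=infRP}, which needs extra hypotheses ($K_\Cbb C^-$ dense, $\Phi(C)\cap\tgot^*_{>0}\neq\emptyset$) plus Sjamaar's holomorphic slice theorem and semistability theory, and the paper does not use it for this theorem.

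There is a second, independent gap: your separation step cannot deliver \emph{regular} pairs satisfying the intersection condition. The projection direction attached to an arbitrary $\xi\notin\Delta(\Phi)$ need be neither rational nor admissible, so that one-shot construction only proves the non-regular inclusion $\Delta_{\rp}\subset\Delta(\Phi)$. To prove (3)$\Rightarrow$(1) the paper replaces it by three explicit families of regular pairs whose inequalities jointly cut out $\Delta(\Phi)$: the pair $(\gamma_\tau,C_\tau)$ forcing containment in the closure of the principal face $\tau$, the pairs $(\gamma^\pm_l,C^\pm_l)$ forcing containment in the affine span $\Pi$, and the facet pairs $(\gamma_F,C_F)$. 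A point of $\tgot^*_{\geq 0}\setminus\Delta(\Phi)$ may violate only a constraint of the first two kinds while satisfying every facet inequality, so your facet-only sketch misses these cases. Moreover regularity is a real verification in each case: rationality holds because $\gamma_\tau$ is a sum of coroots and $\eta_F$ can be chosen in the lattice $\wedge$, and admissibility is checked via Lemma \ref{lem:admissible} and the stabilizer computation giving $\dim_K(C_F)=\dim_K(M)+1$; each pair is then realized as the minimal type of $M\times\overline{Ka(n)}$ for the rational perturbations $a(n)=a'-\frac{1}{n}\gamma$, $n\gg 1$, and is a $B$-Ressayre's pair by the stratification argument above. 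None of this appears in your outline, so the proposal as written does not close the implication (3)$\Rightarrow$(1).
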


\medskip

In \S \ref{sec:RP=infRP}, we will explain under which circumstances the notions of  {\em infinitesimal $B$-Ressayre's pair} and {\em $B$-Ressayre's pair} coincide. Let us denote by 
$\tgot^*_{> 0}$ the interior of the Weyl chamber $\tgot^*_{\geq 0}$.

\begin{theo}\label{th:3} 
Let  $(\gamma,C)$ be a infinitesimal $B$-Ressayre's pair of a complex $K_\Cbb$-variety $M$. Let $(\Omega,\Phi)$ be a K\"{a}hler-Hamiltonian structure on $M$ such as 
$\Phi(C)\cap \tgot^*_{> 0}\neq\emptyset$. Then $(\gamma,C)$ is a $B$-Ressayre's pair. 
\end{theo}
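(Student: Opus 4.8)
The plan is to verify directly the two conditions of Definition~\ref{def:B-ressayre-pair} for the holomorphic map ${\rm q}_\gamma:B\times_{B\cap P_\gamma}C^-\to M$, using the infinitesimal hypothesis to get a local isomorphism and the K\"ahler data to globalize it to generic injectivity. First I would record the reduction noted just before Definition~\ref{def:B-ressayre-pair}: for $x\in C$ the tangent map $\T{\rm q}_\gamma\vert_{[e,x]}$ is an isomorphism exactly when $\rho_x^\gamma$ is. Thus the infinitesimal hypothesis furnishes $x_0\in C$ at which ${\rm q}_\gamma$ is a local biholomorphism, so $\dim\big(B\times_{B\cap P_\gamma}C^-\big)=\dim M$. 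Since $C$ is a connected component of $M^\gamma$, the cell $C^-$ is a connected (hence irreducible) complex submanifold and the source is irreducible. The locus $R$ where $\T{\rm q}_\gamma$ is an isomorphism is Zariski-open and contains $[e,x_0]$, hence is dense; as ${\rm q}_\gamma$ maps $R$ onto an open subset of the irreducible $M$ of equal dimension, ${\rm q}_\gamma$ is dominant, giving the first bullet.

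For the second bullet I would exploit the decomposition $B=\exp(\ngot^{\gamma>0})\cdot(B\cap P_\gamma)$ (the complementary subalgebras $\ngot^{\gamma>0}$ and $\ngot^{\gamma\le0}$ of $\ngot$ exponentiate to a triangular factorization of $N$), which identifies $B\times_{B\cap P_\gamma}C^-\cong N^{\gamma>0}\times C^-$ with $N^{\gamma>0}:=\exp(\ngot^{\gamma>0})$ unipotent, under which ${\rm q}_\gamma$ becomes $(n,x)\mapsto nx$. It then suffices to make ${\rm q}_\gamma$ injective over a dense open subset of its image, since together with immersivity on $R$ this yields the required diffeomorphism after saturating. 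Injectivity over a generic $m$ amounts to showing $\{n\in N^{\gamma>0}:n^{-1}m\in C^-\}$ is a single point; equivalently, if $y\in C^-$ and $n\in N^{\gamma>0}$ satisfy $ny\in C^-$, then generically $n=e$.

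Here the K\"ahler data enters. Writing $c(y)=\lim_{t\to\infty}\exp(-it\gamma)y\in C$ for the retraction defined by the contracting flow, I would apply $\exp(-it\gamma)$ to $ny\in C^-$ and inspect $\exp(-it\gamma)(ny)=\big[\exp(-it\gamma)\,n\,\exp(it\gamma)\big]\,\exp(-it\gamma)y$. The left-hand side converges to $c(ny)\in C$ and the factor $\exp(-it\gamma)y$ converges to $c(y)\in C$, whereas for $n\neq e$ the conjugation factor $\exp(-it\gamma)\,n\,\exp(it\gamma)$ diverges in $N^{\gamma>0}$ (its Lie-algebra argument is dilated by $e^{ta}$ on $\ngot^{\gamma=a}$). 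Convergence of the product $\big[\exp(-it\gamma)\,n\,\exp(it\gamma)\big]\,\exp(-it\gamma)y$ is therefore possible only if the limit $c(y)$ degenerates under $N^{\gamma>0}$, and this is what the hypothesis $\Phi(C)\cap\tgot^*_{>0}\neq\emptyset$ forbids generically: choosing $c_0\in C$ with $\Phi(c_0)\in\tgot^*_{>0}$, equivariance of $\Phi$ gives $K_{c_0}\subseteq K_{\Phi(c_0)}=T$, so the stabilizer at $c_0$ lies in the torus and meets the unipotent $N^{\gamma>0}$ trivially; by upper semicontinuity the same holds on a dense open subset of $C$, where the $N^{\gamma>0}$-action is free with closed orbits and the product cannot converge unless $n=e$. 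This gives generic injectivity, hence birationality of ${\rm q}_\gamma$.

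Finally I would let $U\subset C^-$ be the $B\cap P_\gamma$-saturation of the dense open set of $x$ at which ${\rm q}_\gamma$ is simultaneously a local isomorphism and injective; it contains $c_0\in C$, so it meets $C$, and ${\rm q}_\gamma:B\times_{B\cap P_\gamma}U\to BU$ is a bijective local biholomorphism, i.e.\ a diffeomorphism, establishing the second bullet. The main obstacle is precisely the birationality step: ruling out ``distant'' preimages $n\neq e$ with $ny\in C^-$ rigorously requires upgrading the contracting one-parameter subgroup to a genuine moment-map gradient retraction of $C^-$ onto $C$, and then using the regularity $\Phi(c_0)\in\tgot^*_{>0}$ to annihilate the relevant stabilizers, rather than the purely formal divergence argument sketched above.
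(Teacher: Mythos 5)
Your split of the problem is the right one, and the dominance half of your argument is fine: on a variety the infinitesimal hypothesis gives a point where ${\rm q}_\gamma$ is a local biholomorphism, the source is irreducible, and constructibility of the image then yields dominance --- this is exactly what supplies, in the variety setting, the density hypothesis (condition (1)) under which the paper proves Proposition \ref{prop:RP=infRP}. The genuine gap is the generic injectivity step, i.e.\ excluding $n\in\exp(\ngot^{\gamma>0})$, $n\neq e$, with $y$ and $ny$ both in $C^-$, and your own closing paragraph concedes it: but that concession \emph{is} the theorem. The divergence of $g_t=\exp(-it\gamma)\,n\,\exp(it\gamma)$ combined with generic freeness of stabilizers does not prevent $g_t y_t$ from converging; what one would need is properness of the action map (or closedness of orbits) near the limit, and unipotent orbits on a non-affine variety are typically not closed. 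Already for $\Cbb$ acting on $\Pbb^1$ by translation, the action is free on $\Cbb$, the sequence $g_k=k$ diverges, and yet $g_k\cdot 0$ converges (to the fixed point at infinity): freeness is a statement about stabilizers and says nothing about such limit degenerations. Moreover the freeness claim itself is not secured: equivariance gives $K_{c_0}\subset K_{\Phi(c_0)}=T$ only for the \emph{compact} stabilizer, while the relevant group is the complex stabilizer $(K_\Cbb)_{c_0}$, which at a general point is not the complexification of $K_{c_0}$ and may contain unipotent elements. The same gap recurs in your last step: you never show that $c_0$ lies in the injectivity locus, so ``the saturation meets $C$'' is unjustified as well.

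The paper closes this gap by replacing the dynamical heuristic with analytic GIT, and the hypothesis $\Phi(C)\cap\tgot^*_{>0}\neq\emptyset$ is used twice, not once. One picks $\xi\in\Phi(C)\cap\tgot^*_{>0}$ and passes to $N=M\times\overline{K\xi}$, where Proposition \ref{prop:K-B-RP} reduces the claim to showing that $(\gamma,C_N)$, with $C_N=C\times\overline{K^\gamma\xi}$, is a $K_\Cbb$-Ressayre's pair. Regularity of $\xi$ (first use) makes every stabilizer of the $K$-action on $N$ abelian, hence contained in $K^\gamma$ at points of $C_N$; membership $\xi\in\Phi(C)$ (second use) makes $C_N$ meet $\Phi_N^{-1}(0)$, so semistable points exist, $\langle\Phi_N(C_N),\gamma\rangle=0$, and Proposition \ref{prop:N-ss-RP} (which rests on Sjamaar's holomorphic slice theorem) gives $N^{ss}\cap C^-_N=p^{-1}(C_N^{ss})$. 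The injectivity you are missing is then the key lemma in the proof of Proposition \ref{prop:RP=infRP}: if $x\in N^{ss}\cap C^-_N$ and $gx\in C^-_N$, then $g\in P_\gamma$. Its proof locates a common closed orbit $K_\Cbb z$, $z\in C_N\cap\Phi_N^{-1}(0)$, in both orbit closures, writes the two points in the slice model $K_\Cbb\times_{H_\Cbb}E^0$ as $[p,v]$ and $[p',v']$ with $p,p'\in P_\gamma$ (Lemma \ref{lem:holomorphic-model} and Remark \ref{rem:choisir-z}), and uses abelianness of $H=K_z$ to get $H_\Cbb\subset K^\gamma_\Cbb$, whence $g=p'hp^{-1}\in P_\gamma$. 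Finally the good open set is $U=(C^-_N)_{\reg}\cap N^{ss}$, which is saturated under the Bialynicki--Birula projection, $U=p^{-1}(V)$ with $V$ dense open in $C_N$ (Lemma \ref{lem:C-N-reg} together with the above); it is this $p$-saturation --- not mere $B\cap P_\gamma$-invariance --- that guarantees $U$ meets $C_N$. So the semistability and slice machinery is not an optional strengthening of your sketch; it is the proof of precisely the step your proposal leaves open.
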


\medskip

\subsection*{Acknowledgements} This article, which is strongly influenced by the work of Nicolas Ressayre, takes up several of his ideas in the differential setting. 
I wish to thank Mich\`ele Vergne for our discussions on this subject and for pointing me out an error in a preliminary work where I was trying to develop the notion of Ressayre's pairs  
in symplectic geometry. Thanks to her, I understood that the K\"{a}hlerian framework was the right one to carry out this project.

\medskip

\subsection*{Notations} Throughout the paper, by {\em K\"{a}hler Hamiltonian $K$-manifold} we mean a complex manifold $M$ equipped with an action of $K_\Cbb$
 by holomorphic transformations and a $K$-invariant K\"{a}hler $2$-form $\Omega$: we suppose furthermore that the $K$-action on 
 $(M,\Omega)$ is Hamiltonian.

\section{Ressayre's pairs}

\subsection{The Bialynicki-Birula's complex submanifolds}\label{sec:BB}

Let $M$ be a K\"{a}hler Hamiltonian $K$-manifold.

Let us consider an element $\gamma\in\kgot$ and a connected component
$C$ of the complex submanifold $M^\gamma:=\{m\in M, \gamma\cdot m=0\}$. As in the introduction, we define the 
subset $C^-:=\{m\in M,  \lim_{t\to\infty} \exp(-it\gamma)m\ \in C\}$ and the projection $p_C:C^-\to C$ that sends $m\in C^-$ to 
$\lim_{t\to\infty} \exp(-it\gamma) m\ \in C$.

\begin{prop}
$C^-$ is a locally closed complex submanifold of $M$, and the projection $p_C:C^-\to C$ is an holomorphic map.
\end{prop}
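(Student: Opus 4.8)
The plan is to identify $C^-$ with the stable set of a Morse--Bott gradient flow and to read off its complex structure from a holomorphic linearization along $C$.

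First I would interpret the flow. Writing $f:=\langle\Phi,\gamma\rangle$, the moment map relation \eqref{eq:kostant=rel} together with $g(\cdot,\cdot)=\Omega(\cdot,\Jbb\cdot)$ gives $\grad f=\Jbb\gamma_M$; since the action is holomorphic, $\Jbb\gamma_M$ is the vector field generated by $i\gamma$, so the negative gradient flow of $f$ is exactly $t\mapsto\exp(-it\gamma)m$. Hence $C^-$ is the stable set of the critical component $C$, the function $f$ is constant on each component of $\crit(f)=M^\gamma$, and on $(\T_xM)^{\gamma=a}$ the flow acts by $e^{ta}$; thus the directions contracting to $C$ are exactly those with $a<0$, matching $(\T_xM)^{\gamma\le0}=\T_xC^-$. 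Two elementary but crucial facts: $C^-$ is invariant under the whole flow $\exp(-it\gamma)$, and $p_C$ is constant along flow lines, i.e. $p_C\circ\exp(-it\gamma)=p_C$.

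Next, the local model along $C$. Let $T_\gamma\subset K$ be the torus generated by $\gamma$; it fixes $C$ pointwise and acts holomorphically near $C$. By holomorphic linearization (an equivariant holomorphic tubular neighborhood) there is a $T_\gamma$-invariant neighborhood $\Ocal$ of a given point of $C$ and a $T_\gamma$-equivariant biholomorphism $\phi$ from $\Ocal$ onto a neighborhood of the zero section of the normal bundle $\Ncal=\Ncal^{>0}\oplus\Ncal^{<0}$, on which $T_\gamma$ acts linearly and fibrewise. Because $\phi$ intertwines the holomorphic $T_\gamma$-actions, analytic continuation in the group variable shows that it also intertwines the complexified flow $\exp(-it\gamma)$ wherever both sides remain in the charts. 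Choosing $\Ocal$ of product type, so that it is stable under the contraction, the set $S:=\phi^{-1}(\Ncal^{\le0})$ is a locally closed complex submanifold contained in $C^-$; conversely any $m\in C^-\cap\Ocal$ has a tail $\exp(-it\gamma)m$ lying in $\Ocal$ and converging, which forces its $\Ncal^{>0}$-component to vanish, so that $C^-\cap\Ocal=S$ near $C$. Excluding the possibility that a point with nonzero $\Ncal^{>0}$-component leaves $\Ocal$ and reconverges to $C$ is handled with $f$ as a Lyapunov function, using that $f$ strictly decreases along nonconstant flow lines and that $\grad f$ vanishes precisely on $M^\gamma$.

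It then remains to globalize. Since every $m\in C^-$ enters, after flowing, a chart $\Ocal$ around some point of $C$, we have $C^-=\bigcup_{t\ge0}\exp(it\gamma)\,S$, and each $\exp(it\gamma)$ is a biholomorphism of $M$; covering $C^-$ by the open sets of the form $\exp(it\gamma)\Ocal$, on which $C^-$ coincides with $\exp(it\gamma)S$, exhibits $C^-$ as a locally closed complex submanifold. Finally, in the linear model $p_C$ is the holomorphic bundle projection $\Ncal^{\le0}\to C$, so $p_C$ is holomorphic near $C$; for a general point, choosing $t$ so that $\exp(-it\gamma)$ carries a neighborhood in $C^-$ into $\Ocal$ and using $p_C=p_C\circ\exp(-it\gamma)$ together with the fact that $\exp(-it\gamma)$ restricts to a biholomorphism of $C^-$, we conclude that $p_C$ is holomorphic everywhere.

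I expect the main obstacle to be the second paragraph: establishing the holomorphic equivariant linearization along $C$ (a genuinely complex-analytic input, not merely the smooth Morse--Bott stable manifold theorem) and matching the globally defined stable set $C^-$ with the local linear model $S$, i.e. ruling out reconvergence from the expanding directions. This is precisely where the K\"{a}hler condition and the moment map identity \eqref{eq:kostant=rel} enter, through the Lyapunov function $f=\langle\Phi,\gamma\rangle$.
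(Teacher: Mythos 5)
Your proposal follows the same skeleton as the paper's proof — holomorphic linearization of the torus action near a point $x\in C$, identification of $C^-$ near $x$ with the linear subspace $(\T_x M)^{\gamma\le 0}$, and reading off both the submanifold structure and the holomorphy of $p_C$ from the linear model — but the linearization input differs, and this difference is exactly what creates what you call the ``main obstacle''. The paper invokes the theorem of Koras \cite{Koras-86, Sjamaar-95}, which linearizes the action of the \emph{complexified} torus $\Tbb_\Cbb$: the resulting chart $\Vcal$ is $\Tbb_\Cbb$-invariant, hence invariant under the flow $\exp(-it\gamma)$ in both time directions. Consequently any $m\in C^-$ whose limit lies in $\Vcal$ already lies in $\Vcal$, the whole orbit of any point of $\Vcal$ stays in $\Vcal$, and the conjugacy with the linear flow holds for all time; the identification $C^-\cap\Vcal\simeq(\T_x M)^{\gamma\le 0}\cap\Ucal$ then follows at once, with no globalization step and no reconvergence problem. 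You instead use Bochner linearization for the compact torus plus analytic continuation, which is more elementary, but it yields a chart invariant only under the compact torus, so the conjugacy with the linear flow holds only as long as the orbit stays in the chart.

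That is where your argument has a genuine gap. The sentence ``any $m\in C^-\cap\Ocal$ has a tail $\exp(-it\gamma)m$ lying in $\Ocal$ and converging, which forces its $\Ncal^{>0}$-component to vanish'' is false as stated: if $p_C(m)\notin\Ocal$, the tail lies near $p_C(m)$ and not in $\Ocal$, and even when $p_C(m)\in\Ocal$ one cannot convert convergence of the tail into a statement about the coordinates of $m$ unless the \emph{entire} forward orbit of $m$ stays in the chart. Your proposed patch — ``handled with $f$ as a Lyapunov function'' — does work, but it is a real argument rather than a remark. One must argue roughly as follows: if $m_n\to x$ lie in $C^-$ with nonzero expanding component, each forward orbit exits a fixed product chart for the first time through the wall $\{\,|v_+|=r\,\}$, at points $z_n$ whose $v_0$- and $v_-$-coordinates tend to $0$; a subsequence of the $z_n$ converges to a point $z_\infty$ of the local unstable manifold of $x$ at distance $r$ from $C$, where $f(z_\infty)<f(C)$ because $f$ increases strictly along the (nonconstant) backward orbit of $z_\infty$, which converges to $x$; on the other hand $f(z_n)\ge f(C)$ because $z_n\in C^-$, $f$ decreases along forward orbits, and $f$ is constant on $C$ — a contradiction. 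With this lemma in place, your globalization by the biholomorphisms $\exp(it\gamma)$ and the holomorphy of $p_C$ go through. As written, however, this pivotal step is asserted rather than proved, and it is precisely the step that the paper's stronger, $\Tbb_\Cbb$-equivariant linearization is designed to bypass.
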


\begin{proof}
Let $\Tbb\subset K$ be the torus equal to the closure of $\exp(\Rbb \gamma)$, and let $\Tbb_\Cbb\subset K_\Cbb$ be its complexification. 
Let $m\in C^-$ and let $x=p_C(m)\in C\subset M^{\Tbb_\Cbb}$. A theorem of Koras \cite{Koras-86, Sjamaar-95} tells us that $\Tbb_\Cbb$-action 
can be linearized near $x$. In other words, there exists 
an holomorphic $\Tbb_\Cbb$-equivariant diffeomorphism $\varphi :\Ucal\to \Vcal$ where $\Ucal$ is a $\Tbb_\Cbb$-invariant open neighborhood of $0$ in $\T_x M$ and 
$\Vcal$ is a $\Tbb_\Cbb$-invariant open neighborhood of $x$ in $M$.

By definition $\exp(-it\gamma)m$ tends to $x$, when $t$ goes to infinity. Then $\exp(-it\gamma) m\in \Vcal$ when $t$ is large enough. But since $\Vcal$ is 
$\Tbb_\Cbb$-invariant, we have then $m\in\Vcal$. Throught $\varphi$, we see that  $C^-\cap \Vcal$ is diffeomorphic to $(\T_x M)^{\gamma \leq 0}\cap \Ucal$, and that the map 
$p_C: C^-\cap \Vcal\to C\cap \Vcal$ corresponds to the projection $(\T_x M)^{\gamma \leq 0}\cap \Ucal\to (\T_x M)^{\gamma = 0}\cap \Ucal$. It should be noted here that 
$(\T_x M)^{\gamma \leq 0}\cap \Ucal= (\T_x M)^{\gamma = 0}\cap \Ucal \oplus (\T_x M)^{\gamma< 0}$.
\end{proof}

When $M$ is a projective variety, Bialynicki-Birula shows that $C^-$ is a subvariety Zariski dense in its closure \cite{Bialynicki-Birula}. We will not need this result here.

We finish this section with a remark that we will need later. Consider an holomorphic line bundle $\Lbb\to C^-$ that is equivariant relatively to the action of the torus $\Tbb$. 
We consider its restriction $\Lbb\vert_C$. If $\theta \in H^0(C^-,\Lbb)^\Tbb$ is an equivariant holomorphic section, we can consider its restriction 
$\theta\vert_C$.

\begin{lem}\label{lem:action-fibre-L}
Let $\theta \in H^0(C^-,\Lbb)^\Tbb$.
\begin{enumerate}
\item If the $\Tbb$-action on $\Lbb\vert_C$ is not trivial, then $\theta\vert_C=0$.
\item If the $\Tbb$-action on $\Lbb\vert_C$ is trivial, then 
$$
\{m\in C^-;\,\theta(m)\neq 0\}=p^{-1}_C\left(\{x\in C;\,\theta\vert_C(x)\neq 0\}\right).
$$
\end{enumerate}
\end{lem}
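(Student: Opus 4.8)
The plan is to treat the two cases separately, reducing the second (the substantial one) to the local model provided by the Koras linearization already used in the preceding proposition.

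First I would record that, since $C$ is connected and contained in the fixed-point set $M^\Tbb$, the torus $\Tbb$ acts on the fibers of $\Lbb\vert_C$ through a single locally constant, hence constant, character $\chi:\Tbb\to\Cbb^*$. The hypothesis of (1) is precisely that $\chi$ is nontrivial. For $x\in C$ the equivariance of $\theta$ gives, at the fixed point $x$, the identity $\chi(t)\,\theta(x)=t\cdot\theta(x)=\theta(t\cdot x)=\theta(x)$ for every $t\in\Tbb$; choosing $t$ with $\chi(t)\neq 1$ forces $\theta(x)=0$, so $\theta\vert_C=0$. This disposes of (1) without any localization.

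For (2) I would localize near an arbitrary $x\in C$. By the Koras linearization (exactly as in the proof of the previous proposition) there is a $\Tbb_\Cbb$-invariant neighborhood $\Vcal$ of $x$, holomorphically and $\Tbb_\Cbb$-equivariantly identified with a star-shaped $\Tbb_\Cbb$-invariant neighborhood $\Ucal$ of $0$ in $\T_x M$, under which $C^-\cap\Vcal$ corresponds to $(\T_x M)^{\gamma\le 0}\cap\Ucal$ and $p_C$ to the projection onto the $(\gamma=0)$-part. The point that makes localization legitimate is the same one used in that proof: every $m\in C^-$ with $p_C(m)=x$ already lies in $\Vcal$, since $\exp(-it\gamma)m\in\Vcal$ for $t$ large and $\Vcal$ is $\Tbb_\Cbb$-invariant. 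Hence it suffices to establish the asserted equality of subsets inside each such $\Vcal$. Over the star-shaped $\Ucal$ the holomorphic $\Tbb$-equivariant bundle $\Lbb$ is equivariantly trivial, the equivariant contraction to $0$ identifying it with $\Ucal\times\Cbb_\chi$, where the character $\chi$ is now trivial by hypothesis. Writing $\theta=f\cdot e$ with $e$ the trivializing (nowhere-zero) section and $f$ a holomorphic function, the $\Tbb$-invariance of $\theta$ — which, $\theta$ being holomorphic, extends by continuation from the totally real $\Tbb$ to $\Tbb_\Cbb$-equivariance — becomes $f(\exp(-it\gamma)\cdot v)=f(v)$ for all $t\in\Rbb$. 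On $(\T_x M)^{\gamma\le 0}$ the flow $\exp(-it\gamma)$ acts by $e^{ta}$ on $(\T_x M)^{\gamma=a}$, so it fixes the $(\gamma=0)$-part and contracts the $(\gamma<0)$-part to $0$ as $t\to+\infty$; letting $t\to+\infty$ in the invariance relation and using continuity of $f$ shows that $f$ does not depend on the $(\gamma<0)$-coordinate, i.e. $f(v)=f(p_C(v))$. Since $e$ is nowhere zero, $\theta(m)\neq 0$ iff $f(p_C(m))\neq 0$ iff $\theta\vert_C(p_C(m))\neq 0$, which is exactly the claimed equality on $\Vcal$; gluing over a cover of $C^-$ by such neighborhoods yields (2).

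The delicate steps, which I would import rather than recompute, are the two facts underpinning the local picture: the existence of a holomorphic $\Tbb$-equivariant trivialization of $\Lbb$ over the contractible Stein neighborhood $\Ucal$ (so that $\theta$ becomes a genuine scalar holomorphic function with the stated transformation law), and the holomorphic extension of the $\Tbb$-invariance of $\theta$ to a $\Tbb_\Cbb$-equivariance, which is what legitimizes taking the limit along the non-compact direction $\exp(-it\gamma)\in\Tbb_\Cbb$. Once these are in place the contraction argument is immediate, so I expect the main obstacle to be purely in setting up this equivariant local trivialization cleanly, not in the limiting computation itself.
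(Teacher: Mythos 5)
Your proof is correct and follows essentially the same route as the paper's: localize via the Koras linearization used in the preceding proposition, trivialize $\Lbb$ $\Tbb$-equivariantly over the local model $U_0\times(\T_x M)^{\gamma<0}$, and conclude that the section is constant along the contracting flow $\exp(-it\gamma)$, so that its value at $m$ equals its value at $p_C(m)$. The only difference is one of explicitness: you spell out the character argument for point (1) (which the paper calls obvious) and the holomorphic-continuation step upgrading $\Tbb$-invariance to invariance along $\exp(-it\gamma)\in\Tbb_\Cbb$, which the paper compresses into the assertion that $t\mapsto\theta\vert_\Wcal(v_0,e^{-it\gamma}v_{<0})$ is constant.
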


\begin{proof}
The first point is obvious to see.

Let $m\in C^-$ and $x=p_C(m)$. In the previous proposition, we have explained that a $\Tbb$-invariant neighborhood $\Wcal$ of $m$ in $C^-$ admits 
an holomorphic diffeomorphism with $\widetilde{\Wcal}:=U_0\times (\T_x M)^{\gamma< 0}$ where $U_0$ is an open neighborhood of $0$ in $(\T_x M)^{\gamma = 0}$. 
In this model, the projection $p_C$ is $(v_0,v_{<0})\in \widetilde{\Wcal}\mapsto v_0$.

For any $\Tbb$-equivariant holomorphic line bundle $\Lbb$ on $C^-$, we have a $\Tbb$-equivariant isomorphism between 
$\Lbb\vert_\Wcal$ and $U_0\times (\T_x M)^{\gamma< 0}\times \Lbb_x$. Then, a section $\theta \in H^0(C^-,\Lbb)^\Tbb$ defines a $\Tbb$-equivariant map 
$\theta\vert_\Wcal:U_0\times (\T_x M)^{\gamma< 0}\to\Lbb_x$. If the $\Tbb$-action on $\Lbb_x$ is trivial, we see that 
$\theta\vert_\Wcal(v_o,v_{<0})=\theta\vert_\Wcal(v_o,0)$, because $t \mapsto \theta\vert_\Wcal(v_o,e^{-it\gamma} v_{<0})$ is a constant map.

\end{proof}

\subsection{$K_\Cbb$-Ressayre's pairs}\label{sec:KRP}

Let $N$ be a K\"{a}hler Hamiltonian $K$-manifold, and let $\gamma\in\tgot$ be a non-zero element. When $x\in N^\gamma$, the $K_x$-equivariant morphism
(\ref{eq:rho-m}) induces a complex linear map 
\begin{equation}\label{eq:rho-gamma-K}
(\kgot_\Cbb)^{\gamma>0}\longrightarrow  (\T_x N)^{\gamma>0}.
\end{equation}

\begin{defi}\label{def:infinitesimal-K-ressayre-pair}
Let $C_N\subset N^\gamma$ be a connected component. The data $(\gamma,C_N)$ is called 
an {\bf infinitesimal $K_\Cbb$-Ressayre's pair} if $\exists x\in C_N$, such as (\ref{eq:rho-gamma-K}) is an isomorphism. If furthermore we have 
$\dim_K(C)-\dim_K(N)\in\{0,1\}$, and $\lambda$ is rational, we call $(\gamma,C_N)$ a {\em regular infinitesimal $K_\Cbb$-Ressayre's pair}.
\end{defi}

Consider the Bialynicki-Birula's complex submanifold $C^-_N$ defined by (\ref{eq:BB}). We denote by $p:C_N^-\to C_N$ the projection. As  $C^-_N$ is invariant 
under the action of the parabolic subgroup $P_\gamma$, we may consider the holomorphic map 
$$
{\rm \pi}_\gamma: K_\Cbb\times_{P_\gamma} C^-_N\to N
$$
that sends $[g,x]$ to $gx$. Let $(C^-_N)_{\reg}$ be the open subset formed by the point $n\in C^-_N$ such as the tangent map 
$\T{\rm \pi}_\gamma\vert_{[e, n]}$ is an isomorphism. We immediatly see that  $x\in C_N\cap (C^-_N)_{\reg}$ if and only 
if (\ref{eq:rho-gamma-K}) is an isomorphism. 

\begin{lem}\label{lem:C-N-reg}
Let $(\gamma,C_N)$ be an  infinitesimal $K_\Cbb$-Ressayre's pair. Then $(C^-_N)_{\reg}$ is a dense, $P_\gamma$-invariant, open subset of $C^-_N$ such as 
$(C^-_N)_{\reg}=p^{-1}(C_N\cap (C^-_N)_{\reg})$.
\end{lem}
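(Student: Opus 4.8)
The plan is to realize $(C^-_N)_{\reg}$ as the non-vanishing locus of a single $\Tbb$-equivariant holomorphic section and then feed this into Lemma~\ref{lem:action-fibre-L}. Set $X:=K_\Cbb\times_{P_\gamma}C^-_N$. Since $(\gamma,C_N)$ is an infinitesimal $K_\Cbb$-Ressayre's pair, there is $x_0\in C_N$ for which (\ref{eq:rho-gamma-K}) is an isomorphism; as the dimensions of the $\gamma$-weight spaces of $\T N$ are locally constant along $N^\gamma$, hence constant on the connected manifold $C_N$, the equality $\dim_\Cbb(\kgot_\Cbb)^{\gamma>0}=\dim_\Cbb(\T_x N)^{\gamma>0}$ holds for every $x\in C_N$. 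Combined with $\dim_\Cbb C^-_N=\dim_\Cbb(\T_x N)^{\gamma\leq 0}$ (the local model of the previous proposition) and $\dim_\Cbb(K_\Cbb/P_\gamma)=\dim_\Cbb(\kgot_\Cbb)^{\gamma>0}$, this gives $\dim_\Cbb X=\dim_\Cbb N$. Thus $\pi_\gamma:X\to N$ is a holomorphic map between connected complex manifolds of the same dimension, whose Jacobian $\det(\T\pi_\gamma)$ is a holomorphic section of the line bundle $(\det\T X)^{*}\otimes\pi_\gamma^{*}\det\T N$, vanishing exactly where $\T\pi_\gamma$ fails to be an isomorphism.

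Next I would restrict this to the fiber over the base point, that is to the embedding $\iota:C^-_N\hookrightarrow X$, $n\mapsto[e,n]$. Pulling back the Jacobian bundle and section along $\iota$ produces a holomorphic line bundle $\Lbb\to C^-_N$ and a section $\theta$ with $\{\theta\neq 0\}=(C^-_N)_{\reg}$, by the very definition of $(C^-_N)_{\reg}$. Because $\pi_\gamma$ is $K_\Cbb$-equivariant and the image of $\iota$ is preserved by $P_\gamma$ (indeed $g\cdot[e,n]=[g,n]=[e,gn]$ with $gn\in C^-_N$ for $g\in P_\gamma$), the bundle $\Lbb$ is $P_\gamma$-equivariant and $\theta$ is a $P_\gamma$-equivariant section. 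In particular, for the torus $\Tbb=\overline{\exp(\Rbb\gamma)}\subset P_\gamma$, the section $\theta$ lies in $H^0(C^-_N,\Lbb)^\Tbb$, and its non-vanishing locus $(C^-_N)_{\reg}$ is $P_\gamma$-invariant; this already settles the $P_\gamma$-invariance. Moreover $\theta$ is not identically zero, since $\theta(x_0)\neq 0$, so its non-vanishing locus is a dense open subset of the connected manifold $C^-_N$ (connected because $p:C^-_N\to C_N$ is an affine bundle over the connected base $C_N$), which yields the density statement.

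Finally I would invoke Lemma~\ref{lem:action-fibre-L}, with the dictionary $C=C_N$, $C^-=C^-_N$, $p_C=p$. The restriction $\theta\vert_{C_N}$ is nonzero, being nonzero at $x_0$, so part (1) of that lemma forbids a nontrivial $\Tbb$-action on $\Lbb\vert_{C_N}$; hence the $\Tbb$-action on $\Lbb\vert_{C_N}$ is trivial and part (2) applies, giving
$$
(C^-_N)_{\reg}=\{n\in C^-_N;\,\theta(n)\neq 0\}=p^{-1}\big(\{x\in C_N;\,\theta\vert_{C_N}(x)\neq 0\}\big)=p^{-1}\big(C_N\cap(C^-_N)_{\reg}\big),
$$
which is the remaining assertion.

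The tangent-space identification $\{\theta\neq 0\}=(C^-_N)_{\reg}$ and the dimension count are routine. The part requiring the most care is the equivariance bookkeeping of the second paragraph: turning the Jacobian of $\pi_\gamma$ into a genuine $\Tbb$-equivariant holomorphic section on $C^-_N$ so that Lemma~\ref{lem:action-fibre-L} applies verbatim, and checking that the Ressayre hypothesis $\theta\vert_{C_N}\neq 0$ is exactly what forces the $\Tbb$-weight of $\Lbb\vert_{C_N}$ to vanish.
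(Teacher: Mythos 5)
Your proof is correct and follows essentially the same route as the paper: the paper's line bundle $\Lbb:=\hom\left(\wedge^r(\kgot_\Cbb/\pgot_\gamma),\wedge^r\mathbb{E}\right)$, with $\mathbb{E}=\T N\vert_{C^-_N}/\T C^-_N$, together with its canonical section $\theta(n):\overline{X_1}\wedge\cdots\wedge\overline{X_r}\mapsto \overline{X_1\cdot n}\wedge\cdots\wedge\overline{X_r\cdot n}$, is exactly your pulled-back Jacobian bundle and section along $n\mapsto[e,n]$, and both arguments conclude by applying Lemma \ref{lem:action-fibre-L}. The only (harmless) difference is that you deduce the triviality of the $\Tbb$-action on $\Lbb\vert_{C_N}$ from the contrapositive of part (1) of that lemma, whereas the paper notes it directly from the $\Tbb$-equivariance of (\ref{eq:rho-gamma-K}) at a point where it is an isomorphism.
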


\begin{proof}
The tangent map $\T\pi_\gamma\vert_{[e,n]}$ is an isomorphism if 
$$
\begin{cases}
\kgot_\Cbb\cdot n + \T_n C^-_N=\T_n M\\
\kgot_\Cbb\cdot n\cap \T_n C^-_N \simeq \pgot_\gamma,
\end{cases}
$$
where $\pgot_\gamma=(\kgot_\Cbb)^{\gamma\leq 0}$ is the Lie algebra of $P_\gamma$.

As $(\gamma,C_N)$ is an infinitesimal $K_\Cbb$-Ressayre's pair, the previous relations are satisfied on some points of $C_N$.
Then, the rank $r$ of the holomorphic bundle $\mathbb{E}:=\T M\vert_{C_N^-}/\T C^-_N$ is equal to the dimension of $\kgot_\Cbb/\pgot_\gamma$.
We consider now the $P_\gamma$-holomorphic line bundle $\Lbb\to C^-_N$ defined by 
$\Lbb:=\hom \left(\wedge^r(\kgot_\Cbb/\pgot_\gamma), \wedge^r\mathbb{E}\right)$. 
We have a canonical $P_\gamma$-equivariant section $\theta : C^-_N\to \Lbb$ defined  by
$$
\theta(n) : \overline{X_1} \wedge \cdots\wedge  \overline{X_r}\longrightarrow  \overline{X_1\cdot n} \wedge \cdots\wedge  \overline{X_r\cdot n}.
$$

We notice that  $(C^-_N)_{\reg}=\{n; \theta(n)\neq 0\}$ and that the torus $\Tbb=\overline{\exp(\Rbb\gamma)}$ acts trivially on $\Lbb\vert_{C_N}$. Thanks to 
Lemma \ref{lem:action-fibre-L}, we can conclude that $n\in (C^-_N)_{\reg}$ if and only if $p(n)\in (C^-_N)_{\reg}$.
\end{proof}

Let us now introduce a more restrictive notion, that of $K_\Cbb$-Ressayre's pair.

\begin{defi}\label{def:K-ressayre-pair}
The data $(\gamma,C_N)$ is called a {\bf $K_\Cbb$-Ressayre's pair} if the following conditions hold
\begin{itemize}
\item The holomorphic map ${\rm \pi}_{\gamma}$ is dominant.
\item There exists a $P_{\gamma}$-invariant, open and dense subset $U\subset C^-_N$, intersecting $C_N$, 
so that ${\rm \pi}_{\gamma}$ defines a diffeomorphism $K_\Cbb\times_{P_\gamma} U\simeq K_\Cbb U$.
\end{itemize}
If furthermore we have  $\dim_K(C)-\dim_K(N)\in\{0,1\}$, and $\lambda$ is rational, we call $(\gamma,C_N)$ a regular $K_\Cbb$-Ressayre's pair.
\end{defi}

\subsection{$K_\Cbb$-Ressayre's pairs versus $B$-Ressayre's pairs}\label{sec:KandB}

Let $M$ be a K\"{a}hler Hamiltonian $K$-manifold. We associate to it the complex manifold $N:=M\times K_\Cbb/B$. To a
connected component $C$ of $M^\gamma$ we associate the connected component 
$$
C_N:=C\times K^\gamma_\Cbb/K^\gamma_\Cbb\cap B
$$ 
of $N^\gamma$.

\begin{prop}\label{prop:K-B-RP}
\begin{enumerate}
\item $(\gamma,C)$ is a $B$-Ressayre's pair on $M$ if and only if $(\gamma,C_N)$ is a $K_\Cbb$-Ressayre's pair on $N$.
\item $(\gamma,C)$ is an infinitesimal $B$-Ressayre's pair on $M$ if and only if $(\gamma,C_N)$ is an infinitesimal $K_\Cbb$-Ressayre's pair on $N$.
\end{enumerate}
\end{prop}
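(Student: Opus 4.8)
The plan is to reduce both equivalences to the interplay between the $K_\Cbb$-geometry of $N=M\times K_\Cbb/B$ over the flag factor and the $B$-geometry of $M$, by restricting everything to the base point $eB$ of the flag, whose $K_\Cbb$-stabilizer is $B$ and whose $P_\gamma$-stabilizer is $B\cap P_\gamma$. Throughout I write $\Ccal_0:=K_\Cbb^\gamma/(K_\Cbb^\gamma\cap B)$ for the flag factor of $C_N=C\times\Ccal_0$, and I use that $\T_{eB}(K_\Cbb/B)\simeq\kgot_\Cbb/\bgot$ as $\gamma$-modules (with $\gamma$ acting by $\ad(\gamma)$), so that the quotient map $\overline{\ngot}\to\kgot_\Cbb/\bgot$ restricts to an isomorphism $\overline{\ngot}^{\gamma>0}\xrightarrow{\sim}(\kgot_\Cbb/\bgot)^{\gamma>0}$.

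For part (2), I would first test the isomorphism condition at the points $y=(x,eB)\in C_N$ with $x\in C$. At such a point $(\T_y N)^{\gamma>0}=(\T_x M)^{\gamma>0}\oplus(\kgot_\Cbb/\bgot)^{\gamma>0}$, and, decomposing $(\kgot_\Cbb)^{\gamma>0}=\ngot^{\gamma>0}\oplus\overline{\ngot}^{\gamma>0}$ (there is no $\tgot_\Cbb$-contribution since $\gamma$ fixes $\tgot_\Cbb$), the map (\ref{eq:rho-gamma-K}) sends $X_{+}+Y_{+}$ to $(X_+\cdot x+Y_+\cdot x,\ \overline{Y_+})$, because $\ngot\subset\bgot$ kills the flag component. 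This is block upper-triangular with diagonal blocks $\rho_x^\gamma:\ngot^{\gamma>0}\to(\T_x M)^{\gamma>0}$ and the isomorphism $\overline{\ngot}^{\gamma>0}\xrightarrow{\sim}(\kgot_\Cbb/\bgot)^{\gamma>0}$; hence (\ref{eq:rho-gamma-K}) is an isomorphism at $y$ if and only if $\rho_x^\gamma$ is one. To remove the restriction to points of the form $(x,eB)$, I would invoke $K_\Cbb^\gamma$-homogeneity: for $g\in K_\Cbb^\gamma$ the translation $t_g$ commutes with the $\gamma$-action and $\Ad(g)$ preserves $(\kgot_\Cbb)^{\gamma>0}$, so (\ref{eq:rho-gamma-K}) at $gy$ is conjugate to (\ref{eq:rho-gamma-K}) at $y$; since $K_\Cbb^\gamma$ acts transitively on $\Ccal_0$, every point of $C_N$ is of the form $g\cdot(x,eB)$. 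Thus $(\gamma,C_N)$ is an infinitesimal $K_\Cbb$-Ressayre's pair if and only if some $\rho_x^\gamma$ ($x\in C$) is an isomorphism, i.e.\ if and only if $(\gamma,C)$ is an infinitesimal $B$-Ressayre's pair.

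For part (1), I would begin by identifying the Bialynicki--Birula cell $\Ccal_0^-\subset K_\Cbb/B$ as a single $P_\gamma$-orbit. The projection $p:\Ccal_0^-\to\Ccal_0$ is a $P_\gamma$-equivariant affine bundle; $K_\Cbb^\gamma$ already acts transitively on the base $\Ccal_0$, while the unipotent radical $U_\gamma=\exp((\kgot_\Cbb)^{\gamma<0})$ acts transitively on the fiber $p^{-1}(eB)=\exp(\overline{\ngot}^{\gamma<0})\cdot eB$, so $P_\gamma=K_\Cbb^\gamma\,U_\gamma$ acts transitively and $\Ccal_0^-=P_\gamma\cdot eB\cong P_\gamma/(B\cap P_\gamma)$. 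Consequently $C_N^-=C^-\times\Ccal_0^-=C^-\times P_\gamma/(B\cap P_\gamma)$, and the standard associated-bundle identity $K_\Cbb\times_{P_\gamma}\bigl(Z\times P_\gamma/(B\cap P_\gamma)\bigr)\simeq K_\Cbb\times_{B\cap P_\gamma}Z$ (valid for any $P_\gamma$-variety $Z$, here $Z=C^-$) gives a $K_\Cbb$-equivariant isomorphism $K_\Cbb\times_{P_\gamma}C_N^-\simeq K_\Cbb\times_{B\cap P_\gamma}C^-$ under which $\pi_\gamma$ becomes $[g,m]\mapsto(gm,gB)$. Writing $K_\Cbb\times_{B\cap P_\gamma}C^-\simeq K_\Cbb\times_B\bigl(B\times_{B\cap P_\gamma}C^-\bigr)$ and composing with the tautological isomorphism $\Theta:K_\Cbb\times_B M\simeq M\times K_\Cbb/B$, $[g,m]\mapsto(gm,gB)$, this map is exactly $\Theta\circ(K_\Cbb\times_B\mathrm{q}_\gamma)$. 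Hence $\pi_\gamma$ and $K_\Cbb\times_B\mathrm{q}_\gamma$ share the same dominance, birationality and open-immersion properties.

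Finally I would transfer the two bullet conditions through the bundle $K_\Cbb\times_B(-)\to K_\Cbb/B$: a $K_\Cbb$-equivariant morphism of the form $K_\Cbb\times_B\mathrm{q}_\gamma$ is dominant (resp.\ restricts to a diffeomorphism over a dense open set) if and only if its fiber $\mathrm{q}_\gamma$ over $eB$ is, and the $P_\gamma$-invariant dense open subsets $U\subset C_N^-$ correspond, via the slice at $eB$, bijectively to the $B\cap P_\gamma$-invariant dense open subsets of $C^-$, preserving the property of meeting $C$. This matches the conditions of Definition \ref{def:K-ressayre-pair} with those of Definition \ref{def:B-ressayre-pair}, proving (1). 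The main obstacle is the bookkeeping of part (1): pinning down $\Ccal_0^-$ as the single orbit $P_\gamma/(B\cap P_\gamma)$ and checking that the associated-bundle isomorphisms carry the ``diffeomorphism onto the orbit of a dense open set'' condition faithfully between the three nested groups $B\cap P_\gamma\subset B,P_\gamma\subset K_\Cbb$; the infinitesimal statement (2), by contrast, is a one-point linear-algebra computation.
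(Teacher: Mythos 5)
Your proof is correct and takes essentially the same route as the paper: for part (1) you use the same identification of $C_N^-$ with the associated bundle $P_\gamma\times_{B\cap P_\gamma}C^-$, the same correspondence between $P_\gamma$-invariant open subsets of $C_N^-$ and $B\cap P_\gamma$-invariant open subsets of $C^-$, and the same commutative diagram through $K_\Cbb\times_B M\simeq N$; for part (2) you perform the tangent-map comparison at points $(x,[e])$ that the paper declares immediate. Your version is merely more explicit in places (the block-triangular computation, the identification of the flag BB cell as the single orbit $P_\gamma\cdot eB$, and the $K_\Cbb^\gamma$-homogeneity argument reducing arbitrary points of $C_N$ to points of the form $(x,[e])$), which fills in details the paper leaves implicit.
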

\begin{proof}
Let us prove the first point. The Bialynicki-Birula's complex manifold $C_N^-$ is equal to $C^-\times P_\gamma/P_\gamma\cap B$. We have to compare the maps  
${\rm q}_\gamma: B\times_{B\cap P_\gamma} C^-\to M$, and $\pi_\gamma : K_\Cbb\times_{P_\gamma} C_N^-\to N$. Consider the  canonical isomorphism 
$C_N=C^-\times P_\gamma/P_\gamma\cap B\simeq P_\gamma\times_{P_\gamma\cap B} C^-$. More generaly, for any $P_\gamma$-invariant open subset 
$\Ucal_N\subset C^-_N$ we have an isomorphism $\Ucal_N\simeq P_\gamma\times_{P_\gamma\cap B} \Ucal$ where $\Ucal$ is the open $P_\gamma\cap B$-invariant subset of 
$C^-$ defined by the relation $\Ucal:=\{x\in C^-; (x,[e])\in \Ucal_N\}$. Note that $\Ucal_N$ intersects $C_N$ if and only if $\Ucal$ intersects $C$. 

Now, we notice that ${\rm q}_\gamma$ defines a diffeomorphism $B\times_{B\cap P_\gamma} \Ucal\simeq B\Ucal$ if and only if 
$\pi_\gamma$ defines a diffeomorphism $K_\Cbb\times_{P_\gamma} \Ucal_N\simeq K_\Cbb \Ucal_N$. It can be seen easily through the commutative diagram
$$
\xymatrixcolsep{5pc}\xymatrix{
K_\Cbb\times_{P_\gamma} \Ucal_N\ar[d]^{\pi_\gamma} \ar[r]^{\sim} & K_\Cbb\times_{B} (B\times_{B\cap P_\gamma} \Ucal)\ar[d]^{{\rm q}_\gamma} \\
N \ar[r]^{\sim}          & K_\Cbb\times_B M.
}
$$

The second point is immediate since for any $x\in C$, the tangent map $\T{\rm q}_\gamma\vert_x$ is an isomorphism if and only if 
$\T{\rm \pi}_\gamma\vert_{(x,[e])}$ is an isomorphism

\end{proof}

\subsection{Ressayre's pairs and semi-stable points  I}\label{sec:RP-semi-stable-1}

Let $N$ be a K\"{a}hler Hamiltonian $K$-manifold with proper moment map $\Phi_N$. In this section we suppose that 
$\Phi_N^{-1}(0)\neq \emptyset$, and we consider the subset of analytical semi-stable points:
$$
N^{ss}=\{n\in N; \overline{K_\Cbb\, n}\cap \Phi_N^{-1}(0)\neq\emptyset\}.
$$
In \S \ref{sec:square-phi}, we will explain the well-known fact that\footnote{The set $N^{ss}$ is denoted by $N_0$ in \S \ref{sec:square-phi}.}  $N^{ss}$ is a dense open subset of $N$.

Let $(\gamma,x)\in\kgot\times N$, such as the limit $x_\gamma:=\lim_{t\to\infty}e^{-it\gamma} x$ exists in $N$.  
Recall that $t\geq 0\mapsto e^{-it\gamma} x$ corresponds to the gradient flow of the Morse-Bott fonction $-\langle\Phi,\gamma\rangle$. 

The result of the next proposition is classical in the projective case (see \cite{Ressayre10}, Lemma 2).

\begin{prop}\label{prop:N-ss-gamma}
Let $(\gamma,x)\in\kgot\times N$, such as the limit $x_\gamma:=\lim_{t\to\infty}e^{-it\gamma} x$ exists. 
Then, the following hold
\begin{enumerate}
\item If $x\in N^{ss}$, then $\langle \Phi_N(x_\gamma),\gamma\rangle\leq 0$.
\item If $x\in N^{ss}$ and $\langle \Phi_N(x_\gamma),\gamma\rangle= 0$, then $x_\gamma\in N^{ss}$.
\end{enumerate}
\end{prop}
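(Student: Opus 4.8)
The plan is to treat both parts through a single analytic device: the monotonicity of the moment value along the flow, which is precisely the gradient-flow remark preceding the proposition. Since $t\mapsto e^{-it\gamma}x$ is the gradient flow of $-\langle\Phi_N,\gamma\rangle$, and the Kähler metric identifies $\operatorname{grad}\langle\Phi_N,\gamma\rangle$ with $\Jbb$ applied to the vector field generated by $\gamma$, one computes
\begin{equation*}
\frac{d}{dt}\,\langle\Phi_N(e^{-it\gamma}x),\gamma\rangle=-\big\|\gamma\cdot e^{-it\gamma}x\big\|^2\le 0,
\end{equation*}
so $h(t):=\langle\Phi_N(e^{-it\gamma}x),\gamma\rangle$ is non-increasing. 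First I would record the immediate consequences that $\langle\Phi_N(x_\gamma),\gamma\rangle=\lim_{t\to\infty}h(t)=\inf_{t\ge 0}h(t)$, and that $x_\gamma\in N^\gamma$ lies in $\overline{K_\Cbb\, x}$.

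For the first assertion I would pass to the convex picture. Writing $p(s)=\langle\Phi_N(e^{is\gamma}x),\gamma\rangle=h(-s)$, the same computation gives $p'(s)=\|\gamma\cdot e^{is\gamma}x\|^2\ge 0$, so the Kempf--Ness function $\Psi(s):=\int_0^s p$, whose derivative is $p$, is convex along the geodesic $s\mapsto\exp(is\gamma)$, and its left asymptotic slope equals $\lim_{s\to-\infty}p(s)=\langle\Phi_N(x_\gamma),\gamma\rangle$. The point is then that $x\in N^{ss}$ forces $\Psi$ to be bounded from below: because $\overline{K_\Cbb\, x}$ meets $\Phi_N^{-1}(0)$, the infimum of $\|\Phi_N\|$ over the orbit is $0$, which is the analytic content (developed via the $-\|\Phi_N\|^2$ gradient flow in \S\ref{sec:square-phi}, using properness) ensuring that the Kempf--Ness function of a semistable point has no downward escape. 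A convex function on $\Rbb$ that is bounded below has non-positive left asymptotic slope, whence $\langle\Phi_N(x_\gamma),\gamma\rangle\le 0$.

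For the second assertion the equality $\langle\Phi_N(x_\gamma),\gamma\rangle=0$ means $\inf_s p(s)=0$ is attained only in the limit $s\to-\infty$, i.e. $\Psi$ is convex, bounded below, and asymptotically flat towards $x_\gamma$. I would deduce semistability of $x_\gamma$ by reducing to the centralizer: let $Z=K_\Cbb^\gamma$ be the centralizer of $\exp(\Rbb\gamma)$, acting on the fixed manifold $N^\gamma$ with restricted moment map $\Phi_N^\gamma$ valued in $\big((\kgot_\Cbb)^{\gamma=0}\big)^*$. Since $\langle\Phi_N(x_\gamma),\gamma\rangle=0$, the $\gamma$-component of the moment value of $x_\gamma$ vanishes, which is exactly the condition making $K_\Cbb$-semistability of $x_\gamma$ in $N$ equivalent to $Z$-semistability of $x_\gamma$ in $N^\gamma$; the asymptotic flatness of $\Psi$ is then used to transport the semistability of $x$ along the ray into $Z$-semistability of the limit inside $N^\gamma$.

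I expect the main obstacle to be the second assertion, specifically the equality case. Two inputs must be pinned down: the equivalence between $K_\Cbb$-semistability in $N$ and $Z$-semistability in $N^\gamma$ for points of $N^\gamma$ of vanishing $\gamma$-weight (a slice / local-normal-form argument near $N^\gamma$), and the fact that the flat ray actually produces a zero of $\Phi_N^\gamma$ in $\overline{Z\,x_\gamma}$ rather than merely a zero of $\Phi_N$ in $\overline{K_\Cbb\, x}$ (note that for a \emph{strictly} negative weight the limit need not be semistable, so the hypothesis $\langle\Phi_N(x_\gamma),\gamma\rangle=0$ must be used in an essential way). Both rest on properness of $\Phi_N$, which guarantees convergence of the relevant gradient flows, and on the $\|\Phi_N\|^2$-theory of \S\ref{sec:square-phi}; by contrast the first assertion is essentially the convexity argument above.
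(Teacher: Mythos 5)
Your part (1) is correct, but it takes a different route from the paper. The paper's proof of (1) avoids Kempf--Ness functions entirely: it writes $K_\Cbb=KP_\gamma$, so that semistability of $x$ becomes $\min_{n\in P_\gamma x}\|\Phi_N(n)\|=0$, applies the monotonicity inequality $\langle\Phi_N(n),\gamma\rangle\geq\langle\Phi_N(e^{-it\gamma}n),\gamma\rangle$ to \emph{every} $n\in P_\gamma x$, and uses the constancy of $\langle\Phi_N(\cdot),\gamma\rangle$ on the component $C_N$ containing $x_\gamma$ to pass to the limit $t\to\infty$. Your argument instead invokes the full Kempf--Ness theorem (semistable $\Rightarrow$ the Kempf--Ness function is bounded below, which in the paper is Theorem \ref{theo:kempf-ness-theorem} and needs Kirwan's stratification plus the Lojasiewicz inequality) and then the convexity of $\Psi$ along the geodesic $s\mapsto[\exp(is\gamma)]$, whose asymptotic slope at $-\infty$ is $\langle\Phi_N(x_\gamma),\gamma\rangle$. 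This is valid, but it spends heavy machinery where the paper uses an elementary orbit argument; the payoff of the paper's choice is that (1) is independent of the appendix.

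Part (2) is where there is a genuine gap, and you flag it yourself. The sentence ``the asymptotic flatness of $\Psi$ is then used to transport the semistability of $x$ along the ray into $Z$-semistability of the limit'' is not an argument: semistability of $x_\gamma$ is a statement about the closure of an entire $K^\gamma_\Cbb$-orbit, and flatness of one convex function along one geodesic does not produce it by any routine step. Note also that of your two ``inputs to be pinned down'', only the easy direction of the centralizer equivalence is actually needed: for $y\in N^\gamma$ the moment image $\Phi_N(y)$ is fixed by $\gamma$, so a zero of the $K^\gamma$-moment map on $N^\gamma$ is already a zero of $\Phi_N$, and $Z$-semistability in $N^\gamma$ trivially implies $K_\Cbb$-semistability in $N$. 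Hence the whole difficulty of (2) sits inside your unproven transport step. The paper closes exactly this gap with one identity your proposal never uses: the cocycle property $\Psi_{h^{-1}x}(g)=\Psi_x(hg)-\Psi_x(h)$ of Kempf--Ness functions (Proposition \ref{prop:N-ss-gamma-bis} in the Appendix). Taking $h=e^{it\gamma}$, the hypothesis $\langle\Phi_N(x_\gamma),\gamma\rangle=0$ forces $\langle\Phi_N(e^{-is\gamma}x),\gamma\rangle\geq 0$ for all $s\geq 0$ (a decreasing function with limit $0$), hence $\Psi_x(e^{it\gamma})\leq 0$, hence $\Psi_{e^{-it\gamma}x}(g)\geq c$ uniformly in $t$ whenever $\Psi_x\geq c$; letting $t\to\infty$ gives $\Psi_{x_\gamma}\geq c$, and Theorem \ref{theo:kempf-ness-theorem} converts this bound into $x_\gamma\in N^{ss}$. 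If you insist on your centralizer strategy, you would in effect have to redo what the paper does later with Sjamaar's holomorphic slice theorem (Proposition \ref{prop:N-ss-RP}), which is a substantially longer argument than the cocycle trick.
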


\begin{proof}Let $P_\gamma\subset K_\Cbb$ be the parabolic subgroup associated to $\gamma$ (see (\ref{eq:P-gamma})). Since $K_\Cbb=KP_\gamma$, the fact 
that $x\in N^{ss}$ means that $\overline{P_\gamma x}\cap\Phi_N^{-1}(0)\neq \emptyset$: in other words $\min_{n\in P_\gamma x}\|\Phi_N(n)\|=0$.

Consider now the function $t\geq 0\mapsto \langle \Phi_N(e^{-it\gamma} n),\gamma\rangle$ attached to $n\in P_\gamma x$.  Since 
$\frac{d}{dt}\langle\Phi_N(e^{-it\gamma}n),\gamma\rangle= -\|\gamma_M\|^2(e^{-it\gamma}n)\leq 0$, 
we have 
\begin{equation}\label{eq:N-ss-gamma}
\langle\Phi_N(n),\gamma\rangle \geq \langle\Phi_N(e^{-it\gamma}n),\gamma\rangle,\qquad \forall t\geq 0.
\end{equation} 

Let $C_N\subset N$ be the connected component of $N^\gamma$ containing $x_\gamma$: it is a complex submanifold of $N$, stable by the $K^\gamma_\Cbb$-action. 
Moreover,  the function $n \mapsto \langle \Phi_N(n),\gamma\rangle$ is constant when restricted to $C_N$, equal to $\langle \Phi_N(x_\gamma),\gamma\rangle$. 
Let's take $p\in P_\gamma$ and $n=px$. Then, the limit $\lim_{t\to\infty}e^{-it\gamma} n$ is equal to $k x_\gamma\in C_N$ where 
$k=\lim_{t\to\infty}e^{-it\gamma} p \, e^{it\gamma}\in K^\gamma_\Cbb$. If we take the limit  in (\ref{eq:N-ss-gamma}) as $t\to\infty$, we obtain 
$\langle\Phi_N(n),\gamma\rangle\geq \langle \Phi_N(x_\gamma),\gamma\rangle$ for 
any $n\in P_\gamma x$. Since $\min_{n\in P_\gamma x}\|\Phi_N(n)\|=0$, we can conclude that 
$0\geq \min_{n\in P_\gamma x}\langle\Phi_N(n),\gamma\rangle\geq \langle \Phi_N(x_\gamma),\gamma\rangle$.

The second point is proved in the Appendix.
\end{proof}

\section{The Kirwan-Ness stratification}

\subsection{The square of the moment map}\label{sec:square-phi}

Let us now choose a rational invariant inner product on $\kgot^*$. By rational we mean that for a maximal torus $T\subset K$ with Lie algebra $\tgot$, the inner product takes integral values on the lattice $\wedge:=\frac{1}{2\pi}\ker(\exp:\tgot\to T)$. Let us denote by $\wedge^*\subset \tgot^*$ the dual lattice : $\wedge^*=\hom(\wedge,\Zbb)$. 
We associate to the lattices $\wedge$ and $\wedge^*$ the $\Qbb$-vector space $\tgot_\Qbb$ and $\tgot^*_\Qbb$ generated by them: the vectors belonging to 
$\tgot_\Qbb$ and $\tgot^*_\Qbb$ are designed as rational.

The invariant scalar product on $\kgot$ induces an identification $\kgot^*\simeq \tgot,\xi\mapsto\tilde{\xi} $ such as $\tgot_\Qbb\simeq\tgot^*_\Qbb$. 
To simplify our notation, we will not distinguish between $\xi$ and $\tilde{\xi}$: for example we write $M^{\lambda}$ for the submanifold fixed by 
the subgroup generated by $\tilde{\lambda}$, and we denote by $K^\gamma\subset K$ the subgroup that leaves $\tilde{\lambda}$ invariant.

Let $M$ be a K\"{a}hler Hamiltonian $K$-manifold with proper moment map $\Phi:M\to\kgot^*$. Let  
$$
f:=\frac{1}{2}(\Phi,\Phi):M\longrightarrow \Rbb
$$
denote the norm-square of the moment map. Notice that $f$ is a proper function on $M$.

\begin{defi}
The Kirwan vector field on $M$ is defined by the relation
$$
\kappa_\Phi(m)=\Phi(m)\cdot m,\quad \forall m\in M.
$$
\end{defi}

We consider the gradient $\grad(f)$ of the function $f$ relatively to the Riemannian metric $\Omega(-,\Jbb-)$. We recall the following well-known facts \cite{W11}.

\begin{prop}
\begin{enumerate}
\item The set of critical points of the function $f$ is $\crit(f)=\{\kappa_\Phi=0\}$.
\item The gradient of $f$ is $\grad(f)=\Jbb(\kappa_\Phi)$.
\item We have the decomposition
$\Phi(\crit(f))=\bigcup_{\lambda\in \Bcal_\Phi} K\lambda$ 
where the set $\Bcal_\Phi\subset \tgot^*_{\geq 0}$ is discrete. $\Bcal_\Phi$ is called the set of {\rm types} of $M$. 
\item We have the decomposition $\crit(f)=\bigcup_{\lambda\in \Bcal_\Phi}Z_\lambda$ 
where $Z_\lambda=\crit(\phi)\cap\Phi^{-1}(K\lambda)$ is equal to $K(M^{\lambda}\cap \Phi^{-1}(\lambda))$.
\end{enumerate}
\end{prop}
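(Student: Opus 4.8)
The plan is to split the four assertions into two groups: statements (1) and (2) are a direct computation, while (3) and (4) describe the critical set and rest on equivariance together with a closest-point characterization.

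First I would compute $df$. For $v\in\T_m M$ one has $df_m(v)=(\Phi(m),d\Phi_m(v))$ for the invariant inner product on $\kgot^*$. Writing $\widetilde{\Phi(m)}\in\kgot$ for the image of $\Phi(m)$ under $\kgot^*\simeq\kgot$ and applying the moment map relation (\ref{eq:kostant=rel}) with $X=\widetilde{\Phi(m)}$, this equals $\Omega\big(\widetilde{\Phi(m)}\cdot m,\,v\big)=\Omega(\kappa_\Phi(m),v)$, since by definition $\kappa_\Phi(m)=\Phi(m)\cdot m$. Thus $df_m=\iota(\kappa_\Phi(m))\Omega$. For the metric $g=\Omega(-,\Jbb-)$ and using the $\Jbb$-invariance $\Omega(\Jbb a,\Jbb b)=\Omega(a,b)$ of the K\"ahler form, I get $g(\Jbb\kappa_\Phi(m),v)=\Omega(\Jbb\kappa_\Phi(m),\Jbb v)=\Omega(\kappa_\Phi(m),v)=df_m(v)$ for all $v$, whence $\grad(f)=\Jbb(\kappa_\Phi)$, which is (2). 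Assertion (1) follows at once, since $\Jbb$ is invertible, so $\grad(f)(m)=0$ if and only if $\kappa_\Phi(m)=0$.

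For (3) and (4) the starting observation is that $\kappa_\Phi(m)=\Phi(m)\cdot m=0$ says exactly that $m$ is fixed by the subgroup generated by $\widetilde{\Phi(m)}$, i.e. $m\in M^{\Phi(m)}$. Since $\Phi$ is equivariant and $f$ (hence $\crit(f)$ and $\kappa_\Phi$) is $K$-invariant, I may translate $\Phi(m)$ into the closed chamber: choosing $k\in K$ with $\Ad^*(k)\Phi(m)=\lambda\in\tgot^*_{\geq 0}$ gives $km\in\crit(f)\cap M^\lambda\cap\Phi^{-1}(\lambda)$. This already shows that $\Phi(\crit(f))$ is a union of orbits $K\lambda$ and yields the two inclusions of (4): if $m\in M^\lambda\cap\Phi^{-1}(\lambda)$ then $\lambda\cdot m=0$, so $m\in\crit(f)$ and, by equivariance, the whole $K(M^\lambda\cap\Phi^{-1}(\lambda))$ is critical with moment image in $K\lambda$; conversely any critical $m$ with $\Phi(m)\in K\lambda$ is carried by a suitable $k\in K$ into $M^\lambda\cap\Phi^{-1}(\lambda)$.

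The remaining point, discreteness of $\Bcal_\Phi$, is the main obstacle. The key structural input is a closest-point characterization of the $\lambda$'s. If $C$ is the connected component of $M^\lambda$ through a critical point $m$ with $\Phi(m)=\lambda$, then the differential of $\langle\Phi,\lambda\rangle$ is $\iota((\widetilde\lambda)_M)\Omega$, which vanishes on $M^\lambda\supseteq C$; as $C$ is connected, $\langle\Phi,\lambda\rangle$ is constant on $C$, equal to its value $\|\lambda\|^2$ at $m$. Hence $\Phi(C)$ lies in the affine hyperplane $\{\xi\in\kgot^*:\langle\xi,\lambda\rangle=\|\lambda\|^2\}$, whose point of least norm is $\lambda$ itself, so $\lambda$ is the point of $\Phi(C)$ closest to the origin (the case $\lambda=0$ being the minimum of $f$). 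I would then invoke properness: a sublevel set $f^{-1}([0,R])$ is compact, so it meets only finitely many connected components of fixed-point submanifolds $M^{\Tbb'}$ (by local finiteness of the torus fixed-point stratification on a compact set), and each such component determines at most one closest point $\lambda$; since $\lambda\in\Bcal_\Phi$ with $\|\lambda\|^2\leq 2R$ forces its component to meet $f^{-1}([0,R])$, only finitely many elements of $\Bcal_\Phi$ have norm at most $\sqrt{2R}$, and letting $R\to\infty$ gives discreteness. This is precisely the step where properness of $\Phi$ is indispensable, and it is the content of the standard Kirwan--Ness stratification theory recalled in \cite{W11}.
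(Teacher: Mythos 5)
The paper offers no proof of this proposition: it is stated as a collection of ``well-known facts'' with a citation to Woodward's notes \cite{W11}, so there is no internal argument to compare yours against. Judged on its own merits, your proof is correct and is the standard one. Items (1) and (2): the computation $df_m(v)=\Omega(\kappa_\Phi(m),v)$ via the moment map relation, followed by $g(\Jbb\kappa_\Phi,v)=\Omega(\Jbb\kappa_\Phi,\Jbb v)=\Omega(\kappa_\Phi,v)$ using the $\Jbb$-invariance of $\Omega$ (which indeed follows from the symmetry of $g$), gives exactly $\grad(f)=\Jbb(\kappa_\Phi)$ and hence $\crit(f)=\{\kappa_\Phi=0\}$. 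Items (3) and (4): the observation that $\kappa_\Phi(m)=0$ means $m\in M^{\Phi(m)}$, combined with equivariance of $\Phi$ and $K$-invariance of $f$ to move $\Phi(m)$ into $\tgot^*_{\geq 0}$, yields both inclusions of $Z_\lambda=K(M^{\lambda}\cap\Phi^{-1}(\lambda))$ and the orbit decomposition of $\Phi(\crit(f))$; this is exactly how it is done in the literature the paper cites.

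The only place where your write-up is thinner than a complete argument is the discreteness of $\Bcal_\Phi$. Your closest-point observation is right: if $C$ is the component of $M^{\lambda}$ through a critical point in $\Phi^{-1}(\lambda)$, then $\Phi(C)$ lies in the hyperplane $\{\langle\xi,\lambda\rangle=\|\lambda\|^2\}$, whose unique point of minimal norm is $\lambda$, so $C$ determines $\lambda$. But the counting step needs one more ingredient you only gesture at: the submanifolds $M^{\Tbb'}$ vary with $\lambda$ (a priori over uncountably many subtori $\Tbb'=\overline{\exp(\Rbb\tilde\lambda)}$), so ``finitely many components meet $f^{-1}([0,R])$'' requires first the finiteness of $T$-orbit types (equivalently, of stabilizer subalgebras) on the compact set $f^{-1}([0,R])$, which reduces the family $\{M^{\Tbb'}\cap f^{-1}([0,R])\}$ to finitely many sets, and then local finiteness of the components of each of these closed submanifolds. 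With that made explicit, each pair (stabilizer algebra, component) recovers at most one $\lambda$ by orthogonal projection, and your bound follows. Since these are standard facts about compact group actions, and since the paper itself delegates the whole proposition to \cite{W11}, this is a matter of precision rather than a genuine gap.
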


Let $\varphi_t:M\to M$ be the flow of $-\grad(f)$; since $f$ is proper, $\varphi_t$ exists for all times
$t\in [0,\infty[$, and according to a result of Duistermaat \cite{Lerman05} we know that any trajectory of $\varphi_t$ has a limit when $t\to\infty$. For any 
$m\in M$, let us denote $m_\infty:=\lim_{t\to\infty} \varphi_t(m)$.

The construction of the Kirwan-Ness stratification goes as follows. For each $\lambda\in\Bcal_\Phi$, let $M_\lambda$ denote the set of points of $M$ flowing to $Z_\lambda$,
$$
M_\lambda:=\{m\in M; m_\infty\in Z_\lambda\}.
$$
From its very definition, the set $M_\lambda$ is contained in $\{m\in M, \phi(m)\geq \frac{1}{2}\|\lambda\|^2\}$.

The Kirwan-Ness stratification is the decomposition \cite{Kirwan.84.1}, \cite{Ness84}: 
$$
M=\bigcup_{\lambda\in \Bcal_\Phi} M_\lambda.
$$

When $0$ belongs to the image of $\Phi$, the strata $M_0$ corresponds to the open subset of analytical semi-stable points:
$M_0=\{m\in M; \overline{K_\Cbb\, m}\cap \Phi^{-1}(0)\neq\emptyset\}$.

Let us now explain the geometry of $M_\lambda$ for a non-zero type $\lambda$. Let $C_\lambda$ be the union of the connected components of $M^{\lambda}$ intersecting 
$\Phi^{-1}(\lambda)$. Then $C_\lambda$ is a K\"{a}hler Hamiltonian $K^\lambda$-manifold with proper moment map $\Phi_\lambda:=\Phi\vert_{C_\lambda}-\lambda$.

The Bialynicki-Birula's complex submanifold 
$$
C^-_\lambda:=\{m\in M,  \lim_{t\to\infty} \exp(-it\lambda)\cdot m\ \in C_\lambda\}
$$
corresponds to the set of points of $M$ flowing to $C_\lambda$ under the flow $\varphi_{\lambda,t}$ of $-\grad\langle\Phi,\lambda\rangle$, as $t\to\infty$. The limit of the flow defines 
a projection $C^-_\lambda\to C_\lambda$. Notice that $C^-_\lambda$ is invariant under the action of the parabolic subgroup $P_\lambda$. 

Consider now the Kirwan-Ness stratification of the K\"{a}hler Hamiltonian $K_\lambda$-manifold $C_\lambda$. Let $C_{\lambda,0}$ be the open strata of 
$C_\lambda$ corresponding to the $0$-type:
$$
C_{\lambda,0}=\{x\in C_{\lambda};\ \overline{K^\lambda_\Cbb\, x}\cap \Phi^{-1}_\lambda(0)\neq \emptyset\}.
$$
Let $C^-_{\lambda,0}$ denotes the inverse image of $C_{\lambda,0}$ in $C^-_\lambda$.

\begin{theo}[Kirwan \cite{Kirwan.84.1}]\label{Kirwan-stratification}
Let $M$ be a K\"{a}hler Hamiltonian $K$-manifold with proper
moment map $\Phi:M\to\kgot^*$. For each non zero type $\lambda$, $M_\lambda$ is a $K_\Cbb$-invariant complex submanifold, and 
$K_\Cbb\times_{P_\lambda}C^-_{\lambda,0}\rightarrow M_\lambda$, $[g,z] \mapsto  g\cdot z$ 
is an isomorphism of complex  $K_\Cbb$-manifolds.
\end{theo}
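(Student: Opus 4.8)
The plan is to realize $M_\lambda$ as the image of the holomorphic map $\Theta:=\pi_\lambda\vert_{C^-_{\lambda,0}}:K_\Cbb\times_{P_\lambda}C^-_{\lambda,0}\to M$, $[g,z]\mapsto g\cdot z$, and to prove that $\Theta$ is a $K_\Cbb$-equivariant isomorphism onto $M_\lambda$; the $K_\Cbb$-invariance and the submanifold structure of $M_\lambda$ then fall out as byproducts. Before anything else I would record the easy structural facts. Since $\Phi$ is $K$-equivariant and $f=\frac12(\Phi,\Phi)$ is $K$-invariant, the flow $\varphi_t$ of $-\grad(f)$ is $K$-equivariant, so $Z_\lambda$ and hence $M_\lambda$ are $K$-invariant; the first proposition of \S\ref{sec:BB} shows that $C^-_\lambda$ is a locally closed complex $P_\lambda$-submanifold with holomorphic projection $p_\lambda:C^-_\lambda\to C_\lambda$; and, $C_{\lambda,0}$ being open in $C_\lambda$, the set $C^-_{\lambda,0}=p_\lambda^{-1}(C_{\lambda,0})$ is an open $P_\lambda$-invariant subset of $C^-_\lambda$.

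The core of the argument is a comparison between two gradient flows, and this is where I expect the real work to lie. Writing $\Phi=\lambda+\Psi$ near $Z_\lambda$, one has $2f=2\langle\Phi,\lambda\rangle-(\lambda,\lambda)+(\Psi,\Psi)$, so up to the constant $-\frac12(\lambda,\lambda)$ the descent of $f$ is governed near $Z_\lambda$ by a \emph{normal} term $\langle\Phi,\lambda\rangle$ (the Bialynicki-Birula direction, whose flow is $t\mapsto e^{-it\lambda}m$) and a \emph{tangential} term $\frac12(\Psi,\Psi)$, which restricts on $C_\lambda$ to the Kirwan function $\frac12|\Phi_\lambda|^2$. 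Concretely I want to prove the identity
\[
\{m\in M;\ m_\infty\in M^\lambda\cap\Phi^{-1}(\lambda)\}=C^-_{\lambda,0}.
\]
For $\supseteq$, given $z\in C^-_{\lambda,0}$ I would first retract by $t\mapsto e^{-it\lambda}z$ to the limit $z_\lambda=p_\lambda(z)\in C_{\lambda,0}$, and then flow inside $C_\lambda$ by $-\grad\langle\Phi_\lambda,\Phi_\lambda\rangle$: since $z_\lambda$ is semistable for the $K^\lambda_\Cbb$-action relative to $\Phi_\lambda$, its Kirwan flow lands in $\Phi_\lambda^{-1}(0)=C_\lambda\cap\Phi^{-1}(\lambda)$. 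The delicate point is that the honest $-\grad f$ trajectory of $z$ is not literally this concatenation; one must show the two limits coincide, i.e. that the discrepancy between $-\grad f$ and the dominant vector field decays fast enough along the trajectory (using properness of $f$, the Duistermaat convergence result, and the local linearization near the limit point). This comparison, carried out in a neighborhood of $Z_\lambda$ on which $\Phi$ stays close to $K\lambda$, is the main obstacle. The inclusion $\subseteq$ is the same computation read backwards: if $m_\infty\in M^\lambda\cap\Phi^{-1}(\lambda)$, the normal term forces $\lim_{t\to\infty}e^{-it\lambda}m$ to exist in $C_\lambda$ (so $m\in C^-_\lambda$), while the tangential term forces $p_\lambda(m)$ to be $K^\lambda_\Cbb$-semistable; here Proposition \ref{prop:N-ss-gamma}, applied on $C_\lambda$, is exactly the device that converts the vanishing of the relevant component at the limit into membership of $C_{\lambda,0}$.

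Once this equality is established I would globalize. Because $Z_\lambda=K(M^\lambda\cap\Phi^{-1}(\lambda))$ and $M_\lambda=\{m;\,m_\infty\in Z_\lambda\}$, the $K$-equivariance of $\varphi_t$ gives $M_\lambda=K\cdot C^-_{\lambda,0}$. Using $K_\Cbb=KP_\lambda$ together with the $P_\lambda$-invariance of $C^-_{\lambda,0}$ upgrades this to $M_\lambda=K_\Cbb\cdot C^-_{\lambda,0}$, which shows at once that $\Theta$ is onto $M_\lambda$ and that $M_\lambda$ is $K_\Cbb$-invariant.

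It then remains to see that $\Theta$ is an isomorphism. For the infinitesimal statement I would use the local normal form (Koras linearization) exactly as in Lemma \ref{lem:C-N-reg} to identify the tangent map $\T\pi_\lambda\vert_{[e,z]}$ and to check it is an isomorphism at every $z\in C^-_{\lambda,0}$: semistability guarantees $\kgot_\Cbb\cdot z+\T_zC^-_\lambda=\T_zM$ together with $\kgot_\Cbb\cdot z\cap\T_zC^-_\lambda=\pgot_\lambda$, i.e. $z\in(C^-_\lambda)_{\reg}$. For injectivity, if $gz=g'z'$ with $z,z'\in C^-_{\lambda,0}$, then $h:=g'^{-1}g$ sends $z$ into $C^-_\lambda$, and the slice structure of $\pi_\lambda$ furnished by the local normal form forces $h\in P_\lambda$, whence $[g,z]=[g',z']$. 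Being an injective holomorphic étale map which is surjective onto $M_\lambda$, $\Theta$ is a biholomorphism, i.e. the asserted isomorphism of complex $K_\Cbb$-manifolds.
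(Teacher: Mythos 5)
The first thing to say is that the paper contains no proof of Theorem \ref{Kirwan-stratification} to compare against: it is quoted as Kirwan's theorem, and the remark following it delegates the compact case to \cite{Kirwan.84.1} and the proper-moment-map case to \cite{HSS08} (see also \cite{W11}). Measured against those references, your proposal is a plan rather than a proof. The identity $\{m\in M;\ m_\infty\in M^\lambda\cap\Phi^{-1}(\lambda)\}=C^-_{\lambda,0}$ --- i.e.\ the assertion that the limit of the $-\grad(f)$ trajectory can be computed by concatenating the Bialynicki-Birula flow $t\mapsto e^{-it\lambda}m$ with the internal Kirwan flow of $C_\lambda$ --- is not a delicate point \emph{inside} the argument; it \emph{is} the theorem. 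Essentially all of the analytic work of Kirwan's proof (and of its non-compact extension in \cite{HSS08}) is concentrated exactly there, and you leave it explicitly unresolved (``the main obstacle''). The surrounding steps you do carry out ($K$-equivariance of the flow, $M_\lambda=K_\Cbb\cdot C^-_{\lambda,0}$ via $K_\Cbb=KP_\lambda$ and $P_\lambda$-invariance of $C^-_{\lambda,0}$) are routine bookkeeping, so deferring the core comparison leaves a genuine gap rather than a proof.

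Your last step moreover contains an outright error, not merely a gap. You claim that semistability gives, for \emph{every} $z\in C^-_{\lambda,0}$, the equalities $\kgot_\Cbb\cdot z+\T_z C^-_\lambda=\T_z M$ and $\kgot_\Cbb\cdot z\cap\T_z C^-_\lambda\simeq\pgot_\lambda$, i.e.\ $z\in (C^-_\lambda)_{\reg}$, so that $\Theta$ is an injective holomorphic \'etale map onto $M_\lambda$ and hence a biholomorphism. If the differential of $\pi_\lambda$ were an isomorphism at points of $C^-_{\lambda,0}$, then $M_\lambda=K_\Cbb\cdot C^-_{\lambda,0}$ would contain a nonempty open subset of $M$; by Proposition \ref{prop:M-lambda-open} this happens only for the minimal type $\lambda_{\mini}$. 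For every other non-zero type the stratum $M_\lambda$ has positive codimension and the surjectivity $\kgot_\Cbb\cdot z+\T_z C^-_\lambda=\T_z M$ fails at every point of $C^-_{\lambda,0}$. You have conflated the regularity condition of Lemma \ref{lem:C-N-reg}, which is the right notion when one wants $\pi_\gamma$ to be \emph{dominant} (the Ressayre-pair situation, exactly as in Proposition \ref{prop:minimal-type-non-zero} for the minimal stratum), with what is needed here: that $\Theta$ is an injective holomorphic \emph{immersion} which is a homeomorphism onto its image, so that the image $M_\lambda$ acquires the structure of a locally closed complex submanifold. Relatedly, your injectivity step (``the slice structure of $\pi_\lambda$ forces $h\in P_\lambda$'') is asserted rather than proven; the closest analogue in the paper, the lemma inside the proof of Proposition \ref{prop:RP=infRP}, is established only under the additional hypothesis that the relevant stabilizers are abelian, and in general this step again requires the machinery of \cite{Kirwan.84.1} or \cite{HSS08}.
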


\begin{rem}
Kirwan gave a proof when $M$ is a {\em compact} K\"{a}hler Hamiltonian $K$-manifold. When 
$M$ is non-compact but the moment map is proper, a proof is given in \cite{HSS08} (see also \cite{W11}).
\end{rem}

Theorem \ref{Kirwan-stratification} gives a useful corollary.

\begin{coro}\label{coro:coro-stratification}
For any $m\in M$, we have $\|\Phi(m_\infty)\|=\inf_{x\in K_\Cbb m}\|\Phi(x)\|$.
\end{coro}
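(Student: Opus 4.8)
The plan is to prove the two inequalities between $\inf_{x\in K_\Cbb m}\|\Phi(x)\|$ and $\|\Phi(m_\infty)\|$ separately, using the Kirwan--Ness stratification to control both. First I would locate $m$ in its stratum: every $m\in M$ belongs to exactly one $M_\lambda$ with $\lambda\in\Bcal_\Phi$, so by definition $m_\infty\in Z_\lambda=K(M^\lambda\cap\Phi^{-1}(\lambda))$. Since $\Phi(Z_\lambda)\subset K\lambda$ and the inner product is $K$-invariant, this already gives $\|\Phi(m_\infty)\|=\|\lambda\|$.

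For the inequality $\inf_{x\in K_\Cbb m}\|\Phi(x)\|\le\|\Phi(m_\infty)\|$, the key point is that the whole negative gradient trajectory of $f$ stays inside the orbit $K_\Cbb m$. Indeed, by the proposition recalling that $\grad(f)=\Jbb(\kappa_\Phi)$, the vector $\kappa_\Phi(m)=\Phi(m)\cdot m$ is the value of the fundamental vector field attached to $\Phi(m)\in\kgot$ (using $\kgot^*\simeq\kgot$); applying $\Jbb$ turns this into the fundamental vector field of $i\Phi(m)\in\kgot_\Cbb$ for the holomorphic $K_\Cbb$-action. Thus $-\grad(f)(m)$ is everywhere tangent to the $K_\Cbb$-orbit, so $\varphi_t(m)\in K_\Cbb m$ for all $t\ge 0$. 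Since $\varphi_t(m)\to m_\infty$ and $\Phi$ is continuous, the infimum of $\|\Phi\|$ over $K_\Cbb m$ is at most $\lim_{t\to\infty}\|\Phi(\varphi_t(m))\|=\|\Phi(m_\infty)\|$.

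For the reverse inequality $\inf_{x\in K_\Cbb m}\|\Phi(x)\|\ge\|\Phi(m_\infty)\|$, I would invoke the $K_\Cbb$-invariance of the strata. By Theorem \ref{Kirwan-stratification} (when $\lambda\ne 0$) and by definition of the semi-stable locus (when $\lambda=0$), $M_\lambda$ is $K_\Cbb$-invariant, hence $K_\Cbb m\subset M_\lambda$. As recalled in the construction of the stratification, $M_\lambda\subset\{m\in M;\ f(m)\ge\tfrac12\|\lambda\|^2\}$, so every $x\in K_\Cbb m$ satisfies $\|\Phi(x)\|\ge\|\lambda\|=\|\Phi(m_\infty)\|$, whence the infimum is $\ge\|\Phi(m_\infty)\|$. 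Combining the two estimates yields the equality.

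The main obstacle I expect is the tangency statement used in the second paragraph: one must check that $\Jbb$ applied to the fundamental vector field $X_M$ coincides with the fundamental vector field of $iX$ for the holomorphic action, so that the gradient flow of the norm-square preserves each $K_\Cbb$-orbit. This is a standard consequence of the action being holomorphic, the real and imaginary parts of the complexified infinitesimal action being interchanged by $\Jbb$, but it is the one place where the K\"ahler/holomorphic hypothesis enters essentially; the rest is a formal consequence of the stratification together with the monotonicity of $f$ along its negative gradient flow.
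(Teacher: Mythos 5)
Your proof is correct. The paper in fact gives no written proof of this corollary --- it is stated as an immediate consequence of Theorem \ref{Kirwan-stratification} --- and your argument is precisely the natural derivation: the lower bound uses exactly the two facts the paper records (the $K_\Cbb$-invariance of each stratum, and the inclusion $M_\lambda\subset\{f\geq \tfrac{1}{2}\|\lambda\|^2\}$), while the upper bound uses that the negative gradient trajectory of $f$ through $m$ stays inside $K_\Cbb\, m$. On that last point, which you rightly single out: pointwise tangency of $-\grad (f)$ to a (merely immersed) orbit does not by itself give invariance of the orbit under the complete flow; the clean argument is the one the paper itself uses in the Appendix (proof of Theorem \ref{theo:kempf-ness-theorem}), namely solve $g(t)^{-1}g'(t)=i\,\Phi(\varphi_t(m))$, $g(0)=e$, in $K_\Cbb$ (a global solution exists since the flow of $f$ is defined for all $t\geq 0$) and conclude by uniqueness for the time-dependent ODE that $\varphi_t(m)=g(t)^{-1}m\in K_\Cbb\, m$. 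With that step made explicit, your proof is complete and consistent with the paper's intended reading of the corollary.
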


\medskip

\subsection{The minimal type}

Let $\Delta(\Phi)\subset \tgot^*_{\geq 0}$ be the Kirwan polyhedron, and let $\Bcal_\Phi$ be the set of types of our 
 K\"{a}hler Hamiltonian $K$-manifold $M$ with proper moment map $\Phi$. Let $\lambda_{\mini}$ be the orthogonal projection of 
 $0$ on the closed convex polyhedron $\Delta(\Phi)$. 

We start with the following basic facts.

\begin{lem}
\begin{enumerate}
\item[a)] $\lambda_{\mini}$ is the unique element of $\Bcal_\Phi$ with minimal norm.
\item[b)] The critical subset $Z_{\lambda_{\mini}}$ is equal to $\Phi^{-1}(K\lambda_{\mini})$.
\item[c)] The submanifold $C_{\lambda_{\mini}}$ is the connected component of $M^{\lambda_{\mini}}$ containing $\Phi^{-1}(\lambda_{\mini})$.
\item[d)] The strata $M_{\lambda_{\mini}}$ is connected.
\end{enumerate}
\end{lem}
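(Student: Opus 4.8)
The plan is to treat the four assertions in order, the common tool being the proper norm-square function $f=\frac12(\Phi,\Phi)$ and its global minimum. For (a), note first that $\min_M f$ is attained ($f$ is proper and $\geq 0$) at a point where $\grad(f)=0$, so it is a critical value, equal to $\frac12\|\lambda\|^2$ for some $\lambda\in\Bcal_\Phi$, and $\|\lambda\|$ is the least norm occurring in $\Bcal_\Phi$. Since $\Phi(M)$ and the norm are $K$-invariant and every coadjoint orbit in $\Phi(M)$ meets $\tgot^*_{\geq 0}$ in a point of the same norm, $\inf_M\|\Phi\|=\inf_{\xi\in\Delta(\Phi)}\|\xi\|=\|\lambda_{\mini}\|$, the distance from $0$ to the convex set $\Delta(\Phi)$. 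Hence $\|\lambda\|=\|\lambda_{\mini}\|$; as $\lambda\in\Delta(\Phi)$ also realises this distance and the metric projection onto a convex set is unique, $\lambda=\lambda_{\mini}$. This simultaneously shows $\lambda_{\mini}\in\Bcal_\Phi$ and that it is the unique type of minimal norm.

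Parts (b) and (c) both rest on the remark that any $m$ with $\|\Phi(m)\|=\|\lambda_{\mini}\|$ is a global minimiser of $f$, hence a critical point. For (b) this shows that every point of $\Phi^{-1}(K\lambda_{\mini})$ is critical, i.e. $\Phi^{-1}(K\lambda_{\mini})\subseteq Z_{\lambda_{\mini}}$, while the reverse inclusion is the definition of $Z_{\lambda_{\mini}}$. For (c), if $m\in\Phi^{-1}(\lambda_{\mini})$ then $\grad(f)(m)=\Jbb\,\kappa_\Phi(m)=0$ forces $\kappa_\Phi(m)=\tilde\lambda_{\mini}\cdot m=0$, so $m$ is fixed by the torus generated by $\tilde\lambda_{\mini}$; thus $\Phi^{-1}(\lambda_{\mini})\subseteq M^{\lambda_{\mini}}$. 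It then suffices to know that $\Phi^{-1}(\lambda_{\mini})$ is connected: it then lies in a single connected component of $M^{\lambda_{\mini}}$, whence $C_{\lambda_{\mini}}$—the union of the components of $M^{\lambda_{\mini}}$ meeting $\Phi^{-1}(\lambda_{\mini})$—is exactly that one component. The connectedness of the fibres of a proper moment map is the connectedness half of the convexity package \cite{Kirwan.84.2,L-M-T-W}, which I would invoke here. This is the main obstacle: once fibre connectedness is granted, the rest of (c) is formal.

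For (d) I would split on whether $\lambda_{\mini}$ vanishes. If $\lambda_{\mini}=0$, then $M_{\lambda_{\mini}}=M_0$ is the open dense semistable stratum, and its complement $\bigcup_{\lambda\neq 0}M_\lambda$ is a locally finite union of proper complex submanifolds, hence of real codimension $\geq 2$; removing it from the connected manifold $M$ leaves a connected set. If $\lambda_{\mini}\neq 0$, it is a non-zero type and Theorem \ref{Kirwan-stratification} gives an isomorphism $K_\Cbb\times_{P_{\lambda_{\mini}}}C^-_{\lambda_{\mini},0}\simeq M_{\lambda_{\mini}}$; as $K_\Cbb$ and the parabolic $P_{\lambda_{\mini}}$ are connected, this is a fibre bundle over the connected base $K_\Cbb/P_{\lambda_{\mini}}$, so connectedness of $M_{\lambda_{\mini}}$ reduces to that of $C^-_{\lambda_{\mini},0}$. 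The projection $C^-_{\lambda_{\mini},0}\to C_{\lambda_{\mini},0}$ has connected fibres (complex vector spaces), reducing matters to the semistable stratum $C_{\lambda_{\mini},0}$ of the K\"{a}hler Hamiltonian $K^{\lambda_{\mini}}$-manifold $C_{\lambda_{\mini}}$; by the codimension argument of the first case this is connected as soon as $C_{\lambda_{\mini}}$ is connected, which is precisely part (c).
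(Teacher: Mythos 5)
Parts a), b), c) of your proposal are correct and essentially coincide with the paper's own argument: the minimum of the proper function $f$ equals $\tfrac{1}{2}\|\lambda_{\mini}\|^2$ because $\|\Phi\|$ takes the same values on $M$ as the norm does on $\Delta(\Phi)$, global minimizers of $f$ are critical points, uniqueness of the metric projection of $0$ onto the closed convex set $\Delta(\Phi)$ together with $\Bcal_\Phi\subset\Delta(\Phi)$ gives uniqueness of the minimal type, and the fibre-connectedness of the proper moment map (part of the convexity package) gives c) exactly as in the paper.

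Part d), however, contains a genuine gap, and is also much heavier than necessary. In the case $\lambda_{\mini}=0$ you assert that the complement $\bigcup_{\lambda\neq 0}M_\lambda$ is a locally finite union of \emph{proper} complex submanifolds, ``hence of real codimension $\geq 2$''. That implication fails: a complex submanifold that is a proper subset of $M$ can perfectly well be open (complex codimension $0$). What you actually need is that every stratum $M_\lambda$ with $\lambda\neq\lambda_{\mini}$ has empty interior; this is precisely Proposition \ref{prop:M-lambda-open} of the paper, which is stated and proved \emph{after} the present lemma, and whose proof is not formal---it rests on Sjamaar's local convexity theorem (a local minimum of $f$ must be mapped into $K\lambda_{\mini}$). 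The same unjustified codimension claim is invoked a second time, applied to $C_{\lambda_{\mini}}$ with the group $K^{\lambda_{\mini}}$, in your case $\lambda_{\mini}\neq 0$. Quoting Proposition \ref{prop:M-lambda-open} would not be circular (its proof only uses part a), which you establish first), but you must invoke it explicitly rather than deduce codimension from properness; as written, the step would fail. (A minor further inaccuracy: in the K\"{a}hler setting the fibres of $C^-_{\lambda_{\mini},0}\to C_{\lambda_{\mini},0}$ are not literally complex vector spaces, though they are connected, each point being joined to its limit by its flow trajectory.) The paper's proof of d) bypasses all of this: $\Phi^{-1}(K\lambda_{\mini})=K\cdot\Phi^{-1}(\lambda_{\mini})$ is connected, and every $m\in M_{\lambda_{\mini}}$ is joined to its limit $m_\infty\in Z_{\lambda_{\mini}}=\Phi^{-1}(K\lambda_{\mini})$ by the closure of its $\varphi_t$-trajectory, a connected subset of $M_{\lambda_{\mini}}$; so $M_{\lambda_{\mini}}$ is a union of connected sets all meeting one fixed connected set, hence connected. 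This argument works uniformly whether $\lambda_{\mini}$ is zero or not, and needs neither Theorem \ref{Kirwan-stratification} nor any codimension estimate.
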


\begin{proof}
Any element $m\in\Phi^{-1}(K\lambda_{\mini})$ is a critical point of $f$, as $\frac{1}{2}\|\lambda_{\mini}\|^2=f(m)=\min_M f$. 
It proves that $\lambda_{\mini}$ is an element of $\Bcal_\Phi$, that  $\Phi^{-1}(\lambda_{\mini})\subset M^{\tilde{\lambda}_{\mini}}$ and that 
$Z_{\lambda_{\mini}}=\Phi^{-1}(K\lambda_{\mini})$. Moreover since $\Bcal_\Phi\subset\Delta(\Phi)$, $\lambda_{\mini}$ is the unique element of $\Bcal_\Phi$ with minimal norm.

Since $\Phi^{-1}(\lambda_{\mini})$ is connected, there exists a unique connected component 
$C_{\lambda_{\mini}}$ of $M^{\lambda_{\mini}}$ containing $\Phi^{-1}(\lambda_{\mini})$. Since $\Phi^{-1}(K\lambda_{\mini})$ is connected, the strata 
$M_{\lambda_{\mini}}:=\{m\in M; \lim_{t\to\infty}\varphi_t(m)\in\Phi^{-1}(K\lambda_{\mini})\}$ is also connected.
\end{proof}

It should be noted that $\lambda_{\mini}$ is not rational in general, as shown in the following example.

\begin{exam}
Consider a regular element $\xi\in\tgot^*_{\geq 0}$, and the K\"{a}hler Hamiltonian $K$-manifold $M:=K(2\xi) \times \overline{K\xi}$. Here the minimal type is 
$\lambda_{\mini}=\xi$. So the minimal type of $M$ is not rational if $\xi\notin\tgot^*_\Qbb$.
\end{exam}

We recall the following fundamental fact.

\begin{prop}\label{prop:M-lambda-open}
The strata $M_\lambda$ has a non-empty interior if and only if $\lambda=\lambda_{\mini}$.
\end{prop}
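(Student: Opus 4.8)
The plan is to prove the two implications separately, using that $\lambda_{\mini}$ is the unique global minimum of $f=\frac12(\Phi,\Phi)$, with minimal value $\frac12\|\lambda_{\mini}\|^2$.

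First I would show that $M_{\lambda_{\mini}}$ has non-empty interior by exhibiting an explicit open set inside it. Set $c_0:=\inf\{\frac12\|\mu\|^2 ; \mu\in\Bcal_\Phi,\ \mu\neq\lambda_{\mini}\}$. Since $\Bcal_\Phi$ is discrete and $\lambda_{\mini}$ is its unique element of minimal norm, every other type has strictly larger norm and none can accumulate at $\|\lambda_{\mini}\|$; hence $c_0>\frac12\|\lambda_{\mini}\|^2$ (with $c_0=+\infty$ if $\lambda_{\mini}$ is the only type). The set $\Ocal:=\{m\in M; f(m)<c_0\}$ is then open and non-empty, as $f$ attains its global minimum $\frac12\|\lambda_{\mini}\|^2<c_0$ on $\Phi^{-1}(K\lambda_{\mini})$. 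For $m\in\Ocal$ the limit $m_\infty$ of the $-\grad(f)$-flow satisfies $f(m_\infty)\leq f(m)<c_0$, while $m_\infty\in Z_\mu$ forces $f(m_\infty)=\frac12\|\mu\|^2$; the definition of $c_0$ leaves only $\mu=\lambda_{\mini}$, so $m\in M_{\lambda_{\mini}}$. Thus $\Ocal\subseteq M_{\lambda_{\mini}}$, which gives the implication $\lambda=\lambda_{\mini}\Rightarrow M_\lambda$ has non-empty interior.

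For the converse I would argue by the dimension formula coming from Theorem \ref{Kirwan-stratification}. If $\lambda\neq\lambda_{\mini}$ then $\lambda$ is a non-zero type (when $\lambda_{\mini}\neq 0$ one has $0\notin\Delta(\Phi)$, so $0$ is not a type), and $M_\lambda$ is a locally closed complex submanifold with $M_\lambda\simeq K_\Cbb\times_{P_\lambda}C^-_{\lambda,0}$. Choosing $x\in C_{\lambda,0}\subset C_\lambda\subset M^\lambda$ and using $\T_x C^-_\lambda=(\T_x M)^{\lambda\leq 0}$ together with $\dim_\Cbb K_\Cbb/P_\lambda=\dim_\Cbb(\kgot_\Cbb)^{\lambda>0}$, I get $\dim_\Cbb M_\lambda=\dim_\Cbb(\kgot_\Cbb)^{\lambda>0}+\dim_\Cbb(\T_x M)^{\lambda\leq 0}$. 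Comparing with $\dim_\Cbb M=\dim_\Cbb(\T_x M)^{\lambda\leq 0}+\dim_\Cbb(\T_x M)^{\lambda>0}$, the stratum $M_\lambda$ is full-dimensional, hence open (a complex submanifold of smaller complex dimension has empty interior), exactly when $\dim_\Cbb(\kgot_\Cbb)^{\lambda>0}=\dim_\Cbb(\T_x M)^{\lambda>0}$. So the implication reduces to the strict inequality $\dim_\Cbb(\kgot_\Cbb)^{\lambda>0}<\dim_\Cbb(\T_x M)^{\lambda>0}$ for every non-minimal type $\lambda$, which forces $\dim_\Cbb M_\lambda<\dim_\Cbb M$ and therefore empty interior.

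The main obstacle is precisely this strict inequality, i.e.\ the positivity of the Morse index of the critical set $Z_\lambda$ of $f$ for all $\lambda\neq\lambda_{\mini}$. I would establish it through the normal Hessian of $f=\frac12(\Phi,\Phi)$ at a point $x\in\Phi^{-1}(\lambda)\cap M^\lambda$: the decisive input is the Kempf--Ness/minimal-vector characterization of $\lambda_{\mini}$ as the orthogonal projection of $0$ onto $\Delta(\Phi)$, which guarantees that $Z_{\lambda_{\mini}}$ is the \emph{only} critical set on which the normal Hessian of $f$ is positive semi-definite; every other $Z_\lambda$ carries a strictly descending direction arising from a weight space $(\T_x M)^{\lambda=a}$ with $0<a<\|\lambda\|^2$, giving a positive index. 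This is the classical content of the Kirwan--Ness stratification \cite{Kirwan.84.1,Ness84}, so in the write-up I would either invoke it directly or reproduce the short Hessian computation; the case $\lambda_{\mini}=0$ reduces at once to the already-cited fact that the semistable stratum $M_0=M^{ss}$ is open and dense.
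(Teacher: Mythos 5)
Your first implication, and the way you prove it, coincides with the paper's own argument: the paper also exhibits the sublevel set $\{f<\tfrac12 r^2\}=M\setminus M(r)$ with $r=\inf\{\|\lambda\| ;\ \lambda\in\Bcal_\Phi\setminus\{\lambda_{\mini}\}\}$, and uses the discreteness of $\Bcal_\Phi$ together with the decrease of $f$ along the flow to place this non-empty open set inside $M_{\lambda_{\mini}}$. For the converse, your reduction through Theorem \ref{Kirwan-stratification} is sound as far as it goes: $M_\lambda$ is a locally closed submanifold, so it has non-empty interior precisely when $\dim_\Cbb M_\lambda=\dim_\Cbb M$, i.e.\ when $\dim_\Cbb(\kgot_\Cbb)^{\lambda>0}=\dim_\Cbb(\T_x M)^{\lambda>0}$. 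But the remaining assertion --- that this equality fails for \emph{every} type $\lambda\neq\lambda_{\mini}$ --- is exactly the substance of the proposition, and you never prove it. The justification you offer (that the definition of $\lambda_{\mini}$ as the orthogonal projection of $0$ onto $\Delta(\Phi)$ ``guarantees'' a descending weight direction, with $0<a<\|\lambda\|^2$, at every non-minimal $Z_\lambda$) conflates a global property of the image $\Phi(M)$ with a pointwise property of the Hessian of $f$ along $Z_\lambda$. A Hessian computation at $x\in Z_\lambda$ only identifies which directions \emph{would} be descending (positive-weight directions not accounted for by $(\kgot_\Cbb)^{\lambda>0}\cdot x$); it cannot show that such a direction exists, because the linear algebra at $x$ carries no information about where the rest of $\Delta(\Phi)$ sits relative to the origin. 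Some local-to-global mechanism is indispensable, and this is precisely what the paper's proof supplies: a point $x_o\in\Phi^{-1}(\lambda)$ interior to $M_\lambda$ is a local minimum of $f$; Sjamaar's local convexity theorem identifies $\Delta(\Phi)$ near $\mu=\Phi(x_o)$ with the local Kirwan polyhedron $\Delta(U_o)$, giving $\Delta(\Phi)\cap B_\mu(r_o)\subset\{\|\xi\|\geq\|\mu\|\}$; and then the \emph{global} convexity of $\Delta(\Phi)$ propagates this to $(\xi,\mu)\geq\|\mu\|^2$ on all of $\Delta(\Phi)$, i.e.\ $\mu\in K\lambda_{\mini}$. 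Nothing in your sketch plays this role.

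Invoking ``the classical content of the Kirwan--Ness stratification'' does not close the gap either. Those references treat compact or projective $M$, essentially in the semistable situation $\lambda_{\mini}=0$, whereas here $M$ is non-compact with proper moment map and $\lambda_{\mini}$ may be non-zero and even irrational (see the paper's example $K(2\xi)\times\overline{K\xi}$), so there is no off-the-shelf statement covering the positivity of the codimension of all non-minimal strata; that positivity is what has to be proved. Finally, your fallback in the case $\lambda_{\mini}=0$ --- citing ``the already-cited fact'' that $M_0=M^{ss}$ is open and dense --- is circular within this paper: the density of $N^{ss}$ announced in \S\ref{sec:RP-semi-stable-1} is, by the footnote there, exactly what \S\ref{sec:square-phi} (this proposition together with Corollary \ref{prop:strate-0-ouverte}) is intended to establish.
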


\begin{coro}\label{prop:strate-0-ouverte}
The strata $M_{\lambda_{\mini}}$ is an open, dense and $K_\Cbb$-invariant subset of $M$.
\end{coro}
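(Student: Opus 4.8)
The plan is to extract all three properties from Proposition~\ref{prop:M-lambda-open}, which carries the substantive content, and to treat everything else as point-set topology. I would organize the argument in three steps: $K_\Cbb$-invariance, then openness, then density.

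For $K_\Cbb$-invariance I would split according to whether $\lambda_{\mini}$ vanishes. If $\lambda_{\mini}=0$ (equivalently $0\in\Phi(M)$), then $M_{\lambda_{\mini}}=M_0$ is the set of analytically semi-stable points, which is $K_\Cbb$-invariant by construction. If $\lambda_{\mini}\neq 0$, then $\lambda_{\mini}$ is a non-zero type and Theorem~\ref{Kirwan-stratification} states outright that $M_{\lambda_{\mini}}$ is a $K_\Cbb$-invariant complex submanifold. In either case $M_{\lambda_{\mini}}$ is a connected complex submanifold of $M$, its connectedness being the content of part d) of the lemma above.

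For openness I would argue that, by Proposition~\ref{prop:M-lambda-open}, $M_{\lambda_{\mini}}$ has non-empty interior; since it is a connected complex submanifold, the existence of a single interior point forces its complex dimension to equal $\dim_\Cbb M$ (a complex submanifold of strictly smaller dimension has empty interior), and a full-dimensional submanifold is open. For density I would note that every type $\lambda\neq\lambda_{\mini}$ is non-zero: indeed if $\lambda_{\mini}\neq 0$ then $0\notin\Phi(M)$, so $0$ is not a type. Hence Theorem~\ref{Kirwan-stratification} makes each such $M_\lambda$ a complex submanifold, which by Proposition~\ref{prop:M-lambda-open} has empty interior and therefore positive codimension, so it has Lebesgue measure zero. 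Since $\Bcal_\Phi$ is discrete, hence countable, the complement $M\setminus M_{\lambda_{\mini}}=\bigcup_{\lambda\neq\lambda_{\mini}}M_\lambda$ is a countable union of null sets, thus of measure zero and with empty interior; this yields density.

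Conceptually, the main obstacle is already absorbed into Proposition~\ref{prop:M-lambda-open}: the genuinely non-trivial fact is that no stratum other than the minimal one can have interior points. Granting that, the only subtlety is the implication ``non-empty interior $\Rightarrow$ open'', which is why I would first record that $M_{\lambda_{\mini}}$ is a connected submanifold; once the non-minimal strata are identified as positive-codimension complex submanifolds, both openness and density follow immediately.
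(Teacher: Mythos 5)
Your derivation of $K_\Cbb$-invariance and of density is correct, and in fact more explicit than the paper: the paper's own proof is almost entirely devoted to establishing Proposition \ref{prop:M-lambda-open} itself (via local minima of $f$ and Sjamaar's local convexity theorem), gets openness from a sublevel-set argument, and leaves density and invariance to the reader. Since Proposition \ref{prop:M-lambda-open} is stated before the Corollary, treating it as available, as you do, is legitimate. The genuine problem is your openness step in the case $\lambda_{\mini}=0$. Theorem \ref{Kirwan-stratification} is stated for \emph{non-zero} types only, so it does not make $M_0$ a complex submanifold; and the only statement in the paper to the effect that the semistable locus $M_0=\{m;\ \overline{K_\Cbb\, m}\cap\Phi^{-1}(0)\neq\emptyset\}$ is open is precisely the ``well-known fact'' that \S\ref{sec:RP-semi-stable-1} defers to \S\ref{sec:square-phi}, i.e. to the present Corollary. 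So when $\lambda_{\mini}=0$ your premise ``$M_{\lambda_{\mini}}$ is a connected complex submanifold'' is either unsupported or circular --- and that is exactly the case in which the Corollary carries real content for the rest of the paper.

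The repair is an argument that works uniformly in both cases and needs neither Theorem \ref{Kirwan-stratification} nor connectedness; it is also what makes the paper's own one-line claim of openness rigorous. Put $r=\inf\{\|\lambda\|;\ \lambda\in\Bcal_\Phi\setminus\{\lambda_{\mini}\}\}$, which is $>\|\lambda_{\mini}\|$ because $\Bcal_\Phi$ is discrete and $\lambda_{\mini}$ is its unique element of minimal norm. Since $f$ is non-increasing along the flow $\varphi_t$ and every critical value other than $\frac12\|\lambda_{\mini}\|^2$ is at least $\frac12 r^2$, a point $m$ lies in $M_{\lambda_{\mini}}$ if and only if its trajectory enters the open set $\{f<\frac12 r^2\}$, i.e.
\[
M_{\lambda_{\mini}}=\bigcup_{t\geq 0}\varphi_t^{-1}\bigl(\{f<\tfrac12 r^2\}\bigr),
\]
which is open. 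With this substitute, the rest of your proof stands as written: all non-minimal types are non-zero, so Theorem \ref{Kirwan-stratification} applies to their strata, Proposition \ref{prop:M-lambda-open} forces these submanifolds to have positive codimension, and a countable union of such sets (recall $\Bcal_\Phi$ is discrete, hence countable) has measure zero and empty interior, giving density.
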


\begin{proof}
For any type $\lambda$, the strata $M_\lambda$ is contained in the closed subset $M(\|\lambda\|):=\{m\in M, \phi(m)\geq \frac{1}{2}\|\lambda\|^2\}$. If we take $r=\inf\{\|\lambda\|, \lambda\in\Bcal_\Phi-\{\lambda_{\mini}\}\}>\|\lambda_{\mini}\|$, we see that the union 
$\cup_{\lambda\neq\lambda_{\mini}} M_\lambda$ is contained in $M(r)$. Hence the non-empty open subset $M-M(r)$ is contained in the strata $M_{\lambda_{\mini}}$: thus $M_{\lambda_{\mini}}$ is an open subset of $M$.

Consider a strata $M_\lambda$ with a non-empty interior. This means that a connected component $M^o_\lambda$ of the submanifold $M_\lambda$ is an open subset of $M$. 
By definition, $M^o_\lambda$ intersects $\Phi^{-1}(\lambda)$: so let's take $x_o\in \Phi^{-1}(\lambda)\cap M^o_\lambda$. We have 
$f\geq \frac{1}{2}\|\lambda\|^2$ on $M^o_\lambda$ and $f(x_o)= \frac{1}{2}\|\lambda\|^2$: hence $x_o$ is a local minimum of the function $f$.

We finish the proof of Proposition \ref{prop:M-lambda-open} with the

\begin{lem}
Let $x_o\in M$ be a local minimum of $f$. Then $f(x_o)\in K\lambda_{\mini}$.
\end{lem}
\begin{proof}
For any $K$-invariant open subset $U\subset M$, we consider $\Delta(U)=\Phi(U)\cap\tgot^*_{\geq 0}$ which is a subset of the Kirwan polyhedron $\Delta(\Phi)$. 
Up to a change by the $K$-action, we can assume that $\mu:=\Phi(x_o)$ belongs to the Weyl chamber. Let us denote by $B_\mu(r)\subset \tgot^*$ the open ball centered at $\mu$ 
and of radius $r>0$.

The local convexity theorem of Sjamaar \cite{Sjamaar-98} (Theorems 6.5 and 6.7) tells us that for any $K$-invariant open subset $U$ containing $x_o$, there exists $r>0$ such as 
$$
\Delta(\Phi)\cap B_\mu(r)=\Delta(U)\cap B_\mu(r).
$$

If $x_o\in M$ is a local minimum of $f$, there exists a $K$-invariant open neighborhood $U_o$ of $x_o$ so that $\|\xi\|\geq \|\mu\|$ for any $\xi\in \Delta(U_o)$. Thus for $r_o>0$ small enough, $\Delta(\Phi)\cap B_\mu(r_o)$ is contained in $\{\xi\in\tgot^*, \|\xi\|\geq \|\mu\|\}$. Since  $\Delta(\Phi)\cap B_\mu(r_o)$ is convex, it implies that 
\begin{equation}\label{eq:delta-phi-r}
(\xi,\mu)\geq \|\mu\|^2,\quad \forall \xi\in\Delta(\Phi)\cap B_\mu(r_o).
\end{equation}
Using now the convexity of $\Delta(\Phi)$, we see that (\ref{eq:delta-phi-r}) forces $\Delta(\Phi)$ to be contained in the half-space $(\xi,\mu)\geq \|\mu\|^2$: hence $\mu$ is the smallest 
element of $\Delta(\Phi)$.
\end{proof}
\end{proof}

Let us finish this section by considering the particular situation where the strata $M_0$ is empty. In other words, we suppose that the minimal type 
$\lambda_{\mini}$ is non-zero. Let $C_{\lambda_{\mini}}$ be the connected component of $M^{\lambda_{\mini}}$ containing $\Phi^{-1}(\lambda_{\mini})$. 
We consider the Bialynicki-Birula's complex submanifold  $C^-_{\lambda_{\mini}}$ and the holomorphic map 
$\pi_{\lambda_{\mini}}: K_\Cbb\times_{P_{\lambda_{\mini}}}C^-_{\lambda_{\mini}}\to M$.

Theorem \ref{Kirwan-stratification} gives us the following important fact.
\begin{prop}\label{prop:minimal-type-non-zero}
When $\lambda_{\mini}\neq 0$, 
\begin{itemize}
\item The map $\pi_{\lambda_{\mini}}$ is dominant.
\item There exists a $P_{\lambda_{\mini}}$-invariant open and dense subset $U\subset C^-_{\lambda_{\mini}}$, intersecting $C_{\lambda_{\mini}}$, 
so that $\pi_{\lambda_{\mini}}$ defines a diffeomorphism $K_\Cbb\times_{P_{\lambda_{\mini}}}U\simeq K_\Cbb U$.
\end{itemize}
In other words, the data $(\lambda_{\mini}, C_{\lambda_{\mini}})$ is a {\em $K_\Cbb$-Ressayre's pairs} on $M$.
\end{prop}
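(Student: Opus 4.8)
The plan is to deduce Proposition \ref{prop:minimal-type-non-zero} directly from the Kirwan--Ness stratification (Theorem \ref{Kirwan-stratification}) applied to the minimal type $\lambda_{\mini}$, once the correct open dense subset $U$ has been identified. First I would observe that, because $\Phi^{-1}(\lambda_{\mini})$ is connected, the single connected component $C_{\lambda_{\mini}}$ of $M^{\lambda_{\mini}}$ containing $\Phi^{-1}(\lambda_{\mini})$ coincides with the submanifold $C_\lambda$ entering Theorem \ref{Kirwan-stratification} for $\lambda=\lambda_{\mini}$. That theorem then yields the isomorphism of complex $K_\Cbb$-manifolds
$$
K_\Cbb\times_{P_{\lambda_{\mini}}}C^-_{\lambda_{\mini},0}\ \xrightarrow{\ \sim\ }\ M_{\lambda_{\mini}},
$$
so the natural candidate for $U$ is $C^-_{\lambda_{\mini},0}$, the preimage under the projection $p:C^-_{\lambda_{\mini}}\to C_{\lambda_{\mini}}$ of the $0$-type open stratum $C_{\lambda_{\mini},0}$ of the K\"{a}hler Hamiltonian $K^{\lambda_{\mini}}$-manifold $(C_{\lambda_{\mini}},\Phi_{\lambda_{\mini}})$.

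Next I would check that $U:=C^-_{\lambda_{\mini},0}$ has all the properties required by Definition \ref{def:K-ressayre-pair}. Since $\Phi_{\lambda_{\mini}}^{-1}(0)=\Phi^{-1}(\lambda_{\mini})$ is nonempty, the origin lies in the Kirwan polyhedron of $(C_{\lambda_{\mini}},\Phi_{\lambda_{\mini}})$ and is therefore its own minimal type; hence Corollary \ref{prop:strate-0-ouverte}, applied to $C_{\lambda_{\mini}}$, shows that $C_{\lambda_{\mini},0}$ is an open, dense, $K^{\lambda_{\mini}}_\Cbb$-invariant subset of $C_{\lambda_{\mini}}$, which in particular contains $\Phi^{-1}(\lambda_{\mini})$. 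Because $p$ is a holomorphic fibration with affine-space fibers, by the local product description of the proposition in \S\ref{sec:BB}, the preimage $U=p^{-1}(C_{\lambda_{\mini},0})$ is open and dense in $C^-_{\lambda_{\mini}}$ and meets $C_{\lambda_{\mini}}$ exactly along $C_{\lambda_{\mini},0}$. Its $P_{\lambda_{\mini}}$-invariance follows from the fact that $p$ intertwines the $P_{\lambda_{\mini}}$-action with the $K^{\lambda_{\mini}}_\Cbb$-action through the Levi quotient $P_{\lambda_{\mini}}\to K^{\lambda_{\mini}}_\Cbb$, together with the $K^{\lambda_{\mini}}_\Cbb$-invariance of $C_{\lambda_{\mini},0}$.

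Finally I would assemble the conclusion. Restricting $\pi_{\lambda_{\mini}}$ to $K_\Cbb\times_{P_{\lambda_{\mini}}}U$ is precisely the isomorphism of Theorem \ref{Kirwan-stratification}, whose image is $M_{\lambda_{\mini}}$; thus $K_\Cbb U=M_{\lambda_{\mini}}$ and $\pi_{\lambda_{\mini}}$ defines a diffeomorphism $K_\Cbb\times_{P_{\lambda_{\mini}}}U\simeq K_\Cbb U$, which is the second bullet point. Dominance of $\pi_{\lambda_{\mini}}$ is then immediate, since its image already contains $M_{\lambda_{\mini}}$, which is open and dense in $M$ by Corollary \ref{prop:strate-0-ouverte}. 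By Definition \ref{def:K-ressayre-pair} these two facts say exactly that $(\lambda_{\mini},C_{\lambda_{\mini}})$ is a $K_\Cbb$-Ressayre's pair.

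The only genuinely delicate step I expect is the second paragraph: verifying that the $0$-stratum $C_{\lambda_{\mini},0}$ is open and dense (via the observation that $0$ is the minimal type of $(C_{\lambda_{\mini}},\Phi_{\lambda_{\mini}})$, so that Corollary \ref{prop:strate-0-ouverte} applies) and that its preimage $U$ inherits openness, density and $P_{\lambda_{\mini}}$-invariance from the local product structure of $p$ and the Levi decomposition. Everything else is a formal unwinding of the stratification isomorphism of Theorem \ref{Kirwan-stratification}.
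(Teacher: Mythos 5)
Your proof is correct and follows exactly the route the paper intends: the paper derives Proposition \ref{prop:minimal-type-non-zero} directly from Theorem \ref{Kirwan-stratification} applied to $\lambda=\lambda_{\mini}$, with $U=C^-_{\lambda_{\mini},0}$ and density coming from Corollary \ref{prop:strate-0-ouverte}. You have merely made explicit the details (identification of $C_{\lambda_{\mini}}$ with $C_\lambda$, openness, density and $P_{\lambda_{\mini}}$-invariance of $U$ via the Levi projection and the fibration $p$) that the paper leaves implicit.
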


\subsection{Construction of $B$-Ressayre's pairs}\label{sec:construction-RP}
 Let $(M,\Omega,\Phi)$ be a K\"{a}hler Hamiltonian $K$-manifold with proper moment map.  
 We consider a regular element $a\in\tgot^*_{> 0}$ such as $a\notin \Delta(\Phi)$.
 Let $a'$ be the orthogonal projection of $a$ on $\Delta(\Phi)$. 
 
 We consider the K\"{a}hler Hamiltonian $K$-manifold $M\times \overline{Ka}$. Here $\overline{Ka}$ denotes the coadjoint orbit $Ka$ equipped with the opposite symplectic structure: 
 the complex structure is defined through the identifications $\overline{Ka}\simeq K/T\simeq K_\Cbb/B$, where $B$ is the Borel subgroup with Lie algebra 
 $\bgot=\tgot_\Cbb\oplus\ngot$. The moment map $\Phi_a: M\times \overline{Ka}\to\kgot^*$ is defined by the relation $\Phi_a(m,\xi)=\Phi(m)-\xi$.

We start with a basic result.
\begin{lem}
The function $\|\Phi_a\|:M\times \overline{Ka}\to\Rbb$ reaches its minimum on $K(\Phi^{-1}(a')\times\{a\})$. 
\end{lem}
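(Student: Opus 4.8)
The plan is to reduce the minimization of $\|\Phi_a\|$ to the computation of the distance from the regular element $a$ to the Kirwan polyhedron $\Delta(\Phi)$, and to read off the minimizing locus along the way.

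First I would note that $\|\Phi_a\|$ is proper on $M\times\overline{Ka}$: since $\overline{Ka}$ is compact and $\Phi$ is proper, a bound $\|\Phi_a(m,\xi)\|\le R$ forces $\|\Phi(m)\|\le R+\|a\|$, confining $m$ to a compact set, so the minimum is attained. Then, using that the inner product on $\kgot^*$ is $K$-invariant and that $\Phi$ is $K$-equivariant, I write a point of $\overline{Ka}$ as $\xi=ka$ and compute
$$
\|\Phi_a(m,ka)\|=\|\Phi(m)-ka\|=\|\Phi(k^{-1}m)-a\|,
$$
so that minimizing over $M\times\overline{Ka}$ is the same as minimizing $m'\mapsto\|\Phi(m')-a\|$ over $M$, i.e. computing the distance from $a$ to the $K$-invariant set $\Phi(M)$.

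Next I invoke the standard fact that, $a$ lying in the chamber, for every $\mu\in\tgot^*_{\geq 0}$ and every $k\in K$ one has $\|k\mu-a\|\geq\|\mu-a\|$ (equivalently $(a,k\mu)\leq(a,\mu)$, a linear functional attaining its maximum over a coadjoint orbit at the chamber representative). Writing each $\Phi(m')\in K\mu$ with $\mu\in\tgot^*_{\geq 0}\cap\Phi(M)=\Delta(\Phi)$, this gives $\|\Phi(m')-a\|\geq\|\mu-a\|\geq\|a'-a\|$, where $\|a'-a\|$ is the distance from $a$ to $\Delta(\Phi)$; and the value $\|a'-a\|$ is actually realized at any $m'\in\Phi^{-1}(a')$, which is nonempty because $a'\in\Delta(\Phi)\subset\Phi(M)$. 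Hence $\min\|\Phi_a\|=\|a'-a\|$, and translating back through $(m,\xi)=(km',ka)$ shows that every point of $K(\Phi^{-1}(a')\times\{a\})$ realizes it, since $\Phi_a(km',ka)=k(a'-a)$ has norm $\|a'-a\|$.

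Finally, to obtain that the minimum is attained \emph{only} on $K(\Phi^{-1}(a')\times\{a\})$, I trace the equality cases. If $\|\Phi(m')-a\|=\|a'-a\|$, writing $\Phi(m')=k\mu$ with $\mu\in\Delta(\Phi)$ forces both $\|\mu-a\|=\|a'-a\|$ and $(k\mu,a)=(\mu,a)$; uniqueness of the orthogonal projection onto the closed convex set $\Delta(\Phi)$ gives $\mu=a'$. Here is the single point where the hypothesis $a\in\tgot^*_{>0}$ is used: as $a$ is regular its stabilizer is the maximal torus $T$, so $(\cdot,a)$ attains its maximum on the orbit $Ka'$ only at the chamber point $a'$ (the maximizing set being $K_a\cdot a'=T\cdot a'=\{a'\}$), whence $\Phi(m')=a'$. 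Unwinding $m'=k^{-1}m$, $\xi=ka$ then places every minimizer in $K(\Phi^{-1}(a')\times\{a\})$. I expect this last step — pinning the orbit minimizer to the chamber point $a'$ rather than merely to the orbit $Ka'$, which is exactly where regularity of $a$ enters — to be the only real subtlety; the reduction by equivariance and the orthogonal-projection characterization are routine.
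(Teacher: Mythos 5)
Your proof is correct. The paper actually states this lemma without any proof (it is introduced as ``a basic result''), so there is nothing to compare against line by line; your argument --- reduce by $K$-equivariance to computing $\operatorname{dist}(a,\Phi(M))$, bound below via $(a,k\mu)\le(a,\mu)$ for dominant $a,\mu$ together with uniqueness of the nearest point in the closed convex set $\Delta(\Phi)$, then realize the bound on $\Phi^{-1}(a')$ --- is exactly the natural argument the paper takes for granted. It is worth noting that you prove the sharper statement that the minimum is attained \emph{only} on $K(\Phi^{-1}(a')\times\{a\})$, with the regularity of $a$ correctly identified as the ingredient pinning the orbit maximizer of $(a,\cdot)$ to the chamber point $a'$; this sharper form is precisely what the paper uses immediately afterwards when it asserts $\Phi_a^{-1}(K\gamma_a)=K(\Phi^{-1}(a')\times\{a\})$, so your equality-case analysis is not a luxury but the substantive content of the lemma.
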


So the minimal type of $M\times \overline{Ka}$ is 
$$
\gamma_a:=a'-a\neq 0.
$$ 
The corresponding critical set is $\Phi_a^{-1}(K\gamma_a)=K(\Phi^{-1}(a')\times\{a\})$. Let 
$C_{\gamma_a}$ be the connected component of $(M\times \overline{Ka})^{\gamma_a}$ containing $\Phi_a^{-1}(\gamma_a)$ : we have
 $C_{\gamma_a}=C_a\times \overline{K^{\gamma_a} a}$, where $C_a$ is 
the connected component of $M^{\gamma_a}$ containing $\Phi^{-1}(a')$.

The Bialynicki-Birula's complex submanifold $C_{\gamma_a}^-\subset M\times K_\Cbb/B$ is then 
$$
C_{\gamma_a}^-\simeq C_a^-\times P_{\gamma_a}/B\cap P_{\gamma_a}.
$$
We know from the previous section that $(\gamma_a,C_{\gamma_a})$ is a $K_\Cbb$-Ressayre's pair on $N\simeq M\times K_\Cbb/B$. 
Thanks to Proposition \ref{prop:K-B-RP}, we can translate this property on the manifold $M$. 

\begin{prop}\label{prop:ressayre-pair-gamma-a}
Let $a\in\tgot^*_{\geq 0}$ be a regular element such as $a\notin \Delta(\Phi)$. Then $(\gamma_a, C_a)$ is a $B$-Ressayre's pair of the complex  $K_\Cbb$-manifold $M$. If furthermore,
 $\gamma_a$ is rational and $\dim_K(C_a)-\dim_K(M)\in\{0,1\}$, then $(\gamma_a, C_a)$ is a regular $B$-Ressayre's pair
\end{prop}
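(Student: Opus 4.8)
The plan is to deduce the statement by combining, on the auxiliary product manifold $N := M \times \overline{Ka}$, the two structural results already in hand: the equivalence between $K_\Cbb$-Ressayre's pairs on $N$ and $B$-Ressayre's pairs on $M$ (Proposition \ref{prop:K-B-RP}), and the fact that a non-zero minimal type automatically produces a $K_\Cbb$-Ressayre's pair (Proposition \ref{prop:minimal-type-non-zero}). No new geometric estimate is needed beyond recognizing $\gamma_a$ as the minimal type of $N$.

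First I would record, from the preceding Lemma, that $\|\Phi_a\|$ attains its minimum on $K(\Phi^{-1}(a') \times \{a\})$, so the minimal type of $N$ is exactly $\lambda_{\mini} = \gamma_a = a'-a$. Since $a \notin \Delta(\Phi)$ we have $a' \neq a$, hence $\gamma_a \neq 0$, and the corresponding critical component is $C_{\gamma_a} = C_a \times \overline{K^{\gamma_a} a}$ with Bialynicki-Birula submanifold $C_{\gamma_a}^- \simeq C_a^- \times P_{\gamma_a}/(B \cap P_{\gamma_a})$. Applying Proposition \ref{prop:minimal-type-non-zero} to $N$, whose minimal type is non-zero, then yields directly that $(\gamma_a, C_{\gamma_a})$ is a $K_\Cbb$-Ressayre's pair on $N$.

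Next I would transfer this back to $M$ via Proposition \ref{prop:K-B-RP}. Using the identifications $\overline{Ka} \simeq K/T \simeq K_\Cbb/B$ we write $N \simeq M \times K_\Cbb/B$, and the point to check is that $C_{\gamma_a}$ is precisely the component of $N^{\gamma_a}$ attached to $C_a$ in the recipe of that proposition, i.e. that $C_{\gamma_a} = C_a \times K_\Cbb^{\gamma_a}/(K_\Cbb^{\gamma_a} \cap B)$. This holds because $a$ is regular, so its stabilizer in $K$ is $T$ and the orbit factor satisfies $\overline{K^{\gamma_a} a} \simeq K^{\gamma_a}/T \simeq K_\Cbb^{\gamma_a}/(K_\Cbb^{\gamma_a} \cap B)$. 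Proposition \ref{prop:K-B-RP}(1) then converts the $K_\Cbb$-Ressayre's pair $(\gamma_a, C_{\gamma_a})$ on $N$ into a $B$-Ressayre's pair $(\gamma_a, C_a)$ on $M$, which is the first assertion.

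The regular case is then immediate: under the extra hypotheses that $\gamma_a$ is rational and $\dim_K(C_a) - \dim_K(M) \in \{0,1\}$, the data $(\gamma_a, C_a)$ meets by definition the additional requirements of Definition \ref{def:B-ressayre-pair}, hence is a regular $B$-Ressayre's pair. The only genuinely non-formal ingredient is the minimal-type identification furnished by the Lemma, together with the matching of the orbit factor $\overline{K^{\gamma_a} a}$ with $K_\Cbb^{\gamma_a}/(K_\Cbb^{\gamma_a} \cap B)$; everything else is a transcription through the two quoted propositions, so I do not expect any serious obstacle here.
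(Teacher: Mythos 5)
Your proof is correct and follows essentially the same route as the paper: identify $\gamma_a=a'-a$ as the (non-zero) minimal type of $N=M\times\overline{Ka}$ via the preceding Lemma, invoke Proposition \ref{prop:minimal-type-non-zero} to get a $K_\Cbb$-Ressayre's pair $(\gamma_a,C_{\gamma_a})$ on $N$, and transfer it to $M$ through Proposition \ref{prop:K-B-RP}. Your explicit check that $\overline{K^{\gamma_a}a}\simeq K_\Cbb^{\gamma_a}/(K_\Cbb^{\gamma_a}\cap B)$ (using the regularity of $a$) is a detail the paper leaves implicit, but it does not change the argument.
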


\section{Proofs of the main theorems}

This section is devoted to the proofs of Theorems \ref{th:infinitesimal-ressayre-pairs} and \ref{th:ressayre-pairs}. 
Let $\Delta(\Phi):=\Phi(M)\cap\tgot^*_{\geq 0}$ be the Kirwan polyhedron of a K\"{a}hler Hamiltonian $K$-manifold $(M,\Omega)$ with proper moment map $\Phi$.

 We define the following subsets of the Weyl chamber. 

\begin{itemize}
\item $\Delta_{\infrp}$ is the set of points $\xi\in\tgot^*_{\geq 0}$ satisfying  the inequalities $\langle \xi,\gamma\rangle\geq \langle \Phi(C),\gamma\rangle$, for any 
{\bf infinitesimal  $B$-Ressayre's pair} $(\gamma,C)$.
\item $\Delta^{^{\reg}}_{\infrp}$ is the set of points $\xi\in\tgot^*_{\geq 0}$ satisfying  the inequalities $\langle \xi,\gamma\rangle\geq \langle \Phi(C),\gamma\rangle$, for any 
{\bf regular infinitesimal  $B$-Ressayre's pair} $(\gamma,C)$.
\item $\Delta_{\rp}$ is the set of points $\xi\in\tgot^*_{\geq 0}$ satisfying the inequalities $\langle \xi,\gamma\rangle\geq \langle \Phi(C),\gamma\rangle$, 
for any {\bf $B$-Ressayre's pair} $(\gamma,C)$.
\item $\Delta^{^{\reg}}_{\rp}$ is the set of points $\xi\in\tgot^*_{\geq 0}$ satisfying the inequalities $\langle \xi,\gamma\rangle\geq \langle \Phi(C),\gamma\rangle$, 
for any {\bf regular $B$-Ressayre's pair} $(\gamma,C)$.
\item $\Delta$ is the set of points $\xi\in\tgot^*_{\geq 0}$ satisfying the inequalities $\langle \xi,\gamma\rangle\geq \langle \Phi(C),\gamma\rangle$, 
for any {\bf regular $B$-Ressayre's pair} $(\gamma,C)$ satisfying $\Phi(C)\cap\tgot^*_{\geq 0}$.
\end{itemize}

By definition, we have the commutative diagram, where all the maps are inclusions:
$$
\xymatrix{
\Delta_{\infrp} \ar@{^{(}->}[d] \ar@{^{(}->}[r] & \Delta^{^{\reg}}_{\infrp} \ar@{^{(}->}[d] &  \\
\Delta_{\rp} \ar@{^{(}->}[r]          & \Delta^{^{\reg}}_{\rp}  \ar@{^{(}->}[r] & \Delta\cdot }
$$

In \S \ref{sec:step-1}, we prove the inclusions $ \Delta_{\rp}\subset\Delta(\Phi)\subset \Delta_{\infrp}$. It follows then that 
$$
\Delta(\Phi)=\Delta_{\infrp}=\Delta_{\rp}.
$$

In \S \ref{sec:step-2}, we prove the inclusion $\Delta\subset \Delta(\Phi)$, and since $\Delta(\Phi)\subset\Delta_{\rp}^{^{\reg}}\subset \Delta$, we get finally that 
$\Delta(\Phi)=\Delta^{^{\reg}}_{\rp}=\Delta$. At this stage, the proof of Theorem \ref{th:ressayre-pairs} is completed, together with the equivalence of the points {\em 1.} and 
{\em 2.} in Theorem \ref{th:infinitesimal-ressayre-pairs}.

In \S \ref{sec:RP-intersect}, we explain geometrically the equivalence of the points {\em 2.} and {\em 3.} in Theorem \ref{th:infinitesimal-ressayre-pairs}.

In \S \ref{sec:RP=infRP}, we expose in which circumstances an infinitesimal $B$-Ressayre's pair is a $B$-Ressayre's pair.

\subsection{$\Delta_{\rp}\subset\Delta(\Phi)\subset \Delta_{\infrp}$}\label{sec:step-1}

Let us check the two inclusions.
 
\medskip

\underline{$\Delta_{\rp}\subset\Delta(\Phi)$}

\medskip

Let $\xi\in\tgot^*_{\geq 0}$ that does not belong to $\Delta(\Phi)$. Let $r>0$ be the distance between $\xi$ and $\Delta(\Phi)$, also let $a\in\tgot^*_{> 0}$
be a regular element such as $\|\xi-a\|<\frac{r}{2}$. Since $a\notin \Delta(\Phi)$, we can exploit the result of \S \ref{sec:construction-RP}. 
Let $a'$ be the orthogonal projection of $a$ on $\Delta(\Phi)$. We consider $\gamma_a:=a'-a$, and the connected component $C_a$ of $M^{\gamma_a}$ 
containing $\Phi^{-1}(a')$. Proposition \ref{prop:ressayre-pair-gamma-a} tells us that  $(\gamma_a, C_a)$ is a $B$-Ressayre's pair.

Now, we compute 
\begin{eqnarray*}
\langle \xi,\gamma_a\rangle- \langle \Phi(C_a),\gamma_a\rangle&=&\langle \xi,\gamma_a\rangle- \langle a',\gamma_a\rangle\\
&=&\langle \xi-a,\gamma_a\rangle- \|\gamma_a\|^2\\
&\leq &- \|\gamma_a\|(\|\gamma_a\|- \|\xi-a\|)\\
&<&0\, .
\end{eqnarray*}
The last inequality comes from the fact that $\|\xi-a\|<\frac{r}{2}$ and that $\|\gamma_a\|>\frac{r}{2}$ since it represents the distance between $a$ and $\Delta(\Phi)$. 
The inequality $\langle \xi,\gamma_a\rangle- \langle \Phi(C_a),\gamma_a\rangle <0$ shows that $\xi\notin \Delta_{\rp}$.

\medskip

\underline{$\Delta(\Phi)\subset \Delta_{\infrp}$}

\medskip

Let $\xi\in \Delta(\Phi)$, and consider the K\"{a}hler manifold $N:=M\times \overline{K\xi}$. By definition the set $N^{ss}$ of analytical semi-stable points is dense in $N$.

Let $(\gamma, C)$ be an infinitesimal $B$-Ressayre's pair on $M$, and let $C_N:= C\times K_\Cbb^\gamma/P_\xi\cap K_\Cbb^\gamma$ be the corresponding connected component of 
$N^\gamma$. Here $P_\xi\subset K_\Cbb$ is the parabolic subgroup associated to $\xi\in\tgot^*_{\geq 0}$, so that $\overline{K\xi}\simeq K_\Cbb/P_\xi$. Let $C^-_N:=\{n\in N,  \lim_{t\to\infty} e^{-it\gamma} n\ \in C_N\}$ be the Bialynicki-Birula's submanifold.

\begin{lem}\label{lem:KCinterior}
\begin{enumerate}
\item The set $K_\Cbb C^-_N$ as a non-empty interior. 
\item $C^-_N\cap N^{ss}\neq \emptyset$.
\end{enumerate}
\end{lem}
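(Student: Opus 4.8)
The claim has two parts. Part (1) asserts that $K_\Cbb C^-_N$ has nonempty interior in $N = M \times \overline{K\xi}$, and part (2) asserts that $C^-_N$ meets the semistable set $N^{ss}$. Let me sketch how I would attack each.

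\medskip

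\textbf{Part (1).} The plan is to reduce to the $K_\Cbb$-Ressayre picture via Proposition \ref{prop:K-B-RP}. Since $(\gamma, C)$ is an infinitesimal $B$-Ressayre's pair on $M$, part (2) of that proposition tells us that $(\gamma, C_N)$ is an infinitesimal $K_\Cbb$-Ressayre's pair on $N$. By the discussion preceding Lemma \ref{lem:C-N-reg}, this means there exists a point $x \in C_N$ at which the morphism (\ref{eq:rho-gamma-K}) is an isomorphism, equivalently the tangent map $\T\pi_\gamma\vert_{[e,x]}$ is an isomorphism. I would then invoke Lemma \ref{lem:C-N-reg}, which guarantees that the regular locus $(C^-_N)_{\reg}$ is a \emph{dense} open subset of $C^-_N$. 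On this locus $\T\pi_\gamma$ is an isomorphism, so the holomorphic map $\pi_\gamma : K_\Cbb \times_{P_\gamma} C^-_N \to N$ is a local diffeomorphism near any point $[e,n]$ with $n \in (C^-_N)_{\reg}$; hence its image contains an open set, and since this image is exactly $K_\Cbb C^-_N$, part (1) follows.

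\medskip

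\textbf{Part (2).} This is where I expect the real work to be, and the natural route is a dimension/density argument combined with the minimal-type analysis. Since $N^{ss} = N_0 = M_0$ is a dense open subset of $N$ (the construction $N = M \times \overline{K\xi}$ with $\xi \in \Delta(\Phi)$ is precisely arranged so that $0$ lies in the image of $\Phi_N$, making the $0$-stratum nonempty), and since by part (1) the set $K_\Cbb C^-_N$ also has nonempty interior, the two open sets $N^{ss}$ and the interior of $K_\Cbb C^-_N$ must intersect. Choose a point $n_0$ in this intersection; then $n_0 = g \cdot n$ for some $g \in K_\Cbb$ and $n \in C^-_N$, with $n_0 \in N^{ss}$. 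Because $N^{ss}$ is $K_\Cbb$-invariant (Corollary \ref{prop:strate-0-ouverte} applied to $N$), it follows that $n = g^{-1} n_0 \in N^{ss}$ as well, and $n \in C^-_N$ gives the desired nonempty intersection $C^-_N \cap N^{ss} \neq \emptyset$.

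\medskip

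\textbf{Main obstacle.} The delicate point is justifying that $N^{ss}$ is genuinely dense (hence of full measure / nonempty interior), so that it must overlap the interior produced in part (1); this rests on verifying that the minimal type of $N = M \times \overline{K\xi}$ equals $0$, i.e.\ that $0 \in \Delta(\Phi_N)$. This holds precisely because $\xi \in \Delta(\Phi)$ forces $\Phi(m) = \xi$ for some $m$, so $\Phi_N(m,\xi) = \Phi(m) - \xi = 0$; combined with Proposition \ref{prop:M-lambda-open} and Corollary \ref{prop:strate-0-ouverte} this yields density of $N^{ss}$. I would flag the invariance of $N^{ss}$ under $K_\Cbb$ as the one fact to state explicitly, since the argument for part (2) depends on transporting the semistable point back into $C^-_N$ by the group action.
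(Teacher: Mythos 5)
Your part (2) coincides with the paper's argument: density of $N^{ss}$ together with its $K_\Cbb$-invariance transports a semi-stable point of $K_\Cbb C^-_N$ back into $C^-_N$, and that is exactly how the paper deduces (2) from (1). The gap is in part (1). Lemma \ref{lem:KCinterior} is stated for $N=M\times\overline{K\xi}$ with $\xi\in\Delta(\Phi)$ \emph{arbitrary}, so the flag factor is $K_\Cbb/P_\xi$ for the parabolic $P_\xi\supseteq B$ attached to $\xi$; Proposition \ref{prop:K-B-RP} is stated and proved only for the full flag manifold $N=M\times K_\Cbb/B$, so you may invoke it only when $\xi$ is regular. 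This is not a mere bookkeeping issue: the statement you extract from it, namely that $(\gamma,C_N)$ is an infinitesimal $K_\Cbb$-Ressayre's pair on $N$, i.e.\ that $(\kgot_\Cbb)^{\gamma>0}\to(\T_n N)^{\gamma>0}$ is an \emph{isomorphism} at some $n\in C_N$, is false in general when $\xi$ lies on a wall of the Weyl chamber. Indeed, at $n=(x,[e])$ one has $(\T_n N)^{\gamma>0}=(\T_x M)^{\gamma>0}\oplus(\kgot_\Cbb/\pgot_\xi)^{\gamma>0}$, and a root count (using $\dim(\T_x M)^{\gamma>0}=\dim\ngot^{\gamma>0}$ from the infinitesimal $B$-Ressayre hypothesis, and the $T$-module identification $\kgot_\Cbb/\pgot_\xi\simeq\sum_{\alpha\in\Rgot^+,\,\langle\alpha,\xi\rangle>0}(\kgot_\Cbb)_{-\alpha}$) gives
$$
\dim(\kgot_\Cbb)^{\gamma>0}-\dim(\T_n N)^{\gamma>0}=\#\left\{\alpha\in\Rgot^+:\ \langle\alpha,\xi\rangle=0,\ \langle\alpha,\gamma\rangle<0\right\},
$$
which is positive as soon as some positive root orthogonal to $\xi$ pairs negatively with $\gamma$. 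In that case no point of $C_N$ can satisfy the isomorphism condition, Lemma \ref{lem:C-N-reg} has no hypothesis to feed on, and your ``local diffeomorphism'' step collapses.

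What does survive, and is all that part (1) needs, is \emph{surjectivity} of the tangent map $\T\pi_\gamma\vert_{[e,n]}$, i.e. $\kgot_\Cbb\cdot n+\T_n C^-_N=\T_n N$; a single point of submersion already forces the image $K_\Cbb C^-_N$ to contain an open set, so the density of a regular locus is irrelevant here. This surjectivity is what the paper proves, by an argument your reduction cannot reproduce: given $v\in(\T_n N)^{\gamma>0}$, first match its flag component by some $X\in\kgot_\Cbb^{\gamma>0}$, then correct the remaining $M$-component $v-X\cdot n\in(\T_x M)^{\gamma>0}$ by some $Y\in\ngot^{\gamma>0}$ using $\ngot^{\gamma>0}\cdot x=(\T_x M)^{\gamma>0}$; the two steps are compatible precisely because $\ngot\subset\pgot_\xi$, so that $Y\cdot[e]=0$ and $v=(X+Y)\cdot n$. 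As written, your proof of part (1) is valid only in the special case $\xi\in\tgot^*_{>0}$, which does not cover the boundary points of $\Delta(\Phi)$ for which the lemma is later used.
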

\begin{proof}
The second point follows from the first one since $N^{ss}$ is a dense $K_\Cbb$-invariant subset of $N$.

Let $x\in C$ so that $\ngot^{\gamma>0}\cdot x\simeq (\T_x M)^{\gamma>0}$. The point $n=(x,[e])\in C_N$ is in the interior of $K_\Cbb C^-_N$ if we show that 
$\kgot_\Cbb\cdot n + \T_n C_N^-=\T_n N$. Since $\T_n C_N^-=(\T_n N)^{\gamma\leq 0}$ it is sufficient to check that 
$(\T_n N)^{\gamma>0}\subset \kgot_\Cbb\cdot n$. We have the decomposition $(\T_n N)^{\gamma>0}=(\T_x M)^{\gamma>0}\oplus \kgot_\Cbb^{\gamma>0}\cdot [e]$. It means that 
for any $v\in (\T_n N)^{\gamma>0}$, there exists $X\in\kgot_\Cbb^{\gamma>0}$ so that $v-X\cdot(x,[e])\in (\T_x M)^{\gamma>0}$. But  $\ngot^{\gamma>0}\cdot x\simeq (\T_x M)^{\gamma>0}$, so there exists $Y\in \ngot^{\gamma>0}$ such as $v-X\cdot(x,[e])=Y\cdot x$. The Lie algebra $\ngot$ is contained in the Lie algebra of the parabolic subgroup $P_\xi$: hence $Y\cdot [e]=0$. Finally we have proved that $v=(X+Y)\cdot(x,[e])\in  \kgot_\Cbb\cdot n$.
\end{proof}

Take $n\in C^-_N\cap N^{ss}$, and consider $n_\gamma=\lim_{t\to\infty}e^{-it\gamma} n\in C\times \{[e]\}$. The first point of Proposition \ref{prop:N-ss-gamma} tells us that 
$\langle \Phi_N(n_\gamma),\gamma\rangle\leq 0$, and this inequality means $\langle \Phi(C),\gamma\rangle\leq \langle\xi,\gamma\rangle$. 
We have proved that any point $\xi\in \Delta(\Phi)$ satisfies $\langle \Phi(C),\gamma\rangle\leq \langle\xi,\gamma\rangle$, for any infinitesimal 
$B$-Ressayre's pair $(\gamma,C)$. So, $\Delta(\Phi)\subset \Delta_{\infrp}$.

\subsection{$\Delta\subset \Delta(\Phi)$}\label{sec:step-2}

The aim of this section is the proof of the following 
\begin{theo}
Let $\xi\in\tgot^*_{\geq 0}$ satisfying the inequalities $\langle \xi,\gamma\rangle\geq \langle \Phi(C),\gamma\rangle$, for any regular $B$-Ressayre's pair $(\gamma,C)$ such as $\Phi(C)\cap \tgot^*_{\geq 0}\neq\emptyset$. Then $\xi\in \Delta(\Phi)$.
\end{theo}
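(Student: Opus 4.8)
The plan is to prove the contrapositive: assuming $\xi \notin \Delta(\Phi)$, I will construct a \emph{regular} $B$-Ressayre's pair $(\gamma,C)$ with $\Phi(C)\cap\tgot^*_{\geq 0}\neq\emptyset$ that witnesses a violated inequality $\langle\xi,\gamma\rangle < \langle\Phi(C),\gamma\rangle$. The natural starting point is the machinery of \S\ref{sec:construction-RP}: for a regular element $a\in\tgot^*_{>0}$ with $a\notin\Delta(\Phi)$, the pair $(\gamma_a, C_a)$ with $\gamma_a = a' - a$ (where $a'$ is the orthogonal projection of $a$ onto $\Delta(\Phi)$) is a $B$-Ressayre's pair, and the computation already carried out in \S\ref{sec:step-1} shows $\langle\xi,\gamma_a\rangle - \langle\Phi(C_a),\gamma_a\rangle < 0$ whenever $\|\xi - a\| < \frac{r}{2}$ with $r = \operatorname{dist}(\xi,\Delta(\Phi))$. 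The two things that the earlier argument does \emph{not} guarantee, and which I must now secure, are the \emph{regularity} of the pair and the condition $\Phi(C_a)\cap\tgot^*_{\geq 0}\neq\emptyset$.

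First I would address the intersection condition, which is the easier of the two. Since $C_a$ is the connected component of $M^{\gamma_a}$ containing $\Phi^{-1}(a')$ and $a' = \Phi(x)$ for some $x\in\Phi^{-1}(a')\subset C_a$, we have $a'\in\Phi(C_a)$. Because $a'$ is the projection of $a\in\tgot^*_{>0}$ onto the convex set $\Delta(\Phi)\subset\tgot^*_{\geq 0}$, it lies in $\Delta(\Phi)\subset\tgot^*_{\geq 0}$, so $\Phi(C_a)\cap\tgot^*_{\geq 0}\ni a'$ is nonempty automatically. Thus the intersection condition is free once $a'$ is interpreted correctly.

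The genuine obstacle is regularity: I must choose the approximating element $a$ so that $\gamma_a$ is rational and $\dim_K(C_a) - \dim_K(M)\in\{0,1\}$. This is a density/genericity argument. Rationality of $\gamma_a = a'-a$ can be forced by selecting $a$ appropriately --- one wants $a$ and $a'$ to differ by a rational direction, which is arranged by choosing $a$ on a suitable rational ray or by perturbing within the rational points of $\tgot^*_{>0}$ that remain close to $\xi$. The dimension condition is more delicate: the set of admissible directions $\gamma$ (those with $\dim_K(M^\gamma) - \dim_K(M)\in\{0,1\}$) governs exactly the facets of the Kirwan polyhedron, and I expect that the facets of $\Delta(\Phi)$ near the projection point $a'$ are cut out precisely by such admissible $\gamma$. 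The strategy is therefore to pick $a$ generically close to $\xi$ so that its projection $a'$ lands on (the relative interior of) a facet of $\Delta(\Phi)$, whose supporting hyperplane is governed by an admissible, rational $\gamma_a$; this is where the local polyhedrality of $\Delta(\Phi)$ from the Convexity Theorem enters. I anticipate this facet-selection step to be the main difficulty, since it requires knowing that the outward normal directions to the facets of $\Delta(\Phi)$ are exactly the rational admissible $\gamma$ realized by regular $B$-Ressayre's pairs.

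Once such a regular pair $(\gamma_a,C_a)$ is produced, the sign computation from \S\ref{sec:step-1} applies verbatim and yields $\langle\xi,\gamma_a\rangle < \langle\Phi(C_a),\gamma_a\rangle$, contradicting the hypothesis on $\xi$. Hence $\xi\in\Delta(\Phi)$, completing the inclusion $\Delta\subset\Delta(\Phi)$. In summary, the proof reduces the new statement to the already-established $B$-Ressayre construction, with the entire novelty concentrated in the genericity argument that upgrades the constructed pair to a \emph{regular} one while keeping $\|\xi - a\|$ small and $\Phi(C_a)$ meeting the chamber.
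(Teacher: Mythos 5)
Your reduction to the construction of \S\ref{sec:construction-RP} is the right starting point, and two of your observations are correct: the sign computation of \S\ref{sec:step-1} applies once a suitable pair is found, and the condition $\Phi(C_a)\cap\tgot^*_{\geq 0}\neq\emptyset$ is automatic since $a'\in\Phi(C_a)\cap\Delta(\Phi)$. The gap is the step you defer to a ``density/genericity argument'': it cannot be closed that way. Your sign computation forces $\|\xi-a\|<\frac{r}{2}$, so $\gamma_a=a'-a$ is confined to the open cone of directions joining points near $\xi$ to their projections onto $\Delta(\Phi)$; but admissibility is not a dense or open condition among rational directions --- it requires that enough weights of the $T$-action vanish on $\gamma$ for the stabilizer dimension along $M^\gamma$ to drop to $\dim_K(M)+1$, and the admissible directions form an essentially finite set of rays (compare the Corollary following Lemma \ref{admissible-exemple-1}, which proves exactly this finiteness in the example of \S\ref{sec:example-1}). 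There is no reason any admissible ray meets your cone. The situation is worst precisely where your facet-selection idea breaks down: if $\xi$ lies in the interior of the normal cone of a vertex (or of any lower-dimensional face) of $\Delta(\Phi)$, then \emph{every} $a$ with $\|a-\xi\|<\frac{r}{2}$ projects onto that same face, so $a'$ never reaches the relative interior of a facet; and for such $\xi$ the inequality that is violated is in general carried by a facet whose normal does not point from $\xi$ toward its projection at all. Note also that $\Delta(\Phi)$ may have empty interior in $\tgot^*$ (it lies in $\bar{\tau}$ and in the affine subspace $\Pi$), and it need not be a rational polyhedron (see the example where $\lambda_{\mini}\notin\tgot^*_\Qbb$), so neither ``the outward normal of a facet'' nor the rationality of $\gamma_a$ comes for free.

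The paper's proof reverses your logic, and that reversal is the missing idea. Instead of choosing $a$ near $\xi$ and hoping $\gamma_a$ becomes admissible, it first fixes finitely many rational admissible directions read off from the structure of the polyhedron: $\gamma_\tau=-\sum_{\alpha\in\Rgot^+_\tau}H_\alpha$ attached to the principal face $\tau$ (admissible by Lemma \ref{lem:admissible}, since $K\cdot M^{\gamma_\tau}=M$); $\gamma_l^\pm=\pm\eta_l+\gamma_\tau$ for a rational basis $(\eta_l)$ of the principal stabilizer algebra $\sgot_\tau$; and $\gamma_F=\eta_F+\gamma_\tau$ for each non-trivial facet $F$, where the Principal-cross-section Theorem allows one to take $\eta_F\in\wedge$ and yields $\dim_K(C_F)=\dim_K(M)+1$ (Lemma \ref{lem:gamma-F}). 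Only then are the approximating points $a(n)=a'-\frac{1}{n}\gamma$ introduced, with $a'\in\Delta(\Phi)\cap\tau$ (resp. $a'_F\in F$) --- these points are close to $\Delta(\Phi)$, not to $\xi$ --- and Proposition \ref{prop:ressayre-pair-gamma-a} turns each of them into a regular $B$-Ressayre's pair (Propositions \ref{prop:gamma-tau}, \ref{prop:gamma-S}, \ref{prop:gamma-F}). The conclusion is then purely convex-geometric: by (\ref{eq:description-delta-K-M}) and Lemmas \ref{lem:gamma-tau}, \ref{lem:gamma-S}, \ref{lem:gamma-F}, the half-spaces of this finite collection, intersected with $\tgot^*_{\geq 0}$, cut out exactly $\Delta(\Phi)$, so any $\xi$ satisfying all the inequalities lies in $\Delta(\Phi)$. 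In short, you identified the right difficulty --- that facet data must produce rational admissible directions --- but the perturbative mechanism you propose cannot establish it; what is needed is the face-by-face construction above, in which the admissible direction is chosen first and the auxiliary point $a$ is built from it afterwards.
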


Our arguments go as follows: we will show that there exists a collection $(\gamma_i, C_i)_{i\in I}$ of regular Ressayre's pairs satisfying 
$\Phi(C_i)\cap \tgot^*_{\geq 0}\neq\emptyset$, and  for which we have 
$$
\bigcap_{i\in I}\left\{\xi\in\tgot^*_{\geq 0}, \langle \xi,\gamma_i\rangle\geq \langle \Phi(C_i),\gamma_i\rangle\right\}=\Delta(\Phi).
$$
The set $I$ will be finite when $M$ is compact.

 We start with the following remark concerning admissible elements
 
\begin{lem}\label{lem:admissible}
$\dim_K(M^\gamma)=\dim_K(M)$ if and only if $K\cdot M^\gamma=M$.
\end{lem}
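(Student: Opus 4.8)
The plan is to characterize $\dim_K(M^\gamma) = \dim_K(M)$ in terms of the action of $K$ on $M^\gamma$. Recall that $\dim_K(\Xcal) = \min_{m \in \Xcal} \dim(\kgot_m)$ measures the minimal dimension of the stabilizer over a subset. The quantity $\dim_K(M)$ records the generic (minimal) infinitesimal stabilizer dimension on all of $M$, while $\dim_K(M^\gamma)$ records it over the fixed-point submanifold $M^\gamma$. Since $M^\gamma \subset M$, the inequality $\dim_K(M^\gamma) \geq \dim_K(M)$ holds automatically, so the content of the lemma is to identify precisely when equality is attained.

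First I would observe that the equality $\dim_K(M^\gamma) = \dim_K(M)$ says that there exists a point $x \in M^\gamma$ whose stabilizer $\kgot_x$ has the generic dimension $\dim_K(M)$; equivalently, the generic stabilizer dimension is already realized on the fixed-point set. The key geometric idea is to compare the $K$-orbit dimensions. For $m \in M$, the orbit $K\cdot m$ has dimension $\dim \kgot - \dim \kgot_m$, so a point with minimal stabilizer is a point with maximal orbit. I would argue that $K \cdot M^\gamma = M$ holds if and only if the orbits through $M^\gamma$ are large enough to sweep out all of $M$, which is a dimension-count condition tightly linked to the stabilizer dimension on $M^\gamma$.

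For the direction $K \cdot M^\gamma = M \Rightarrow \dim_K(M^\gamma) = \dim_K(M)$: if the $K$-saturation of $M^\gamma$ equals $M$, then a generic point of $M$ (one realizing the minimal stabilizer dimension $\dim_K(M)$) lies on an orbit meeting $M^\gamma$; since the stabilizer dimension is constant along a $K$-orbit, some point of $M^\gamma$ has stabilizer dimension $\dim_K(M)$, forcing $\dim_K(M^\gamma) \leq \dim_K(M)$, hence equality. For the converse, suppose $\dim_K(M^\gamma) = \dim_K(M) =: d$. Then there is $x \in M^\gamma$ with $\dim \kgot_x = d$, so the orbit $K \cdot x$ has maximal dimension. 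The plan is to use the fact that the set $M_{(d)}$ of points with minimal stabilizer dimension is open and dense in $M$, and that $K \cdot M^\gamma$ is a $K$-invariant subset containing a point of maximal orbit dimension; one shows $K \cdot M^\gamma$ contains the open dense stratum and, being closed under the relevant limiting/connectedness properties available in this Kähler setting, must exhaust $M$.

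The main obstacle I anticipate is the converse direction: realizing a single point of maximal orbit dimension in $M^\gamma$ does not by itself obviously give $K \cdot M^\gamma = M$, since $K \cdot M^\gamma$ is a priori only a $K$-invariant subset that could fail to be all of $M$ (for instance it might be a lower-dimensional piece even if one orbit is maximal). The resolution should exploit the structure already developed in the paper: the Bialynicki–Birula decomposition and the properness of the moment map ensure that gradient flow carries a dense open set of $M$ into neighborhoods of the critical submanifolds, and $M^\gamma$ sits inside the critical set of $\langle \Phi, \gamma\rangle$. Concretely, I expect to use that $M^\gamma$ is the fixed-point set whose $K$-saturation is $K$-invariant, combined with the dimension equality to force $K \cdot M^\gamma$ to contain the open dense principal stratum, whence by density and invariance it equals $M$. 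Controlling this step carefully — ensuring the maximal-orbit point propagates to an open set via the equivariant geometry rather than remaining isolated — will be the crux of the argument.
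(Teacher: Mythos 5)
Your easy direction ($K\cdot M^\gamma=M \Rightarrow \dim_K(M^\gamma)=\dim_K(M)$) is the same one-line argument as the paper's, and you have correctly located the crux of the converse: a single point $x_o\in M^\gamma$ with $\dim(\kgot_{x_o})=\dim_K(M)$ must be shown to ``propagate'' so that $K\cdot M^\gamma$ exhausts $M$. But your proposal leaves exactly this step unproven, and the tools you gesture at for it (Bialynicki--Birula decomposition, properness of $\Phi$, gradient flow) are not the relevant ones: the lemma is a statement about compact transformation groups, and the paper's proof uses no K\"ahler or moment-map input at all. The first missing ingredient is the slice theorem: near $x_o$ one has $M\simeq \Ucal=K\times_{K_{x_o}}V$, and minimality of $\dim(\kgot_{x_o})$ forces the infinitesimal slice representation of $\kgot_{x_o}$ on $V$ to be trivial (stabilizer algebras of points of the slice are subalgebras of $\kgot_{x_o}$ of at least the same dimension, hence equal to it). Since $\gamma\in\kgot_{x_o}$, the fixed-point set $M^\gamma$ then contains $\Vcal=K^\gamma\times_{K^\gamma_{x_o}}V$, and $K\Vcal=\Ucal$; this is precisely the mechanism by which the maximal-orbit point spreads to an open neighborhood, and it is not supplied by orbit-dimension counting alone.

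The second gap is that nonempty interior is still not enough. You are right that $K\cdot M^\gamma$ is closed (this needs only compactness of $K$ and closedness of $M^\gamma$, not any ``limiting properties of the K\"ahler setting''), but a closed invariant set with nonempty interior need not be all of $M$; what is needed is that $K\cdot M^\gamma$ contains a dense set, and your outline asserts rather than proves that it contains the principal stratum. The paper closes this with principal orbit type theory: let $\hgot_o$ be the principal isotropy algebra, so that $K\cdot M^{\hgot_o}=M$ and $K\cdot M_{\hgot_o}$ is open and dense, where $M_{\hgot_o}=\{m\in M,\ \kgot_m=\hgot_o\}$ and $M^{\hgot_o}=\{m\in M,\ \hgot_o\subset\kgot_m\}$. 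The open set $\Ucal\subset K\cdot M^\gamma$ must meet $K\cdot M_{\hgot_o}$, so after conjugating $\hgot_o$ there is a point of $M^\gamma\cap M_{\hgot_o}$, which says exactly that $\gamma\in\hgot_o$. Hence $M^{\hgot_o}\subset M^\gamma$, and therefore $K\cdot M^\gamma\supset K\cdot M^{\hgot_o}=M$. Without these two steps --- triviality of the slice representation at a minimal-stabilizer point, and the conjugation argument putting $\gamma$ inside the principal isotropy algebra --- your sketch does not close.
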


\begin{proof}
If $K\cdot M^\gamma=M$, we have obviously $\dim_K(M^\gamma)=\dim_K(M)$. Suppose now that there exists $x_o\in M^\gamma$ such as 
$\dim(\kgot_{x_o})= \dim_K(M)$. Then a neighborhood of $Kx_o$ is of the form $\Ucal=K\times_{K_{x_o}}V$ where $\kgot_{x_o}$ acts trivially on 
$V$ and $\gamma\in \kgot_{x_o}$. Then a neighborhood of $K^\gamma x_o$ in $M^\gamma$ is $\Vcal=K^\gamma\times_{K^\gamma_{x_o}}V$. We see then that 
$K\Vcal=\Ucal$: in other words $K\cdot M^\gamma$ contains $x_o$ in its interior. 

To any subalgebra $\hgot\subset \kgot$ we associate the sub-manifolds $M_\hgot=\{m\in M,\hgot=\kgot_m\}$ and $M^\hgot=\{m\in M,\hgot\subset
\kgot_m\}$. There exists a subalgebra $\hgot_o$, unique up to conjugation, such as $K\cdot M^{\hgot_o}=M$, and
$K\cdot M_{\hgot_o}$ is a dense open subset in $M$. The set $K\cdot M^\gamma\cap K\cdot M_{\hgot_o}$ is non-empty, hence, up to a change of $\hgot_o$ by conjugation, 
we have $M^\gamma\cap M_{\hgot_o}\neq\emptyset$: 
in other words $\gamma\in\hgot_o$. Finally we see that $M^{\hgot_o}\subset M^\gamma$ and $K\cdot M^\gamma=M$.
\end{proof}

\medskip

Before starting the description of the collection $(\gamma_i, C_i)_{i\in I}$, we need to recall the following facts. 
There exists a unique open face $\tau$ of the Weyl chamber $\tgot^*_{\geq 0}$ such as
$\Delta(\Phi)\cap\tau$ is dense in $\Delta(\Phi)$ : $\tau$ is called the {\em principal} face \cite{L-M-T-W}. All points in the open face $\tau$ have the same 
connected centralizer $K_\tau$. Let $A_\tau$ be the identity component of the center of $K_\tau$ and $[K_\tau,K_\tau]$
its semi-simple part. Note that we have an identification between the dual of the Lie algebra $\agot_\tau$ of $A_\tau$ and the linear span $\Rbb\tau$ of the face $\tau$. 
The Principal-cross-section Theorem \cite{L-M-T-W} tells us that $Y_\tau :=\Phi^{-1}(\tau)$ is a symplectic  $K_\tau$-manifold, with a trivial action of 
$[K_\tau,K_\tau]$. So, the restriction of $\Phi$ on $Y_\tau$ is a moment map $\Phi_\tau : Y_\tau\to \Rbb\tau$ for the Hamiltonian action of the torus $A_\tau$.

As the following example shows, the slice $Y_\tau$ is not always a complex submanifold of $M$.

\begin{exam} Let $\lambda$ be a regular element in the Weyl chamber, and consider the K\"{a}hler manifold $M=K(2\lambda)\times \overline{K\lambda}$. The moment map 
is defined by the relations $\Phi(\eta,\xi)=\eta-\xi$, and the principal face $\tau$ is the interior of the Weyl chamber. The element $x:=(2\lambda,\lambda)$ belongs to 
$Y_\tau:=\Phi^{-1}(\tau)$. Let $\qgot\subset \kgot$ be the image of $\ngot$ by the map $\Re:\kgot_\Cbb\to \kgot$. We view $\qgot$ as a complex $T$-module through the 
isomorphism $\Re:\ngot_\Cbb\simeq \qgot$. Then, the tangent space $\T_x M$ is equal to $\qgot\times \overline{\qgot}$ and the subspace 
$\T_x Y_\tau$ corresponds to $V:=\{(X,2X), X\in \qgot\}$. We see that $V$ is not a complex subspace of $\qgot\times \overline{\qgot}$.
\end{exam}

\subsubsection*{The Ressayre's pair $(\gamma_\tau, C_\tau)$}

In this section, we  show that a Ressayre's pair describes the fact that $\Delta(\Phi)$ is contained in $\bar{\tau}$. 
Let us denote by $H_\alpha\in \tgot$ the coroot associated to a root $\alpha$.  Let $\Rgot_\tau^+\subset \Rgot^+$ be the set of positive roots that are orthogonal to $\tau$.
\begin{defi}
Consider the following rational vector 
$$
\gamma_\tau:= -\sum_{\alpha\in\Rgot^+_\tau}H_\alpha.
$$
\end{defi}

\begin{lem}The element $\gamma_\tau$ satisfies the following properties:
\begin{itemize}
\item $\langle\xi,\gamma_\tau\rangle\leq 0$ for any $\xi\in\tgot^*_{\geq 0}$.
\item For any $\xi\in\tgot^*_{\geq 0}$, $\langle\xi,\gamma_\tau\rangle=0$ if and only if $\xi\in \bar{\tau}$.
\item $\gamma_\tau$  acts trivially on $Y_\tau$.
\item $\langle\alpha,\gamma_\tau\rangle< 0$ for any $\alpha\in\Rgot^+_\tau$.
\end{itemize}
\end{lem}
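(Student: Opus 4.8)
The plan is to derive all four properties from two facts: that $\gamma_\tau=-2\rho^\vee_\tau$ is minus twice the half-sum $\rho^\vee_\tau:=\tfrac12\sum_{\alpha\in\Rgot^+_\tau}H_\alpha$ of the positive coroots of the Levi subgroup $K_\tau$, and that $\Rgot^+_\tau$ is exactly the set of positive roots vanishing on the face $\tau$. Throughout I would use the convention $\tgot^*_{\geq 0}=\{\xi:\langle\xi,H_\alpha\rangle\geq 0\ \forall\alpha\in\Rgot^+\}$ together with the relation $\langle\xi,H_\alpha\rangle=\tfrac{2(\xi,\alpha)}{(\alpha,\alpha)}$ coming from the invariant inner product, so that $\langle\xi,H_\alpha\rangle=0$ is equivalent to $(\xi,\alpha)=0$.

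The first property is then immediate: for $\xi\in\tgot^*_{\geq 0}$ every term of $\langle\xi,\gamma_\tau\rangle=-\sum_{\alpha\in\Rgot^+_\tau}\langle\xi,H_\alpha\rangle$ is non-positive, since $\Rgot^+_\tau\subset\Rgot^+$. For the second property, the same non-negativity of the terms shows $\langle\xi,\gamma_\tau\rangle=0$ precisely when $\langle\xi,H_\alpha\rangle=0$ for all $\alpha\in\Rgot^+_\tau$, i.e.\ when $\xi\perp\Rgot^+_\tau$. It then remains to recognise this as the condition $\xi\in\bar\tau$. Here I would invoke that $\tau$ is the relative interior of the face of $\tgot^*_{\geq 0}$ cut out by the set $J$ of simple roots orthogonal to $\tau$, that $\Rgot^+_\tau$ consists of the positive roots lying in the span of $J$, and that the closed face is $\bar\tau=\{\xi\in\tgot^*_{\geq 0}:(\xi,\alpha_i)=0\ \forall\alpha_i\in J\}$. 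Since each $\alpha\in\Rgot^+_\tau$ is a non-negative integral combination of the $\alpha_i\in J$ and $(\xi,\cdot)$ is linear, orthogonality to $J$ is equivalent to orthogonality to all of $\Rgot^+_\tau$; this yields $\langle\xi,\gamma_\tau\rangle=0\iff\xi\in\bar\tau$.

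For the third property I would note that the coroots $H_\alpha$, $\alpha\in\Rgot^+_\tau$, lie in the Cartan subalgebra of the semisimple part $[\kgot_\tau,\kgot_\tau]$, so $\gamma_\tau\in[\kgot_\tau,\kgot_\tau]$ (equivalently, it has no component along $\agot_\tau$). By the Principal-cross-section Theorem the semisimple factor $[K_\tau,K_\tau]$ acts trivially on $Y_\tau=\Phi^{-1}(\tau)$, hence so does the subgroup generated by $\gamma_\tau$, giving the claim. Finally, for the fourth property I would use the standard positivity of the half-sum of coroots: a reflection argument inside $K_\tau$ gives $\langle\alpha_i,\rho^\vee_\tau\rangle=1$ for each simple $\alpha_i\in J$, so writing $\alpha=\sum_{\alpha_i\in J}c_i\alpha_i$ with $c_i\in\Nbb$ not all zero gives $\langle\alpha,\rho^\vee_\tau\rangle=\sum_i c_i>0$ and therefore $\langle\alpha,\gamma_\tau\rangle=-2\langle\alpha,\rho^\vee_\tau\rangle<0$ for $\alpha\in\Rgot^+_\tau$.

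I expect the only genuinely delicate step to be the face identification underlying the second property: one must check that $\tau$ is the open face attached to the simple roots in $\Rgot^+_\tau$, and that $\Rgot^+_\tau$ is generated as a root system by its simple members, so that the orthogonality conditions indexed by $\Rgot^+_\tau$ and by $J$ define the same subset of $\tgot^*_{\geq 0}$. The remaining three properties are then short formal consequences of the coroot identity $\gamma_\tau=-2\rho^\vee_\tau$.
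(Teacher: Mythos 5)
Your proof is correct and follows essentially the same route as the paper's: non-negativity of $\langle\xi,H_\alpha\rangle$ on the Weyl chamber for the first two points, $\gamma_\tau\in[\kgot_\tau,\kgot_\tau]$ combined with the Principal-cross-section Theorem for the third, and the reflection computation giving $\langle\alpha_o,\sum_{\alpha\in\Rgot^+_\tau}H_\alpha\rangle=2$ for simple $\alpha_o$ (your identity $\langle\alpha_i,\rho^\vee_\tau\rangle=1$) for the fourth. The only difference is that you spell out the face identification $\{\xi\in\tgot^*_{\geq 0}:\xi\perp\Rgot^+_\tau\}=\bar{\tau}$, which the paper leaves implicit; this is a welcome detail rather than a deviation.
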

\begin{proof}
The first two points follow from the fact that $\langle\xi,H_\alpha\rangle\geq 0$ for any $\xi\in\tgot^*_{\geq 0}$ and any positive roots $\alpha$. The third point is due to the fact that $\gamma_\tau\in [\kgot_\tau,\kgot_\tau]$.

Let $\alpha_o$ be a simple root of $\Rgot^+_\tau$. Let $\sigma_{\alpha_o}:\tgot\to\tgot$ and  $\tilde{\sigma}_{\alpha_o}:\tgot^*\to\tgot^*$ be the associated orthogonal symmetries: 
$\sigma_{\alpha_o}(X)=X- \langle \alpha_o,X\rangle H_{\alpha_o}$, and $\tilde{\sigma}_{\alpha_o}(\xi)=\xi- \langle \xi,H_{\alpha_o}\rangle \alpha_o$. We have $\sigma_{\alpha_o}(H_\beta)=H_{ \tilde{\sigma}_{\alpha_o}(\beta)}$ for any root $\beta$. We also have  $\tilde{\sigma}_{\alpha_o}\left(\Rgot^+_\tau-\{\alpha_o\}\right)=\Rgot_\tau^+-\{\alpha_o\}$ and $\tilde{\sigma}_{\alpha_o}(\alpha_o)=-\alpha_o$. We  then see that 
$$
\sigma_{\alpha_o}\Big(\sum_{\alpha\in\Rgot^+_\tau} H_\alpha\Big)=\sum_{\alpha\in\Rgot^+_\tau}H_\alpha-2H_{\alpha_o}. 
$$
In other words $\langle\alpha_o,\sum_{\alpha\in\Rgot^+_\tau}H_\alpha\rangle=2$. Thus $\langle\alpha_o,\gamma_\tau\rangle<0$. This implies that 
$\langle\alpha,\gamma_\tau\rangle<0$ for any $\alpha\in \Rgot_\tau^+$.
\end{proof}

Let $C_\tau$ be the connected component of $M^{\gamma_\tau}$ containing $Y_\tau$. We start with the following basic result.

\begin{lem}\label{lem:gamma-tau}
\begin{itemize}
\item $\gamma_\tau$ is an admissible element.
\item For any $\xi\in\tgot^*_{\geq 0}$, the inequality $\langle \xi,\gamma_\tau\rangle\geq \langle \Phi(C_\tau),\gamma_\tau\rangle$
is equivalent to $\xi\in\bar{\tau}$.
\end{itemize}
\end{lem}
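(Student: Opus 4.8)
The plan is to treat the two assertions separately, deriving both from the four properties of $\gamma_\tau$ established in the preceding lemma together with the structure of the principal cross-section $Y_\tau$. Throughout I assume $\tau\neq\tgot^*_{\geq 0}$, i.e.\ that $\tau$ is a proper face; otherwise $\bar\tau=\tgot^*_{\geq 0}$, the set $\Rgot^+_\tau$ of positive roots orthogonal to $\tau$ is empty, $\gamma_\tau=0$, and there is nothing to prove. For a proper face, $\Rgot^+_\tau$ consists exactly of the positive roots of the semisimple part $[\kgot_\tau,\kgot_\tau]$, which is non-zero, so $\gamma_\tau\neq 0$; being an integral combination of the coroots $H_\alpha$, it is rational. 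Thus the only substantial point in the first bullet is the dimension condition $\dim_K(M^{\gamma_\tau})-\dim_K(M)\in\{0,1\}$.

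For that dimension condition I would argue that the difference is in fact exactly $0$. Since $\gamma_\tau$ acts trivially on $Y_\tau$, we have $Y_\tau\subset M^{\gamma_\tau}$, hence $K\cdot Y_\tau\subset K\cdot M^{\gamma_\tau}$. The first step is to invoke the principal cross-section theory of \cite{L-M-T-W}: because $\Delta(\Phi)\cap\tau$ is dense in $\Delta(\Phi)$, the $K$-sweep $K\cdot Y_\tau=\Phi^{-1}(K\tau)$ is dense in $M$. The second step is purely topological: as $K$ is compact, $K\cdot M^{\gamma_\tau}$ is a closed subset of $M$ (if $k_n m_n\to p$ with $m_n\in M^{\gamma_\tau}$, a subsequence of the $k_n$ converges to some $k$, whence $m_n\to k^{-1}p\in M^{\gamma_\tau}$ and $p\in K\cdot M^{\gamma_\tau}$). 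A closed set containing the dense set $K\cdot Y_\tau$ must be all of $M$, so $K\cdot M^{\gamma_\tau}=M$. Lemma \ref{lem:admissible} then gives $\dim_K(M^{\gamma_\tau})=\dim_K(M)$, and $\gamma_\tau$ is admissible.

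The second bullet reduces to computing $\langle\Phi(C_\tau),\gamma_\tau\rangle$. This number is well defined because $\langle\Phi,\gamma_\tau\rangle$ is locally constant on $M^{\gamma_\tau}$ (its differential is $\iota((\gamma_\tau)_M)\Omega$, which vanishes where $(\gamma_\tau)_M=0$) and $C_\tau$ is connected. Evaluating at any $y\in Y_\tau\subset C_\tau$, we have $\Phi(y)\in\tau\subset\bar\tau$, and the preceding lemma gives $\langle\xi,\gamma_\tau\rangle=0$ for every $\xi\in\bar\tau$; hence $\langle\Phi(C_\tau),\gamma_\tau\rangle=0$. Consequently the inequality $\langle\xi,\gamma_\tau\rangle\geq\langle\Phi(C_\tau),\gamma_\tau\rangle$ is simply $\langle\xi,\gamma_\tau\rangle\geq 0$. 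For $\xi\in\tgot^*_{\geq 0}$ the preceding lemma asserts $\langle\xi,\gamma_\tau\rangle\leq 0$, with equality precisely when $\xi\in\bar\tau$; combining the two inequalities, $\langle\xi,\gamma_\tau\rangle\geq 0$ holds if and only if $\langle\xi,\gamma_\tau\rangle=0$, i.e.\ if and only if $\xi\in\bar\tau$, as claimed.

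The main obstacle is the density of $\Phi^{-1}(K\tau)$ in $M$, which is the one place where genuine convexity/cross-section input is needed; once it is granted, both bullets follow formally from Lemma \ref{lem:admissible} and the four elementary properties of $\gamma_\tau$. I would be careful to quote the principal cross-section theorem in the form that identifies $\Phi^{-1}(K\tau)$ with $K\cdot Y_\tau$ and asserts its density, rather than merely the openness of a thicker cross-section $\Phi^{-1}(K\tilde\tau)$, since it is exactly the points of $Y_\tau$ --- where $[\kgot_\tau,\kgot_\tau]$, and thus $\gamma_\tau$, acts trivially --- that must be dense up to the $K$-action.
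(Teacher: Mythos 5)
Your proof is correct and follows essentially the same route as the paper: establish $K\cdot M^{\gamma_\tau}=M$ and invoke Lemma \ref{lem:admissible} for admissibility, then note $\langle\Phi(C_\tau),\gamma_\tau\rangle=0$ and conclude via the first two points of the preceding lemma. The only difference is that you supply the details the paper leaves implicit (density of $K\cdot Y_\tau$ from the principal cross-section theorem, closedness of $K\cdot M^{\gamma_\tau}$, and local constancy of $\langle\Phi,\gamma_\tau\rangle$ on $C_\tau$), which is a faithful expansion rather than a different argument.
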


\begin{proof}
Since $K\cdot M^{\gamma_\tau}=M$, $\gamma_\tau$ is an admissible element (see Lemma \ref{lem:admissible}). 
Now we consider the inequality $\langle \xi,\gamma_\tau\rangle\geq \langle \Phi(C_\tau),\gamma_\tau\rangle$ for an element $\xi\in\tgot^*_{\geq 0}$. First we notice that
$\langle \Phi(C_\tau),\gamma_\tau\rangle=0$, and the first two points of the previous Lemma tell us that $\langle \xi,\gamma_\tau\rangle\geq 0$ is equivalent to 
$\xi\in\bar{\tau}$.
\end{proof}

\begin{prop}\label{prop:gamma-tau}
$(\gamma_\tau, C_\tau)$ is a regular $B$-Ressayre's pair such as $\Phi(C_\tau)\cap\tgot^*_{\geq 0}\neq \emptyset$.
\end{prop}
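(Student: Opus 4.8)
The plan is to verify the two defining conditions of Definition \ref{def:B-ressayre-pair} for the pair $(\gamma_\tau,C_\tau)$, and then check the regularity. The key observation is that $\gamma_\tau$ is a very special admissible element, already shown to act trivially on the principal slice $Y_\tau$, with $\langle\alpha,\gamma_\tau\rangle<0$ for every $\alpha\in\Rgot^+_\tau$ and $\langle\alpha,\gamma_\tau\rangle\geq 0$ for the remaining positive roots. This sign pattern controls exactly how the Borel and the parabolic $P_{\gamma_\tau}$ interact with $C_\tau^-$.

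First I would translate the problem, via Proposition \ref{prop:K-B-RP}, into the equivalent statement that $(\gamma_\tau,C_{N})$ is a regular $K_\Cbb$-Ressayre's pair on $N:=M\times K_\Cbb/B$. The most efficient route, however, is likely to realize $(\gamma_\tau,C_\tau)$ as a minimal-type pair. I would try to produce an auxiliary K\"{a}hler-Hamiltonian structure whose minimal type $\lambda_{\mini}$ equals (a positive rational multiple of) $\gamma_\tau$ and whose associated component is $C_\tau$, so that Proposition \ref{prop:minimal-type-non-zero} (which gives precisely dominance of $\pi_{\gamma_\tau}$ and the existence of the open dense $U$) applies directly. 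This is the cleanest path since the needed conclusion matches that proposition verbatim. The construction should mimic \S\ref{sec:construction-RP}: perturb the moment map in the direction of $\gamma_\tau$, using the fact that $\gamma_\tau\in[\kgot_\tau,\kgot_\tau]$ so that pushing $0$ off the principal face in that direction moves it out of $\Delta(\Phi)$ precisely when $\Delta(\Phi)\subset\bar\tau$.

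Next I would verify regularity. One part is already available: Lemma \ref{lem:gamma-tau} establishes that $\gamma_\tau$ is admissible, i.e. rational with $\dim_K(M^{\gamma_\tau})-\dim_K(M)\in\{0,1\}$; since $C_\tau$ is a connected component of $M^{\gamma_\tau}$ containing the principal slice, one has $\dim_K(C_\tau)=\dim_K(M^{\gamma_\tau})$, giving $\dim_K(C_\tau)-\dim_K(M)\in\{0,1\}$. Rationality of $\gamma_\tau$ is clear from its definition as a sum of coroots. The remaining claim $\Phi(C_\tau)\cap\tgot^*_{\geq 0}\neq\emptyset$ follows because $C_\tau\supset Y_\tau=\Phi^{-1}(\tau)$ and $\tau\subset\tgot^*_{\geq 0}$, so any point of $Y_\tau$ maps into the Weyl chamber.

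The main obstacle I expect is making the two diffeomorphism/dominance conditions rigorous rather than quoting them. If the minimal-type reduction does not apply cleanly — for instance if the naive perturbation fails to make $\gamma_\tau$ the exact minimal type — I would instead verify the conditions by hand at a point $x\in Y_\tau$, showing $\T\pi_{\gamma_\tau}$ is an isomorphism there and then invoking Lemma \ref{lem:C-N-reg} to spread the regular locus to a $P_{\gamma_\tau}$-saturated open dense set. The delicate point is the injectivity/diffeomorphism statement for $B\times_{B\cap P_{\gamma_\tau}}U\simeq BU$: one must use the strict negativity $\langle\alpha,\gamma_\tau\rangle<0$ on $\Rgot^+_\tau$ to ensure that the only unipotent directions in $B$ that fix $C_\tau^-$ are exactly those in $B\cap P_{\gamma_\tau}$, so that the $B$-orbit map has no collapsing and the generic fibers are trivial. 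Handling this transversality carefully, and confirming that the exceptional locus does not meet a dense subset intersecting $C_\tau$, is where the real work lies.
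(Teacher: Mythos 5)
Your proposal follows essentially the same route as the paper's proof: the paper takes $a'\in\Delta(\Phi)\cap\tau$, perturbs it to $a(n):=a'-\tfrac{1}{n}\gamma_\tau$ (regular, outside $\Delta(\Phi)$, with orthogonal projection $a'$), and then invokes the construction of \S\ref{sec:construction-RP} — i.e.\ Proposition \ref{prop:ressayre-pair-gamma-a}, which is exactly the minimal-type argument via Propositions \ref{prop:minimal-type-non-zero} and \ref{prop:K-B-RP} that you outline — together with Lemma \ref{lem:gamma-tau} for admissibility. Your regularity check and the observation that $Y_\tau\subset C_\tau$ gives $\Phi(C_\tau)\cap\tgot^*_{\geq 0}\neq\emptyset$ are also as in the paper, so your fallback "by hand" verification is not needed.
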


\begin{proof} Using the identification $\tgot\simeq \tgot^*$, we view $\gamma_\tau$ as a rational element of $\tgot^*$ orthogonal to $\tau$. 
Let $a'\in\Delta(\Phi)\cap\tau$ and consider the elements $a(n):=a'-\frac{1}{n}\gamma_\tau$ for $n\geq 1$. 
We notice that for $n$ large enough
\begin{enumerate}
\item $a(n)$ is a regular element of the Weyl chamber,
\item $a(n)\notin \Delta(\Phi)$,
\item $a'$ is the orthogonal projection of $a(n)$ on $\Delta(\Phi)$.
\end{enumerate}
So we can exploit the results of \S \ref{sec:construction-RP} with the elements $a(n)$ for $n>>1$. Proposition  
\ref{prop:ressayre-pair-gamma-a} and Lemma \ref{lem:gamma-tau} tell us that $(\gamma_\tau, C_\tau)$ is a regular $B$-Ressayre's pair.
\end{proof}

\subsubsection*{The Ressayre's pairs $(\gamma^\pm_l, C^\pm_l)$}

Let $S_\tau$ be the identity component of the principal stabilizer for the action of $A_\tau$ on $Y_\tau$: let $\sgot_\tau$ be its Lie algebra. 
The convex polyhedron $\Delta(\Phi)\subset \bar{\tau}$ generates an affine subspace $\Pi$ of $\Rbb\tau$ with direction $(\sgot_\tau)^\perp$.
In this section, we  show that a finite family of Ressayre's pairs describe the fact that $\Delta(\Phi)$ is contained in the affine subspace $\Pi$.

Let $(\eta_l)_{l\in L}$ be a rational basis of $\sgot_\tau$. We consider then the rational elements
$$
\gamma^\pm_l:= \pm \eta_l +\gamma_\tau,\quad l\in L.
$$
For any $l\in L$, we denote by $C^\pm_l$ the connected component of $M^{\gamma^\pm_l}$ containing $Y_\tau$.

\begin{lem}\label{lem:gamma-S}
\begin{itemize}
\item Any $\gamma^\pm_l$ is an admissible element.
\item The set of elements $\xi\in\Rbb\tau$ satisfying the inequalities
\begin{equation}\label{eq:gamma-S}
\langle \xi,\gamma_l^\pm\rangle\geq \langle \Phi(C^\pm_l),\gamma^\pm_j\rangle, \quad \forall l\in L
\end{equation}
corresponds to the affine subspace $\Pi$.
\end{itemize}
\end{lem}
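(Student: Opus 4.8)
The plan is to extract both assertions from one geometric fact: that each $\gamma^\pm_l$ fixes $Y_\tau$ pointwise. First I would check this. The element $\gamma_\tau$ acts trivially on $Y_\tau$ by the previous lemma, while $\eta_l\in\sgot_\tau$ lies in the Lie algebra of the identity component $S_\tau$ of the principal stabilizer of the $A_\tau$-action on $Y_\tau$. Since $A_\tau$ is a torus, the connected subgroup $S_\tau$ fixes every principal point of $Y_\tau$; its fixed-point set is closed and contains the dense principal locus of the connected manifold $Y_\tau$ (connectedness coming from the Principal-cross-section Theorem), hence $S_\tau$, and in particular $\eta_l$, acts trivially on all of $Y_\tau$. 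Therefore $\gamma^\pm_l=\pm\eta_l+\gamma_\tau$ acts trivially on $Y_\tau$, so $Y_\tau\subset M^{\gamma^\pm_l}$ and the component $C^\pm_l$ is well defined.

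Admissibility then follows exactly as for $\gamma_\tau$. The saturation $K\cdot M^{\gamma^\pm_l}$ is closed (image of the map $K\times M^{\gamma^\pm_l}\to M$ with compact fibers) and contains the dense subset $K\cdot Y_\tau$; hence $K\cdot M^{\gamma^\pm_l}=M$. By Lemma \ref{lem:admissible} this gives $\dim_K(M^{\gamma^\pm_l})=\dim_K(M)$, so the difference is $0\in\{0,1\}$, and as $\gamma^\pm_l$ is rational, $\gamma^\pm_l$ is admissible.

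For the second point I would first evaluate the constants. Since $C^\pm_l\subset M^{\gamma^\pm_l}$, the function $\langle\Phi,\gamma^\pm_l\rangle$ has differential $\iota((\gamma^\pm_l)_M)\Omega$, which vanishes along the fixed set, so it is constant on the connected component $C^\pm_l$. Fixing any $y_0\in Y_\tau\subset C^\pm_l$ and using $\langle\Phi(y_0),\gamma_\tau\rangle=0$ (because $\Phi(y_0)\in\bar{\tau}$ and $\gamma_\tau\perp\tau$), I obtain $\langle\Phi(C^\pm_l),\gamma^\pm_l\rangle=\pm\langle\Phi(y_0),\eta_l\rangle$. For $\xi\in\Rbb\tau$ one likewise has $\langle\xi,\gamma^\pm_l\rangle=\pm\langle\xi,\eta_l\rangle$ since $\langle\xi,\gamma_\tau\rangle=0$. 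Thus the two inequalities (\ref{eq:gamma-S}) attached to $\gamma^+_l$ and $\gamma^-_l$ read $\langle\xi,\eta_l\rangle\geq\langle\Phi(y_0),\eta_l\rangle$ and $\langle\xi,\eta_l\rangle\leq\langle\Phi(y_0),\eta_l\rangle$, and together are equivalent to the single equality $\langle\xi-\Phi(y_0),\eta_l\rangle=0$.

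Finally, since $(\eta_l)_{l\in L}$ is a basis of $\sgot_\tau$, the full system over $l\in L$ is equivalent to $\xi-\Phi(y_0)\in(\sgot_\tau)^\perp$; that is, the set of solutions in $\Rbb\tau$ is the affine subspace $\Phi(y_0)+(\sgot_\tau)^\perp$. As $\Phi(y_0)\in\Delta(\Phi)\subset\Pi$ and $\Pi$ has direction $(\sgot_\tau)^\perp$ by construction, these two affine subspaces of $\Rbb\tau$ share a point and a direction, hence coincide, giving exactly $\Pi$. The main obstacle I anticipate is the careful justification that $\eta_l$ fixes \emph{all} of $Y_\tau$ and not only its principal locus: this rests on the connectedness of $Y_\tau$ and on the identity component of the principal stabilizer being contained in every $A_\tau$-stabilizer along $Y_\tau$, together with the density of $K\cdot Y_\tau$ in $M$ used in the admissibility step.
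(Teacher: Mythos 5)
Your proof is correct and takes essentially the same route as the paper: admissibility comes from Lemma \ref{lem:admissible} together with the density of $K\cdot Y_\tau$ (which the paper condenses into the one-line claim $K\cdot M^{S_\tau}=M$), and the second point is obtained by evaluating the constant $\langle\Phi(C^\pm_l),\gamma^\pm_l\rangle$ at a point $y_0\in Y_\tau$, using $\langle\xi,\gamma_\tau\rangle=0$ on $\Rbb\tau$, and pairing the $+$ and $-$ inequalities to get $\xi-\Phi(y_0)\in(\sgot_\tau)^\perp$, hence $\xi\in\Pi$. The only difference is that you make explicit the details the paper leaves implicit (that $S_\tau$ fixes all of $Y_\tau$ by the closed-fixed-set argument for torus actions, and that the saturation $K\cdot M^{\gamma^\pm_l}$ is closed), which is a useful but not substantively different elaboration.
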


\begin{proof} Since $K\cdot M^{S_\tau}=M$, we have $K\cdot M^{\gamma_l^\pm}=M$ for any $l\in L$. Hence, $(\gamma^\pm_l)_{l\in L}$ are  admissible elements.
Let $\xi_o\in \Phi(Y_\tau)\subset \Pi$. Since $\langle\xi,\gamma_\tau\rangle=0, \forall \xi\in\Rbb\tau$, the inequalities (\ref{eq:gamma-S}) are equivalent to
$\pm\langle \xi-\xi_o,\eta_l\rangle\geq 0,  \forall l\in L$: in other words $\xi-\xi_o\in (\sgot_\tau)^\perp$, so
$\xi\in\Pi$.
\end{proof}

\medskip

\begin{prop}\label{prop:gamma-S}
For any $l\in L$, $(\gamma^\pm_l, C^\pm_l)$ is a regular $B$-Ressayre's pair such as $\Phi(C^\pm_l)\cap\tgot^*_{\geq 0}\neq \emptyset$.
\end{prop}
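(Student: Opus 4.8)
The plan is to mimic the proof of Proposition~\ref{prop:gamma-tau} and reduce the statement to the construction of \S\ref{sec:construction-RP} via Proposition~\ref{prop:ressayre-pair-gamma-a}. I would fix a point $a'\in\Delta(\Phi)\cap\tau$ and, for $n\geq 1$, consider
\[
a(n):=a'-\tfrac{1}{n}\gamma^\pm_l .
\]
The aim is to verify that, for $n$ large, $a(n)$ is a regular element of $\tgot^*_{>0}$ with $a(n)\notin\Delta(\Phi)$ and whose orthogonal projection onto $\Delta(\Phi)$ equals $a'$. Granting this, $\gamma_{a(n)}=a'-a(n)=\tfrac1n\gamma^\pm_l$ is a positive rational multiple of $\gamma^\pm_l$, so Proposition~\ref{prop:ressayre-pair-gamma-a} yields a $B$-Ressayre's pair $(\gamma_{a(n)},C_{a(n)})$, which I will then identify with $(\gamma^\pm_l,C^\pm_l)$.

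The one genuinely new ingredient compared with Proposition~\ref{prop:gamma-tau} is that $\gamma^\pm_l$ is orthogonal to the entire affine hull $\Pi$ of $\Delta(\Phi)$, and not merely to $\Rbb\tau$. Writing $W\subset\Rbb\tau$ for the direction of $\Pi$, which is $(\sgot_\tau)^\perp$ by construction, any $w\in W$ is orthogonal to $\eta_l\in\sgot_\tau$ and, lying in $\Rbb\tau$, is orthogonal to $\gamma_\tau$; hence $\langle\gamma^\pm_l,w\rangle=0$. Since $a(n)-a'=-\tfrac1n\gamma^\pm_l$ is then orthogonal to $W$ and nonzero, a one-line Pythagoras estimate gives $\|a(n)-z\|^2=\|a(n)-a'\|^2+\|a'-z\|^2$ for every $z\in\Delta(\Phi)\subset\Pi$, so $a'$ is the orthogonal projection of $a(n)$ for all $n$, and in particular $a(n)\notin\Pi\supset\Delta(\Phi)$. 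Regularity of $a(n)$ for $n\gg1$ is checked root by root exactly as in Proposition~\ref{prop:gamma-tau}: for $\alpha\in\Rgot^+_\tau$ one has $\langle\eta_l,H_\alpha\rangle=0$ and $\langle\gamma_\tau,H_\alpha\rangle<0$, whence $\langle a(n),H_\alpha\rangle>0$; for $\alpha\notin\Rgot^+_\tau$ one uses $\langle a',H_\alpha\rangle>0$ (as $a'\in\tau$) and lets $n\to\infty$.

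It then remains to match the data and to deduce regularity of the pair. As $\gamma_{a(n)}$ and $\gamma^\pm_l$ generate the same subtorus, $M^{\gamma_{a(n)}}=M^{\gamma^\pm_l}$; and since both $\gamma_\tau$ and $\eta_l\in\sgot_\tau$ act trivially on $Y_\tau$, one has $Y_\tau\subset M^{\gamma^\pm_l}$, so $\Phi^{-1}(a')\subset Y_\tau\subset C^\pm_l$. Hence the connected component $C_{a(n)}$ of $M^{\gamma^\pm_l}$ containing $\Phi^{-1}(a')$ is exactly $C^\pm_l$, and $a'\in\Phi(C^\pm_l)\cap\tgot^*_{\geq0}$ shows this intersection is nonempty. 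Finally, Lemma~\ref{lem:gamma-S} gives the rationality and admissibility of $\gamma^\pm_l$, and admissibility together with Lemma~\ref{lem:admissible} yields $\dim_K(M^{\gamma^\pm_l})=\dim_K(M)$; since $Y_\tau\subset C^\pm_l$ meets the principal $K$-stratum of $M$, the squeeze $\dim_K(M)\le\dim_K(C^\pm_l)\le\dim_K(Y_\tau)=\dim_K(M)$ gives $\dim_K(C^\pm_l)-\dim_K(M)=0$, so $(\gamma^\pm_l,C^\pm_l)$ is a \emph{regular} $B$-Ressayre's pair.

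I expect the main obstacle to be the orthogonal-projection step, since it is precisely there that the defining property of $\Pi$---that its direction is $(\sgot_\tau)^\perp$, i.e.\ the role of the principal stabilizer $S_\tau$---is what pushes $\gamma^\pm_l$ off $\Delta(\Phi)$ in an orthogonal direction. The supporting fact, that $\sgot_\tau$ acts trivially on all of $Y_\tau$ (so that $C^\pm_l$ is well defined and does contain $\Phi^{-1}(a')$), should be handled by recalling that for a Hamiltonian torus action the identity component of the principal isotropy of $A_\tau$ on $Y_\tau$ coincides with the identity component of the kernel of that action.
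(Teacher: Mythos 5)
Your proposal follows exactly the paper's own proof: fix $a'\in\Delta(\Phi)\cap\tau$, perturb to $a(n)=a'-\tfrac{1}{n}\gamma_l^\pm$, check that for $n$ large $a(n)$ is regular, lies outside $\Delta(\Phi)$, and has $a'$ as its orthogonal projection, then invoke the construction of \S\ref{sec:construction-RP} through Proposition \ref{prop:ressayre-pair-gamma-a}. The paper states these three verifications without proof, and your additions (the Pythagoras argument using that $\gamma_l^\pm$ is orthogonal to the direction $(\sgot_\tau)^\perp$ of $\Pi$, the identification $C_{a(n)}=C_l^\pm$ via $\Phi^{-1}(a')\subset Y_\tau$, and the squeeze giving $\dim_K(C_l^\pm)=\dim_K(M)$) correctly supply exactly what is left implicit there.
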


\begin{proof}
The proof follows the lines of the proof of Proposition \ref{prop:gamma-tau}.  
Let $a'\in\Delta(\Phi)\cap\tau$ and consider the elements $a_l^\pm(n):=a'-\frac{1}{n}\gamma_l^\pm$ for $n\geq 1$. 
We notice that for $n$ is large enough
\begin{enumerate}
\item $a_l^\pm(n)$ is a regular element of the Weyl chamber,
\item $a_l^\pm(n)\notin \Delta(\Phi)$,
\item $a'$ is the orthogonal projection of $a_l^\pm(n)$ on $\Delta(\Phi)$.
\end{enumerate}
So we can exploit the results of \S \ref{sec:construction-RP} with the element $a_l^\pm(n)$ for $n>>1$. Proposition 
\ref{prop:ressayre-pair-gamma-a} tells us that $(\gamma^\pm_l, C^\pm_l)$ is a $B$-Ressayre's pair. 
\end{proof}

\subsubsection*{The Ressayre's pairs $(\gamma_F, C_F)$}

In this section, we show that the polyhedron $\Delta(\Phi)$, viewed as a subset of the affine subspace $\Pi$, is the intersection of 
the cone $\Pi\cap\tgot^*_{\geq 0}$, with a collection of half spaces parametrized by a family of $B$-Ressayre's pairs.

\begin{defi}
An open facet $F$ of $\Delta(\Phi)$ is called {\em non trivial} if $F\subset\tau$. We denote by $\Fcal(\Phi)$ the set of non trivial open facets of $\Delta(\Phi)$.
\end{defi}

Let $F\in \Fcal(\Phi)$. There exists $\eta_F\in\agot_\tau\setminus\sgot_\tau$ such as the affine space generated by $F$ is 
$\Pi_F=\{\xi\in \Pi,\langle\xi,\eta_F\rangle=\langle\xi_F,\eta_F\rangle\}$ for any $\xi_F\in F$. The vector $\eta_F$ is chosen so that 
$\Delta(\Phi)\subset \{\xi\in \Pi,\langle\xi,\eta_F\rangle\geq\langle\xi_F,\eta_F\rangle\}$.

By definition of the set $\Fcal(\Phi)$, we have the following description of the Kirwan polyhedron
\begin{equation}\label{eq:description-delta-K-M}
\Delta(\Phi)=  \bigcap_{F\in \Fcal(\Phi)}\left\{\xi\in \Pi,\langle\xi,\eta_F\rangle\geq\langle\xi_F,\eta_F\rangle\right\}\hspace{2mm}\bigcap\hspace{2mm}\tgot^*_{\geq 0}.
\end{equation}

The pull-back $Z_F:=\Phi_\tau^{-1}(F)$ is a connected component of the submanifold $Y_\tau^{\eta_F}$ fixed by $\eta_F$. Here, the generic infinitesimal 
stabilizer of the $A_\tau$-action on $Z_F$ is $\sgot_\tau\oplus\Rbb \eta_F$, hence the element $\eta_F$ can be chosen in $\wedge$. Consider an element $x_F\in Z_F$ 
with stabilizer subalgebra $\kgot_{x_F}$ equal  $\sgot_\tau\oplus\Rbb \eta_F\oplus [\kgot_\tau,\kgot_\tau]$.

We consider now the rational elements
$$
\gamma_F=\eta_F+\gamma_\tau,\quad F\in\Fcal(\Phi).
$$

Let $C_F$ be the connected component of $M^{\gamma_F}$ containing $Z_F$.

\begin{lem}\label{lem:gamma-F}
\begin{itemize}
\item $\gamma_F$ is an admissible element, for any $F\in \Fcal(\Phi)$.
\item The set of elements $\xi\in\Pi\cap \tgot^*_{\geq 0}$ satisfying the inequalities
\begin{equation}\label{eq:gamma-F}
\langle \xi,\gamma_F\rangle\geq \langle \Phi(C_F),\gamma_F\rangle, \quad \forall F\in \Fcal(\Phi)
\end{equation}
corresponds to $\Delta(\Phi)$.
\end{itemize}
\end{lem}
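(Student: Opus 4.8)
The plan is to dispatch the two bullets in turn: the first is a dimension count, the second a direct substitution into the description (\ref{eq:description-delta-K-M}) of the Kirwan polyhedron.

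For the \emph{admissibility} of $\gamma_F=\eta_F+\gamma_\tau$, I would first observe that it is rational, since $\eta_F\in\wedge$ and $\gamma_\tau$ is rational, and non-zero: as $\eta_F$ lies in the center $\agot_\tau$ of $\kgot_\tau$ whereas $\gamma_\tau\in[\kgot_\tau,\kgot_\tau]$, the equality $\gamma_F=0$ would force $\eta_F=0$, contradicting $\eta_F\in\agot_\tau\setminus\sgot_\tau$. It then remains to verify $\dim_K(M^{\gamma_F})-\dim_K(M)\in\{0,1\}$. The inequality $\dim_K(M^{\gamma_F})\geq\dim_K(M)$ is automatic, since $M^{\gamma_F}\subset M$ and $\dim_K$ is a minimum of stabilizer dimensions. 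For the reverse bound I would use the point $x_F$: since $x_F\in Z_F\subset C_F\subset M^{\gamma_F}$ and, by construction, $\kgot_{x_F}=\sgot_\tau\oplus\Rbb\eta_F\oplus[\kgot_\tau,\kgot_\tau]$, we get $\dim_K(M^{\gamma_F})\leq\dim(\kgot_{x_F})=\dim\sgot_\tau+1+\dim[\kgot_\tau,\kgot_\tau]$.

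The count then closes by identifying $\dim_K(M)$. By the principal-cross-section structure, a generic point of $M$ is $K$-conjugate to a point of $Y_\tau$; since $[K_\tau,K_\tau]$ acts trivially on $Y_\tau$ and the principal stabilizer of the $A_\tau$-action on $Y_\tau$ has Lie algebra $\sgot_\tau$, the generic stabilizer of the $K$-action on $M$ is $\sgot_\tau\oplus[\kgot_\tau,\kgot_\tau]$, so that $\dim_K(M)=\dim\sgot_\tau+\dim[\kgot_\tau,\kgot_\tau]$. Combining with the previous bound gives $\dim_K(M^{\gamma_F})\leq\dim_K(M)+1$, hence $\dim_K(M^{\gamma_F})-\dim_K(M)\in\{0,1\}$ and $\gamma_F$ is admissible. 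This is exactly where the argument must differ from Lemmas \ref{lem:gamma-tau} and \ref{lem:gamma-S}: because $\eta_F\notin\sgot_\tau$ one no longer has $K\cdot M^{\gamma_F}=M$, and the difference may genuinely equal $1$.

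For the \emph{inequality description}, I would simply rewrite both sides for $\xi\in\Pi\subset\Rbb\tau$. Since $\gamma_\tau$ is orthogonal to $\Rbb\tau$, we have $\langle\xi,\gamma_\tau\rangle=0$, so $\langle\xi,\gamma_F\rangle=\langle\xi,\eta_F\rangle$. On the other side, $\langle\Phi(\cdot),\gamma_F\rangle$ is constant on the connected component $C_F$ of $M^{\gamma_F}$; evaluating on $Z_F\subset C_F$, where $\Phi(Z_F)=F\subset\tau$, and using once more $\gamma_\tau\perp\Rbb\tau$ together with $F\subset\Pi_F$, I obtain $\langle\Phi(C_F),\gamma_F\rangle=\langle\xi_F,\eta_F\rangle$. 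Thus for $\xi\in\Pi$ the inequality $\langle\xi,\gamma_F\rangle\geq\langle\Phi(C_F),\gamma_F\rangle$ is literally $\langle\xi,\eta_F\rangle\geq\langle\xi_F,\eta_F\rangle$. Intersecting over all $F\in\Fcal(\Phi)$ and with $\tgot^*_{\geq 0}$, the description (\ref{eq:description-delta-K-M}) identifies the solution set with $\Delta(\Phi)$. The only genuinely delicate point is the admissibility count, namely the identification of $\dim_K(M)$ with the dimension of the generic stabilizer furnished by the principal cross-section and its comparison with $\dim\kgot_{x_F}$; once this is in place, the inequality description is a purely formal computation resting on $\gamma_\tau\perp\Rbb\tau$ and (\ref{eq:description-delta-K-M}).
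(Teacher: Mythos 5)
Your proof is correct and takes essentially the same route as the paper: admissibility is obtained from the stabilizer $\kgot_{x_F}=\sgot_\tau\oplus\Rbb\eta_F\oplus[\kgot_\tau,\kgot_\tau]$ compared against $\dim_K(M)=\dim\sgot_\tau+\dim[\kgot_\tau,\kgot_\tau]$, and the second bullet by substituting $\langle\xi,\gamma_\tau\rangle=0$ on $\Rbb\tau$ into the description (\ref{eq:description-delta-K-M}). You merely fill in details the paper leaves implicit (rationality, non-vanishing of $\gamma_F$, the identification of $\dim_K(M)$ via the principal cross-section), and you settle for the bound $\dim_K(M^{\gamma_F})-\dim_K(M)\in\{0,1\}$ where the paper asserts the exact equality $\dim_K(C_F)=\dim_K(M)+1$; the weaker bound is all that admissibility and regularity require.
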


\begin{proof}
The element $x_F$ belongs to $C_F$, and the stabilizer subalgebra $\kgot_{x_F}$ is equal to $\sgot_\tau\oplus\Rbb \gamma_F\oplus [\kgot_\tau,\kgot_\tau]$. 
Hence $\dim_K(C_F)=\dim_K(M)+1$. The first point is settled. The last assertion of the proposition is a consequence of (\ref{eq:description-delta-K-M}).
\end{proof}

\begin{prop}\label{prop:gamma-F}
For any $F\in \Fcal(\Phi)$, the couple $(\gamma_F, C_F)$ is a regular $B$-Ressayre's pair such as $\Phi(C_F)\cap\tgot^*_{\geq 0}\neq \emptyset$.
\end{prop}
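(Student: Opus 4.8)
The plan is to mimic exactly the strategy used in the proofs of Propositions \ref{prop:gamma-tau} and \ref{prop:gamma-S}, applying the general construction of \S \ref{sec:construction-RP} to a carefully chosen sequence of regular elements approaching a point of the facet $F$. The key idea is that Proposition \ref{prop:ressayre-pair-gamma-a} produces a $B$-Ressayre's pair $(\gamma_a, C_a)$ whenever $a\in\tgot^*_{\geq 0}$ is regular, $a\notin\Delta(\Phi)$, with $\gamma_a=a'-a$ where $a'$ is the orthogonal projection of $a$ on $\Delta(\Phi)$. So the whole task reduces to exhibiting a sequence $a_F(n)$ whose projection data realizes the pair $(\gamma_F, C_F)$ and certifies the regularity and the nonemptiness condition $\Phi(C_F)\cap\tgot^*_{\geq 0}\neq\emptyset$.

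First I would pick a base point $\xi_F\in F$ lying in the relative interior of the facet, and set
$$
a_F(n):=\xi_F-\frac{1}{n}\gamma_F,\qquad n\geq 1,
$$
by analogy with the earlier constructions. I would then verify, for $n$ large enough, the three conditions needed to invoke \S \ref{sec:construction-RP}: that $a_F(n)$ is a regular element of the Weyl chamber, that $a_F(n)\notin\Delta(\Phi)$, and that $\xi_F$ is the orthogonal projection of $a_F(n)$ onto $\Delta(\Phi)$. The decomposition $\gamma_F=\eta_F+\gamma_\tau$ is what makes these checks work: the $\gamma_\tau$ summand pushes $a_F(n)$ strictly into the interior of $\tgot^*_{\geq 0}$ (since $\langle\alpha,\gamma_\tau\rangle<0$ for the roots orthogonal to $\tau$ and $\gamma_\tau$ is orthogonal to $\Rbb\tau$), guaranteeing regularity, while the $\eta_F$ summand moves $a_F(n)$ off the supporting hyperplane $\Pi_F$ of $\Delta(\Phi)$ in the direction pointing out of the half-space $\langle\xi,\eta_F\rangle\geq\langle\xi_F,\eta_F\rangle$, so that $a_F(n)$ leaves the polyhedron with $\xi_F$ as nearest point. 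Here I expect to need the fact that $F$ is a genuine facet so that the outward normal cone at $\xi_F$ contains a neighborhood of the $\eta_F$-direction within $\Pi$, which secures the projection claim for large $n$.

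Once these three conditions hold, Proposition \ref{prop:ressayre-pair-gamma-a} immediately gives that $(\gamma_{a_F(n)}, C_{a_F(n)})=(\gamma_F, C_F)$ is a $B$-Ressayre's pair (up to the positive-scalar identification $a_F(n)'-a_F(n)=\frac{1}{n}\gamma_F\sim\gamma_F$, and with $C_{a_F(n)}$ the component of $M^{\gamma_F}$ containing $\Phi^{-1}(\xi_F)\supset Z_F$, which coincides with $C_F$). The regularity is then supplied by Lemma \ref{lem:gamma-F}, which already establishes that $\gamma_F$ is admissible with $\dim_K(C_F)=\dim_K(M)+1$ and that $\gamma_F$ is rational, so the hypotheses $\dim_K(C_F)-\dim_K(M)\in\{0,1\}$ and rationality in Proposition \ref{prop:ressayre-pair-gamma-a} are met. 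Finally, the condition $\Phi(C_F)\cap\tgot^*_{\geq 0}\neq\emptyset$ holds because $Z_F=\Phi_\tau^{-1}(F)\subset C_F$ and $F\subset\tau\subset\tgot^*_{\geq 0}$, so $\Phi(C_F)$ meets the Weyl chamber at $\xi_F$.

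The main obstacle I anticipate is the verification that $\xi_F$ is exactly the orthogonal projection of $a_F(n)$ onto $\Delta(\Phi)$ — that is, that the perturbation $-\frac{1}{n}\gamma_F$ is a genuine outer normal direction to $\Delta(\Phi)$ at $\xi_F$. This requires care because $\gamma_F$ has two components and one must check that the combined displacement stays within the normal cone of $\Delta(\Phi)$ at $\xi_F$, using both that $\gamma_\tau$ is orthogonal to the affine span $\Pi$ (so it contributes no tangential component within $\Pi$) and that $\eta_F$ is the chosen outward normal to the facet $F$ inside $\Pi$. Since the earlier propositions handled the analogous projection claims essentially by the same $\gamma_\tau$-based argument, I expect the write-up to be short, explicitly paralleling the proof of Proposition \ref{prop:gamma-S} with the single additional remark that the $\eta_F$-direction is outward-pointing relative to $\Delta(\Phi)$.
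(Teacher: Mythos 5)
Your proposal is correct and follows essentially the same route as the paper: the paper's proof also takes $a'_F\in F$, forms $a_F(n):=a'_F-\frac{1}{n}\gamma_F$, checks the same three conditions (regularity, $a_F(n)\notin\Delta(\Phi)$, and $a'_F$ being the orthogonal projection) for $n$ large, and then invokes Proposition \ref{prop:ressayre-pair-gamma-a}. Your additional explicit verifications — that the normal-cone/projection claim follows from the choice of $\eta_F$ together with $\gamma_\tau\perp\Rbb\tau$, that regularity comes from Lemma \ref{lem:gamma-F}, and that $\Phi(C_F)$ meets the Weyl chamber via $Z_F$ — are exactly the details the paper leaves implicit.
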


\begin{proof}
The proof follows the lines of the proof of Proposition \ref{prop:gamma-tau}.  
Let $a'_F\in F$ and consider the elements $a_F(n):=a'-\frac{1}{n}\gamma_F$ for $n\geq 1$. 
We notice that for $n$ is large enough
\begin{enumerate}
\item $a_F(n)$ is a regular element of the Weyl chamber,
\item $a_F(n)\notin \Delta(\Phi)$,
\item $a'_F$ is the orthogonal projection of $a_F(n)$ on $\Delta(\Phi)$.
\end{enumerate}
So we can exploit the results of \S \ref{sec:construction-RP} with the elements $a_F(n)$ for $n>>1$. Proposition 
\ref{prop:ressayre-pair-gamma-a} tells us that $(\gamma_F, C_F)$ is a $B$-Ressayre's pair. 
\end{proof}

\subsection{Ressayre's pairs and semi-stable points II}\label{sec:RP-semi-stable-2}

Let $N$ be a K\"{a}hler Hamiltonian $K$-manifold  with proper moment map $\Phi_N$. 
The aim of this section is the following proposition that completes the results of Proposition \ref{prop:N-ss-gamma}.

\begin{prop}\label{prop:N-ss-RP}
Let $C_N$ be a connected component of $N^\gamma$. Then, the following holds
\begin{enumerate}
\item If $N^{ss}\cap C_N^- \neq\emptyset$, then $\langle \Phi_N(C_N),\gamma\rangle\leq 0$.
\item If $N^{ss}\cap C_N^-\neq\emptyset$ and $\langle \Phi_N(C_N),\gamma\rangle= 0$, then $C_N\cap\Phi_N^{-1}(0)\neq\emptyset$.
\end{enumerate}
\end{prop}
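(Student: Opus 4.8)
The plan is to deduce both assertions from Proposition \ref{prop:N-ss-gamma} by passing from a point of $C_N^-$ to its Bialynicki-Birula limit in $C_N$, and then, for the second point, to exploit the fact that the Kirwan-Ness gradient flow leaves $N^\gamma$ invariant. First I would record a preliminary observation used in both parts: the function $\langle\Phi_N,\gamma\rangle$ is constant on $C_N$. Indeed, along $N^\gamma$ the fundamental vector field $\gamma_N$ vanishes, so by the moment map relation (\ref{eq:kostant=rel}) we have $d\langle\Phi_N,\gamma\rangle=\iota(\gamma_N)\Omega=0$ on $N^\gamma$; hence $\langle\Phi_N,\gamma\rangle$ is locally constant on $N^\gamma$, and therefore constant on the connected component $C_N$, equal to $\langle\Phi_N(C_N),\gamma\rangle$. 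Both parts begin identically: choose $x\in N^{ss}\cap C_N^-$ and set $x_\gamma:=\lim_{t\to\infty}e^{-it\gamma}x$, which lies in $C_N$ by the very definition of $C_N^-$.

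For assertion (1), since $x\in N^{ss}$ and the limit $x_\gamma$ exists, the first point of Proposition \ref{prop:N-ss-gamma} gives $\langle\Phi_N(x_\gamma),\gamma\rangle\leq 0$; by the constancy above this reads $\langle\Phi_N(C_N),\gamma\rangle\leq 0$. For assertion (2), we now have $\langle\Phi_N(x_\gamma),\gamma\rangle=\langle\Phi_N(C_N),\gamma\rangle=0$, so the second point of Proposition \ref{prop:N-ss-gamma} applies and yields $x_\gamma\in N^{ss}$; thus $x_\gamma\in C_N\cap N^{ss}$.

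The key geometric input for finishing (2) is that the flow $\varphi_t$ of $-\grad(f)$, with $f=\frac{1}{2}\|\Phi_N\|^2$, preserves $N^\gamma$. To see this, note that for $m\in N^\gamma$ the equivariance of $\Phi_N$ forces $\Phi_N(m)$ to be fixed by $\Ad^*(\overline{\exp\Rbb\gamma})$, so under the identification $\kgot^*\simeq\kgot$ the element $\Phi_N(m)$ lies in $\kgot^\gamma$; hence the Kirwan vector field $\kappa_\Phi(m)=\Phi_N(m)\cdot m$ is tangent to the $K^\gamma$-invariant submanifold $N^\gamma$, and since $N^\gamma$ is a complex submanifold, so is $\grad(f)=\Jbb(\kappa_\Phi)$. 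Consequently $\varphi_t$ preserves $N^\gamma$, and by connectedness it preserves the component $C_N$; moreover its restriction to $C_N$ coincides with the Kirwan-Ness flow of $C_N$ viewed as a $K^\gamma$-manifold, because $\Phi_N(C_N)\subset(\kgot^\gamma)^*$ makes $f|_{C_N}=\frac12\|\Phi_N|_{C_N}\|^2$. Finally, $N^{ss}\cap C_N^-\neq\emptyset$ forces $\Phi_N^{-1}(0)\neq\emptyset$, so $0\in\Bcal_\Phi$, $N^{ss}=N_0$ and $Z_0=\Phi_N^{-1}(0)$. Starting the flow at $x_\gamma\in C_N\cap N_0$, the limit $(x_\gamma)_\infty$ remains in $C_N$ and lies in $Z_0=\Phi_N^{-1}(0)$; therefore $(x_\gamma)_\infty\in C_N\cap\Phi_N^{-1}(0)$, as desired.

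The two reductions to Proposition \ref{prop:N-ss-gamma} are routine. I expect the only genuinely new ingredient, and hence the main obstacle, to be the invariance of the Kirwan-Ness flow under $N^\gamma$: establishing $\Phi_N(m)\in\kgot^\gamma$ for $m\in N^\gamma$ and checking that this makes both $\kappa_\Phi$ and $\grad(f)$ tangent to the complex submanifold $N^\gamma$. One should also take care that semistability is being used relative to the type $0$, which is legitimate precisely because the hypothesis $N^{ss}\cap C_N^-\neq\emptyset$ guarantees $\Phi_N^{-1}(0)\neq\emptyset$.
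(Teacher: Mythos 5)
Your proof is correct, and the reduction of both assertions to Proposition \ref{prop:N-ss-gamma} (via the Bialynicki--Birula limit $x_\gamma$ and the constancy of $\langle\Phi_N,\gamma\rangle$ on $C_N$) is exactly how the paper begins. But your treatment of the key step in assertion (2) is genuinely different from the paper's. You argue dynamically: $\Phi_N(N^\gamma)$ lies in the $\Ad^*$-fixed part of $\kgot^*$ by equivariance, so the Kirwan vector field $\kappa_\Phi$ is tangent to the closed complex submanifold $N^\gamma$, hence so is $\grad f=\Jbb(\kappa_\Phi)$; the negative gradient flow therefore preserves $C_N$, and starting it at $x_\gamma\in C_N\cap N^{ss}=C_N\cap N_0$ produces a limit point in $C_N\cap Z_0=C_N\cap\Phi_N^{-1}(0)$. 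This is sound (your tangency claim, the open-closed argument for flow invariance, the identity $Z_0=\Phi_N^{-1}(0)$, and the identification $N^{ss}=N_0$ when $0\in\Phi_N(N)$ are all valid and available from \S 3.1), and it is arguably shorter, since it recycles the Kirwan--Ness stratification machinery already in place. The paper instead chooses the closed orbit $K_\Cbb z\subset\overline{K_\Cbb x_\gamma}$ with $z\in\Phi_N^{-1}(0)$ and invokes Sjamaar's Holomorphic Slice Theorem at $z$, analyzing how $x$ and $x_\gamma$ sit in the model $K_\Cbb\times_{H_\Cbb}E^0$ (Lemma \ref{lem:holomorphic-model}) to conclude $Kz\cap\overline{K_\Cbb^\gamma x_\gamma}\neq\emptyset$. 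The slice argument is heavier but yields strictly more than the stated proposition: it locates a zero of $\Phi_N$ inside $\overline{K_\Cbb^\gamma x_\gamma}$ itself (i.e.\ $x_\gamma$ is semistable for the $K_\Cbb^\gamma$-action on $C_N$), and the by-products Lemma \ref{lem:holomorphic-model} and Remark \ref{rem:choisir-z} are reused later, in \S \ref{sec:RP=infRP}, to prove $N^{ss}\cap C_N^-=p^{-1}(C_N^{ss})$ and the key lemma that $g\in P_\gamma$. Your approach can recover the finer statement too, but only with an extra observation you only sketch: since $\Phi_N(C_N)$ sits in $(\kgot^\gamma)^*$, the restricted flow on $C_N$ is the Kirwan--Ness flow of the $K^\gamma$-manifold $C_N$, and (as in the Appendix) its trajectories stay inside $K_\Cbb^\gamma$-orbits, so the limit lies in $\overline{K_\Cbb^\gamma x_\gamma}$. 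As a proof of the proposition as stated, your argument is complete.
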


\begin{proof}
Let $x\in C_N^-\cap N^{ss}$, and let denote by $x_\gamma\in C_N$ the limit of $e^{-it\gamma} x$ when $t\to\infty$.
In Proposition \ref{prop:N-ss-gamma}, we proved that $\langle \Phi_N(x_\gamma),\gamma\rangle\leq 0$, and also that 
$x_\gamma\in N^{ss}$ when $\langle \Phi_N(x_\gamma),\gamma\rangle=0$. Our first point is then settled. 

Suppose that $x_\gamma\in N^{ss}$: it remains to show that $C_N\cap\Phi_N^{-1}(0)\neq\emptyset$. Let $z\in \Phi_N^{-1}(0)$ so that 
$K_\Cbb z$ is the closed orbit contained in $\overline{K_\Cbb x_\gamma}\subset \overline{K_\Cbb x}$. 
We will use now the Holomorphic Slice Theorem of Sjamaar \cite{Sjamaar-95} in order to describe a neighbourhood of $K_\Cbb z$. 
First recall that the stabilizer subgroup $(K_\Cbb)_{z}$ is equal to the complexification of $H:= K_{z}$.
Let $E\subset \T_{z}N$ be the orthogonal  complement (relatively to the K\"{a}hler metric) of the subspace $\kgot_\Cbb\cdot z$. We see that 
$E$ is a Hermitian vector space equipped with a linear action of $H_\Cbb$. Sjamaar proved in \cite{Sjamaar-95} that a $K_\Cbb$-invariant 
neighbourhood $\Wcal$ of the orbit $K_\Cbb\, z$ admits a $K_\Cbb$-equivariant holomorphic diffeomorphism with
$$
\widetilde{\Wcal}:=K_\Cbb\times_{H_\Cbb}E^0,
$$
where $E^0$ is a $H_\Cbb$-invariant neighborhood of $0\in E$.

By definition, the orbits $K_\Cbb\, x$  and $K_\Cbb\, x_\gamma$ intersect $\Wcal$, so the points $x$ and $x_\gamma$ belong to $\Wcal$. 
Let us write by $\tilde{x},\tilde{x}_\gamma,\tilde{z}\in \widetilde{\Wcal}$ the image of $x,x_\gamma,z\in \Wcal$. Here $\tilde{z}=[e,0]$.

\begin{lem}\label{lem:holomorphic-model}
\begin{enumerate}
\item There exists $(a,p_\gamma,k_\gamma, v)\in K\times P_\gamma\times K^\gamma_\Cbb\times E$ such as
$$
\tilde{x}=[p_\gamma a,v]\quad {\rm and}\quad \tilde{x}_\gamma=[k_\gamma a,v_o].
$$ 
The data $(a,p_\gamma,k_\gamma, v)$ satisfies the following relations: $\gamma':=a^{-1}\gamma\in\hgot$,  $v\in E^{\gamma'\leq 0}$, 
$v_o:=\lim_{t\to\infty}e^{-it\gamma'} v\in E^{\gamma'= 0}$, and $k_\gamma:=\lim_{t\to\infty}e^{-it\gamma} p_\gamma e^{it\gamma}$.
\item $0\in \overline{H_\Cbb\, v}$ and $0\in \overline{H^{\gamma'}_\Cbb\, v_o}$.
\item $K\,\tilde{z}\cap \overline{K^\gamma_\Cbb\, \tilde{x}_\gamma}\neq\emptyset$.
\end{enumerate}
\end{lem}

The third point of the lemma says that $K z\cap \overline{K^\gamma_\Cbb x_\gamma}\neq\emptyset$. But $\overline{K^\gamma_\Cbb x_\gamma}\subset C_N$ and 
$K z\subset \Phi_N^{-1}(0)$, so we have proved that $C_N\cap\Phi_N^{-1}(0)\neq\emptyset$. The proof of Proposition \ref{prop:N-ss-RP} is completed.

\begin{proof} Consider the projection $T:\widetilde{\Wcal}\to K_\Cbb/H_\Cbb$. Since $\lim_{t\to\infty}e^{-it\gamma} \tilde{x}=\tilde{x}_\gamma$, we have 
$\lim_{t\to\infty}e^{-it\gamma} T(\tilde{x})= T(\tilde{x}_\gamma)\in (K_\Cbb/H_\Cbb)^\gamma$. It is not difficult to check that there exists 
$(a,p_\gamma,k_\gamma)\in K\times P_\gamma\times K^\gamma_\Cbb$ so that
$$
T(\tilde{x})=[p_\gamma a]\quad {\rm and}\quad  T(\tilde{x}_\gamma)=[k_\gamma a],
$$
where $k_\gamma:=\lim_{t\to\infty}e^{-it\gamma} p_\gamma e^{it\gamma}$ and $\gamma':=a^{-1}\gamma\in\hgot$.

Now let $v,v_o\in E$ such as $\tilde{x}=[p_\gamma a,v]$ and $\tilde{x}_\gamma=[k_\gamma a,v_o]$. Since 
$\lim_{t\to\infty}e^{-it\gamma} \tilde{x}=\tilde{x}_\gamma$, we have $\lim_{t\to\infty}e^{-it\gamma'} v= v_o$, and this implies that 
$v\in E^{\gamma'\leq 0}$ and $v_o\in E^{\gamma'= 0}$. The first point of the lemma is settled.

Since $\tilde{z}$ belongs to $\overline{K_\Cbb\tilde{x}}$, we must have $0\in \overline{H_\Cbb\, v}$. Let $P_{\gamma'}$ be the parabolic subgroup of 
$K_\Cbb$ attached to $\gamma'$, and take $P_{\gamma'}^H:=P_{\gamma'}\cap H_\Cbb$. Since $H_\Cbb=H \, P_{\gamma'}^H$, we have 
$0\in \overline{P_{\gamma'}^H v}$. Let $\epsilon>0$, and let $w= p\, v\in P_{\gamma'}^H v$ such as $\|w\|\leq \epsilon$. Since $w$ belongs to 
$E^{\gamma'\leq 0}$, we have $\|e^{-it\gamma'} w\|\leq \epsilon$ for any $t\geq 0$. Hence the vector $w_o:= \lim_{t\to\infty}e^{-it\gamma'} w$ satisfies 
$\|w_o\|\leq \epsilon$ and $w_o= h v_o$, where $h=\lim_{t\to\infty}e^{-it\gamma'} p \,e^{it\gamma'}\in H_\Cbb^{\gamma'}$. We have proved that 
$0\in \overline{H^{\gamma'}_\Cbb\, v_o}$, and this fact implies that $a \tilde{z}=[a,0]$ belongs to $\overline{K^\gamma_\Cbb\, \tilde{x}_\gamma}$. The last point is proved. 
\end{proof}
\end{proof}

\begin{rem}\label{rem:choisir-z}
In the proof of Lemma \ref{lem:holomorphic-model}, we have seen that the element $z\in\overline{K_\Cbb\, x}\cap\Phi^{-1}(0)$ can be chosen in $\overline{K_\Cbb^\gamma\, x_\gamma}\cap\Phi^{-1}(0)$. 
If we do so, the expression in $\widetilde{\Wcal}$ are $\tilde{x}=[p,v]$ and $\tilde{x}_\gamma=[k,v_o]$, where $p\in P_\gamma$ and 
$k:=\lim_{t\to\infty}e^{-it\gamma} p \,e^{it\gamma}\in K_\Cbb^\gamma$.
\end{rem}

\medskip

\subsection{{\em 2.} $\Leftrightarrow$ {\em 3.} in Theorem \ref{th:infinitesimal-ressayre-pairs}.}\label{sec:RP-intersect}

Let $M$ be a K\"{a}hler Hamiltonian $K$-manifold with proper moment map $\Phi$. We denote by $\Rcal$ the set of infinitesimal $B$-Ressayre's pair on $M$, and by 
$\Rcal'\subset\Rcal$ the subset formed by the infinitesimal $B$-Ressayre's pair $(\gamma,C)$ satisfying $\Phi(C)\cap\tgot^*_{\geq 0}$.

In \S \ref{sec:step-1} and  \S \ref{sec:step-2}, we proved that $\Delta_{\infrp}=\Delta(\Phi)$. The aim of this section is to explain why $\Delta_{\infrp}$ coincides with 
$$
\Delta'_{\infrp} = \left\{\xi\in\tgot^*_{\geq 0};\ \langle \xi,\gamma\rangle\geq \langle \Phi(C),\gamma\rangle,\ \forall (\gamma,C)\in\Rcal'\right\}.
$$

To any $(\gamma, C)\in \Rcal$, we associate the hyperplane of $\tgot^*$ : $\Pi_{(\gamma, C)}:=$ \break 
$\left\{\xi\in\tgot^*;\ \langle \xi,\gamma\rangle= \langle \Phi(C),\gamma\rangle\right\}$.
Let $\Rcal^0\subset \Rcal$ be the subset formed by the infinitesimal $B$-Ressayre's pair $(\gamma,C)$ satisfying $\Pi_{(\gamma, C)}\cap \Delta(\Phi)\neq \emptyset$.

The equality $\Delta_{\infrp}=\Delta'_{\infrp}$ follows from the following proposition.
\begin{prop}
\begin{enumerate}
\item $\Delta_{\infrp}$ is equal to 
$$
\Delta^0_{\infrp} = \left\{\xi\in\tgot^*_{\geq 0};\ \langle \xi,\gamma\rangle\geq \langle \Phi(C),\gamma\rangle,\ \forall (\gamma,C)\in\Rcal^0\right\}.
$$
\item $\Rcal'=\Rcal^0$.
\end{enumerate}
\end{prop}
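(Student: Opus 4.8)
The plan is to prove the two assertions in the order (2) then (1), since the nontrivial inclusion in (1) will only use the \emph{easy} half of (2). Throughout I will use two facts already available: for any $(\gamma,C)\in\Rcal$ the number $\langle\Phi(C),\gamma\rangle$ is well defined, because $\langle\Phi,\gamma\rangle$ is constant on the connected component $C$ of $M^\gamma$ (its differential $\iota(\gamma_M)\Omega$ vanishes along $M^\gamma$); and $\Delta(\Phi)=\Delta_{\infrp}$, so $\Delta(\Phi)$ is contained in the closed half-space $\langle\,\cdot\,,\gamma\rangle\geq\langle\Phi(C),\gamma\rangle$ for every $(\gamma,C)\in\Rcal$. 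In particular $\Pi_{(\gamma,C)}$ is a supporting hyperplane of $\Delta(\Phi)$ exactly when it meets it, which is precisely the condition defining $\Rcal^0$.

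I would first dispose of $\Rcal'\subset\Rcal^0$: given $(\gamma,C)\in\Rcal'$, pick $y\in C$ with $\xi:=\Phi(y)\in\tgot^*_{\geq 0}$; then $\xi\in\Phi(M)\cap\tgot^*_{\geq 0}=\Delta(\Phi)$ and $\langle\xi,\gamma\rangle=\langle\Phi(C),\gamma\rangle$, so $\xi\in\Pi_{(\gamma,C)}\cap\Delta(\Phi)$ and $(\gamma,C)\in\Rcal^0$. For the converse $\Rcal^0\subset\Rcal'$ I would reuse the auxiliary construction of \S\ref{sec:step-1}. Let $(\gamma,C)\in\Rcal^0$ and choose $\xi\in\Delta(\Phi)$ with $\langle\xi,\gamma\rangle=\langle\Phi(C),\gamma\rangle$. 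Form $N:=M\times\overline{K\xi}$ with $\Phi_N(m,\eta)=\Phi(m)-\eta$, and take $C_N:=C\times K^\gamma_\Cbb/(P_\xi\cap K^\gamma_\Cbb)$, the component of $N^\gamma$ through $(C,[e])$, where $[e]$ corresponds to $\xi$. Evaluating the constant $\langle\Phi_N(C_N),\gamma\rangle$ at a point $(x,[e])$ gives $\langle\Phi(C),\gamma\rangle-\langle\xi,\gamma\rangle=0$. By Lemma \ref{lem:KCinterior} we have $C_N^-\cap N^{ss}\neq\emptyset$, so Proposition \ref{prop:N-ss-RP}(2) yields a point $(x,\eta)\in C_N\cap\Phi_N^{-1}(0)$, i.e. $x\in C$ with $\Phi(x)=\eta$. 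The second factor of $C_N$ being the $K^\gamma_\Cbb$-orbit of $[e]$, its set of points in $\kgot^*$ is the coadjoint orbit $K^\gamma\xi$, so $\eta\in K^\gamma\xi$; choosing $k\in K^\gamma$ with $k\eta=\xi$ and using that $C$ is $K^\gamma$-invariant, we get $kx\in C$ and $\Phi(kx)=\xi\in\tgot^*_{\geq 0}$. Hence $\xi\in\Phi(C)\cap\tgot^*_{\geq 0}$ and $(\gamma,C)\in\Rcal'$, which settles (2).

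For (1), the inclusion $\Delta_{\infrp}\subset\Delta^0_{\infrp}$ is immediate, since $\Rcal^0\subset\Rcal$ imposes fewer inequalities. For the reverse $\Delta^0_{\infrp}\subset\Delta(\Phi)=\Delta_{\infrp}$ I would invoke the explicit family of regular $B$-Ressayre's pairs built in \S\ref{sec:step-2}, namely $(\gamma_\tau,C_\tau)$, the $(\gamma^\pm_l,C^\pm_l)$, and the $(\gamma_F,C_F)$ for $F\in\Fcal(\Phi)$. Each is a $B$-Ressayre's pair, hence an infinitesimal one, hence lies in $\Rcal$; each satisfies $\Phi(C)\cap\tgot^*_{\geq 0}\neq\emptyset$, so lies in $\Rcal'$; and by the inclusion $\Rcal'\subset\Rcal^0$ just established, they all lie in $\Rcal^0$. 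Lemmas \ref{lem:gamma-tau}, \ref{lem:gamma-S} and \ref{lem:gamma-F} show that the inequalities attached to this subfamily already cut out $\Delta(\Phi)$ inside $\tgot^*_{\geq 0}$, forcing successively $\xi\in\bar\tau$, then $\xi\in\Pi$, then $\xi\in\Delta(\Phi)$. Since $\Delta^0_{\infrp}$ imposes in particular these inequalities, we get $\Delta^0_{\infrp}\subset\Delta(\Phi)$, whence equality.

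The main obstacle is the converse inclusion $\Rcal^0\subset\Rcal'$ of part (2): it is the only step that is not formal. It rests on the semistable-point analysis of Proposition \ref{prop:N-ss-RP} and, crucially, on identifying the $\gamma$-fixed points of the flag factor $\overline{K\xi}$ with the coadjoint orbit $K^\gamma\xi$, which is what converts the abstract zero of $\Phi_N$ on $C_N$ into a genuine point of $C$ whose image lies in the closed Weyl chamber.
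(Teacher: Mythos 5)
Your proof is correct, and it splits against the paper as follows. On part (2) --- the only genuinely geometric step --- you follow the paper's own route exactly: $\Rcal'\subset\Rcal^0$ by evaluating $\Phi$ at a point of $C$ mapping into the Weyl chamber, and for the converse the auxiliary manifold $N=M\times\overline{K\xi}$ with $\xi\in\Pi_{(\gamma,C)}\cap\Delta(\Phi)$, Lemma \ref{lem:KCinterior}, and Proposition \ref{prop:N-ss-RP}. You are in fact more complete than the paper at the final step: the paper passes from $0\in\Phi_N(C_N)$ to ``$\xi\in\Phi(C)$'' without comment, and your translation by $k\in K^\gamma$ (legitimate because $K^\gamma$ is connected, hence preserves the component $C$) is precisely the detail being glossed over. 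On part (1) you take a genuinely different route. The paper argues abstractly by a segment argument: for $\xi_0\in\Delta^0_{\infrp}$ and $\xi_1\in\Delta_{\infrp}$ it sets $s=\inf\{t\in[0,1]:\ \xi_t\in\Delta(\Phi)\}$ along $\xi_t=t\xi_1+(1-t)\xi_0$, and if $s>0$ it produces a pair $(\gamma,C)\in\Rcal$ whose hyperplane contains $\xi_s$ (so $(\gamma,C)\in\Rcal^0$) while its inequality fails at $\xi_{s-\epsilon}$, contradicting the two inequalities at the endpoints. You instead quote the explicit family $(\gamma_\tau,C_\tau)$, $(\gamma_l^\pm,C_l^\pm)$, $(\gamma_F,C_F)$ of \S\ref{sec:step-2}, observe that it sits inside $\Rcal'\subset\Rcal^0$, and use Lemmas \ref{lem:gamma-tau}, \ref{lem:gamma-S}, \ref{lem:gamma-F} to force $\xi\in\bar\tau$, then $\xi\in\Pi$, then $\xi\in\Delta(\Phi)$. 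Both approaches work, and each buys something: the paper's needs only $\Delta(\Phi)=\Delta_{\infrp}$ plus convexity and is independent of the \S\ref{sec:step-2} construction, but it tacitly extracts a single limiting pair as $t\uparrow s$ from the a priori infinite family $\Rcal$, a local-finiteness point the paper does not address; your version bypasses that limit entirely, at the price of invoking the full strength of \S\ref{sec:step-2} and the fact (already assumed in the paper's diagram of inclusions) that every $B$-Ressayre's pair is an infinitesimal one.
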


\begin{proof}
We have obviously $\Delta_{\infrp}\subset \Delta^0_{\infrp}$. Let us consider $\xi_0\in \Delta^0_{\infrp}$ and $\xi_1\in \Delta_{\infrp}$. For any $t\in [0,1]$, let's 
$\xi_t=t\xi_1+(1-t)\xi_0$. Let $s=\inf\{t\in [0,1], \xi_t\in \Delta(\Phi)\}$. If $s> 0$, there exists $(\gamma, C)\in \Rcal$ such as 
$\langle \xi_s,\gamma\rangle =\langle \Phi(C),\gamma\rangle$ and $\langle \xi_{s-\epsilon},\gamma\rangle <\langle \Phi(C),\gamma\rangle$ for $\epsilon>0$ small enough. The first equality means that $\xi_s\in \Pi_{(\gamma, C)}\cap \Delta(\Phi)$, hence $(\gamma, C)\in\Rcal^0$. Then the second inequality is in contradiction with the fact that 
$\langle \xi_0,\gamma\rangle \geq \langle \Phi(C),\gamma\rangle$ and $\langle \xi_1,\gamma\rangle \geq \langle \Phi(C),\gamma\rangle$. So, $s=0$ which means that 
$\xi_0\in \Delta_{\infrp}$. The first point is settled.

By definition, we have $\Rcal'\subset \Rcal^0$. Let us prove the opposite inclusion. 
Let $(\gamma,C)\in\Rcal^0$. The hyperplane $\Pi_{(\gamma, C)}$ intersects $\Delta(\Phi)$ and we want to prove that $\Phi(C)\cap\tgot_{\geq 0}^*\neq 0$. Let us 
consider $\xi\in \Pi_{(\gamma, C)}\cap\Delta(\Phi)$ and the associated manifold $N:=M\times\overline{K\xi}$. We consider the connected component 
$C_N=C\times\overline{K^\gamma\xi}$ of $N^\gamma$, and the Bialynicki-Birula's submanifold
$C^-_N$. We prove in Lemma \ref{lem:KCinterior} that $C^-_N\cap N^{ss}\neq \emptyset$ and  the relation $\langle \xi,\gamma\rangle = \langle \Phi(C),\gamma\rangle$ means that 
$\langle\Phi_N(C_N),\gamma\rangle=0$. We can conclude, thanks to Proposition \ref{prop:N-ss-RP}, that $0\in\Phi_N(C_N)$: in other words $\xi\in \Phi(C)$.
\end{proof}

\subsection{Ressayre's pairs $=$ infinitesimal Ressayre's pairs}\label{sec:RP=infRP}

Let $M$ be a K\"{a}hler Hamiltonian $K$-manifold with proper moment map $\Phi$. The main purpose of this section is the following proposition. 

\begin{prop}\label{prop:RP=infRP}
Let $(\gamma,C)$ be an infinitesimal $B$-Ressayre's pair on $M$ satisfying both conditions:
\begin{enumerate}
\item $K_\Cbb C^-$ is dense in $M$,
\item $\Phi(C)\cap\tgot^*_{>0}\neq\emptyset$.
\end{enumerate}
Then $(\gamma,C)$ be a $B$-Ressayre's pair on $M$.
\end{prop}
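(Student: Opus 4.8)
The plan is to transfer the problem to the manifold $N := M \times \overline{K\xi}$ of \S\ref{sec:KandB} and to realize $(\gamma, C_N)$ as a genuine $K_\Cbb$-Ressayre's pair by means of the semi-stability results of \S\ref{sec:RP-semi-stable-2}. Fix $c \in C$ with $\xi := \Phi(c) \in \tgot^*_{>0}$; since $\xi$ is regular we have $\overline{K\xi} \simeq K_\Cbb/B$, so $N$ is exactly the space attached to $M$ in Proposition \ref{prop:K-B-RP}, and $C_N := C \times \overline{K^\gamma \xi}$ is the corresponding component of $N^\gamma$. By Proposition \ref{prop:K-B-RP}(2) the pair $(\gamma, C_N)$ is an \emph{infinitesimal} $K_\Cbb$-Ressayre's pair, and by Proposition \ref{prop:K-B-RP}(1) it suffices to prove that it is a $K_\Cbb$-Ressayre's pair. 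The two conditions to establish are the dominance of $\pi_\gamma \colon K_\Cbb \times_{P_\gamma} C_N^- \to N$ and the existence of a $P_\gamma$-invariant, dense open $U \subset C_N^-$ on which $\pi_\gamma$ induces a diffeomorphism onto $K_\Cbb U$.

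First I would record the moment-map geometry. The point $z := (c, \xi)$ satisfies $\Phi_N(z) = \Phi(c) - \xi = 0$, so $z \in C_N \cap \Phi_N^{-1}(0)$. Hence $\Phi_N^{-1}(0) \neq \emptyset$ (so $N^{ss}$ is open and dense, \S\ref{sec:square-phi}), the function $\langle \Phi_N, \gamma\rangle$ is constant equal to $0$ on the component $C_N$, and $z \in C_N^- \cap N^{ss}$. Thus the hypotheses of Proposition \ref{prop:N-ss-RP}(2) are available along every $N^{ss}$-point of $C_N^-$: when $n \in C_N^- \cap N^{ss}$ flows to $p(n) \in C_N$, the limit lies in $C_N \cap N^{ss}$ and $\overline{K_\Cbb^\gamma\, p(n)}$ meets $\Phi_N^{-1}(0)$. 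For the open set I would take $U := (C_N^-)_{\reg}$: by Lemma \ref{lem:C-N-reg} it is dense, $P_\gamma$-invariant, open, satisfies $U = p^{-1}(C_N \cap U)$ and meets $C_N$, and on $K_\Cbb \times_{P_\gamma} U$ the holomorphic map $\pi_\gamma$ is a local biholomorphism (source and target have equal dimension by the infinitesimal condition).

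It then remains to verify dominance and injectivity. Dominance is where condition (1) enters: through the isomorphism $N \simeq K_\Cbb \times_B M$ the image of $\pi_\gamma$ becomes $K_\Cbb \times_B (B C^-)$, so $\pi_\gamma$ is dominant iff $B C^-$ is dense in $M$. The infinitesimal condition makes $B C^-$ of full dimension; in the algebraic category this alone forces density, but in the present analytic setting the extra input is needed. Writing $K_\Cbb C^- = \bigcup_w B w C^-$ via the Bruhat decomposition $K_\Cbb = \bigsqcup_w B w P_\gamma$ (using $P_\gamma C^- = C^-$), the density of $K_\Cbb C^-$ exhibits $M$ as a finite union of the closed sets $\overline{BwC^-}$, and I expect the regularity of $\xi$ together with the semi-stable point $z \in C_N^-$ to single out the big cell $w = e$ as the one carrying the dense part, whence $\overline{BC^-} = M$; thus condition (1) is precisely the analytic substitute for automatic algebraic dominance.

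The heart of the argument, and the main obstacle, is the injectivity of $\pi_\gamma$ on $K_\Cbb \times_{P_\gamma}(U \cap N^{ss})$, to be obtained with the holomorphic slice machinery of \S\ref{sec:RP-semi-stable-2}. Suppose $n_1 = g\, n_2$ with $n_1, n_2 \in U \cap N^{ss}$ and $g \in K_\Cbb$; I would flow both points down by $\exp(-it\gamma)$ to $p(n_i) \in C_N$, invoke Proposition \ref{prop:N-ss-RP}(2) to place a zero $\zeta$ of $\Phi_N$ in the common closed orbit inside $\overline{K_\Cbb n_1} = \overline{K_\Cbb n_2}$, and apply Sjamaar's Holomorphic Slice Theorem at $K_\Cbb \zeta$ exactly as in Lemma \ref{lem:holomorphic-model} and Remark \ref{rem:choisir-z}. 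In the slice model each point acquires the normal form $[p_i, v_i]$ with $p_i \in P_\gamma$, and the relation $n_1 = g\, n_2$ then forces $g \in P_\gamma$, so $[e, n_1] = [g, n_2]$ in $K_\Cbb \times_{P_\gamma} U$; combined with the local biholomorphism property this makes $\pi_\gamma$ a diffeomorphism onto its open image, proving $(\gamma, C_N)$ — equivalently $(\gamma, C)$ — is a $K_\Cbb$-Ressayre's (resp. $B$-Ressayre's) pair. A cleaner alternative I would also pursue is to feed $a := \xi - t\gamma$ (small $t > 0$, using $\tgot \simeq \tgot^*$) into the construction of \S\ref{sec:construction-RP}: the inclusion $\Delta(\Phi) \subset \Delta_{\infrp}$ of \S\ref{sec:step-1} makes $\xi$ the orthogonal projection of $a$ onto $\Delta(\Phi)$ and $\gamma_a = t\gamma$ the minimal type, so Proposition \ref{prop:ressayre-pair-gamma-a} yields a $B$-Ressayre's pair $(\gamma_a, C_a)$ with $\gamma_a$ proportional to $\gamma$; the remaining point, which is exactly where condition (1) would be used, is to identify the minimal-type component $C_a$ with the given $C$.
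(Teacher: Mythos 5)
Your main route is the paper's route --- reduction to $N=M\times\overline{K\xi}$ via Proposition~\ref{prop:K-B-RP}, semi-stability on $N$ via Proposition~\ref{prop:N-ss-RP}, and the open set $U=(C_N^-)_{\reg}\cap N^{ss}$ from Lemma~\ref{lem:C-N-reg} --- but it has a genuine gap exactly at its heart, the injectivity step. In the slice model around the closed orbit $K_\Cbb\,\zeta$, the normal forms $\tilde n_i=[p_i,v_i]$ with $p_i\in P_\gamma$ and the relation $n_1=g\,n_2$ only give $gp_2=p_1h$ for some $h\in H_\Cbb=(K_\Cbb)_\zeta$, i.e. $g=p_1hp_2^{-1}$; nothing so far places $h$, hence $g$, in $P_\gamma$. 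The paper closes this with a key fact you never invoke: since $\xi$ is \emph{regular}, every stabilizer for the $K$-action on $N$ is abelian, and since $\gamma\in\kgot_\zeta$ this forces $K_\zeta\subset K^\gamma$, whence $H_\Cbb\subset K^\gamma_\Cbb\subset P_\gamma$ and finally $g\in P_\gamma$. This is precisely where hypothesis (2) is consumed in the paper's proof; your proposal instead spends the regularity of $\xi$ on the dominance question, so the assertion that the normal form ``forces $g\in P_\gamma$'' is unsupported as written. A second, smaller gap is dominance itself: you correctly observe that dominance of $\pi_\gamma$ amounts to $\overline{BC^-}=M$ while hypothesis (1) literally gives $\overline{K_\Cbb C^-}=M$, but your Bruhat-cell bridge (``I expect the regularity of $\xi$ \dots to single out the big cell'') is an expectation, not an argument. (The paper reads the dense image of $\pi_\gamma$ directly off hypothesis (1) and does not elaborate this point either.)

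Ironically, the ``cleaner alternative'' you sketch at the end is the one that closes completely, and the point you leave open is a one-liner that does not require condition (1). Take $a=\xi-t\gamma$ for small $t>0$ (using $\tgot\simeq\tgot^*$). By \S\ref{sec:step-1} we have $\Delta(\Phi)\subset\Delta_{\infrp}$, hence $\langle\eta,\gamma\rangle\geq\langle\Phi(C),\gamma\rangle=\langle\xi,\gamma\rangle$ for all $\eta\in\Delta(\Phi)$; therefore, for every $\eta\in\Delta(\Phi)$,
\begin{equation*}
\|a-\eta\|^2=\|\xi-\eta\|^2-2t\langle\xi-\eta,\gamma\rangle+t^2\|\gamma\|^2\geq t^2\|\gamma\|^2=\|a-\xi\|^2,
\end{equation*}
so $a$ is regular (as $\xi\in\tgot^*_{>0}$), $a\notin\Delta(\Phi)$, and its projection is $a'=\xi$, giving $\gamma_a=t\gamma$. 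Proposition~\ref{prop:ressayre-pair-gamma-a} then makes $(t\gamma,C_a)$ a $B$-Ressayre's pair, where $C_a$ is the component of $M^{\gamma}$ containing $\Phi^{-1}(\xi)$. The identification $C_a=C$ is immediate: each point of $\Phi^{-1}(\xi)\times\{a\}$ realizes the minimum of $\|\Phi_a\|$, hence is a critical point of $\frac12\|\Phi_a\|^2$, hence is fixed by $\Phi_a(\cdot)=\gamma_a$; thus $\Phi^{-1}(\xi)\subset M^\gamma$, and since your base point $c$ lies in both $C$ and $\Phi^{-1}(\xi)\subset C_a$, the two connected components of $M^\gamma$ coincide. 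As $P_{t\gamma}=P_\gamma$ and $C^-$ is unchanged, $(\gamma,C)$ is a $B$-Ressayre's pair. Note what this buys: it is the paper's own device from \S\ref{sec:construction-RP} and \S\ref{sec:step-2} (perturb a boundary point of $\Delta(\Phi)$ in the direction $-\gamma$ and invoke Kirwan's theorem through Proposition~\ref{prop:ressayre-pair-gamma-a}), it avoids the holomorphic slice theorem entirely, and it uses only hypothesis (2) --- so, had you carried out this last step, you would have obtained a proof genuinely different from, and arguably simpler than, the one in the paper.
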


\begin{rem}
Ressayre obtained a similar result in the algebraic setting (see \cite{Ressayre10}, \S 6).
\end{rem}

\begin{proof} Let $\xi\in \Phi(C)\cap\tgot^*_{>0}$, and consider the K\"{a}hler Hamiltonian $K$-manifold $N:= M\times \overline{K\xi}$. We work with the connected component 
$C_N:=C\times \overline{K^\gamma\xi}$. Thanks to Proposition \ref{prop:K-B-RP}, we know that it is sufficient to check that $(\gamma,C_N)$ is a $K_\Cbb$-Ressayre's pair on $N$.
We will use here, as a key fact, that the stabilizer subgroups for the $K$-action on $N$ are abelian: in particular, for any $n\in C_N$, the stabilizer subgroup $K_n$ is an abelian subgroup contained in $K_\gamma$.

By definition the set $N^{ss}$ is non-empty and $\langle\Phi_N(C_N),\gamma\rangle=0$. Let $p:C^-_N\to C_N$ be the canonical projection.
We have proved in Proposition \ref{prop:N-ss-RP} that $N^{ss}\cap C_N^-=p^{-1}(C^{ss}_N)$, where $C^{ss}_N=\{n\in C_N; \overline{K^\gamma_\Cbb\, n}\cap \Phi_N^{-1}(0)\}$ 
is a dense open subset of $C_N$.

\begin{lem}
Let $g\in K_\Cbb$ and $x\in N^{ss}\cap C_N^-$, such as $gx\in C_N^-$. Then $g\in P_\gamma$.
\end{lem}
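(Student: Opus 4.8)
The plan is to transport the statement into Sjamaar's holomorphic slice model already used for Proposition~\ref{prop:N-ss-RP}, and then to reduce it to the elementary fact that a repelling fixed point of a one-parameter flow on a flag variety attracts only itself.

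First I would note that $gx$ satisfies the same hypotheses as $x$: since $N^{ss}$ is $K_\Cbb$-invariant we have $gx\in N^{ss}\cap C_N^-$. Because $x\in N^{ss}$ and $\langle\Phi_N(C_N),\gamma\rangle=0$, Proposition~\ref{prop:N-ss-gamma} gives $x_\gamma\in N^{ss}$, so by Remark~\ref{rem:choisir-z} I may pick the closed orbit $K_\Cbb z\subset\overline{K_\Cbb x}=\overline{K_\Cbb gx}$ with $z\in\overline{K_\Cbb^\gamma x_\gamma}\cap\Phi_N^{-1}(0)\subset C_N$; in particular $\gamma$ lies in the Lie algebra $\hgot$ of $H:=K_z$. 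Writing the slice model $\widetilde{\Wcal}=K_\Cbb\times_{H_\Cbb}E^0$ on a $K_\Cbb$-invariant neighbourhood $\Wcal$ of $K_\Cbb z$ (so $x,gx\in\Wcal$), Remark~\ref{rem:choisir-z} yields $\tilde x=[p,v]$ with $p\in P_\gamma$ and $v\in E^{\gamma\le 0}$, hence $\widetilde{gx}=g\tilde x=[gp,v]$.

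The decisive input is the abelian stabilizer hypothesis. As $H$ is abelian and $\gamma\in\hgot$, every element of $H_\Cbb$ commutes with the flow $e^{s\gamma}$, which forces the inclusions $H_\Cbb\subseteq K_\Cbb^\gamma\subseteq P_\gamma$. Consequently the projection $K_\Cbb/H_\Cbb\to K_\Cbb/P_\gamma$ is well defined, and composing it with the bundle map $\widetilde{\Wcal}\to K_\Cbb/H_\Cbb$ produces a continuous $K_\Cbb$-equivariant map $\Psi\colon\Wcal\to K_\Cbb/P_\gamma$ that intertwines the $e^{-it\gamma}$-actions and satisfies $\Psi(gx)=gp\,P_\gamma$. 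Two features of $\Psi$ will matter. Since the component $C_N$ of $N^\gamma$ is modelled by $K_\Cbb^\gamma\times_{H_\Cbb}E^{\gamma=0}$, whose representatives all lie in $K_\Cbb^\gamma\subseteq P_\gamma$, the map $\Psi$ is constantly equal to $[e]\,P_\gamma$ on $C_N\cap\Wcal$. And on the flag variety $K_\Cbb/P_\gamma$ the point $[e]\,P_\gamma$ is a source for $t\mapsto e^{-it\gamma}$: its tangent space $\kgot_\Cbb/\pgot_\gamma\cong(\kgot_\Cbb)^{\gamma>0}$ carries only strictly expanding weights, so the only point whose forward flow converges to $[e]\,P_\gamma$ is $[e]\,P_\gamma$ itself.

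Granting these, the conclusion follows quickly: $gx\in C_N^-$ means $\lim_{t\to\infty}e^{-it\gamma}gx=:y_\infty\in C_N$, so applying $\Psi$ gives $\lim_{t\to\infty}e^{-it\gamma}(gp\,P_\gamma)=\Psi(y_\infty)=[e]\,P_\gamma$; the source property then yields $gp\in P_\gamma$, and since $p\in P_\gamma$ we obtain $g\in P_\gamma$. The main obstacle is precisely the step $\lim_{t\to\infty}\Psi(e^{-it\gamma}gx)=\Psi(y_\infty)$: a priori the trajectory $e^{-it\gamma}gx$ could drift towards a boundary point of $\Wcal$, so that its flag image escapes to another component of $(K_\Cbb/P_\gamma)^\gamma$. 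Here I would use that $v\in E^{\gamma\le 0}$ keeps the fibre coordinate $e^{-it\gamma}v$ bounded, together with the properness of $\Phi_N$ (which confines the whole trajectory to a compact set once it converges), to show that the trajectory stays near the closed orbit $K_\Cbb z$ and that $y_\infty$ lies in $\Wcal$; this is the only point where the closedness of $K_\Cbb z$ and the abelian structure are used in an essential, non-formal way.
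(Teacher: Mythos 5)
Your setup is sound as far as it goes: the slice model at $z$, the inclusion $H_\Cbb\subset K^\gamma_\Cbb\subset P_\gamma$ coming from the abelian stabilizer, the equivariant projection $\Psi:\Wcal\to K_\Cbb/P_\gamma$, and the fact that $[e]P_\gamma$ is a source whose Bialynicki-Birula minus-cell is reduced to itself are all correct. But the decisive step is circular. You justify $\Psi(y_\infty)=[e]P_\gamma$ by asserting that $\Psi$ is constant equal to $[e]P_\gamma$ on $C_N\cap\Wcal$, because ``$C_N$ is modelled by $K^\gamma_\Cbb\times_{H_\Cbb}E^{\gamma=0}$''. That identification describes only the connected component of the fixed-point set of the flow in $\widetilde{\Wcal}=K_\Cbb\times_{H_\Cbb}E^0$ passing through $\tilde z=[e,0]$. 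The full fixed set is $\{[k,w]:\ \Ad(k^{-1})\gamma\in\hgot_\Cbb,\ \Ad(k^{-1})\gamma\cdot w=0\}$, which in general has further components with representatives $k\notin K^\gamma_\Cbb H_\Cbb$, and on those components $\Psi$ takes the value $[k]P_\gamma\neq[e]P_\gamma$. Since $C_N\cap\Wcal$ is only an open subset of the connected manifold $C_N$, it need not be connected, and nothing in your argument places $y_\infty=(gx)_\gamma$ in the good component. Worse, by your own source argument, $\Psi(y_\infty)=[e]P_\gamma$ holds if and only if $gp\in P_\gamma$, i.e.\ if and only if $g\in P_\gamma$: the claim you read off from the model is logically equivalent to the conclusion of the lemma. (By contrast, the difficulty you flag at the end is not one: $\Wcal$ is $K_\Cbb$-invariant, so the trajectory $e^{-it\gamma}gx$ can never leave it, and $y_\infty\in\Wcal$ follows from $y_\infty\in N^{ss}$ -- Proposition \ref{prop:N-ss-gamma} applied to $gx$ -- which forces $\overline{K^\gamma_\Cbb y_\infty}$ to meet $\Phi_N^{-1}(0)\cap\overline{K_\Cbb x}\subset K_\Cbb z\subset\Wcal$; properness and bounded fibre coordinates are not the relevant tools.)

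The paper closes exactly this hole by treating $x$ and $x'=gx$ symmetrically in a single slice. Using Proposition \ref{prop:N-ss-RP}, Remark \ref{rem:choisir-z} and the uniqueness of the closed orbit in $\overline{K_\Cbb x}$, it chooses one point $z\in C_N\cap\Phi_N^{-1}(0)$ lying in $\overline{K^\gamma_\Cbb x_\gamma}\cap\overline{K^\gamma_\Cbb x'_\gamma}$; Lemma \ref{lem:holomorphic-model} then yields slice expressions $\tilde x=[p,v]$ and $\tilde x'=[p',v']$ with \emph{both} $p,p'\in P_\gamma$, and the relation $x'=gx$ gives $gp=p'h$ for some $h\in H_\Cbb\subset K^\gamma_\Cbb$, whence $g=p'hp^{-1}\in P_\gamma$. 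What your proposal is missing is precisely the analogue of this second application of the slice lemma: an argument showing that, relative to the same $z$ used for $x$, the point $gx$ (equivalently its limit $y_\infty$) also admits a $P_\gamma$-representative. Once you have that, the flag-variety dynamics becomes superfluous, since the conclusion follows from the two-line group computation above.
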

\begin{proof}Let $x'=gx$. Let $x_\gamma:=\lim_{t\to\infty}e^{-it\gamma}x$ and $x'_\gamma:=\lim_{t\to\infty}e^{-it\gamma}x'$ be the corresponding point in $C_N^{ss}$. 
The orbits $K_\Cbb x_\gamma$ and $K_\Cbb x'_\gamma$ are both contained in the closure of the orbit $K_\Cbb x\subset N^{ss}$. 
Thanks to Proposition \ref{prop:N-ss-RP} and Remark \ref{rem:choisir-z}, we know that there exists $z\in C_N\cap \Phi_N^{-1}(0)$, such as 
$z\in \overline{K^\gamma_\Cbb x_\gamma}\cap \overline{K^\gamma_\Cbb x'_\gamma}$. 

As in the proof of Proposition \ref{prop:N-ss-RP}, we use now the Holomorphic Slice Theorem of Sjamaar \cite{Sjamaar-95} in order to describe a neighbourhood of $K_\Cbb z$. 
The stabilizer subgroup $(K_\Cbb)_{z}$, which is equal to the complexification of $H:= K_{z}$, is an abelian group contained in $K^\gamma_\Cbb$.
A $K_\Cbb$-invariant neighbourhood $\Wcal$ of the orbit $K_\Cbb z$ admits a $K_\Cbb$-equivariant holomorphic diffeomorphism with
$\widetilde{\Wcal}:=K_\Cbb\times_{H_\Cbb}E^0$.

The points $x,x'$ belong to $\Wcal$. Let us write by $\tilde{x},\tilde{x}'$ the corresponding points in $\widetilde{\Wcal}$. Lemma \ref{lem:holomorphic-model} 
and Remark \ref{rem:choisir-z} say that there exists $(p,p')\in P_\gamma^2$  such as $\tilde{x}=[p,v]$, and $\tilde{x}'=[p',v']$. But $x'=gx$, so there exists 
$h\in H_\Cbb$ satisfying $gp=p'h$. Finally, we see that $g=p'hp^{-1}\in P_\gamma$ because $h\in K_\Cbb^\gamma$. 
\end{proof}

Let us finish the proof of Proposition \ref{prop:RP=infRP}.  The map $\pi_\gamma: K_\Cbb\times_{P_\lambda}C_N^-\to N$ has a dense image by hypothesis, and it is injective when restricted to $K_\Cbb\times_{P_\lambda}(N^{ss}\cap C_N^-)$. Let $(C^-_N)_{\reg}$ be the open subset formed by the point $n\in C^-_N$ such as the tangent map 
$\T{\rm \pi}_\gamma\vert_{[e, n]}$ is an isomorphism. We have seen in Lemma \ref{lem:C-N-reg} that 
$(C^-_N)_{\reg}$ is a dense, $P_\gamma$-invariant, open subset of $C^-_N$ such as $(C^-_N)_{\reg}=p^{-1}(C_N\cap (C^-_N)_{\reg})$.

Finally, let's take $U=(C^-_N)_{\reg}\cap N^{ss}$. We have $U=p^{-1}(V)$ where $V=C^{ss}_N\cap(C^-_N)_{\reg}$ is a dense open subset of $C_N$. The map
$\pi_\gamma$ defines then an holomorphic diffeomorphism between $K_\Cbb\times_{P_\lambda}U$ and the dense open subset $K_\Cbb U\subset N$.
\end{proof}

\section{H\"{o}rn conditions}\label{sec:Horn-conditions}

Let $\gamma\in\tgot$ be a non-zero element, and let $C\subset M^\gamma$ be a connected component. 
Let $C^-$ be the Bialynicki-Birula's complex manifold associated to $C$ (see (\ref{eq:BB})). In order to analyse the fibers of the map 
$q_\gamma : B\times_{B\cap P_\gamma}C^-\to M$, we introduce the maps
 $r^g_\gamma : B\times_{B\cap P_\gamma}C^-\to K_\Cbb/Ad(g)P_\gamma$ that 
sends $[b,x]$ to $[bg^{-1}]$.

\subsection{Characterization of $B$-Ressayre's pairs}\label{sec:Characterization-RP}
We start with the following basic remark.

\begin{lem}\label{lem:r-gamma}
Let $g\in K_\Cbb$. For any $m\in M$, the map $r^g_\gamma$ is injective when restricted to $q_\gamma^{-1}(m)$.
\end{lem}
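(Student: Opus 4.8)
The plan is to unwind the two conditions characterising a pair of points $[b_1,x_1],[b_2,x_2]\in q_\gamma^{-1}(m)$ having the same image under $r^g_\gamma$, and to show they force $[b_1,x_1]=[b_2,x_2]$. By definition, $[b_1,x_1],[b_2,x_2]\in q_\gamma^{-1}(m)$ means $b_1x_1=b_2x_2=m$, while $r^g_\gamma([b_1,x_1])=r^g_\gamma([b_2,x_2])$ means $[b_1g^{-1}]=[b_2g^{-1}]$ in $K_\Cbb/\Ad(g)P_\gamma$.

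First I would translate the equality in $K_\Cbb/\Ad(g)P_\gamma$ into a condition that does not involve $g$. The cosets $b_1g^{-1}\,\Ad(g)P_\gamma$ and $b_2g^{-1}\,\Ad(g)P_\gamma$ coincide if and only if $(b_2g^{-1})^{-1}(b_1g^{-1})\in \Ad(g)P_\gamma=gP_\gamma g^{-1}$, that is
\[
g\,b_2^{-1}b_1\,g^{-1}\in gP_\gamma g^{-1}\qquad\Longleftrightarrow\qquad b_2^{-1}b_1\in P_\gamma .
\]
Thus the factor $g$ cancels, and we obtain $p:=b_2^{-1}b_1\in P_\gamma$. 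Since $b_1,b_2\in B$ we also have $p\in B$, hence $p\in B\cap P_\gamma$.

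Next I would exploit the relation $b_1x_1=b_2x_2$, which gives $x_2=b_2^{-1}b_1x_1=p\,x_1$; this is consistent with $C^-$ being $P_\gamma$-invariant, so $p\,x_1\in C^-$. It then suffices to apply the equivalence relation defining $B\times_{B\cap P_\gamma}C^-$, namely $[b,x]=[bh,h^{-1}x]$ for $h\in B\cap P_\gamma$. Taking $h=p$,
\[
[b_2,x_2]=[b_2,\,p\,x_1]=[b_2p,\,x_1]=[b_1,x_1],
\]
where the last equality uses $b_2p=b_2b_2^{-1}b_1=b_1$. This establishes $[b_1,x_1]=[b_2,x_2]$, i.e. the injectivity of $r^g_\gamma$ on $q_\gamma^{-1}(m)$.

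There is no genuine obstacle here; the only points requiring care are the bookkeeping of the associated-bundle relation and the coset computation, the latter showing that the dependence on $g$ drops out, so that injectivity in fact holds simultaneously for every $g\in K_\Cbb$.
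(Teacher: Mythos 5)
Your proof is correct: the coset identity $[b_1g^{-1}]=[b_2g^{-1}]$ in $K_\Cbb/\Ad(g)P_\gamma$ does reduce to $b_2^{-1}b_1\in B\cap P_\gamma$, and combined with $b_1x_1=b_2x_2$ the associated-bundle relation forces $[b_1,x_1]=[b_2,x_2]$. The paper itself states this lemma as a ``basic remark'' with no written proof, and your argument is exactly the bookkeeping that the author leaves implicit, including the observation that the dependence on $g$ cancels.
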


We have the following characterization of $B$-Ressayre's pairs.

\begin{prop}\label{prop:caracteriser-ressayre-pair}Let $g\in K_\Cbb$.
\begin{itemize}
\item If $(\gamma,C)$ is an infinitesimal $B$-Ressayre's pair then :
\begin{enumerate}
\item[A)] $\dim_\Cbb(\ngot^{\gamma>0})={\rm rank}_\Cbb(\T M\vert_C)^{\gamma>0}$,
\item[B)] $\tr_{\gamma}(\ngot^{\gamma>0})=\tr_{\gamma}((\T M\vert_C)^{\gamma>0})$.
\end{enumerate}
\item If $(\gamma,C)$ satisfies conditions A), B), and furthermore the set 
\begin{equation}\label{eq:condition-1-image}
{\rm Image}(q_\gamma)=\{m\in M, r^g_\gamma(q_\gamma^{-1}(m))\neq\emptyset\}\quad \  has\ a\ nonempty\ interior,
\end{equation}
then $(\gamma,C)$ is an infinitesimal $B$-Ressayre's pair.
\item Suppose that $M$ is an algebraic variety equipped with an algebraic action of $K_\Cbb$. If $(\gamma,C)$ satisfies relations A), B), and furthermore 
the set 
\begin{equation}\label{eq:condition-2-image}
\{m\in M, {\rm cardinal}(r^g_\gamma(q_\gamma^{-1}(m)))= 1\}\quad \ has\ a\ nonempty\ interior,
\end{equation}
then $(\gamma,C)$ is a $B$-Ressayre's pair.
\end{itemize}
\end{prop}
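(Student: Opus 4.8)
The plan is to reduce all three assertions to the behaviour of a single determinantal section on the Bialynicki-Birula submanifold $C^-$, combined with the saturation property that condition B) secretly encodes. Recall that for $x\in C$ the tangent map $\T q_\gamma|_{[e,x]}$ is an isomorphism exactly when $\rho_x^\gamma:\ngot^{\gamma>0}\to(\T_xM)^{\gamma>0}$ is. I set $\mathbb{E}:=\T M|_{C^-}/\T C^-$, a holomorphic $(B\cap P_\gamma)$-equivariant bundle whose fibre over $x\in C$ is $(\T_xM)^{\gamma>0}$, of rank $r$; the maps $\rho^\gamma$ assemble into a bundle morphism $\underline{\ngot^{\gamma>0}}\to\mathbb{E}$, and taking top exterior powers produces a $\Tbb$-equivariant section $\theta$ of $\Lbb:=\hom(\wedge^r\ngot^{\gamma>0},\wedge^r\mathbb{E})$, with $(C^-)_{\reg}=\{\theta\neq0\}$, as in the proof of Lemma \ref{lem:C-N-reg}. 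The first bullet is then immediate: if $\rho_x^\gamma$ is an isomorphism for some $x\in C$ it is an isomorphism of $\Tbb$-modules, so matching the dimensions of the $\gamma$-weight spaces gives A) and matching the weights with multiplicity gives $\tr_{\gamma}(\ngot^{\gamma>0})=\tr_{\gamma}((\T M|_C)^{\gamma>0})$, i.e. B).

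The key observation is that A) and B) together provide the two engines for the converse statements. First, A) forces $\dim_\Cbb(B\times_{B\cap P_\gamma}C^-)=\dim_\Cbb\ngot^{\gamma>0}+\dim_\Cbb(\T_xM)^{\gamma\leq0}=\dim_\Cbb M$, so $q_\gamma$ is a holomorphic map between connected complex manifolds of equal dimension. Second, B) says precisely that $\Tbb=\overline{\exp(\Rbb\gamma)}$ acts trivially on $\Lbb|_C$, whence Lemma \ref{lem:action-fibre-L} yields the saturation $(C^-)_{\reg}=p^{-1}\big((C^-)_{\reg}\cap C\big)$, where $p:C^-\to C$ is the projection; moreover $(C^-)_{\reg}$ is $(B\cap P_\gamma)$-invariant by $B$-equivariance of $q_\gamma$.

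For the second bullet I argue as follows. Since ${\rm Image}(q_\gamma)$ has nonempty interior and $q_\gamma$ is holomorphic between equidimensional connected manifolds, its generic rank is maximal, so the regular locus $R=B\times_{B\cap P_\gamma}(C^-)_{\reg}$ is nonempty; hence $(C^-)_{\reg}\neq\emptyset$, and the saturation forces $(C^-)_{\reg}\cap C\neq\emptyset$. Thus $\rho_x^\gamma$ is an isomorphism for some $x\in C$, i.e. $(\gamma,C)$ is an infinitesimal $B$-Ressayre's pair. (One may alternatively route this through Proposition \ref{prop:K-B-RP}, but the direct argument is shorter.)

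The third bullet is where the real work lies, and I expect it to be the main obstacle. By Lemma \ref{lem:r-gamma} the map $r^g_\gamma$ is injective on each fibre of $q_\gamma$, so the hypothesis ${\rm cardinal}(r^g_\gamma(q_\gamma^{-1}(m)))=1$ means the whole fibre $q_\gamma^{-1}(m)$ is a single point; hence $q_\gamma$ is injective over a nonempty open set, and a fortiori its image has nonempty interior, so the second bullet applies and gives both $(C^-)_{\reg}\neq\emptyset$ and, in the algebraic setting, that $q_\gamma$ is dominant between irreducible varieties of equal dimension. A dominant generically injective morphism over $\Cbb$ is birational, so $q_\gamma$ is an isomorphism over a dense open $Y_0\subset M$. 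The delicate point is that this naive isomorphism locus need be neither $(B\cap P_\gamma)$-saturated nor meet the lower-dimensional $C$; I resolve this by transferring injectivity onto the good set $R=B\times_{B\cap P_\gamma}(C^-)_{\reg}$. Indeed $q_\gamma|_R$ is étale (a local diffeomorphism) onto a dense open $\Omega:=q_\gamma(R)$, its ``double-point'' locus has open image in $\Omega$, and that image is disjoint from the dense set $Y_0\cap\Omega$ over which $q_\gamma$ is injective; therefore the double-point locus is empty and $q_\gamma|_R:R\to\Omega$ is a bijective local diffeomorphism, hence a diffeomorphism. I then take $U=(C^-)_{\reg}$: it is $(B\cap P_\gamma)$-invariant, open and dense in $C^-$, meets $C$ by the saturation of the previous paragraph, and $q_\gamma$ identifies $B\times_{B\cap P_\gamma}U=R$ with $BU=\Omega$. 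This is exactly the data making $(\gamma,C)$ a $B$-Ressayre's pair, completing the proof.
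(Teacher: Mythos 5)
Your proof is correct and takes essentially the same route as the paper: condition A) gives equidimensionality, the determinantal section $\theta$ of $\Lbb=\hom\left(\wedge^r\ngot^{\gamma>0},\wedge^r\mathbb{E}\right)$ combined with condition B) and Lemma \ref{lem:action-fibre-L} yields the saturation $(C^-)_{\reg}=p^{-1}\big((C^-)_{\reg}\cap C\big)$, and generic injectivity (via Lemma \ref{lem:r-gamma}) drives the algebraic case. The only difference is one of detail: where the paper simply asserts that $q_\gamma$ restricts to an isomorphism $B\times_{B\cap P_\gamma}(C^-)_{\reg}\simeq B(C^-)_{\reg}$, you justify it (generic injectivity gives birationality, and openness of the double-point locus of the \'etale restriction $q_\gamma\vert_R$ then forces injectivity on $R$), thereby filling in a step the paper leaves implicit.
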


\begin{proof}  Let $x\in C$ such as  $\rho_x^\gamma:=\ngot^{\gamma>0}\to (\T_x M)^{\gamma>0}$ is bijective. 
Since the map $\rho_x^\gamma$ commutes with the action of $\gamma$ we get A) and B). The first point is proved.

Suppose now that $(\gamma,C)$ satisfies conditions A) and B).  Condition A) insures that the manifolds $B\times_{B\cap P_\gamma}C^-$ and $M$ have the same dimension.
Either (\ref{eq:condition-1-image}) or (\ref{eq:condition-2-image}) imply that ${\rm Image}(q_\gamma)$ has a non-empty interior, so it must contains a regular value of the map $q_\gamma$. We see then that the set $(C^-)_{\reg}$ formed by the point $m\in C^-$ for which the tangent map $\T q_\gamma\vert_{[e, m]}$ is an isomorphism, is a dense open subset of $C^-$.

As we did in Lemma \ref{lem:C-N-reg}, let us check that  equality B) implies that $(C^-)_{\reg}\cap C\neq \emptyset$. 
The rank $s$ of the holomorphic bundle $\mathbb{E}:=\T M\vert_{C^-}/\T C^-$ is equal to the dimension of 
$\bgot/\pgot_\gamma\cap\bgot$. We consider now the $B\cap P_\gamma$-holomorphic line bundle $\Lbb\to C^-$ defined by 
$\Lbb:=\hom \left(\wedge^r(\bgot/\pgot_\gamma\cap\bgot), \wedge^r\mathbb{E}\right)$. We have a canonical $B\cap P_\gamma$-equivariant section 
$\theta : C^-\to \Lbb$ defined  by
$\theta(m) : \overline{X_1} \wedge \cdots\wedge  \overline{X_s}\longrightarrow  \overline{X_1\cdot m} \wedge \cdots\wedge  \overline{X_s\cdot m}$. 
We notice that  $(C^-)_{\reg}=\{m; \theta(m)\neq 0\}$ and condition $B)$ tell us that the torus $\Tbb=\overline{\exp(\Rbb\gamma)}$ acts trivially on $\Lbb\vert_{C}$. 
Thanks to Lemma \ref{lem:action-fibre-L}, we can conclude that $m\in (C^-)_{\reg}$ if and only if $p(m)\in (C^-)_{\reg}$. The set $C\cap (C^-)_{\reg}$ is not empty, and for any 
$x\in C\cap (C^-)_{\reg}$, the map $\rho^\gamma_x$ is an isomorphism. At this stage, we have proved that $(\gamma,C)$ is an infinitesimal $B$-Ressayre's pair.

Suppose now that $M$ is an algebraic variety. We have an algebraic map $q_\gamma$ between smooth algebraic varieties of same dimension. Moreover, on the open subset 
$B\times_{B\cap P_\gamma}(C^-)_{\reg}$, the differential $q_\gamma$ is always bijective. If (\ref{eq:condition-2-image}) holds, we can conclude that $q_\gamma$ defines an isomorphism between $B\times_{B\cap P_\gamma}(C^-)_{\reg}$ and $B(C^-)_{\reg}$. We have proved that $(\gamma,C)$ is a $B$-Ressayre's pair since $(C^-)_{\reg}\cap C\neq \emptyset$.
\end{proof}

\medskip

\subsection{Highest weight vector}

\begin{defi}
We associate to an infinitesimal Ressayre's pair $(\gamma,C)$ the holomorphic line bundle $\Lbb_{(\gamma,C)}:=\det\left((\T M\vert_C)^{\gamma>0}\right)$ on $C$.
\end{defi}

Take a basis $Y_1,\cdots,Y_p$ of $\ngot^{\gamma>0}$. We define an holomorphic section of the line bundle $\Lbb_{(\gamma,C)}$ as follows :
$\Theta_{(\gamma,C)}(x):= Y_1\cdot x\wedge\cdots\wedge  Y_p\cdot x,\quad \forall x\in C$.

The vector space of holomorphic sections $H^0(C, \Lbb_{(\gamma,C)})$ is a $K_\gamma$-module, and we will now see that $\Theta_{(\gamma,C)}\in H^0(C, \Lbb_{(\gamma,C)})$ is a highest weight vector.
Let 
$$
\rho_{(\gamma,C)}:=\sum_{\stackrel{\alpha>0}{\langle\alpha,\gamma\rangle<0}}\alpha.
$$

The following property is immediate.

\begin{prop}
We have $X\cdot \Theta_{(\gamma,C)}=0$ for any $X\in\ngot^{\gamma=0}$ and 
$$
t\cdot  \Theta_{(\gamma,C)}= t^{\rho_{(\gamma,C)}}\ \Theta_{(\gamma,C)},\quad \forall t\in T.
$$
\end{prop}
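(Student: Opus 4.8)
The plan is to reduce both assertions to a single transformation law for $\Theta_{(\gamma,C)}$ under the complexified centralizer $K^\gamma_\Cbb$, and then read off the two statements by specializing to $T$ and to $\ngot^{\gamma=0}$. Since $K^\gamma_\Cbb$ commutes with $\gamma$ it preserves the $\gamma$-eigenspace decomposition of $\T M\vert_C$, hence acts on $(\T M\vert_C)^{\gamma>0}$ and on its determinant; because $C$ is $K^\gamma_\Cbb$-stable, $H^0(C,\Lbb_{(\gamma,C)})$ is a $K^\gamma_\Cbb$-module, and both $T$ and $\ngot^{\gamma=0}\subset\kgot^\gamma_\Cbb$ act on it. First I would record the equivariance of the infinitesimal action map, $g_*(Y\cdot m)=(\Ad(g)Y)\cdot(gm)$, which is immediate from $g\,e^{sY}m=e^{s\Ad(g)Y}(gm)$. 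Applying this to $(g\cdot\Theta_{(\gamma,C)})(x)=g\cdot\Theta_{(\gamma,C)}(g^{-1}x)$ for $g$ normalizing $\ngot^{\gamma>0}$ gives
$$(g\cdot\Theta_{(\gamma,C)})(x)=\rho_x(\Ad(g)Y_1)\wedge\cdots\wedge\rho_x(\Ad(g)Y_p)=\det\big(\Ad(g)\vert_{\ngot^{\gamma>0}}\big)\,\Theta_{(\gamma,C)}(x),$$
the last equality because wedging the images of a linear endomorphism multiplies by its determinant. Note that $p=\dim_\Cbb\ngot^{\gamma>0}$ equals the rank of $(\T M\vert_C)^{\gamma>0}$ by condition A) of Proposition \ref{prop:caracteriser-ressayre-pair}, so $\Theta_{(\gamma,C)}$ really is a section of the determinant line.

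For the torus statement I would take $g=t\in T$, which normalizes $\ngot^{\gamma>0}$ since the latter is $T$-stable. Choosing the basis $Y_1,\dots,Y_p$ to consist of root vectors $X_{\beta_1},\dots,X_{\beta_p}$, the operator $\Ad(t)$ is diagonal with eigenvalues $t^{\beta_j}$, so $\det(\Ad(t)\vert_{\ngot^{\gamma>0}})=t^{\sum_j\beta_j}$. The roots $\beta_j$ occurring in $\ngot^{\gamma>0}$ are exactly the positive roots $\alpha$ with $\langle\alpha,\gamma\rangle<0$, whence $\sum_j\beta_j=\rho_{(\gamma,C)}$ and $t\cdot\Theta_{(\gamma,C)}=t^{\rho_{(\gamma,C)}}\,\Theta_{(\gamma,C)}$.

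For the vanishing statement I would differentiate the same law along $g=\exp(sX)$ with $X\in\ngot^{\gamma=0}$; this $X$ lies in $\kgot^\gamma_\Cbb$ and $\ad(X)$ preserves $\ngot^{\gamma>0}$ (the bracket of a $\gamma$-weight-$0$ element of $\ngot$ with a strictly positive one stays in $\ngot^{\gamma>0}$), so the formula applies and yields $X\cdot\Theta_{(\gamma,C)}=\tr(\ad(X)\vert_{\ngot^{\gamma>0}})\,\Theta_{(\gamma,C)}$. Since $X\in\ngot$ is nilpotent, $\ad(X)$ is nilpotent on $\ngot^{\gamma>0}$ and has vanishing trace, giving $X\cdot\Theta_{(\gamma,C)}=0$. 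Equivalently, one can run the direct Leibniz computation $(X\cdot\Theta_{(\gamma,C)})(x)=\sum_i\rho_x(Y_1)\wedge\cdots\wedge\rho_x([X,Y_i])\wedge\cdots\wedge\rho_x(Y_p)$ and use antisymmetry of the wedge to collapse it to $(\sum_i c_{ii})\,\Theta_{(\gamma,C)}$, where $[X,Y_i]=\sum_j c_{ij}Y_j$, the diagonal sum again being the trace. The only point needing care—and the place where I expect the bookkeeping to concentrate—is keeping the root-sign conventions consistent so that the roots of $\ngot^{\gamma>0}$ are identified with $\{\alpha>0:\langle\alpha,\gamma\rangle<0\}$; once that is fixed, both statements are formal consequences of the determinant and trace identities, which is why the author calls the property immediate.
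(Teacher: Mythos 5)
Your proof is correct; the paper in fact gives no argument at all (it declares the property immediate), and your computation — the transformation law $(g\cdot\Theta_{(\gamma,C)})(x)=\det\bigl(\Ad(g)\vert_{\ngot^{\gamma>0}}\bigr)\Theta_{(\gamma,C)}(x)$ for $g$ normalizing $\ngot^{\gamma>0}$, specialized to $T$ and differentiated along $\ngot^{\gamma=0}$ where the trace of the nilpotent operator $\ad(X)$ vanishes — is precisely the intended one. The one point where you defer to convention, namely that the $T$-weights on $\ngot^{\gamma>0}$ are the positive roots with $\langle\alpha,\gamma\rangle<0$, is exactly what the paper's definition of $\rho_{(\gamma,C)}$ presupposes, but be aware that the paper is not internally consistent on this sign: in Lemma \ref{lem:A-B-horn} the quantity $\tr_{\gamma}(\ngot^{\gamma>0})$ is computed as a sum over positive roots with $\langle\alpha,\gamma\rangle>0$, and under that (standard) convention the weight of $\Theta_{(\gamma,C)}$ would come out as $\sum_{\alpha>0,\,\langle\alpha,\gamma\rangle>0}\alpha$; in either case the convention-free identities you establish, $t\cdot\Theta_{(\gamma,C)}=\det\bigl(\Ad(t)\vert_{\ngot^{\gamma>0}}\bigr)\,\Theta_{(\gamma,C)}$ and $X\cdot\Theta_{(\gamma,C)}=\tr\bigl(\ad(X)\vert_{\ngot^{\gamma>0}}\bigr)\,\Theta_{(\gamma,C)}=0$, carry the full content of the proposition.
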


\section{Examples}

\subsection{$\tilde{K}_\Cbb\times K_\Cbb$ acting on $\tilde{K}_\Cbb$}\label{sec:example-1}

In this section, we work out the example of a complex reductive group $\tilde{K}_\Cbb$ equipped with the following action of 
$\tilde{K}_\Cbb\times K_\Cbb$ : $(\tilde{k},k)\cdot a= \tilde{k} a k^{-1}$. Here $K\croc \tilde{K}$ is a closed connected subgroup, and 
$K_\Cbb$ is the corresponding reductive subgroup of $\tilde{K}_\Cbb$. 

There is a diffeomorphism of the cotangent bundle $\T^*\tilde{K}$ with $\tilde{K}_\Cbb$ defined as follows. We identify $\T^*\tilde{K}$ with 
$\tilde{K}\times \tilde{\kgot}^*$ by means of left-translation and then with $\tilde{K}\times \tilde{\kgot}$ by means of an invariant inner product on  $\tilde{\kgot}$. 
The map $\varphi:\tilde{K}\times \tilde{\kgot}\to \tilde{K}_\Cbb$ given by $\varphi(a,X)=a e^{iX}$ is a diffeomorphism. If we use $\varphi$ to transport the complex structure of 
$\tilde{K}_\Cbb$ to $\T^*\tilde{K}$, then the resulting complex structure on $\T^*\tilde{K}$ is compatible with the symplectic structure on $\T^*\tilde{K}$, so that $\T^*\tilde{K}$ 
becomes a K\"{a}hler Hamiltonian $\tilde{K}\times K$-manifold (see \cite{Hall87}, \S 3). The moment map relative to the $\tilde{K}\times K$-action is a proper 
map $\Phi=\Phi_{\tilde{K}} \oplus \Phi_K : \T^*\tilde{K} \to \tilde{\kgot}^*\oplus \kgot^*$ defined by
\begin{equation}\label{eq:momentcotangent}
\Phi_{\tilde{K}}(a,\tilde{\xi})=-a\tilde{\xi},\quad \Phi_K(a,\tilde{\xi})=\pi(\tilde{\xi}).
\end{equation}
Here $\pi : \tilde{\kgot}^* \to \kgot^*$ is the projection dual to the inclusion $\kgot \croc \tilde{\kgot}$ of Lie algebras.

Select maximal tori $T$ in $K$ and $\tilde{T}$ in $\tilde{K}$ such as $T\subset \tilde{T}$, and
Weyl chambers $\tilde{\tgot}^*_{\geq 0}$ in $\tilde{\tgot}^*$ and
$\tgot^*_{\geq 0}$ in  $\tgot^*$, where $\tgot$ and $\tilde{\tgot}$ denote the Lie algebras of $T$, resp. $\tilde{T}$.

Let $\Delta(\T^*\tilde{K})\subset \tilde{\tgot}^*_{\geq 0}\times \tgot^*_{\geq 0}$
be the Kirwan polytope associated to $\Phi$. Equations (\ref{eq:momentcotangent}) show that
$$
\Delta(\T^*\tilde{K})=\left\{(\tilde{\xi},\xi)\in\tilde{\tgot}^*_{\geq 0}\times \tgot^*_{\geq 0}\, |\, -\xi\in \pi\big(\tilde{K}\tilde{\xi}\big) \right\}.
$$

We recall from general principle that the Kirwan polyhedron $\Delta(\T^*\tilde{K})$ has a non zero interior in $\tilde{\tgot}^*\times \tgot^*$ if and only if
the generic stabilizer of the $\tilde{K}\times K$-action on $\T^* \tilde{K}$ is finite. To simplify the exposition, we assume from now on that 
{\em no non-zero ideal of $\kgot$ is an ideal of $\tilde{\kgot}$}. It implies the following
\begin{lem}
The intersection of  $\Delta(\T^*\tilde{K})$ with $\tilde{\tgot}^*_{>0}\times \tgot^*_{>0}$ has a non empty interior, and the generic stabilizer of the 
$\tilde{K}\times K$-action on $\tilde{K}_\Cbb\simeq \T^* \tilde{K}$ is finite.
\end{lem}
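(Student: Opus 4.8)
The plan is to prove the second assertion first, since it carries all the content. By the general principle recalled just above, $\Delta(\T^*\tilde{K})$ has a non-empty interior in $\tilde{\tgot}^*\times\tgot^*$ if and only if the generic stabilizer of the $\tilde{K}\times K$-action on $\T^*\tilde{K}$ is finite. Granting this finiteness, the interior of $\Delta(\T^*\tilde{K})$ is a non-empty open subset of $\tilde{\tgot}^*\times\tgot^*$; as the complement of $\tilde{\tgot}^*_{>0}\times\tgot^*_{>0}$ inside $\tilde{\tgot}^*_{\geq 0}\times\tgot^*_{\geq 0}$ is a finite union of walls, hence nowhere dense, this interior must meet $\tilde{\tgot}^*_{>0}\times\tgot^*_{>0}$. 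Thus $\Delta(\T^*\tilde{K})\cap(\tilde{\tgot}^*_{>0}\times\tgot^*_{>0})$ has a non-empty interior, which is the first assertion. Everything therefore reduces to showing that the generic stabilizer is finite.

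First I would reduce this to a statement inside $\tilde{\kgot}$. The induced action of $\tilde{K}\times K$ on the zero section $\tilde{K}\subset\T^*\tilde{K}$, namely $(\tilde{k},k)\cdot a=\tilde{k} a k^{-1}$, is transitive (already $\tilde{k}k^{-1}$ exhausts $\tilde{K}$) with isotropy at $e$ the diagonal copy $\kgot_{\mathrm{diag}}\simeq\kgot$ of $K$ in $\tilde{K}\times K$. Hence $\T^*\tilde{K}$ is $(\tilde{K}\times K)$-equivariantly the associated bundle $(\tilde{K}\times K)\times_{K}\big((\tilde{\kgot}\oplus\kgot)/\kgot_{\mathrm{diag}}\big)^{*}$, and the standard description of cotangent bundles of homogeneous spaces shows that the generic stabilizer of the total action coincides with the generic stabilizer of the isotropy group $K$ on the fibre. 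The $K$-equivariant isomorphism $(\tilde{\kgot}\oplus\kgot)/\kgot_{\mathrm{diag}}\simeq\tilde{\kgot}$, $(\tilde{Y},Y')\mapsto\tilde{Y}-Y'$, together with the invariant identification $\tilde{\kgot}^{*}\simeq\tilde{\kgot}$, reduces the problem to the following: for generic $Y\in\tilde{\kgot}$ one has $\kgot\cap\zgot_{\tilde{\kgot}}(Y)=0$, where $\zgot_{\tilde{\kgot}}(Y)=\{Z\in\tilde{\kgot};[Y,Z]=0\}$; that is, the $K$-action on $\tilde{\kgot}$ by the restricted adjoint representation has trivial generic infinitesimal stabilizer.

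The core is then a purely Lie-algebraic argument using the hypothesis. Fix the invariant inner product on $\tilde{\kgot}$ and set $\pgot:=\kgot^{\perp}$, so that $\tilde{\kgot}=\kgot\oplus\pgot$ with $[\kgot,\pgot]\subset\pgot$. Consider $\cgot:=\{Z\in\kgot;[Z,\pgot]=0\}$, the centralizer of $\pgot$ in $\kgot$. A one-line Jacobi computation, $[[W,Z],P]=[W,[Z,P]]-[Z,[W,P]]=-[Z,[W,P]]=0$ for $W\in\kgot$, $Z\in\cgot$, $P\in\pgot$ (using $[W,P]\in\pgot$), shows that $\cgot$ is an ideal of $\kgot$; since it centralizes $\pgot$ we get $[\tilde{\kgot},\cgot]=[\kgot,\cgot]+[\pgot,\cgot]\subset\cgot$, so $\cgot$ is even an ideal of $\tilde{\kgot}$. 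By the standing hypothesis that no non-zero ideal of $\kgot$ is an ideal of $\tilde{\kgot}$, we conclude $\cgot=0$. Finally take $Y=X+P$ with $X$ a regular element of $\kgot$ and $P\in\pgot$ generic. If $Z\in\kgot$ satisfies $[Z,Y]=0$, then comparing $\kgot$- and $\pgot$-components gives $[Z,X]=0$ and $[Z,P]=0$; regularity of $X$ forces $Z$ into the Cartan subalgebra $\zgot_{\kgot}(X)$, on which the genericity of $P$ forces $Z$ to act trivially on every weight space of $\pgot$, i.e. $Z\in\cgot=0$. Thus this particular $Y$ has trivial infinitesimal stabilizer, and by upper semicontinuity of the stabilizer dimension the generic stabilizer is finite, completing the argument.

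I expect the main obstacle to be precisely this core Lie-algebraic step: identifying the correct candidate ideal $\cgot=\zgot_{\kgot}(\pgot)$, verifying that it is simultaneously an ideal of $\kgot$ and of $\tilde{\kgot}$, and matching the genericity of the pair $(X,P)$ with the exact vanishing of the infinitesimal stabilizer. By contrast, the reduction to the adjoint action on $\tilde{\kgot}$ is routine cotangent-bundle symplectic geometry, and the passage from finiteness of the generic stabilizer to the two stated conclusions is immediate from the quoted principle together with the nowhere-density of the walls.
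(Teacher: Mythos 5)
Your proposal is correct. Note that the paper offers no proof of this lemma at all: it is stated as an immediate consequence of the standing hypothesis that no non-zero ideal of $\kgot$ is an ideal of $\tilde{\kgot}$, together with the "general principle" recalled just before it, so your argument is supplying details the author chose to omit rather than competing with a written proof. Each of your three steps checks out: (i) the deduction of the first assertion from the second is exactly the quoted principle plus the nowhere-density of the chamber walls; (ii) the reduction to the adjoint action of $K$ on $\tilde{\kgot}$ is the standard associated-bundle description $\T^*\tilde{K}\simeq(\tilde{K}\times K)\times_{K_{\rm diag}}\tilde{\kgot}^*$, under which every stabilizer of the total action is conjugate to a stabilizer of the fibre action, and it is consistent with the moment map formula (\ref{eq:momentcotangent}) of the paper, which shows the fibre variable transforms by the (co)adjoint action of the $K$-factor; (iii) the core Lie-algebra step is sound: $\cgot=\zgot_{\kgot}(\pgot)$ is an ideal of $\kgot$ by your Jacobi computation, hence an ideal of $\tilde{\kgot}$ since $[\pgot,\cgot]=0$, so the hypothesis kills it, and then $Y=X+P$ with $X$ regular and $P$ having non-zero component in every $\tgot$-weight space of $\pgot_\Cbb$ (a non-empty Zariski-open condition on the real form, since $\pgot$ spans $\pgot_\Cbb$) has $\kgot\cap\zgot_{\tilde{\kgot}}(Y)=0$, and semicontinuity of the stabilizer dimension finishes the proof. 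This is precisely the natural way to convert the ideal-theoretic hypothesis into finiteness of the generic stabilizer, and it is presumably the argument the author had in mind.
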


In the next section, we look after the $B$-Ressayre's pairs of the complex manifold $\tilde{K}_\Cbb$ relative to the $\tilde{K}_\Cbb\times K_\Cbb$-action.

\subsubsection{Admissible elements}

Let $(\tilde X,X)$ be an element of  $\tilde \tgot\times \tgot$.
In the following lemma, which follows by direct checking, we describe the manifold $N^{(\tilde X,X)}$ of zeroes of the vector field $(\tilde X,X)$ on $N=\T^*\tilde K$. 
We denote $\tilde{K}^X\subset \tilde{K}$ the subgroup that fixes $X$. The cotangent bundle $\T^*\tilde{K}^X\simeq \tilde{K}^X_\Cbb$ is then a submanifold of 
$\T^*\tilde{K}\simeq \tilde{K}_\Cbb$.

\begin{lem}\label{lem:point-fixe-cotangent}

\begin{itemize}
\item If $\tilde X$ is not conjugate to $X$ in $\tilde \kgot$ , then
$N^{(\tilde X,X)}=\emptyset$

\item If $\tilde X=aX$ with $a\in \tilde K$, then
$N^{(\tilde X,X)}=a\cdot \T^*\tilde{K}^X$.
\end{itemize}
\end{lem}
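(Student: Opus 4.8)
The plan is to identify $N^{(\tilde{X},X)}$ as an explicit conjugacy-type locus in $\tilde{K}_\Cbb$ and then to reduce conjugacy under $\tilde{K}_\Cbb$ to conjugacy under $\tilde{K}$. First I would compute the vector field generated by $(\tilde{X},X)\in\tilde{\tgot}\times\tgot$ at a point $g\in\tilde{K}_\Cbb\simeq\T^*\tilde{K}$. Since the action is $(\tilde{k},k)\cdot g=\tilde{k}gk^{-1}$, this field is $\frac{d}{dt}\big|_{t=0}e^{t\tilde{X}}ge^{-tX}$; translating it to the Lie algebra by left multiplication by $g^{-1}$ gives $\Ad(g^{-1})\tilde{X}-X$. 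Hence it vanishes at $g$ precisely when $\Ad(g)X=\tilde{X}$, so that
\[
N^{(\tilde{X},X)}=\{g\in\tilde{K}_\Cbb \ ;\ \Ad(g)X=\tilde{X}\}.
\]

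The heart of the matter is the following claim, which I would isolate: for $X,Z\in\tilde{\kgot}$, if $\Ad(e^{iZ})X\in\tilde{\kgot}$ then in fact $\Ad(e^{iZ})X=X$ and $[Z,X]=0$. I would prove it by diagonalising. Fix an $\Ad(\tilde{K})$-invariant inner product on $\tilde{\kgot}$ and extend it to a Hermitian form on $\tilde{\kgot}_\Cbb$; then $\ad Z$ is skew-Hermitian, $i\,\ad Z$ is Hermitian with real eigenvalues, and $\Ad(e^{iZ})=\exp(i\,\ad Z)$ is positive self-adjoint. Write $X=\sum_\mu X_\mu$ along the real eigenvalues $\mu$ of $i\,\ad Z$. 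The conjugation $\sigma$ of $\tilde{\kgot}_\Cbb$ fixing $\tilde{\kgot}$ commutes with $\ad Z$ and is antilinear, hence anticommutes with $i\,\ad Z$ and sends the $\mu$-eigenspace to the $(-\mu)$-eigenspace; so $\sigma X=X$ forces $\sigma X_\mu=X_{-\mu}$. Since $\Ad(e^{iZ})X=\sum_\mu e^{\mu}X_\mu$, the requirement that this lie in $\tilde{\kgot}$ reads $e^{\mu}X_\mu=e^{-\mu}X_\mu$ for all $\mu$, killing every $X_\mu$ with $\mu\neq0$. Thus $X=X_0$, giving $\Ad(e^{iZ})X=X$ and $[Z,X]=0$.

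With this lemma both cases follow from the Cartan decomposition $\tilde{K}_\Cbb=\tilde{K}\exp(i\tilde{\kgot})$. Writing $g=ke^{iZ}$, the equation $\Ad(g)X=\tilde{X}$ becomes $\Ad(e^{iZ})X=\Ad(k^{-1})\tilde{X}\in\tilde{\kgot}$, so the claim yields $\Ad(e^{iZ})X=X$ and $\tilde{X}=\Ad(k)X$. Hence $N^{(\tilde{X},X)}\neq\emptyset$ forces $\tilde{X}$ and $X$ to be $\tilde{K}$-conjugate in $\tilde{\kgot}$, which is the first bullet. When $\tilde{X}=\Ad(a)X$ with $a\in\tilde{K}$, the condition $\Ad(g)X=\tilde{X}$ reads $a^{-1}g\in(\tilde{K}_\Cbb)_X$; the same eigenvalue computation shows $(\tilde{K}_\Cbb)_X=(\tilde{K}^X)_\Cbb=\tilde{K}^X\exp(i\tilde{\kgot}^X)$, which under $\varphi$ is exactly the submanifold $\T^*\tilde{K}^X\subset\tilde{K}_\Cbb$. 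Therefore $N^{(\tilde{X},X)}=a\cdot\T^*\tilde{K}^X$, the second bullet.

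The hard part will be the reduction from $\tilde{K}_\Cbb$-conjugacy to $\tilde{K}$-conjugacy, together with the matching centraliser identity $(\tilde{K}_\Cbb)_X=(\tilde{K}^X)_\Cbb$; both rest on the single eigenspace argument above. Everything else is the routine unwinding of the identification $\T^*\tilde{K}\simeq\tilde{K}_\Cbb$ through $\varphi$ and of the formula for the infinitesimal action, which is why the statement is flagged as following by direct checking.
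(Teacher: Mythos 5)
Your proof is correct. Note that the paper itself gives no argument for this lemma --- it is stated as ``follows by direct checking'' --- so there is nothing to compare against; your write-up simply supplies the details that the author left implicit. Your reduction is the natural one: the zero set of the vector field is the twisted conjugation locus $\{g\in\tilde{K}_\Cbb;\ \mathrm{Ad}(g)X=\tilde{X}\}$, and the only substantive point is the standard fact that two elements of the compact real form $\tilde{\kgot}$ that are $\tilde{K}_\Cbb$-conjugate are already $\tilde{K}$-conjugate, together with the matching statement $(\tilde{K}_\Cbb)_X=\tilde{K}^X\exp(i\tilde{\kgot}^X)$; your Cartan-decomposition-plus-eigenspace argument (using that $\sigma$ anticommutes with the Hermitian operator $i\,\mathrm{ad}\,Z$, hence swaps the $\mu$- and $(-\mu)$-eigenspaces) establishes both at once and is exactly the kind of checking the author had in mind.
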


Let $\tilde{W}=N(\tilde{T})/\tilde{T}$ be the Weyl group. We notice that if $\tilde{X}=aX$ for some $a\in \tilde K$, then there exists $\tilde{w}\in \tilde{W}$ such as 
$\tilde{X}=\tilde{w}X$. So, the admissible elements relative to the action of $\tilde{K}_\Cbb\times K_\Cbb$ on $\tilde{K}_\Cbb$ are of the form $(\tilde{w}\gamma,\gamma)$ where 
$\gamma\in \tgot$ is a rational element and  $\tilde{w}\in \tilde{W}$.

We consider now a $K$-invariant decomposition at the level of Lie algebras: 
$$
\tilde{\kgot}=\kgot\oplus\qgot
$$
Let $\qgot^\gamma\subset\qgot$ be the subspace fixed by the adjoint action of $\gamma$. We let $\tilde{\Rgot}\subset \tilde{\tgot}^*$ denote the set of roots for the group $\tilde{K}_\Cbb$, and $\tilde{\Rgot}^{\gamma=0}$ denote the subset of roots vanishing on $\gamma$.

\begin{lem}\label{admissible-exemple-1}Let $\gamma\in \tgot$ be a rational element and  let $\tilde{w}\in \tilde{W}$. 
\begin{enumerate}
\item The element $(\tilde{w}\gamma,\gamma)$ is admissible if and only if $\dim_T(\qgot^\gamma)=1$.
\item If $(\tilde{w}\gamma,\gamma)$ is admissible, then the projection of $\tilde{\Rgot}^{\gamma=0}$ on $\tgot^*$ generates $(\Rbb\gamma)^\perp$.
\end{enumerate}
\end{lem}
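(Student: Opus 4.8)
The plan is to translate admissibility of $(\tilde w\gamma,\gamma)$ into a single computation of the generic infinitesimal stabilizer along the fixed-point manifold $N^{(\tilde w\gamma,\gamma)}$, with $N=\T^*\tilde K$. Recall from the lemma above that the standing hypothesis forces the generic stabilizer of the $\tilde K\times K$-action on $N$ to be finite, so $\dim_{\tilde K\times K}(N)=0$. Since the one-parameter group generated by $(\tilde w\gamma,\gamma)$ fixes $N^{(\tilde w\gamma,\gamma)}$ pointwise, this element belongs to every stabilizer along the submanifold and hence $\dim_{\tilde K\times K}(N^{(\tilde w\gamma,\gamma)})\geq 1$; by Remark \ref{rem:cas-generic-stabilizer}, $(\tilde w\gamma,\gamma)$ is therefore admissible if and only if $\dim_{\tilde K\times K}(N^{(\tilde w\gamma,\gamma)})=1$. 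First I would fix $a\in\tilde K$ with $\tilde w\gamma=\Ad(a)\gamma$; by Lemma \ref{lem:point-fixe-cotangent} we then have $N^{(\tilde w\gamma,\gamma)}=a\cdot\T^*\tilde K^\gamma$, which in the coordinates $(h,\tilde\xi)\in\tilde K\times\tilde\kgot$ given by $\varphi(h,\tilde\xi)=he^{i\tilde\xi}$ is exactly $\{(h,\tilde\xi):a^{-1}h\in\tilde K^\gamma,\ \tilde\xi\in\tilde\kgot^\gamma\}$, by uniqueness of the polar decomposition in $\tilde K_\Cbb$.

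Next I would carry out the stabilizer computation, which is the core of part (1). From $\tilde k\,(he^{i\tilde\xi})\,k^{-1}=(\tilde khk^{-1})\,e^{i\Ad(k)\tilde\xi}$ the action reads $(\tilde k,k)\cdot(h,\tilde\xi)=(\tilde khk^{-1},\Ad(k)\tilde\xi)$, so the stabilizer of $(h,\tilde\xi)$ is $\{(hkh^{-1},k):k\in K,\ \Ad(k)\tilde\xi=\tilde\xi\}\cong K_{\tilde\xi}$, with Lie algebra $\{X\in\kgot:[X,\tilde\xi]=0\}$; in particular it depends only on $\tilde\xi$. As stabilizer dimension is upper semicontinuous, $\dim_{\tilde K\times K}(N^{(\tilde w\gamma,\gamma)})$ is its value for $\tilde\xi$ generic in $\tilde\kgot^\gamma$. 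Such a $\tilde\xi$ can be taken regular in $\tilde\kgot$ (the non-regular locus is a proper closed subset, since $\tilde\kgot^\gamma\supseteq\tilde\tgot$ contains regular elements), and then $\tilde\kgot^{\tilde\xi}$ is a Cartan subalgebra $\tilde\hgot\ni\gamma$; hence any $X\in\kgot$ centralizing $\tilde\xi$ lies in the abelian $\tilde\hgot$, commutes with $\gamma$, and so belongs to $\kgot^\gamma$. Writing $\tilde\xi=\xi_\kgot+\xi_\qgot$ along $\tilde\kgot^\gamma=\kgot^\gamma\oplus\qgot^\gamma$ with both summands generic, and using $[\kgot^\gamma,\qgot^\gamma]\subseteq\qgot^\gamma$, the conditions $[X,\xi_\kgot]=0$ and $[X,\xi_\qgot]=0$ decouple: the first forces $X$ into the centralizer of the regular element $\xi_\kgot$, a maximal torus of $K^\gamma$ that we may take to be $T$, and among such $X$ the second cuts out the generic $T$-isotropy subalgebra of the $T$-module $\qgot^\gamma$. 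Thus $\dim K_{\tilde\xi}=\dim_T(\qgot^\gamma)$, which proves that $(\tilde w\gamma,\gamma)$ is admissible iff $\dim_T(\qgot^\gamma)=1$.

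For part (2) I would argue by a dimension sandwich. The submodule $\qgot^\gamma\subseteq\tilde\kgot^\gamma$ is $T$-stable, and the $T$-weights occurring in $\qgot^\gamma_\Cbb$ are restrictions to $\tgot$ of the $\tilde T$-weights of $\tilde\kgot^\gamma_\Cbb$, namely of the roots in $\tilde\Rgot^{\gamma=0}$; thus every nonzero $T$-weight of $\qgot^\gamma$ lies in $\{\tilde\alpha|_\tgot:\tilde\alpha\in\tilde\Rgot^{\gamma=0}\}$, and each such restriction annihilates $\gamma$, so lies in $(\Rbb\gamma)^\perp$. If $(\tilde w\gamma,\gamma)$ is admissible then part (1) gives $\dim_T(\qgot^\gamma)=1$, i.e. the nonzero $T$-weights of $\qgot^\gamma$ span a subspace of dimension $\dim\tgot-1$. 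Since $\dim(\Rbb\gamma)^\perp=\dim\tgot-1$, the inclusions $\operatorname{span}\{\text{weights of }\qgot^\gamma\}\subseteq\operatorname{span}\{\tilde\alpha|_\tgot:\tilde\alpha\in\tilde\Rgot^{\gamma=0}\}\subseteq(\Rbb\gamma)^\perp$ are equalities, so the projection of $\tilde\Rgot^{\gamma=0}$ onto $\tgot^*$ generates $(\Rbb\gamma)^\perp$.

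The step I expect to be the main obstacle is the generic stabilizer computation in the second paragraph: one must take $\tilde\xi$ generic in all of $\tilde\kgot^\gamma$ rather than in its Cartan $\tilde\tgot$ (where $\kgot\cap\tilde\hgot$ would be too large and would give the wrong, non-minimal dimension), and then decouple the two bracket conditions along $\kgot^\gamma\oplus\qgot^\gamma$ to recognize precisely the generic $T$-isotropy on $\qgot^\gamma$. Once the identification $\dim_{\tilde K\times K}(N^{(\tilde w\gamma,\gamma)})=\dim_T(\qgot^\gamma)$ is secured, both assertions of the lemma are immediate.
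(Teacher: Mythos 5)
Your proof is correct and follows essentially the same route as the paper: identify $N^{(\tilde w\gamma,\gamma)}$ via Lemma \ref{lem:point-fixe-cotangent}, reduce the generic $\tilde K\times K$-stabilizer along it to the generic $K$-stabilizer on $\tilde\kgot^\gamma=\kgot^\gamma\oplus\qgot^\gamma$ and hence to $\dim_T(\qgot^\gamma)$, then conclude with Remark \ref{rem:cas-generic-stabilizer}, and for part (2) compare the span of the $T$-weights of $\qgot^\gamma$ with the projection of $\tilde\Rgot^{\gamma=0}$ inside $(\Rbb\gamma)^\perp$. The only difference is one of detail: you spell out (correctly, via regular elements and upper semicontinuity) the stabilizer identification that the paper compresses into the phrase ``a direct computation gives''.
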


\begin{coro}
The $\tilde{K}\times K$-action on $\tilde{K}_\Cbb$ admits a finite number of admissible elements (modulo identifications $(\tilde{w}q\gamma,q\gamma)\sim (\tilde{w}\gamma,\gamma)$ for any $q\in\Qbb^{>0}$).
\end{coro}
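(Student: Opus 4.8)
The plan is to deduce finiteness from the two assertions of Lemma \ref{admissible-exemple-1}, which together force each admissible direction $\gamma$ to lie along one of finitely many rational lines in $\tgot$. Recall from the discussion preceding that lemma that every admissible element has the shape $(\tilde{w}\gamma,\gamma)$ with $\tilde{w}\in\tilde{W}$ and $\gamma\in\tgot$ a nonzero rational element. Since $\tilde{W}$ is finite and the identification $(\tilde{w}q\gamma,q\gamma)\sim(\tilde{w}\gamma,\gamma)$ for $q\in\Qbb^{>0}$ collapses each ray to a single class, it is enough to show that the set of rational lines $\Rbb\gamma$ produced by admissible pairs is finite.

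First I would record the elementary finiteness coming from the root data. The projection $\pi:\tilde{\tgot}^*\to\tgot^*$ carries the finite set $\tilde{\Rgot}$ to a finite subset $\pi(\tilde{\Rgot})\subset\tgot^*$, so as $S$ ranges over the finitely many subsets of $\tilde{\Rgot}$ the spans of $\pi(S)$ form a finite family of subspaces of $\tgot^*$; in particular only finitely many of them are hyperplanes. Now invoke Lemma \ref{admissible-exemple-1}(2): if $(\tilde{w}\gamma,\gamma)$ is admissible, the hyperplane $(\Rbb\gamma)^\perp\subset\tgot^*$ is spanned by $\pi(\tilde{\Rgot}^{\gamma=0})$, with $\tilde{\Rgot}^{\gamma=0}\subset\tilde{\Rgot}$, and hence belongs to the finite family above. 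Since a hyperplane in $\tgot^*$ determines its annihilator in $\tgot$, which is a single line, the line $\Rbb\gamma$ is recovered from $(\Rbb\gamma)^\perp$, so admissibility permits only finitely many such lines. Each rational line contributes at most two classes under scaling by $\Qbb^{>0}$ (those of $\gamma$ and $-\gamma$), and combining this with the finiteness of $\tilde{W}$ yields the stated finiteness.

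The argument is short once Lemma \ref{admissible-exemple-1} is in hand; the one point deserving attention is the passage from $(\Rbb\gamma)^\perp$ back to $\Rbb\gamma$, namely that admissibility really pins $\gamma$ down to a single line rather than merely to a full wall of the arrangement of root hyperplanes. This is exactly what part (2) of the lemma provides, since the roots vanishing on an admissible $\gamma$ project onto a full hyperplane orthogonal to $\gamma$; thus no positive-dimensional continuum of admissible directions can survive, and the remaining bookkeeping is purely combinatorial.
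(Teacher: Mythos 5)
Your proposal is correct and takes essentially the same approach as the paper: the paper's proof text establishes the two points of Lemma \ref{admissible-exemple-1} and treats finiteness as an immediate consequence of point (2), which is exactly the bookkeeping you carry out. Namely, $(\Rbb\gamma)^\perp$ must be one of the finitely many hyperplanes spanned by subsets of the finite set $\pi(\tilde{\Rgot})$, so $\Rbb\gamma$ ranges over finitely many rational lines, and the finiteness of $\tilde{W}$ together with the two $\Qbb^{>0}$-rays per rational line concludes.
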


\begin{proof}
Since the generic stabilizer of the $\tilde{K}\times K$-action on $\tilde{K}_\Cbb$ is finite, we have $\dim_{\tilde{K}\times K}(\tilde{K}_\Cbb)=0$. The submanifold fixed by  
$(\tilde{w}\gamma,\gamma)$ is  $\tilde{w}\tilde{K}^\gamma_\Cbb$ and a direct computation gives 
\begin{eqnarray*}
\dim_{\tilde{K}\times K}(\tilde{w}\tilde{K}^\gamma_\Cbb)=\dim_{\tilde{K}\times K}(\tilde{K}^\gamma_\Cbb)=\dim_{\tilde{K}\times K}(\tilde{K}^\gamma\times \tilde{\kgot}^\gamma)
&=& \dim_{\tilde{K}\times K}(\{1\}\times \tilde{\kgot}^\gamma)\\
&=& \dim_{K}(\tilde{\kgot}^\gamma)\\
&=& \dim_{K}(\kgot^\gamma\times \qgot^\gamma)=\dim_T(\qgot^\gamma).
\end{eqnarray*}
As $\dim_{\tilde{K}\times K}(\tilde{K}_\Cbb)=0$, a rational element $(\tilde{w}\gamma,\gamma)$ is admissible if and only if 
$\dim_T(\qgot^\gamma)=$ $\dim_{\tilde{K}\times K}(\tilde{w}\tilde{K}^\gamma_\Cbb)$ is equal to $1$ (see Remark \ref{rem:cas-generic-stabilizer}). 
The first point is proved.

The projection of $\tilde{\Rgot}^{\gamma=0}$ on $\tgot^*$ is equal to $\Rgot^{\gamma=0}\cup\Rgot(\qgot^{\gamma})$ where $\Rgot^{\gamma=0}$ denotes the set of roots
 for the group $K$ vanishing on $\gamma$ and 
$\Rgot(\qgot^{\gamma})$ denotes the set of weights for the $T$-action  on $\qgot^{\gamma}\otimes\Cbb$. The equality $\dim_T(\qgot^\gamma)=1$ means that the set 
$\Rgot(\qgot^{\gamma})$ generates $(\Rbb\gamma)^\perp$. So, the first point tells us that the projection of $\tilde{\Rgot}^{\gamma=0}$ on $\tgot^*$ generates 
$(\Rbb\gamma)^\perp$, when $(\tilde{w}\gamma,\gamma)$ is admissible.
\end{proof}

\subsubsection{Ressayre's pairs of $\tilde{K}_\Cbb$}

At any element $(\tilde{w},\gamma)\in \tilde{W}\times \tgot$, we associate $\gamma_{\tilde{w}}:=(\tilde{w}\gamma,\gamma)$ and 
 the submanifold $C_{\tilde{w},\gamma}:=\tilde{w}\tilde{K}^\gamma_\Cbb\subset \tilde{K}_\Cbb$ fixed by $\gamma_{\tilde{w}}$. 
 The Bialynicki-Birula's complex submanifold associated to $C_{\tilde{w},\gamma}$ is $C_{\tilde{w},\gamma}^-:= \tilde{w}\tilde{P}_\gamma$ 
where $\tilde{P}_\gamma$ is the parabolic subgroup of $\tilde{K}_\Cbb$ associated to $\gamma$ (see (\ref{eq:P-gamma})). The parabolic subgroup of 
$\tilde{K}_\Cbb\times K_\Cbb$ associated to the weight $\gamma_{\tilde{w}}$ is 
$$
\Pbb_{\tilde{w},\gamma}:=\mathrm{Ad}(\tilde{w})(\tilde{P}_\gamma)\times P_\gamma.
$$

Let $B\subset K_\Cbb$ (resp. $\tilde{B}\subset \tilde{K}_\Cbb$) be the Borel subgroup associated to the choice of the Weyl chamber $\tgot^*_{\geq 0}$
 (resp. $\tilde{\tgot}^*_{\geq 0}$).
 We can now consider the holomorphic map
 $$
{\rm q}_{\tilde{w},\gamma}: (\tilde{B}\times B)\times_{(\tilde{B}\times B)\cap \Pbb_{\tilde{w},\gamma}} C_{\tilde{w},\gamma}^-\longrightarrow  \tilde{K}_\Cbb
$$
that sends $[\tilde{b},b;\tilde{w}\tilde{p}]$ to $\tilde{b}\tilde{w}\tilde{p}b^{-1}$. 

 In our context, Relations A) and B) of Proposition \ref{prop:caracteriser-ressayre-pair} can be computed as follows.

\begin{lem}\label{lem:A-B-horn}
\begin{itemize}
\item Relation A) is  $\dim_\Cbb(\tilde{\ngot}^{\tilde{w}\gamma>0})+\dim_\Cbb (\ngot^{\gamma>0})=$ 
$\dim_\Cbb (\tilde{\kgot}_\Cbb)^{\tilde{w}\gamma>0}$.
\item Relation B) corresponds to 
\begin{equation}\label{eq:trace-condition-w-tilde}
\sum_{\stackrel{\alpha\in\Rgot^+}{\langle\alpha,\gamma\rangle> 0}}\langle\alpha,\gamma\rangle=
\sum_{\stackrel{\tilde{\alpha}\in\tilde{\Rgot}^-}{\langle\tilde{\alpha},\tilde{w}\gamma\rangle> 0}}\langle\tilde{\alpha},\tilde{w}\gamma\rangle.
\end{equation}
\end{itemize}
\end{lem}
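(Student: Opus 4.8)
The plan is to compute both sides of Conditions A) and B) of Proposition~\ref{prop:caracteriser-ressayre-pair} explicitly, the ambient group being $G=\tilde{K}_\Cbb\times K_\Cbb$ with Borel $\tilde{B}\times B$, so that the relevant nilradical is $\tilde{\ngot}\oplus\ngot$ and the distinguished element is $\gamma_{\tilde{w}}=(\tilde{w}\gamma,\gamma)$. First I would handle the left-hand sides. Since $\gamma_{\tilde{w}}$ acts on the summand $\tilde{\ngot}\subset\tilde{\kgot}_\Cbb$ through $\ad(\tilde{w}\gamma)$ and on $\ngot\subset\kgot_\Cbb$ through $\ad(\gamma)$, the positive part splits as $(\tilde{\ngot}\oplus\ngot)^{\gamma_{\tilde{w}}>0}=\tilde{\ngot}^{\tilde{w}\gamma>0}\oplus\ngot^{\gamma>0}$. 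Using that the root spaces are one-dimensional, this yields at once
$$
\dim_\Cbb(\tilde{\ngot}\oplus\ngot)^{\gamma_{\tilde{w}}>0}=\dim_\Cbb\tilde{\ngot}^{\tilde{w}\gamma>0}+\dim_\Cbb\ngot^{\gamma>0},
$$
and, for the weighted trace,
$$
\tr_{\gamma_{\tilde{w}}}\big((\tilde{\ngot}\oplus\ngot)^{\gamma_{\tilde{w}}>0}\big)=\sum_{\stackrel{\tilde{\alpha}\in\tilde{\Rgot}^+}{\langle\tilde{\alpha},\tilde{w}\gamma\rangle>0}}\langle\tilde{\alpha},\tilde{w}\gamma\rangle+\sum_{\stackrel{\alpha\in\Rgot^+}{\langle\alpha,\gamma\rangle>0}}\langle\alpha,\gamma\rangle.
$$

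The heart of the argument is the identification of $(\T M\vert_{C_{\tilde{w},\gamma}})^{\gamma_{\tilde{w}}>0}$, where $M=\tilde{K}_\Cbb$. I would trivialize $\T_a M\simeq\tilde{\kgot}_\Cbb$ by right translation, $\xi\mapsto\xi a$. The subgroup $g_t=(e^{t\tilde{w}\gamma},e^{t\gamma})$ acts on $M$ by $a\mapsto e^{t\tilde{w}\gamma}ae^{-t\gamma}$, and at a fixed point $a\in C_{\tilde{w},\gamma}$ — where $e^{t\tilde{w}\gamma}ae^{-t\gamma}=a$, the infinitesimal form of which is $\Ad(a)\gamma=\tilde{w}\gamma$ — it sends the tangent vector $\xi a$ to
$$
e^{t\tilde{w}\gamma}(\xi a)e^{-t\gamma}=\big(e^{t\tilde{w}\gamma}\xi e^{-t\tilde{w}\gamma}\big)\big(e^{t\tilde{w}\gamma}ae^{-t\gamma}\big)=\big(\Ad(e^{t\tilde{w}\gamma})\xi\big)\,a.
$$
Thus in the right trivialization $\gamma_{\tilde{w}}$ acts through $\ad(\tilde{w}\gamma)$ on $\tilde{\kgot}_\Cbb$, whence $(\T M\vert_{C_{\tilde{w},\gamma}})^{\gamma_{\tilde{w}}>0}\simeq(\tilde{\kgot}_\Cbb)^{\tilde{w}\gamma>0}$, of rank $\dim_\Cbb(\tilde{\kgot}_\Cbb)^{\tilde{w}\gamma>0}$ and weighted trace $\sum_{\tilde{\alpha}\in\tilde{\Rgot},\,\langle\tilde{\alpha},\tilde{w}\gamma\rangle>0}\langle\tilde{\alpha},\tilde{w}\gamma\rangle$ (the Cartan $\tilde{\tgot}_\Cbb$ being of weight zero). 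This tangent-space computation is the only genuinely non-formal step.

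Assembling the two computations, Condition A) reads exactly
$$
\dim_\Cbb\tilde{\ngot}^{\tilde{w}\gamma>0}+\dim_\Cbb\ngot^{\gamma>0}=\dim_\Cbb(\tilde{\kgot}_\Cbb)^{\tilde{w}\gamma>0},
$$
which is the first bullet. For Condition B) I would split the root sum for $(\tilde{\kgot}_\Cbb)^{\tilde{w}\gamma>0}$ along $\tilde{\Rgot}=\tilde{\Rgot}^+\sqcup\tilde{\Rgot}^-$:
$$
\sum_{\stackrel{\tilde{\alpha}\in\tilde{\Rgot}}{\langle\tilde{\alpha},\tilde{w}\gamma\rangle>0}}\langle\tilde{\alpha},\tilde{w}\gamma\rangle=\sum_{\stackrel{\tilde{\alpha}\in\tilde{\Rgot}^+}{\langle\tilde{\alpha},\tilde{w}\gamma\rangle>0}}\langle\tilde{\alpha},\tilde{w}\gamma\rangle+\sum_{\stackrel{\tilde{\alpha}\in\tilde{\Rgot}^-}{\langle\tilde{\alpha},\tilde{w}\gamma\rangle>0}}\langle\tilde{\alpha},\tilde{w}\gamma\rangle.
$$
The term over $\tilde{\Rgot}^+$ occurs identically on the left-hand side (it is the contribution of $\tilde{\ngot}^{\tilde{w}\gamma>0}$), so cancelling it reduces the identity $\tr_{\gamma_{\tilde{w}}}\big((\tilde{\ngot}\oplus\ngot)^{\gamma_{\tilde{w}}>0}\big)=\tr_{\gamma_{\tilde{w}}}\big((\T M\vert_{C_{\tilde{w},\gamma}})^{\gamma_{\tilde{w}}>0}\big)$ precisely to (\ref{eq:trace-condition-w-tilde}). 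I expect no further obstacle; the remaining verifications — one-dimensionality of the root spaces and the weight conventions for the spaces $E^{\gamma=a}$ — are routine bookkeeping.
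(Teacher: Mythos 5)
Your proof is correct, and it supplies exactly the computation the paper leaves implicit: the paper states Lemma \ref{lem:A-B-horn} without proof, as a direct specialization of Conditions A) and B) of Proposition \ref{prop:caracteriser-ressayre-pair} to $G=\tilde{K}_\Cbb\times K_\Cbb$ acting on $M=\tilde{K}_\Cbb$. Your key step — trivializing $\T_a\tilde{K}_\Cbb$ by right translation at a fixed point $a\in\tilde{w}\tilde{K}^\gamma_\Cbb$ to see that $\gamma_{\tilde{w}}$ acts through $\ad(\tilde{w}\gamma)$, hence $(\T M\vert_{C_{\tilde{w},\gamma}})^{\gamma_{\tilde{w}}>0}\simeq(\tilde{\kgot}_\Cbb)^{\tilde{w}\gamma>0}$ — together with the splitting of the $\tilde{\Rgot}$-sum into $\tilde{\Rgot}^+\sqcup\tilde{\Rgot}^-$ and cancellation of the $\tilde{\Rgot}^+$ term, is precisely the intended argument.
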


In the next section, we will see that the $B$-Ressayre's pair on $\tilde{K}_\Cbb$ have a nice caracterisation in terms of Schubert calculus.

\subsubsection{Schubert calculus}

For any $\gamma\in\tgot$, we consider the projective varieties $\Fcal_\gamma:=K_\Cbb/P_\gamma$, $\tilde{\Fcal}_\gamma:=\tilde{K}_\Cbb/\tilde{P}_\gamma$, 
and the map
$$
{\rm r}_{\tilde{w},\gamma}: (\tilde{B}\times B)\times_{(\tilde{B}\times B)\cap \Pbb_{\tilde{w},\gamma}} C_{\tilde{w},\gamma}^-\longrightarrow  \tilde{\Fcal}_\gamma \times \Fcal_\gamma
$$
that sends $[\tilde{b},b;\tilde{w}\tilde{p}]$ to $([\tilde{b}\tilde{w}],[b])$. Here ${\rm r}_{\tilde{w},\gamma}$ corresponds to the map $r^g_\gamma$ introduced in \S 
\ref{sec:Horn-conditions}, with $g=(\tilde{w}^{-1},e)$.

We associate to any $\tilde{w}\in\tilde{W}$, the Schubert cell
$$
\tilde{\Xgot}^o_{\tilde{w},\gamma}:= \tilde{B}[\tilde{w}]\subset \tilde{\Fcal}_\gamma.
$$
and the Schubert variety $\tilde{\Xgot}_{\tilde{w},\gamma}:=\overline{\tilde{\Xgot}^o_{\tilde{w},\gamma}}$. If $\tilde{W}^\gamma$ denotes the subgroup of $\tilde{W}$ that fixes 
$\gamma$, we see that  the Schubert cell $\tilde{\Xgot}^o_{\tilde{w},\gamma}$ and the Schubert variety $\tilde{\Xgot}_{\tilde{w},\gamma}$ depends only of the class of $\tilde{w}$ in 
$\tilde{W}/\tilde{W}^\gamma$.

On the variety $\Fcal_\gamma$, we consider the Schubert cell $\Xgot^o_{\gamma}:= B[e]$ and the Schubert variety $\Xgot_{\gamma}:=\overline{\Xgot^o_{\gamma}}$. 
Notice that we have a canonical embedding $\iota: \Fcal_\gamma\to  \tilde{\Fcal}_\gamma$ since $K_\Cbb\cap \tilde{P}_\gamma=P_\gamma$.

\begin{lem}\label{lem:schubert-1}
\begin{itemize}
\item $(\gamma_{\tilde{w}}, C_{\tilde{w},\gamma})$ satisfies Relations A) if and only if $\dim_\Cbb(\tilde{\Fcal}_\gamma)=$ \break $\dim_\Cbb(\tilde{\Xgot}^o_{\tilde{w},\gamma})+\dim_\Cbb(\Xgot^o_{\gamma})$.

\item For any $\tilde{k}\in \tilde{K}_\Cbb$, we have a bijective correspondance
${\rm q}_{\tilde{w},\gamma}^{-1}(\tilde{k})\simeq \tilde{k}^{-1}\tilde{\Xgot}^o_{\tilde{w},\gamma}\cap \iota(\Xgot^o_{\gamma})$.
\end{itemize}
\end{lem}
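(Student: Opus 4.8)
The plan is to reduce both statements to root-system combinatorics, the key tool being the identity $\Ad(\tilde{w})\tilde{\pgot}_\gamma=(\tilde{\kgot}_\Cbb)^{\tilde{w}\gamma\leq 0}=\tilde{\pgot}_{\tilde{w}\gamma}$, which holds because $\Ad(\tilde{w})$ sends the root space $\tilde{\ggot}_{\tilde{\alpha}}$ to $\tilde{\ggot}_{\tilde{w}\tilde{\alpha}}$ and $\langle\tilde{w}\tilde{\alpha},\tilde{w}\gamma\rangle=\langle\tilde{\alpha},\gamma\rangle$.

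First I would establish the dimension identity of the first item. The Schubert cell $\tilde{\Xgot}^o_{\tilde{w},\gamma}=\tilde{B}[\tilde{w}]$ is the $\tilde{B}$-orbit of $[\tilde{w}]$, so its dimension is that of the image of $\tilde{\bgot}$ in $\T_{[\tilde{w}]}\tilde{\Fcal}_\gamma=\tilde{\kgot}_\Cbb/\Ad(\tilde{w})\tilde{\pgot}_\gamma$, namely $\dim_\Cbb\tilde{\bgot}-\dim_\Cbb(\tilde{\bgot}\cap\tilde{\pgot}_{\tilde{w}\gamma})$. With the identity above this equals $\#\{\tilde{\beta}\in\tilde{\Rgot}^+:\langle\tilde{\beta},\tilde{w}\gamma\rangle>0\}=\dim_\Cbb\tilde{\ngot}^{\tilde{w}\gamma>0}$; the same computation performed in $K_\Cbb$ with $\tilde{w}=e$ gives $\dim_\Cbb\Xgot^o_\gamma=\dim_\Cbb\ngot^{\gamma>0}$. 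Since $\dim_\Cbb\tilde{\Fcal}_\gamma=\dim_\Cbb(\tilde{\kgot}_\Cbb)^{\gamma>0}=\dim_\Cbb(\tilde{\kgot}_\Cbb)^{\tilde{w}\gamma>0}$ (the two counts coincide because $\tilde{\alpha}\mapsto\tilde{w}\tilde{\alpha}$ permutes $\tilde{\Rgot}$), the equality $\dim_\Cbb\tilde{\Fcal}_\gamma=\dim_\Cbb\tilde{\Xgot}^o_{\tilde{w},\gamma}+\dim_\Cbb\Xgot^o_\gamma$ becomes exactly Relation A) of Lemma \ref{lem:A-B-horn}.

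For the second item I would exhibit an explicit bijection $\Psi\colon{\rm q}_{\tilde{w},\gamma}^{-1}(\tilde{k})\to\tilde{k}^{-1}\tilde{\Xgot}^o_{\tilde{w},\gamma}\cap\iota(\Xgot^o_\gamma)$ by $[\tilde{b},b;\tilde{w}\tilde{p}]\mapsto\iota([b])$. This is well defined: replacing the representative by the action of $(\tilde{\beta},\beta)\in(\tilde{B}\times B)\cap\Pbb_{\tilde{w},\gamma}$ changes $b$ into $b\beta^{-1}$ with $\beta\in B\cap P_\gamma$, hence leaves $[b]\in\Fcal_\gamma$ unchanged; and it lands in $\iota(\Xgot^o_\gamma)$ since $b\in B$. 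The fiber condition $\tilde{b}\tilde{w}\tilde{p}b^{-1}=\tilde{k}$ rewrites as $b=\tilde{k}^{-1}\tilde{b}\tilde{w}\tilde{p}$, so $\iota([b])=[\tilde{k}^{-1}\tilde{b}\tilde{w}]=\tilde{k}^{-1}[\tilde{b}\tilde{w}]\in\tilde{k}^{-1}\tilde{\Xgot}^o_{\tilde{w},\gamma}$, confirming that $\Psi$ takes values in the asserted intersection.

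Finally I would check bijectivity. Injectivity follows from Lemma \ref{lem:r-gamma}: on the fiber the two coordinates of ${\rm r}_{\tilde{w},\gamma}=([\tilde{b}\tilde{w}],[b])$ satisfy $[\tilde{b}\tilde{w}]=\tilde{k}\,\iota([b])$, so ${\rm r}_{\tilde{w},\gamma}$ restricted to the fiber is determined by $\Psi$, and ${\rm r}_{\tilde{w},\gamma}$ (which is $r^g_\gamma$ for $g=(\tilde{w}^{-1},e)$) is injective there. For surjectivity, given $y=\iota([b])$ with $b\in B$ lying in $\tilde{k}^{-1}\tilde{\Xgot}^o_{\tilde{w},\gamma}$, the membership $\tilde{k}y\in\tilde{B}[\tilde{w}]$ produces $\tilde{b}\in\tilde{B}$ and $\tilde{p}\in\tilde{P}_\gamma$ with $\tilde{k}b=\tilde{b}\tilde{w}\tilde{p}$, whence $[\tilde{b},b;\tilde{w}\tilde{p}]\in{\rm q}_{\tilde{w},\gamma}^{-1}(\tilde{k})$ maps to $y$. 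The step I expect to demand the most care is precisely this coset bookkeeping: tracking the fiber-product equivalence and the compatibility $K_\Cbb\cap\tilde{P}_\gamma=P_\gamma$ underlying $\iota$, so that $\Psi$ and the assignment just described are genuinely mutually inverse.
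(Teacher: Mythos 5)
Your proof is correct and takes essentially the same route as the paper's: the first item is exactly the Schubert-cell dimension count that identifies the statement with Relation A) of Lemma \ref{lem:A-B-horn}, and the second item is the paper's argument — injectivity of ${\rm r}_{\tilde{w},\gamma}$ on fibers (Lemma \ref{lem:r-gamma}) combined with the coset identification of ${\rm r}_{\tilde{w},\gamma}\big({\rm q}_{\tilde{w},\gamma}^{-1}(\tilde{k})\big)$ with $\{(x,y),\, x=\tilde{k}y\}$ — which you merely spell out in full detail via the explicit map $[\tilde{b},b;\tilde{w}\tilde{p}]\mapsto\iota([b])$. The paper leaves these verifications as "an easy matter"; your write-up supplies them correctly.
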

\begin{proof} 
The first point is consequence of the first point of Lemma \ref{lem:A-B-horn}. Thanks to Lemma \ref{lem:r-gamma}, we know that ${\rm r}_{\tilde{w},\gamma}$ is injective 
when restricted to ${\rm q}_{\tilde{w},\gamma}^{-1}(\tilde{k})$. Now, it is an easy matter to check that ${\rm r}_{\tilde{w},\gamma}\big({\rm q}_{\tilde{w},\gamma}^{-1}(\tilde{k})\big)$ 
corresponds to the set $\{(x,y)\in \tilde{\Xgot}^o_{\tilde{w},\gamma}\times \Xgot^o_{\gamma}, x=\tilde{k}y\}$. The second point is completed.
\end{proof}

We consider the cohomology\footnote{Here, we use singular cohomology with integer coefficients.} ring $H^*(\tilde{\Fcal}_\gamma,\Zbb)$ of $\tilde{\Fcal}_\gamma$. 
If Y is an irreducible closed subvariety of $\tilde{\Fcal}_\gamma$, we denote by $[Y]\in H^{2n_Y}(\tilde{\Fcal}_\gamma,\Zbb)$ its cycle class in cohomology : here $n_Y={\rm codim}_\Cbb(Y)$. Let $\iota^*:H^*(\tilde{\Fcal}_\gamma,\Zbb)\to H^*(\Fcal_\gamma,\Zbb)$ be the pull-back map in cohomology. 
Recall that the cohomology class $[pt]$ (resp. $[\tilde{pt}]$) associated to a singleton $Y=\{pt\}\subset \Fcal_\gamma$  (resp.  $\tilde{Y}=\{\tilde{pt}\}$) is a basis of 
$H^{\maxx}(\Fcal_\gamma,\Zbb)$ (resp. $H^{\maxx}(\tilde{\Fcal}_\gamma,\Zbb)$).

\medskip

We recall some classical properties.

\begin{lem}\label{lem:schubert-2}
\begin{enumerate}
\item Let $\tilde{X},\tilde{Y}$ be two irreducible closed subvarieties of $\tilde{\Fcal}_\gamma$. Let's $\tilde{X}^o,\tilde{Y}^o$ be their smooth part.
\begin{itemize}
\item We have $[\tilde{X}]\cdot [\tilde{Y}]=[\tilde{X}\cap g \tilde{Y}]$ for $g\in \tilde{K}_\Cbb$ belonging to a dense open subset. 
\item The relation $[\tilde{X}]\cdot [\tilde{Y}]=n[\tilde{pt}]$ holds if and only if the set 
$\tilde{X}^o\cap g \tilde{Y}^o$ is of cardinal $n$, for $g\in \tilde{K}_\Cbb$ belonging to a dense open subset. 
\end{itemize}
\item Let $Y$ (resp. $\tilde{Y}$) be an irreducible closed subvariety of $\Fcal_\gamma$ (resp. $\tilde{\Fcal}_\gamma$). The relation $[\iota(Y)]\cdot [\tilde{Y}]= 
n[\tilde{pt}]$ in $H^*(\tilde{\Fcal}_\gamma,\Zbb)$ is equivalent to the relation $[Y]\cdot \iota^*([\tilde{Y}])= n[pt]$ in $H^*(\Fcal_\gamma,\Zbb)$.
\end{enumerate}
\end{lem}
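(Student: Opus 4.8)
The plan is to treat the two parts separately, invoking two classical pillars of intersection theory on homogeneous spaces: Kleiman's theorem on the transversality of a general translate for part (1), and the projection formula for the Gysin map for part (2). Since both parts rest on standard results, the work is in checking that the hypotheses apply and in translating the cohomological identities into the stated geometric statements.

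For part (1), I would first observe that $\tilde{\Fcal}_\gamma=\tilde{K}_\Cbb/\tilde{P}_\gamma$ is a smooth projective variety on which the connected group $\tilde{K}_\Cbb$ acts transitively. Kleiman's transversality theorem (in characteristic zero) then guarantees that for $g$ in a dense open subset of $\tilde{K}_\Cbb$, the translate $g\tilde{Y}$ meets $\tilde{X}$ properly, the intersection of the smooth loci $\tilde{X}^o\cap g\tilde{Y}^o$ is transverse, and no component of $\tilde{X}\cap g\tilde{Y}$ is contained in a singular locus. From generic transversality the fundamental class of the intersection represents the cup product, $[\tilde{X}]\cdot[\tilde{Y}]=[\tilde{X}\cap g\tilde{Y}]$, which is the first bullet. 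For the second bullet, the hypothesis $[\tilde{X}]\cdot[\tilde{Y}]=n[\tilde{pt}]$ forces the product into $H^{\maxx}(\tilde{\Fcal}_\gamma,\Zbb)$, i.e. $\dim_\Cbb\tilde{X}+\dim_\Cbb\tilde{Y}=\dim_\Cbb\tilde{\Fcal}_\gamma$; then the transverse intersection $\tilde{X}^o\cap g\tilde{Y}^o$ is a finite set of reduced points whose cardinality is the degree of the zero-cycle $[\tilde{X}]\cdot[\tilde{Y}]$, namely $n$, and conversely counting these points recovers $n$.

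For part (2), I would introduce the Gysin push-forward $\iota_*:H^*(\Fcal_\gamma,\Zbb)\to H^{*+2c}(\tilde{\Fcal}_\gamma,\Zbb)$ attached to the closed embedding $\iota$ of smooth projective varieties, where $c=\dim_\Cbb\tilde{\Fcal}_\gamma-\dim_\Cbb\Fcal_\gamma$. Three standard facts suffice: $\iota_*[Z]=[\iota(Z)]$ for a subvariety $Z\subset\Fcal_\gamma$; the projection formula $\iota_*(\alpha\cdot\iota^*\beta)=\iota_*(\alpha)\cdot\beta$; and $\iota_*[pt]=[\tilde{pt}]$, so that $\iota_*$ restricts to an isomorphism $n[pt]\mapsto n[\tilde{pt}]$ in top degree. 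Applying the projection formula with $\alpha=[Y]$ and $\beta=[\tilde{Y}]$ yields $\iota_*([Y]\cdot\iota^*[\tilde{Y}])=[\iota(Y)]\cdot[\tilde{Y}]$. A quick degree count shows both sides sit in the respective top cohomology groups (since $\dim_\Cbb\iota(Y)=\dim_\Cbb Y$ matches the degree on the source), and $\iota_*$ is an isomorphism there, so the relation $[Y]\cdot\iota^*[\tilde{Y}]=n[pt]$ holds if and only if $[\iota(Y)]\cdot[\tilde{Y}]=n[\tilde{pt}]$.

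The only point demanding genuine care is the passage from a cohomological identity to a set-theoretic count in the second bullet of part (1): I must make sure that the generic translate not merely meets $\tilde{X}$ transversally on the smooth loci but also avoids the (strictly lower-dimensional) singular loci entirely, so that the whole set-theoretic intersection consists of reduced points lying in $\tilde{X}^o\cap g\tilde{Y}^o$ and its cardinality genuinely equals the intersection number $n$. The remaining verifications are routine consequences of the cited theorems.
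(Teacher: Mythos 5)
Your proposal is correct and follows essentially the same route as the paper: part (1) is exactly the paper's appeal to Kleiman's transversality theorem (cited there via Belkale--Kumar, Proposition 3), including the key point about generic translates avoiding the singular loci. Part (2) is "left to the reader" in the paper, and your Gysin push-forward/projection-formula argument, with the degree count showing $\iota_*$ is an isomorphism in top degree, is the standard way to supply it.
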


\begin{proof} The first point is a consequence of Kleiman transversality theorem \cite{Kleiman} (see \cite{BK1}, Proposition 3). The second point is left to the reader.
\end{proof}

\medskip

Lemmas \ref{lem:schubert-1}, \ref{lem:schubert-2} and Proposition \ref{prop:caracteriser-ressayre-pair} give us the following corollary.

\begin{coro}\label{coro:critere-schubert-ressayre}
\begin{itemize}
\item $(\gamma_{\tilde{w}}, C_{\tilde{w},\gamma})$ is a $B$-Ressayre's pair  on $\tilde{K}_\Cbb$ if and only if (\ref{eq:trace-condition-w-tilde}) holds and $[\Xgot_{\gamma}]\cdot \iota^*([\tilde{\Xgot}_{\tilde{w},\gamma}])= [pt]$.
\item $(\gamma_{\tilde{w}}, C_{\tilde{w},\gamma})$ is an infinitesimal $B$-Ressayre's pair on $\tilde{K}_\Cbb$ if and only if (\ref{eq:trace-condition-w-tilde}) holds and $[\Xgot_{\gamma}]\cdot \iota^*([\tilde{\Xgot}_{\tilde{w},\gamma}])= n[pt]$ for $n\geq 1$.
\end{itemize}
\end{coro}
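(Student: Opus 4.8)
The plan is to read off both equivalences from Proposition \ref{prop:caracteriser-ressayre-pair} once its hypotheses have been translated into Schubert calculus. Here $M=\tilde{K}_\Cbb$, and the maps ${\rm q}_{\tilde{w},\gamma}$, ${\rm r}_{\tilde{w},\gamma}$ play the roles of $q_\gamma$, $r^g_\gamma$ with $g=(\tilde{w}^{-1},e)$. By Lemma \ref{lem:A-B-horn}, Relation B) is exactly (\ref{eq:trace-condition-w-tilde}), so that condition is simply carried along in both statements; the actual work is to match Relation A) and the image/cardinality conditions of Proposition \ref{prop:caracteriser-ressayre-pair} with the intersection number.

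First I would set up the dictionary for the generic fibre. By Lemma \ref{lem:r-gamma}, ${\rm r}_{\tilde{w},\gamma}$ is injective on each fibre of ${\rm q}_{\tilde{w},\gamma}$, so for every $\tilde{k}$ the cardinality of ${\rm q}_{\tilde{w},\gamma}^{-1}(\tilde{k})$ equals $\#\big(\tilde{k}^{-1}\tilde{\Xgot}^o_{\tilde{w},\gamma}\cap\iota(\Xgot^o_\gamma)\big)$ by Lemma \ref{lem:schubert-1}. Applying Lemma \ref{lem:schubert-2}(1) with $\tilde{X}=\iota(\Xgot_\gamma)$, $\tilde{Y}=\tilde{\Xgot}_{\tilde{w},\gamma}$ and $g=\tilde{k}^{-1}$, this generic cardinality is the integer $n$ appearing in $[\iota(\Xgot_\gamma)]\cdot[\tilde{\Xgot}_{\tilde{w},\gamma}]=n[\tilde{pt}]$, which Lemma \ref{lem:schubert-2}(2) identifies with the $n$ in $[\Xgot_\gamma]\cdot\iota^*([\tilde{\Xgot}_{\tilde{w},\gamma}])=n[pt]$.

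The key observation is that Relation A) is equivalent to this product lying in top degree. By Lemma \ref{lem:schubert-1}(1), A) reads $\dim_\Cbb\tilde{\Fcal}_\gamma=\dim_\Cbb\tilde{\Xgot}^o_{\tilde{w},\gamma}+\dim_\Cbb\Xgot^o_\gamma$, that is ${\rm codim}_\Cbb\iota(\Xgot_\gamma)+{\rm codim}_\Cbb\tilde{\Xgot}_{\tilde{w},\gamma}=\dim_\Cbb\tilde{\Fcal}_\gamma$, which says precisely that $[\iota(\Xgot_\gamma)]\cdot[\tilde{\Xgot}_{\tilde{w},\gamma}]$ lands in $H^{\maxx}(\tilde{\Fcal}_\gamma,\Zbb)$. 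Since $[\tilde{pt}]$ spans $H^{\maxx}$, a relation $[\Xgot_\gamma]\cdot\iota^*([\tilde{\Xgot}_{\tilde{w},\gamma}])=n[pt]$ with $n\geq 1$ forces A) for degree reasons; this is what lets me recover the full hypotheses of Proposition \ref{prop:caracteriser-ressayre-pair} from the cohomological hypothesis in the backward directions.

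With this dictionary both equivalences follow. Forward: if $(\gamma_{\tilde{w}},C_{\tilde{w},\gamma})$ is an infinitesimal (resp. an honest, hence in particular infinitesimal) $B$-Ressayre's pair, the first bullet of Proposition \ref{prop:caracteriser-ressayre-pair} gives A) and B), hence (\ref{eq:trace-condition-w-tilde}); moreover ${\rm q}_{\tilde{w},\gamma}$ is dominant (it has a point where the tangent map is an isomorphism, and by A) source and target have equal dimension), hence generically finite with non-empty generic fibre of cardinality $n\geq 1$ (resp. $n=1$ by generic injectivity), and the dictionary gives $[\Xgot_\gamma]\cdot\iota^*([\tilde{\Xgot}_{\tilde{w},\gamma}])=n[pt]$ with $n\geq 1$ (resp. $=[pt]$). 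Conversely, given (\ref{eq:trace-condition-w-tilde}) and the cohomological relation, A) holds by the degree argument and B) by Lemma \ref{lem:A-B-horn}; the dictionary then shows ${\rm Image}({\rm q}_{\tilde{w},\gamma})$ contains a dense open set when $n\geq 1$, so (\ref{eq:condition-1-image}) holds and the second bullet of Proposition \ref{prop:caracteriser-ressayre-pair} yields an infinitesimal $B$-Ressayre's pair, while $n=1$ gives (\ref{eq:condition-2-image}) and the third bullet (using that $\tilde{K}_\Cbb$ is algebraic) yields an honest $B$-Ressayre's pair. I expect the main obstacle to be this last bit of bookkeeping: verifying that $r$-injectivity together with Kleiman transversality really identify the generic fibre cardinality of ${\rm q}_{\tilde{w},\gamma}$ with the intersection number, and keeping the two cohomological formulations — pushed forward along $\iota$ versus pulled back along $\iota^*$ — in exact step.
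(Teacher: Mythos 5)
Your proposal is correct and takes essentially the same approach as the paper: the paper's proof of this corollary consists entirely of citing Lemmas \ref{lem:schubert-1}, \ref{lem:schubert-2} and Proposition \ref{prop:caracteriser-ressayre-pair}, and you assemble exactly these ingredients in the intended way (Relation B) as the trace condition (\ref{eq:trace-condition-w-tilde}), Relation A) as the complementary-degree condition, Kleiman transversality to identify the generic fibre cardinality with the intersection number, and injectivity of ${\rm r}_{\tilde{w},\gamma}$ on the fibres of ${\rm q}_{\tilde{w},\gamma}$). The extra details you supply --- recovering A) from $n\geq 1$ for degree reasons, and the argument that injectivity of ${\rm q}_{\tilde{w},\gamma}$ on a dense open subset forces generic fibre cardinality $n=1$ --- are precisely the bookkeeping the paper leaves implicit.
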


\medskip 

We can finally describe the Kirwan polyhedron $\Delta(\T^*\tilde{K})$.

\medskip

\begin{theo}\label{theo:delta-exemple-1} Let $K\subset \tilde{K}$ be a closed connected subgroup such as {\em no non-zero ideal of $\kgot$ is an ideal of $\tilde{\kgot}$}. Let $(\tilde{\xi},\xi)\in\tilde{\tgot}^*_{\geq 0}\times \tgot^*_{\geq 0}$. We have  $-\xi\in \pi\big(\tilde{K}\tilde{\xi}\big)$ if and only if 
$$
\langle \tilde{\xi},\tilde{w}\tilde{\gamma}\rangle+\langle \xi,\gamma\rangle\geq 0
$$
for any $(\gamma,\tilde{w})\in\tgot\times \tilde{W}$ satisfying the following properties:
\begin{enumerate}
\item[a)] $\gamma$ is rational and $\dim_T(\qgot^\gamma)=1$. 
\item[b)] $[\Xgot_{\gamma}]\cdot \iota^*([\tilde{\Xgot}_{\tilde{w},\gamma}])= [pt]$ in $H^*(\Fcal_\gamma,\Zbb)$.
\item[c)] $\sum_{\stackrel{\alpha\in\Rgot^+}{\langle\alpha,\gamma\rangle> 0}}\langle\alpha,\gamma\rangle=
\sum_{\stackrel{\tilde{\alpha}\in\tilde{\Rgot}^-}{\langle\tilde{\alpha},\tilde{w}\gamma\rangle> 0}}\langle\tilde{\alpha},\tilde{w}\gamma\rangle$.
\end{enumerate}
\end{theo}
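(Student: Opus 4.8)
The plan is to read the statement off directly from Theorem \ref{th:ressayre-pairs}, applied to the K\"{a}hler Hamiltonian $\tilde{K}\times K$-manifold $M=\tilde{K}_\Cbb\simeq\T^*\tilde{K}$, whose Kirwan polyhedron is $\Delta(\T^*\tilde{K})$. Since we have already recorded that $\Delta(\T^*\tilde{K})=\{(\tilde{\xi},\xi)\,|\,-\xi\in\pi(\tilde{K}\tilde{\xi})\}$, the left-hand condition $-\xi\in\pi(\tilde{K}\tilde{\xi})$ is exactly the membership $(\tilde{\xi},\xi)\in\Delta(\Phi)$. Theorem \ref{th:ressayre-pairs} then reduces this membership to the inequalities $\langle(\tilde{\xi},\xi),\delta\rangle\geq\langle\Phi(C),\delta\rangle$ ranging over all regular $B$-Ressayre's pairs $(\delta,C)$ of $\tilde{K}_\Cbb$.

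Next I would identify these pairs with the data $(\gamma,\tilde{w})$ of the statement. A regular $B$-Ressayre's pair has an admissible underlying element, and by the discussion following Lemma \ref{lem:point-fixe-cotangent} together with Lemma \ref{admissible-exemple-1}, every admissible element for the $\tilde{K}\times K$-action equals $\gamma_{\tilde{w}}=(\tilde{w}\gamma,\gamma)$ for some rational $\gamma\in\tgot$ and $\tilde{w}\in\tilde{W}$, with fixed component $C_{\tilde{w},\gamma}=\tilde{w}\tilde{K}^\gamma_\Cbb$; regularity is then equivalent to the admissibility condition (a), $\dim_T(\qgot^\gamma)=1$ (note that $\gamma\neq 0$ forces $\dim_T(\qgot^\gamma)\geq 1$, so the jump of $\dim_{\tilde{K}\times K}$ is always exactly $1$). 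Corollary \ref{coro:critere-schubert-ressayre} completes the identification: $(\gamma_{\tilde{w}},C_{\tilde{w},\gamma})$ is a $B$-Ressayre's pair precisely when the trace identity (c) and the Schubert condition (b), $[\Xgot_\gamma]\cdot\iota^*([\tilde{\Xgot}_{\tilde{w},\gamma}])=[pt]$, both hold. Hence the regular $B$-Ressayre's pairs of $\tilde{K}_\Cbb$ are exactly those indexed by the $(\gamma,\tilde{w})$ satisfying (a), (b), (c).

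The crux is to evaluate the right-hand side $\langle\Phi(C_{\tilde{w},\gamma}),\gamma_{\tilde{w}}\rangle$. Since $n\mapsto\langle\Phi(n),\gamma_{\tilde{w}}\rangle$ is constant on the connected fixed-point component $C_{\tilde{w},\gamma}$, I may evaluate it at the single point $\tilde{w}\in C_{\tilde{w},\gamma}$, which in the coordinates $\T^*\tilde{K}\simeq\tilde{K}\times\tilde{\kgot}$, $\varphi(a,X)=ae^{iX}$, is $(\tilde{w},0)$. The explicit formulas (\ref{eq:momentcotangent}) give $\Phi_{\tilde{K}}(\tilde{w},0)=-\tilde{w}\cdot 0=0$ and $\Phi_K(\tilde{w},0)=\pi(0)=0$, so $0\in\Phi(C_{\tilde{w},\gamma})$ and $\langle\Phi(C_{\tilde{w},\gamma}),\gamma_{\tilde{w}}\rangle=0$ (in particular $\Phi(C_{\tilde{w},\gamma})$ meets the closed chamber, so these are exactly the pairs tested in part (3) of the theorem). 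Substituting this vanishing, the inequality $\langle(\tilde{\xi},\xi),\gamma_{\tilde{w}}\rangle\geq\langle\Phi(C_{\tilde{w},\gamma}),\gamma_{\tilde{w}}\rangle$ becomes $\langle\tilde{\xi},\tilde{w}\gamma\rangle+\langle\xi,\gamma\rangle\geq 0$, which is exactly the displayed inequality.

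The step demanding the most care is the bookkeeping of the second paragraph: one must ensure that conditions (a), (b), (c) carve out precisely the regular $B$-Ressayre's pairs and nothing more, so that the family of inequalities furnished by Theorem \ref{th:ressayre-pairs} coincides with the stated family (the redundancy coming from the stabilizer $\tilde{W}^\gamma$ of $\gamma$ in $\tilde{W}$ is harmless, as it merely repeats inequalities). By contrast, the vanishing $\langle\Phi(C_{\tilde{w},\gamma}),\gamma_{\tilde{w}}\rangle=0$ is immediate from the explicit moment map together with the constancy of $\langle\Phi,\gamma_{\tilde{w}}\rangle$ on $C_{\tilde{w},\gamma}$, and the reduction itself is a direct invocation of the second main theorem.
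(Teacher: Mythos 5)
Your proposal is correct and is precisely the argument the paper intends (the paper states Theorem \ref{theo:delta-exemple-1} without a written proof, leaving it as the assembly of Theorem \ref{th:ressayre-pairs}, Lemma \ref{admissible-exemple-1}, and Corollary \ref{coro:critere-schubert-ressayre}): you apply the main theorem to $\T^*\tilde{K}\simeq\tilde{K}_\Cbb$, identify the regular $B$-Ressayre's pairs with the data $(\gamma,\tilde{w})$ satisfying a), b), c), and evaluate $\langle\Phi(C_{\tilde{w},\gamma}),\gamma_{\tilde{w}}\rangle=0$ at the point $\tilde{w}$. Your explicit verification of that vanishing, and of the fact that $\gamma\neq 0$ forces $\dim_T(\qgot^\gamma)\geq 1$ so that admissibility collapses to $\dim_T(\qgot^\gamma)=1$, fills in exactly the bookkeeping the paper leaves implicit.
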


The previous theorem has a long story. The first input was given by Klyachko \cite{Klyachko} with a refinement by Belkale \cite{Belkale01}, when $K_\Cbb=SL(n)$ and 
$\tilde{K}_\Cbb=(SL(n))^s$. The case $\tilde{K}_\Cbb=(K_\Cbb)^s$ was treated by Kapovich-Leeb-Millson \cite{Ka-Le-Mi} following an analogous slightly weaker result 
proved by Berenstein-Sjamaar \cite{Berenstein-Sjamaar}. Condition c) is related to the notion of Levi-movability introduced by Belkale-Kumar \cite{BK1}. It is recalled 
that the notion of Ressayre's pair is an adaptation of Belkale-Kumar's Levi-movability. Finally, the general case was treated by Ressayre \cite{Ressayre10}, 
where he proves furthermore the irredundancy of the list of inequalities.

\medskip

\subsection{$\tilde{K}_\Cbb\times K_\Cbb$ acting on $\tilde{K}_\Cbb\times V$}

Let $(V,h)$ be an Hermitian $K$-vector space. The linear action of $K_\Cbb$ on $V$ is denoted 
$\rho : K_\Cbb\to {\rm GL}(V)$.

Let $\Omega_V=-{\rm Im}(h)$ be the corresponding $2$-form on $V$. The moment map 
$\Phi_V:V\to\kgot^*$ associated to the action of $K$ on $(V,\Omega_V)$ is defined by the relation
$$
\langle\Phi_V(v),X\rangle=\frac{1}{2}\Omega(Xv,v),\quad X\in\kgot.
$$

We suppose here that the moment map $\Phi_V$ is {\em proper}. It is like saying that the algebra ${\rm Sym}(V^*)^K$ of invariant polynomial functions on $V$ is reduced to the constants.

In this section, we study the action of $\tilde{K}_\Cbb\times K_\Cbb$ on $\tilde{K}_\Cbb\times V\simeq \T^* \tilde{K}\times V$ that is defined as follows : 
$(\tilde{k},k)\cdot (a,v)= (\tilde{k} a k^{-1},kv)$. The corresponding moment map $\Phi: \T^* \tilde{K}\times V\to \tilde{\kgot}^*\oplus \kgot^*$, which is defined by the relations
\begin{equation}\label{eq:moment-map-exemple-2}
\Phi(\tilde{k},\tilde{\xi},v)=\left(-\tilde{k}\tilde{\xi},\pi(\tilde{\xi})+\Phi_V(v)\right),
\end{equation}
is proper.

\subsubsection{Admissible elements}

In this example, Lemmas \ref{admissible-exemple-1} and \ref{lem:A-B-horn} become

\begin{lem}\label{admissible-horn-exemple-2}Let $\gamma\in \tgot$ be a rational element and let $\tilde{w}\in \tilde{W}$. 
\begin{enumerate}
\item The element $\gamma_{\tilde{w}}:=(\tilde{w}\gamma,\gamma)$ is admissible if and only if $\dim_T(\qgot^\gamma\times V^\gamma)=1$.
\item Relation A) means $\dim_\Cbb(\tilde{\ngot}^{\tilde{w}\gamma>0})+\dim_\Cbb (\ngot^{\gamma>0})= 
\dim_\Cbb (\tilde{\kgot}_\Cbb)^{\tilde{w}\gamma>0}+\dim_\Cbb(V^{\gamma>0})$.
\item Relation B) corresponds to 
\begin{equation}\label{eq:trace-condition-w-tilde-exemple-2}
\sum_{\stackrel{\alpha\in\Rgot^+}{\langle\alpha,\gamma\rangle> 0}}\langle\alpha,\gamma\rangle=
\sum_{\stackrel{\tilde{\alpha}\in\tilde{\Rgot}^-}{\langle\tilde{\alpha},\tilde{w}\gamma\rangle> 0}}\langle\tilde{\alpha},\tilde{w}\gamma\rangle+\tr_\gamma(V^{\gamma> 0}).
\end{equation}
\end{enumerate}
\end{lem}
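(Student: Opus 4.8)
The three assertions are the $V$-twisted versions of Lemmas~\ref{admissible-exemple-1} and~\ref{lem:A-B-horn}, and my plan is to re-run those computations on $N=\tilde{K}_\Cbb\times V\simeq\T^*\tilde{K}\times V$ while isolating the contribution of the extra factor $V$. The two structural facts I would record first are: (i) since $V$ is a vector space carrying a \emph{linear} $K$-action, the tangent bundle splits as $\T N=\T\tilde{K}_\Cbb\oplus\underline{V}$ along $N$, with the $\tilde{K}_\Cbb\times K_\Cbb$-action on the second summand factoring through $K_\Cbb$; and (ii) the element $\gamma_{\tilde{w}}=(\tilde{w}\gamma,\gamma)$ therefore acts on the $V$-summand only through its $K$-component $\gamma$. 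Combining (ii) with Lemma~\ref{lem:point-fixe-cotangent} applied to the $\T^*\tilde{K}$-factor, the connected component $C$ of $N^{\gamma_{\tilde{w}}}$ of interest is $C=\tilde{w}\tilde{K}^\gamma_\Cbb\times V^\gamma$, and along $C$ one has $(\T N\vert_C)^{\gamma_{\tilde{w}}>0}=(\T\tilde{K}_\Cbb\vert_C)^{\gamma_{\tilde{w}}>0}\oplus V^{\gamma>0}$.

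For the first assertion I would argue exactly as in the corollary following Lemma~\ref{admissible-exemple-1}. The generic stabilizer of $\tilde{K}\times K$ on $N$ is contained in the generic stabilizer on $\tilde{K}_\Cbb$ (forget the $V$-coordinate), hence finite, so $\dim_{\tilde{K}\times K}(N)=0$ and Remark~\ref{rem:cas-generic-stabilizer} reduces admissibility of $\gamma_{\tilde{w}}$ to the single equality $\dim_{\tilde{K}\times K}(N^{\gamma_{\tilde{w}}})=1$. Running the same chain of identifications as in that corollary, but now starting from $N^{\gamma_{\tilde{w}}}=\tilde{w}\tilde{K}^\gamma_\Cbb\times V^\gamma$, yields $\dim_{\tilde{K}\times K}(N^{\gamma_{\tilde{w}}})=\dim_K(\tilde{\kgot}^\gamma\times V^\gamma)=\dim_K(\kgot^\gamma\times\qgot^\gamma\times V^\gamma)=\dim_T(\qgot^\gamma\times V^\gamma)$, which is the stated criterion.

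For the second and third assertions I would feed $C$ into Proposition~\ref{prop:caracteriser-ressayre-pair}, with the role of $\ngot$ played by the nilradical $\tilde{\ngot}\oplus\ngot$ of $\tilde{B}\times B$ and the role of $\gamma$ by $\gamma_{\tilde{w}}$. Both Relation~A) and Relation~B) split according to the decompositions $(\tilde{\ngot}\oplus\ngot)^{\gamma_{\tilde{w}}>0}=\tilde{\ngot}^{\tilde{w}\gamma>0}\oplus\ngot^{\gamma>0}$ and $(\T N\vert_C)^{\gamma_{\tilde{w}}>0}=(\T\tilde{K}_\Cbb\vert_C)^{\gamma_{\tilde{w}}>0}\oplus V^{\gamma>0}$. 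On the $\tilde{K}_\Cbb$-factor the identities are precisely those already computed in Lemma~\ref{lem:A-B-horn} (in particular $(\T\tilde{K}_\Cbb\vert_C)^{\gamma_{\tilde{w}}>0}\simeq(\tilde{\kgot}_\Cbb)^{\tilde{w}\gamma>0}$ via the right trivialisation at the base point $\tilde{w}$, whose $\tilde{w}\gamma$-weights are the $\langle\tilde{\alpha},\tilde{w}\gamma\rangle$); the only new terms are the rank $\dim_\Cbb(V^{\gamma>0})$ for A) and the weighted trace $\tr_\gamma(V^{\gamma>0})$ for B). Appending these to the right-hand sides of the formulas of Lemma~\ref{lem:A-B-horn} gives exactly the stated Relation~A) and formula~(\ref{eq:trace-condition-w-tilde-exemple-2}).

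Since everything reduces to transporting two already-proved computations and adding one transparent $V$-summand, I do not expect a genuine obstacle. The only points demanding a little care are the finiteness of the generic stabilizer on $N$ (needed to place us in the setting of Remark~\ref{rem:cas-generic-stabilizer}) and reading off the weights on the $\T\tilde{K}_\Cbb$-factor with the correct trivialisation, so that they involve $\tilde{w}\gamma$ rather than $\gamma$; both are settled by the earlier analysis of $\tilde{K}_\Cbb$ and require no new idea.
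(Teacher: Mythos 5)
Your proposal is correct and follows exactly the route the paper intends: the paper states this lemma without proof as the direct adaptation of Lemma \ref{admissible-exemple-1} (and its corollary's stabilizer computation) and Lemma \ref{lem:A-B-horn} to $N=\tilde{K}_\Cbb\times V$, which is precisely what you carry out via the splitting $\T N=\T\tilde{K}_\Cbb\oplus\underline{V}$, the identification $N^{\gamma_{\tilde{w}}}=\tilde{w}\tilde{K}^\gamma_\Cbb\times V^\gamma$, and the chain $\dim_{\tilde{K}\times K}(N^{\gamma_{\tilde{w}}})=\dim_K(\kgot^\gamma\times\qgot^\gamma\times V^\gamma)=\dim_T(\qgot^\gamma\times V^\gamma)$. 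Your weight computation on the $\T\tilde{K}_\Cbb$-factor (right trivialization giving the $\langle\tilde{\alpha},\tilde{w}\gamma\rangle$ weights) and the appended $V$-terms $\dim_\Cbb(V^{\gamma>0})$ and $\tr_\gamma(V^{\gamma>0})$ reproduce the stated Relations A) and B) exactly as in the paper.
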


\subsubsection{Ressayre's pairs of $\tilde{K}_\Cbb\times V$}

The submanifold fixed by $\gamma_{\tilde{w}}$ is $\tilde{w}\tilde{K}^\gamma_\Cbb\times V^\gamma$, and 
the corresponding Bialynicki-Birula's complex submanifold is $\tilde{w}\tilde{P}_\gamma\times V^{\gamma\leq 0}$. So, we work with the holomorphic map
 $$
{\rm q}^V_{\tilde{w},\gamma}: (\tilde{B}\times B)\times_{(\tilde{B}\times B)\cap \Pbb_{\tilde{w},\gamma}} (\tilde{w}\tilde{P}_\gamma\times V^{\gamma\leq 0})
\longrightarrow  \tilde{K}_\Cbb\times V
$$
that sends $[\tilde{b},b;\tilde{w}\tilde{p},v]$ to $(\tilde{b}\tilde{w}\tilde{p}b^{-1},\rho(b)v)$. 

In order to analyse the fibers of the map ${\rm q}^V_{\tilde{w},\gamma}$, we use,  as in \S  \ref{sec:example-1}, the map 
${\rm r}^V_{\tilde{w},\gamma}: (\tilde{B}\times B)\times_{(\tilde{B}\times B)\cap \Pbb_{\tilde{w},\gamma}} (\tilde{w}\tilde{P}_\gamma\times V^{\gamma\leq 0})\longrightarrow  
\tilde{\Fcal}_\gamma \times \Fcal_\gamma$ that sends $[\tilde{b},b;\tilde{w}\tilde{p}, v]$ to $([\tilde{b}\tilde{w}],[b])$.

We start with

\begin{lem}
The map ${\rm r}^V_{\tilde{w},\gamma}$ is injective when it is restricted to the fibers of ${\rm q}^V_{\tilde{w},\gamma}$.
\end{lem}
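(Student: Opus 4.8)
The plan is to mimic the proof of Lemma \ref{lem:r-gamma} from the $V$-free situation of \S \ref{sec:example-1}, exploiting the fact that the map ${\rm r}^V_{\tilde{w},\gamma}$ retains exactly the two flag coordinates $([\tilde{b}\tilde{w}],[b])\in\tilde{\Fcal}_\gamma\times\Fcal_\gamma$, and that once these coordinates are prescribed along a fiber of ${\rm q}^V_{\tilde{w},\gamma}$, the $V$-component of a representative is rigidly determined. Thus the $V$-factor should contribute no genuinely new difficulty: the whole point will be that the value of $\rho(b)v$ is constant along a fiber.

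First I would take two points $[\tilde{b}_1,b_1;\tilde{w}\tilde{p}_1,v_1]$ and $[\tilde{b}_2,b_2;\tilde{w}\tilde{p}_2,v_2]$ lying in a common fiber of ${\rm q}^V_{\tilde{w},\gamma}$ and having the same image under ${\rm r}^V_{\tilde{w},\gamma}$. The equality of ${\rm r}^V_{\tilde{w},\gamma}$-images reads $[\tilde{b}_1\tilde{w}]=[\tilde{b}_2\tilde{w}]$ in $\tilde{\Fcal}_\gamma$ and $[b_1]=[b_2]$ in $\Fcal_\gamma$. Setting $\tilde{q}_0:=\tilde{b}_2^{-1}\tilde{b}_1$ and $q_0:=b_2^{-1}b_1$, I would check from these flag equalities that $\tilde{q}_0\in\tilde{B}\cap\mathrm{Ad}(\tilde{w})(\tilde{P}_\gamma)$ and $q_0\in B\cap P_\gamma$, so that the pair $(\tilde{q}_0,q_0)$ is a bona fide element of the isotropy group $(\tilde{B}\times B)\cap\Pbb_{\tilde{w},\gamma}$ defining the associated bundle. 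Automatically $\tilde{b}_2=\tilde{b}_1\tilde{q}_0^{-1}$ and $b_2=b_1 q_0^{-1}$.

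The remaining step is to show that this single element $(\tilde{q}_0,q_0)$ carries the first representative onto the second. The same-fiber condition gives $\tilde{b}_1\tilde{w}\tilde{p}_1 b_1^{-1}=\tilde{b}_2\tilde{w}\tilde{p}_2 b_2^{-1}$ together with $\rho(b_1)v_1=\rho(b_2)v_2$. Using $b_1^{-1}b_2=q_0^{-1}$, the first equality rearranges to $\tilde{w}\tilde{p}_2=\tilde{q}_0\,\tilde{w}\tilde{p}_1\,q_0^{-1}$, while the second gives $v_2=\rho(q_0)v_1$. These are precisely the identities expressing $(\tilde{w}\tilde{p}_2,v_2)=(\tilde{q}_0,q_0)\cdot(\tilde{w}\tilde{p}_1,v_1)$ in $C^-=\tilde{w}\tilde{P}_\gamma\times V^{\gamma\leq 0}$, so combined with $\tilde{b}_2=\tilde{b}_1\tilde{q}_0^{-1}$ and $b_2=b_1 q_0^{-1}$ the two points coincide in the domain.

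The one place requiring care, and which I regard as the main (if modest) obstacle, is verifying that the relation so produced genuinely remains inside the fibred product: one must note that $V^{\gamma\leq 0}$ is $P_\gamma$-invariant, so $\rho(q_0)v_1\in V^{\gamma\leq 0}$, and that $\tilde{q}_0\,\tilde{w}\tilde{p}_1\,q_0^{-1}\in\tilde{w}\tilde{P}_\gamma$, which follows by writing $\tilde{q}_0=\tilde{w}\tilde{p}'\tilde{w}^{-1}$ with $\tilde{p}'\in\tilde{P}_\gamma$ and using $q_0\in P_\gamma\subset\tilde{P}_\gamma$. This is the same mechanism as in the $V$-free case treated in \S \ref{sec:example-1}; the key structural observation to emphasize is that the flag data recorded by ${\rm r}^V_{\tilde{w},\gamma}$ pins down the isotropy element, after which the fiber condition of ${\rm q}^V_{\tilde{w},\gamma}$ forces the remaining parabolic and vector coordinates, yielding injectivity.
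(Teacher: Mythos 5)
Your proof is correct: the paper states this lemma without proof (just as it does for its model, Lemma \ref{lem:r-gamma}, which it calls a basic remark), and the intended argument is precisely the direct verification you give. Extracting the isotropy element $(\tilde{q}_0,q_0)\in(\tilde{B}\times B)\cap\Pbb_{\tilde{w},\gamma}$ from the equality of the two flag coordinates, and then checking via the fiber condition that this single element carries one representative onto the other (using $q_0\in P_\gamma\subset\tilde{P}_\gamma$ and the $P_\gamma$-invariance of $V^{\gamma\leq 0}$, so that one stays inside $\tilde{w}\tilde{P}_\gamma\times V^{\gamma\leq 0}$), is exactly what is needed.
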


In order to obtain a criteria based on Schubert calculus, we replace $V$ by the projective manifold $\Pbb(\Cbb\times V)=G/Q$. Here 
$G={\rm GL}(\Cbb\times V)$, and $Q\subset G$ is the parabolic subgroup of $G$ that fixes $[1,0]\in \Pbb(\Cbb\times V)$. Let $P_\gamma^G \subset G$ be the 
parabolic subgroup associated to $\gamma$. Then $V^\gamma$ (resp. $V^{\gamma\leq 0}$) is a dense open part of $\Pbb(\Cbb\times V^\gamma)=G^\gamma\cdot [1,0]$ 
(resp.  $\Pbb(\Cbb\times V^{\gamma\leq 0})=P^G_\gamma\cdot [1,0]$).

Let us work with the holomorphic map
 $$
\widehat{\rm q}^V_{\tilde{w},\gamma}: 
(\tilde{B}\times B)\times_{(\tilde{B}\times B)\cap \Pbb_{\tilde{w},\gamma}} (\tilde{w}\tilde{P}_\gamma\times P^G_{\gamma}/Q\cap P^G_{\gamma})
\longrightarrow  \tilde{K}_\Cbb\times G/Q
$$
that sends $[\tilde{b},b;\tilde{w}\tilde{p},[p]]$ to $(\tilde{b}\tilde{w}\tilde{p}b^{-1},[\widehat{\rho}(b)p])$. Here we denote $\widehat{\rho}:K_\Cbb\to {\rm GL}(\Cbb\times V)$ 
the composition of $\rho : K_\Cbb\to {\rm GL}(V)$, with the morphism ${\rm GL}(V)\to{\rm GL}(\Cbb\times V)$ that sends $g$ to 
$
\begin{pmatrix}
1 & 0\\
0 & g
\end{pmatrix}.
$

By an argument of density, we can replace ${\rm q}^V_{\tilde{w},\gamma}$ by $\widehat{\rm q}^V_{\tilde{w},\gamma}$, and obtain the following adaptation of Proposition 
\ref{prop:caracteriser-ressayre-pair}. Let $N:=\tilde{K}_\Cbb\times G/Q$.
\begin{prop}\label{prop:caracteriser-ressayre-pair-exemple-2} Suppose that $\gamma_{\tilde{w}}=(\tilde{w}\gamma,\gamma)$ satisfies Conditions A) and B).
\begin{itemize}
\item If the set ${\rm Image}(\widehat{\rm q}^V_{\tilde{w},\gamma})=\{n\in N, {\rm r}^V_{\tilde{w},\gamma}((\widehat{\rm q}^V_{\tilde{w},\gamma})^{-1}(n))\neq\emptyset\}$  has a 
 nonempty interior, then $(\gamma_{\tilde{w}}, \tilde{w}\tilde{K}^\gamma_\Cbb\times V^\gamma)$ is an infinitesimal $B$-Ressayre's pair.
\item  If the set $\{n\in N, {\rm cardinal}({\rm r}^V_{\tilde{w},\gamma}((\widehat{\rm q}^V_{\tilde{w},\gamma})^{-1}(n))= 1\}$ has a 
 nonempty interior, then then $(\gamma_{\tilde{w}}, \tilde{w}\tilde{K}^\gamma_\Cbb\times V^\gamma)$ is a $B$-Ressayre's pair.
\end{itemize}
\end{prop}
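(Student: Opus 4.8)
The plan is to apply Proposition~\ref{prop:caracteriser-ressayre-pair} directly to $M=\tilde{K}_\Cbb\times V$, which is a smooth algebraic variety carrying an algebraic action of $\tilde{K}_\Cbb\times K_\Cbb$, with Borel subgroup $\tilde{B}\times B$ and nilradical $\tilde{\ngot}\oplus\ngot$. Here the relevant data are $(\gamma_{\tilde{w}},C)$ with $C=\tilde{w}\tilde{K}^\gamma_\Cbb\times V^\gamma$ the connected component of $M^{\gamma_{\tilde{w}}}$, Bialynicki-Birula submanifold $C^-=\tilde{w}\tilde{P}_\gamma\times V^{\gamma\leq 0}$, structure map ${\rm q}^V_{\tilde{w},\gamma}$, and the auxiliary map $r^g_\gamma={\rm r}^V_{\tilde{w},\gamma}$ for the choice $g=(\tilde{w}^{-1},e)$. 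With this dictionary the second and third bullets of Proposition~\ref{prop:caracteriser-ressayre-pair} are available; in particular Conditions A) and B) for $(\gamma_{\tilde{w}},C)$ on $M$ are exactly those of Lemma~\ref{admissible-horn-exemple-2}, hence hold by assumption.

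The step I expect to be the main obstacle is to compare ${\rm q}^V_{\tilde{w},\gamma}$ with its projective avatar $\widehat{\rm q}^V_{\tilde{w},\gamma}$, so that the hypotheses stated in terms of $\widehat{\rm q}^V_{\tilde{w},\gamma}$ become the interior conditions \eqref{eq:condition-1-image} and \eqref{eq:condition-2-image} for ${\rm q}^V_{\tilde{w},\gamma}$. I would use that $v\mapsto[1,v]$ realises $V$ as a dense open $K_\Cbb$-stable chart of $G/Q=\Pbb(\Cbb\times V)$, that $V^{\gamma\leq 0}$ is the dense open chart of $P^G_\gamma/Q\cap P^G_\gamma$, and that $\widehat{\rho}$ fixes the first coordinate of $\Cbb\times V$. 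The last fact shows that a point of the domain is sent by $\widehat{\rm q}^V_{\tilde{w},\gamma}$ into $\tilde{K}_\Cbb\times V\subset N$ if and only if its $P^G_\gamma/Q\cap P^G_\gamma$-entry already lies in $V^{\gamma\leq 0}$, and that on this dense open locus $\widehat{\rm q}^V_{\tilde{w},\gamma}$ coincides with ${\rm q}^V_{\tilde{w},\gamma}$ and ${\rm r}^V_{\tilde{w},\gamma}$ with its version for $M$. Consequently, for $m\in\tilde{K}_\Cbb\times V$ the fibre $(\widehat{\rm q}^V_{\tilde{w},\gamma})^{-1}(m)$ lies entirely in this locus and equals the fibre of ${\rm q}^V_{\tilde{w},\gamma}$, so the two maps have identical fibrewise cardinalities over the chart; since $N=\tilde{K}_\Cbb\times G/Q$ is irreducible and $\tilde{K}_\Cbb\times V$ is dense open in it, ${\rm Image}(\widehat{\rm q}^V_{\tilde{w},\gamma})$ has nonempty interior in $N$ if and only if ${\rm Image}({\rm q}^V_{\tilde{w},\gamma})$ has nonempty interior in $M$, and likewise the cardinal-one locus of ${\rm r}^V_{\tilde{w},\gamma}$ has nonempty interior if and only if that of $r^g_\gamma$ does.

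With these equivalences in hand the conclusion is immediate: under the first hypothesis, Condition~\eqref{eq:condition-1-image} holds for ${\rm q}^V_{\tilde{w},\gamma}$, so the second bullet of Proposition~\ref{prop:caracteriser-ressayre-pair} shows that $(\gamma_{\tilde{w}},\tilde{w}\tilde{K}^\gamma_\Cbb\times V^\gamma)$ is an infinitesimal $B$-Ressayre's pair on $M$; under the second hypothesis, Condition~\eqref{eq:condition-2-image} holds and, $M$ being algebraic with algebraic action, the third bullet shows that the pair is a genuine $B$-Ressayre's pair. The only point needing real care beyond this is the density bookkeeping above — checking that \emph{nonempty interior} and \emph{cardinal one} survive the passage between $V$ and $\Pbb(\Cbb\times V)$ in both directions — which is precisely the ``argument of density'' alluded to before the statement. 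The tangent identification $\T_{[1,0]}\Pbb(\Cbb\times V)\simeq V$ of $K_\Cbb$-modules (the reason $\Pbb(\Cbb\times V)$, and not $\Pbb(V)$, is the correct compactification) guarantees in addition that Conditions A), B) are insensitive to the replacement, should one instead prefer to run Proposition~\ref{prop:caracteriser-ressayre-pair} on $N$ with $C_N=\tilde{w}\tilde{K}^\gamma_\Cbb\times\Pbb(\Cbb\times V^\gamma)$ and transfer the conclusions back along the dense open chart.
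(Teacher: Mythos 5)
Your proof is correct and takes essentially the same route as the paper, whose entire justification is the sentence ``by an argument of density, we can replace ${\rm q}^V_{\tilde{w},\gamma}$ by $\widehat{\rm q}^V_{\tilde{w},\gamma}$'' followed by an appeal to Proposition \ref{prop:caracteriser-ressayre-pair} applied to the algebraic variety $M=\tilde{K}_\Cbb\times V$. Your write-up simply makes that density argument explicit — the key observation that $\widehat{\rho}(b)$ fixes the first coordinate of $\Cbb\times V$, so fibres of $\widehat{\rm q}^V_{\tilde{w},\gamma}$ over the dense open chart $\tilde{K}_\Cbb\times V\subset N$ coincide with those of ${\rm q}^V_{\tilde{w},\gamma}$, transferring the nonempty-interior hypotheses into conditions (\ref{eq:condition-1-image}) and (\ref{eq:condition-2-image}) — which is exactly what the paper leaves implicit.
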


\subsubsection{Schubert calculus}

To an oriented real vector bundle $\Ecal\to N$ of rank $r$, we can associate its Euler class ${\rm Eul}(\Ecal)\in H^{2r}(N,\Zbb)$. 
When $\Vcal\to N$ is a complex vector bundle, then ${\rm Eul}(\Vcal_\Rbb)$ corresponds to the top Chern class $c_{p}(\Vcal)$, where $p$ is the complex rank of $\Vcal$, and 
$\Vcal_\Rbb$ means $\Vcal$ as a real vector bundle oriented by its complex structure (see \cite{Bott-Tu}, \S 21).

We will need the following result.

\begin{lem}
Let $M$ be a smooth complex projective variety. Let $Y\subset M$ be a subvariety and let $Y^o\subset M$ be it's smooth part. Let 
$\iota_X : X\to M$ be a smooth compact oriented submanifold contained in $Y^o$. Then 
$$
\iota_X^*([Y])={\rm Eul}(\Ncal\vert_X),
$$
where $\Ncal$ is the normal bundle of $Y^o$ in $M$.
\end{lem}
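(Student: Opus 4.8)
The plan is to reduce the identity to the classical self-intersection formula for a \emph{smooth} closed submanifold, by localizing away from the singular locus of $Y$. Set $Y^{\rm sing}:=Y\setminus Y^o$ and $U:=M\setminus Y^{\rm sing}$. As $Y^{\rm sing}$ is the singular locus of the closed subvariety $Y$, it is closed in $M$, and since $X\subset Y^o$ we have $X\subset U$; thus $U$ is an open neighbourhood of $X$ in $M$ on which $Z:=Y\cap U=Y^o$ is a \emph{smooth} closed complex submanifold of complex codimension $c:=n_Y$. I denote the inclusions by $j:X\croc Z$, $\iota_Z:Z\croc U$ and $\iota_U:U\croc M$, so that $\iota_X=\iota_U\circ\iota_Z\circ j$, and I write $r_U:=\iota_U^*:H^{2c}(M,\Zbb)\to H^{2c}(U,\Zbb)$ for the restriction map.

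The crucial step is to show that $r_U([Y])=[Z]_U$, the cohomology cycle class of the smooth submanifold $Z$ computed inside $U$. Recall that $[Y]$ is Poincar\'e dual to the fundamental class $[Y]\in H_{2d}(M)\cong H^{\rm BM}_{2d}(M)$, with $d:=\dim_\Cbb Y=\dim_\Cbb M-c$, and that this class is pushed forward from $H^{\rm BM}_{2d}(Y)$. Under Poincar\'e--Lefschetz duality on the oriented manifolds $M$ and $U$, the cohomological restriction $r_U$ corresponds to the restriction $H^{\rm BM}_{2d}(Y)\to H^{\rm BM}_{2d}(Y\cap U)$ in Borel--Moore homology. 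In the localization sequence attached to the closed subset $Y^{\rm sing}\subset Y$ with open complement $Y^o=Y\cap U$,
$$
H^{\rm BM}_{2d}(Y^{\rm sing})\longrightarrow H^{\rm BM}_{2d}(Y)\longrightarrow H^{\rm BM}_{2d}(Y^o)\longrightarrow H^{\rm BM}_{2d-1}(Y^{\rm sing}),
$$
the two groups $H^{\rm BM}_{2d}(Y^{\rm sing})$ and $H^{\rm BM}_{2d-1}(Y^{\rm sing})$ vanish because $\dim_\Cbb Y^{\rm sing}\le d-1$. Hence the middle restriction is an isomorphism carrying $[Y]$ to the fundamental class $[Y^o]$, and dualizing back over $U$ gives $r_U([Y])=[Y^o]_U=[Z]_U$.

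Then I invoke the self-intersection formula for the smooth closed oriented submanifold $Z\croc U$: representing the Poincar\'e dual $[Z]_U$ by the Thom class of the normal bundle $N_{Z/U}$ supported in a tubular neighbourhood, its restriction to the zero section is the Euler class, so $\iota_Z^*([Z]_U)=\Eul(N_{Z/U})$ (see \cite{Bott-Tu}, \S 6). Combining this with the factorization of $\iota_X$ yields
$$
\iota_X^*([Y])=j^*\,\iota_Z^*\,r_U([Y])=j^*\,\iota_Z^*\big([Z]_U\big)=j^*\,\Eul\big(N_{Z/U}\big)=\Eul\big(N_{Z/U}\vert_X\big).
$$
Finally, since $U$ is open in $M$ we have $N_{Z/U}=\Ncal\vert_Z$, where $\Ncal$ is the normal bundle of $Y^o$ in $M$; restricting to $X$ gives $N_{Z/U}\vert_X=\Ncal\vert_X$, whence $\iota_X^*([Y])=\Eul(\Ncal\vert_X)$. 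As $\Ncal$ carries a complex structure, this Euler class is the top Chern class $c_c(\Ncal\vert_X)$.

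The main obstacle is the middle step, namely the identification $r_U([Y])=[Z]_U$ of the restricted cycle class of the \emph{possibly singular} variety $Y$ with the cycle class of the smooth submanifold $Y\cap U$. This is precisely where the hypothesis $X\subset Y^o$ is essential, and it rests on the vanishing of the Borel--Moore homology of $Y^{\rm sing}$ in the top two degrees, that is, on the codimension estimate $\dim_\Cbb Y^{\rm sing}<\dim_\Cbb Y$. Once this localization statement is secured, the remaining ingredients---the smooth self-intersection formula and the functoriality of the pullback---are standard.
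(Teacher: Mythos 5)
Your proof is correct, but it follows a genuinely different route from the paper. The paper works entirely in de Rham cohomology: it defines the cycle class by the current of integration, $\int_M [Y]\cdot\eta=\int_{Y^o}\eta\vert_{Y^o}$ (invoking Lelong's integrability theorem for algebraic cycles), and then evaluates $\int_M [Y]\cdot {\rm Thom}(X,M)\cdot\theta$ in two ways, using the factorization ${\rm Thom}(X,M)={\rm Thom}(X,Y^o)\cdot {\rm Thom}(Y^o,M)$ along $Y^o$ and the fact that ${\rm Thom}(Y^o,M)\vert_{Y^o}={\rm Eul}(\Ncal)$; here the compactness of $X$ is used to integrate against its Thom class. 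You instead localize topologically: you remove the singular locus, identify the restriction of $[Y]$ to $U=M\setminus Y^{\rm sing}$ with the cycle class of the smooth closed submanifold $Y^o\subset U$ via the Borel--Moore localization sequence and the dimension bound $\dim_\Cbb Y^{\rm sing}<\dim_\Cbb Y$, and then apply the smooth self-intersection formula. Each approach has its advantages: yours works directly with integer coefficients (the paper's footnote asserts integral singular cohomology, but its de Rham argument literally proves the identity only in real cohomology, i.e.\ up to torsion) and needs no analytic input, at the cost of invoking Borel--Moore homology, its localization sequence, and the compatibility of Poincar\'e duality with restriction to opens; the paper's argument stays within elementary de Rham theory at the cost of relying on Lelong's theorem. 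One small point of precision in your write-up: the cohomological restriction $r_U$ corresponds under duality to the Borel--Moore restriction $H^{\rm BM}_{2d}(M)\to H^{\rm BM}_{2d}(U)$, and one then needs the commutative square with the proper pushforwards from $Y$ and from $Y\cap U$ (the latter being closed in $U$) before applying the localization sequence for $Y^{\rm sing}\subset Y$; your phrasing conflates these two steps, but the argument as assembled is sound.
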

\begin{proof}
We will give a proof of this equality in the de Rham cohomology of $M$. If $\eta$ is a closed form on $M$, the cycle class $[Y]$ is defined through the relation
$\int_M [Y] \cdot \eta = \int_{Y^o} \eta\vert_{Y^o}$.The convergence of the right hand side is a consequence Lelong integrability's theorem of algebraic cycles. 
Let $\Ncal_{(X,M)}$ be the normal bundle of $X$ in $M$. 
The de Rham cohomology with compact support $H^*_{c}(\Ncal_{(X,M)})$ is freely generated, as a $H^*_{c}(X)$-module, by the Thom class ${\rm Thom}(X,M)$. We denote by 
$Thom(X,M)\in  H^*_c(\Ucal_X)$, the image of ${\rm Thom}(X,M)$ through a diffeomorphism between $\Ncal_{(X,M)}$ and a tubular neighborhood $\Ucal_X$ of $X$ in $M$.
If $\theta\in H^*_{c}(X)$, we denote simply by $Thom(X,M)\cdot \theta\in  H^*_c(\Ucal_X)$ the image of ${\rm Thom}(X,M)\cdot p^*(\theta)\in H^*_{c}(\Ncal_{(X,M)})$: here $p:\Ncal_{(X,M)}\to X$ denotes the projection.
Now, we compute $\int_M[Y]\cdot Thom(X,M)\cdot \theta$ in two different ways. Since $Thom(X,M)$ is the de Rham class that represents the oriented submanifold 
$X$, we have $\int_M[Y]\cdot Thom(X,M)\cdot \theta=\int_X\iota_X^*([Y])\cdot\theta$. On the other hand, we have 
$\int_M[Y]\cdot Thom(X,M)\cdot \theta= \int_{Y^o} (Thom(X,M)\cdot \theta)\vert_{Y^o}$. As $Thom(X,M)=Thom(X,Y^o)\cdot Thom(Y^o,M)$, we obtain  
$Thom(X,M)\vert_{Y^o}=Thom(X,Y^o)\cdot {\rm Eul}(\Ncal)$ because $Thom(Y^o,M)\vert_{Y^o}={\rm Eul}(\Ncal)$. Finally, 
$$
\int_X\iota_X^*([Y])\cdot\theta=\int_M[Y]\cdot Thom(X,M)\cdot \theta=\int_X{\rm Eul}(\Ncal)\vert_X\cdot\theta,\quad \forall \theta\in H^*(X).
$$
This relation completes the proof. 
\end{proof}

The isomorphism $V^{\gamma>0}\simeq V/V^{\gamma\leq 0}$ shows that $V^{\gamma>0}$ can be viewed as a $P_\gamma$-module. Let 
$\Vcal^{\gamma>0}= K_\Cbb\times_{P_\gamma}V^{\gamma>0}$ be the corresponding complex vector bundle on $\Fcal_\gamma$.
In the following proposition, we denote simply ${\rm Eul}(V^{\gamma>0})$ the Euler class ${\rm Eul}(\Vcal^{\gamma>0}_\Rbb)\in H^*(\Fcal_\gamma,\Zbb)$

\begin{prop}
\begin{itemize}
\item $(\gamma_{\tilde{w}},  \tilde{w}\tilde{K}^\gamma_\Cbb\times V^\gamma)$ is a $B$-Ressayre's pair  on $\tilde{K}_\Cbb\times V$ if and only if 
(\ref{eq:trace-condition-w-tilde-exemple-2}) holds and $[\Xgot_{\gamma}]\cdot \iota^*([\tilde{\Xgot}_{\tilde{w},\gamma}])\cdot {\rm Eul}(V^{\gamma>0})= [pt]$ in $H^*(\Fcal_\gamma,\Zbb)$.
\item $(\gamma_{\tilde{w}}, \tilde{w}\tilde{K}^\gamma_\Cbb\times V^\gamma)$ is an infinitesimal $B$-Ressayre's pair on $\tilde{K}_\Cbb\times V$ if and only if 
(\ref{eq:trace-condition-w-tilde-exemple-2}) holds and $[\Xgot_{\gamma}]\cdot \iota^*([\tilde{\Xgot}_{\tilde{w},\gamma}])\cdot {\rm Eul}(V^{\gamma>0})= n [pt]$ for $n\geq 1$.
\end{itemize}
\end{prop}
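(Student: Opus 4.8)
The plan is to follow verbatim the proof of Corollary \ref{coro:critere-schubert-ressayre}, inserting the factor ${\rm Eul}(V^{\gamma>0})$ to account for the presence of the vector space $V$. First I would invoke Proposition \ref{prop:caracteriser-ressayre-pair-exemple-2}, which reduces both equivalences to understanding the generic fibre of the holomorphic map $\widehat{\rm q}^V_{\tilde{w},\gamma}$ over $N=\tilde{K}_\Cbb\times G/Q$: the pair $(\gamma_{\tilde{w}},\tilde{w}\tilde{K}^\gamma_\Cbb\times V^\gamma)$ is an infinitesimal $B$-Ressayre's pair exactly when the generic fibre is non-empty, and a $B$-Ressayre's pair exactly when the generic fibre is a single point. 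Here the necessity of Conditions A) and B) for the forward implication is supplied by the first bullet of Proposition \ref{prop:caracteriser-ressayre-pair} applied to $M=\tilde{K}_\Cbb\times V$, Condition B) being precisely the trace identity (\ref{eq:trace-condition-w-tilde-exemple-2}); Condition A) will be absorbed into the requirement that the triple product land in top degree.

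Next I would analyse the fibre of $\widehat{\rm q}^V_{\tilde{w},\gamma}$ over a generic $(\tilde{k},[\ell])$, writing $\ell=(\ell_0,\ell_V)\in\Cbb\times V$. Since ${\rm r}^V_{\tilde{w},\gamma}$ is injective on fibres, the fibre can be read off inside $\tilde{\Fcal}_\gamma\times\Fcal_\gamma$. Exactly as in Lemma \ref{lem:schubert-1}, the $\tilde{K}_\Cbb$-component of $\widehat{\rm q}^V_{\tilde{w},\gamma}$ forces $([\tilde{b}\tilde{w}],[b])\in\tilde{\Xgot}^o_{\tilde{w},\gamma}\times\Xgot^o_{\gamma}$ with $\iota([b])\in\tilde{k}^{-1}\tilde{\Xgot}^o_{\tilde{w},\gamma}$, while the $G/Q$-component forces $[\widehat{\rho}(b)^{-1}\ell]\in\Pbb(\Cbb\times V^{\gamma\leq 0})$. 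Because $\widehat{\rho}$ acts trivially on the $\Cbb$-factor, this last condition says that $\rho(b)^{-1}\ell_V\in V^{\gamma\leq 0}$, i.e. that the canonical holomorphic section $s_{\ell_V}$ of the quotient bundle $\Vcal^{\gamma>0}=K_\Cbb\times_{P_\gamma}V^{\gamma>0}$ (induced by the constant section $\ell_V$ of the trivial bundle $\Fcal_\gamma\times V$) vanishes at $[b]$. Thus the generic fibre is in bijection with the set of $[b]\in\Xgot^o_{\gamma}$ lying in $\iota^{-1}(\tilde{k}^{-1}\tilde{\Xgot}^o_{\tilde{w},\gamma})$ and annihilating $s_{\ell_V}$.

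I would then count this set for generic $(\tilde{k},[\ell])$. Applying Kleiman transversality for the $\tilde{k}$-translate (as in Lemma \ref{lem:schubert-2}) and Bertini for the generic section $s_{\ell_V}$ — the bundle $\Vcal^{\gamma>0}$ being globally generated as a quotient of a trivial bundle — the relevant loci meet transversally in finitely many reduced points, Condition A) guaranteeing that the dimensions are complementary. The Schubert part contributes the class $[\Xgot_{\gamma}]\cdot\iota^*([\tilde{\Xgot}_{\tilde{w},\gamma}])$ in $H^*(\Fcal_\gamma,\Zbb)$ by Lemma \ref{lem:schubert-2}, and the vanishing of $s_{\ell_V}$ contributes the top Chern class of $\Vcal^{\gamma>0}$, namely ${\rm Eul}(V^{\gamma>0})$. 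Here the Euler class lemma proved above is the decisive tool: applied to $Y=\Pbb(\Cbb\times V^{\gamma\leq 0})\subset G/Q$, whose normal bundle is the dual of the tautological line twisted by $V^{\gamma>0}$, it identifies the contribution of the $G/Q$-constraint with the Euler class of $\Vcal^{\gamma>0}$. Consequently the generic fibre has cardinality $n$, where $[\Xgot_{\gamma}]\cdot\iota^*([\tilde{\Xgot}_{\tilde{w},\gamma}])\cdot{\rm Eul}(V^{\gamma>0})=n[pt]$.

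Finally I would conclude: a non-empty generic fibre ($n\geq 1$) is equivalent, under the trace identity (\ref{eq:trace-condition-w-tilde-exemple-2}), to the pair being an infinitesimal $B$-Ressayre's pair, and a singleton generic fibre ($n=1$) to its being a $B$-Ressayre's pair, the two implications being furnished respectively by the first and second bullets of Proposition \ref{prop:caracteriser-ressayre-pair-exemple-2}. The main obstacle I anticipate is the rigorous bookkeeping of the Euler-class identification: one must check that the zero scheme of $s_{\ell_V}$ is reduced and transverse to the Schubert intersection, and that passing from the affine $V$ to the projective model $G/Q$ — with the $\mathcal{O}(1)$-twist appearing in the normal bundle of $\Pbb(\Cbb\times V^{\gamma\leq 0})$ — does not change the intersection number. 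This twist is immaterial precisely because the intersection is zero-dimensional and the geometric point count is insensitive to tensoring the bundle by a line bundle, which is exactly what the Euler class lemma, combined with the transversality above, secures.
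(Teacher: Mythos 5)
Your argument is correct and shares the paper's skeleton --- reduce to counting generic fibres via Proposition \ref{prop:caracteriser-ressayre-pair-exemple-2} (with Conditions A), B) supplied/absorbed exactly as you say), identify the fibre over $(\tilde{k},[\ell])$ through the injectivity of ${\rm r}^V_{\tilde{w},\gamma}$, then evaluate the count by Schubert calculus --- but it handles the vector-space factor by a genuinely different device. The paper transfers the $G/Q$-constraint to the \emph{other} flag variety $\Fcal^G_\gamma=G/P^G_\gamma$: the fibre becomes the intersection $(\iota\times j)(\Xgot^o_{\gamma})\cap\bigl(\tilde{k}^{-1}\tilde{\Xgot}^o_{\tilde{w},\gamma}\times gY^o\bigr)$ with $Y^o=Q\cdot[e]$, Kleiman transversality in $\tilde{\Fcal}_\gamma\times\Fcal^G_\gamma$ gives the count as $[(\iota\times j)(\Xgot_{\gamma})]\cdot([\tilde{\Xgot}_{\tilde{w},\gamma}]\times[Y])$, and the Euler-class lemma, applied with $X={\rm GL}(V)[e]\subset Y^o$ and the factorization $j=\iota_X\circ j_X$, yields $j^*([Y])={\rm Eul}(\Vcal^{\gamma>0}_\Rbb)$. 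You instead stay on $\Fcal_\gamma$ and read the constraint (for $\ell_0\neq 0$, which is generic) as the vanishing of the section $s_{\ell_V}$ of the globally generated quotient bundle $\Vcal^{\gamma>0}$, so the Euler class enters as the Poincar\'{e} dual of a transverse zero locus, with a Bertini/Sard argument for the linear system $\{s_{\ell_V}\}_{\ell_V\in V}$ replacing part of Kleiman's theorem. Your route avoids the auxiliary compactification $G/P^G_\gamma$ and the pullback computation $j^*([Y])$, which is arguably more transparent; the paper's route keeps every genericity statement inside Kleiman's homogeneity argument, at the cost of the extra flag variety and the normal-bundle identification it leaves to the reader.

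One justification in your closing paragraph must be repaired, although it does not damage the proof. It is false in general that a zero-dimensional count is ``insensitive to tensoring the bundle by a line bundle'': on $\Pbb^1$ one has ${\rm Eul}(\mathcal{O})=0$ while ${\rm Eul}(\mathcal{O}(1))$ integrates to $1$. Fortunately you never need this claim: in your own formulation the constraint is the vanishing of an honest section of the \emph{untwisted} bundle $\Vcal^{\gamma>0}$, because on the chart $\{\ell_0\neq 0\}$ of $\Pbb(\Cbb\times V)$ the tautological line is canonically trivialized by the $\Cbb$-coordinate, which is preserved by $\widehat{\rho}(K_\Cbb)$; equivalently, the paper's computation in $G/P^G_\gamma$ shows the relevant normal bundle pulls back to $\Vcal^{\gamma>0}$ with no twist. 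Replace the ``insensitive to tensoring'' sentence by this trivialization remark and the argument is complete.
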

\begin{proof}
As we noticed in Proposition \ref{prop:caracteriser-ressayre-pair-exemple-2}, we have to analyse the sets $\theta(\tilde{k},g):=
{\rm r}^V_{\tilde{w},\gamma}((\widehat{\rm q}^V_{\tilde{w},\gamma})^{-1}(\tilde{k},g[e]))\subset \tilde{\Fcal}_\gamma \times \Fcal_\gamma$ associated to 
$(\tilde{k},g)\in \tilde{K}_\Cbb\times G$. 

Consider the maps $\iota :  \Fcal_\gamma\to \tilde{\Fcal}_\gamma$ and $j:\Fcal_\gamma\to \Fcal_\gamma^G$ : here $\Fcal_\gamma^G:=G/P^G_\gamma$ and 
$j([k])=[\hat{\rho}(k)]$. Let $Y^o\subset \Fcal_\gamma^G$ be the orbit $Q\cdot[e]$ and let $Y:=\overline{Y^o}$. A direct computation shows that we have a bijective correspondance 
$$
\theta(\tilde{k},g)\simeq   (\iota\times j)(\Xgot^o_{\gamma})\bigcap \tilde{k}^{-1}\tilde{\Xgot}^o_{\tilde{w},\gamma}\times gY^o.
$$
As in Lemma \ref{coro:critere-schubert-ressayre}, we see that $(\gamma_{\tilde{w}},  \tilde{w}\tilde{K}^\gamma_\Cbb\times V^\gamma)$ is a $B$-Ressayre's pair if and only if 
(\ref{eq:trace-condition-w-tilde-exemple-2}) holds and 
$$
[(\iota\times j)(\Xgot_{\gamma})]\cdot([\tilde{\Xgot}_{\tilde{w},\gamma}]\times [Y])= [pt] \times [pt]\in H^*(\tilde{\Fcal}_\gamma,\Zbb)\times H^*(\Fcal^G_\gamma,\Zbb)
$$
The former relation is equivalent to $[\Xgot_{\gamma}]\cdot \iota^*([\tilde{\Xgot}_{\tilde{w},\gamma}])\cdot j^*([Y])= [pt] \in H^*(\Fcal_\gamma,\Zbb)$.
In order to compute the form $j^*([Y])$, we decompose the map $j:\Fcal_\gamma\to \Fcal_\gamma^G$ in $j=\iota_X\circ j_X$. Here $X= {\rm GL}(V)[e]\subset \Fcal_\gamma^G$ is a smooth projective submanifold contained in $Y^o$, $j_X: \Fcal_\gamma\to X$ is the canonical map and $\iota_X: X\croc \Fcal_\gamma^G$ is the inclusion. Finally 
$$
j^*([Y])=j^*_X(\iota_X^*([Y]))=j^*_X\left({\rm Eul}(\Ncal\vert_X)\right)={\rm Eul}(j^{-1}_X(\Ncal\vert_X)),
$$
where $\Ncal$ is the normal bundle of $Y^o$ in $\Fcal^G_\gamma$. 
We leave it to readers to verify that the vector bundle $j^{-1}_X(\Ncal\vert_X)\to \Fcal_\gamma$ corresponds to $\Vcal^{\gamma>0}_\Rbb$. The first point is settled and 
the proof of the second point is similar. 
\end{proof}

\medskip 

We can finally describe the Kirwan polyhedron $\Delta(\T^*\tilde{K}\times V)$.

\begin{theo}\label{theo:delta-exemple-2} Let $K\subset \tilde{K}$ be a closed connected subgroup such as  no non-zero ideal of $\kgot$ is an ideal of $\tilde{\kgot}$. Let $V$ be a $K$-module such as the algebra ${\rm Sym}(V^*)^K$ of invariant polynomial functions on $V$ is reduced to the constants. 

An element $(\tilde{\xi},\xi)\in\tilde{\tgot}_{\geq 0}\times \tgot_{\geq 0}$ belongs to $\Delta(\T^*\tilde{K}\times V)$ if and only if
$$
\langle \tilde{\xi},\tilde{w}\tilde{\gamma}\rangle+\langle \xi,\gamma\rangle\geq 0
$$
for any $(\gamma,\tilde{w})\in\tgot\times \tilde{W}$ satisfying the following properties:
\begin{enumerate}
\item[a)] $\gamma$ is rational and $\dim_T(\qgot^\gamma\times V^\gamma)=1$. 
\item[b)] $[\Xgot_{\gamma}]\cdot \iota^*([\tilde{\Xgot}_{\tilde{w},\gamma}])\cdot {\rm Eul}(V^{\gamma> 0})= [pt]$ in $H^*(\Fcal_\gamma,\Zbb)$.
\item[c)] $\sum_{\stackrel{\alpha\in\Rgot^+}{\langle\alpha,\gamma\rangle> 0}}\langle\alpha,\gamma\rangle=
\sum_{\stackrel{\tilde{\alpha}\in\tilde{\Rgot}^-}{\langle\tilde{\alpha},\tilde{w}\gamma\rangle> 0}}\langle\tilde{\alpha},\tilde{w}\gamma\rangle+ \tr_\gamma(V^{\gamma> 0})$.
\end{enumerate}
\end{theo}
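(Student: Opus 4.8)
The plan is to apply Theorem~\ref{th:ressayre-pairs} to the group $\tilde{K}\times K$ acting on the K\"ahler Hamiltonian manifold $N:=\T^*\tilde{K}\times V\simeq \tilde{K}_\Cbb\times V$, whose moment map $\Phi$ from (\ref{eq:moment-map-exemple-2}) is proper. By the equivalence of points {\em 1.} and {\em 2.} of that theorem, a point $(\tilde{\xi},\xi)$ of the chamber $\tilde{\tgot}^*_{\geq 0}\times\tgot^*_{\geq 0}$ lies in $\Delta(\T^*\tilde{K}\times V)$ if and only if $\langle(\tilde{\xi},\xi),\Gamma\rangle\geq \langle\Phi(C),\Gamma\rangle$ for every regular $B$-Ressayre's pair $(\Gamma,C)$ of $N$. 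Everything then reduces to identifying these pairs with the data $(\gamma,\tilde{w})$ of the statement and to computing the right-hand side.

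First I would classify the fixed-point data. For a non-zero $\Gamma=(\tilde{X},X)\in\tilde{\tgot}\times\tgot$ the set $N^\Gamma$ is non-empty only when $\tilde{X}$ is $\tilde{K}$-conjugate to $X$, in which case $\tilde{X}=\tilde{w}X$ for some $\tilde{w}\in\tilde{W}$ (see Lemma~\ref{lem:point-fixe-cotangent}); writing $\gamma:=X$, which is non-zero since $\Gamma\neq 0$, the analogue of Lemma~\ref{lem:point-fixe-cotangent} for the extra factor $V$ gives the connected fixed submanifold $N^{\gamma_{\tilde{w}}}=\tilde{w}\tilde{K}^\gamma_\Cbb\times V^\gamma=:C_{\tilde{w},\gamma}$, with $\gamma_{\tilde{w}}=(\tilde{w}\gamma,\gamma)$. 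Hence every regular $B$-Ressayre's pair of $N$ is of the form $(\gamma_{\tilde{w}},C_{\tilde{w},\gamma})$.

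Next I would match the three conditions. Regularity asks that $\gamma$ be rational and that $\dim_{\tilde{K}\times K}(C_{\tilde{w},\gamma})\in\{0,1\}$; but $\gamma\neq 0$ acts trivially on $\qgot^\gamma\times V^\gamma$, so $\dim_{\tilde{K}\times K}(C_{\tilde{w},\gamma})=\dim_T(\qgot^\gamma\times V^\gamma)\geq 1$, and regularity is therefore equivalent to $\dim_T(\qgot^\gamma\times V^\gamma)=1$, i.e.\ to condition a) (compare Lemma~\ref{admissible-horn-exemple-2}). Once a) holds, the Schubert-calculus proposition preceding the theorem states that $(\gamma_{\tilde{w}},C_{\tilde{w},\gamma})$ is a $B$-Ressayre's pair exactly when the trace identity (\ref{eq:trace-condition-w-tilde-exemple-2}) (condition c)) and the cohomological relation $[\Xgot_{\gamma}]\cdot\iota^*([\tilde{\Xgot}_{\tilde{w},\gamma}])\cdot{\rm Eul}(V^{\gamma>0})=[pt]$ (condition b)) both hold. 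Thus the regular $B$-Ressayre's pairs of $N$ are precisely the $(\gamma_{\tilde{w}},C_{\tilde{w},\gamma})$ for which $(\gamma,\tilde{w})$ satisfies a), b) and c).

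Finally I would evaluate the right-hand side. The function $n\mapsto\langle\Phi(n),\gamma_{\tilde{w}}\rangle$ is constant on the connected component $C_{\tilde{w},\gamma}$ of $N^{\gamma_{\tilde{w}}}$, so it suffices to test it at the point of $C_{\tilde{w},\gamma}$ whose $\T^*\tilde{K}$-coordinates are $(a,\tilde{\xi})=(\dot{\tilde{w}},0)$ and whose $V$-coordinate is $0$; by (\ref{eq:moment-map-exemple-2}) its image is $\Phi=(-\dot{\tilde{w}}\cdot 0,\ \pi(0)+\Phi_V(0))=(0,0)$, whence $\langle\Phi(C_{\tilde{w},\gamma}),\gamma_{\tilde{w}}\rangle=0$. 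The inequality of Theorem~\ref{th:ressayre-pairs} then reads $\langle\tilde{\xi},\tilde{w}\gamma\rangle+\langle\xi,\gamma\rangle\geq 0$, which is exactly the asserted inequality. All the substantial ingredients are imported from the earlier results (the fixed-point classification, Lemma~\ref{admissible-horn-exemple-2}, and especially the proposition converting conditions A), B) and a generic-fiber count into b) and c)); the one step particular to this argument is the dimension bookkeeping that turns the regularity range $\{0,1\}$ into the sharp equality in a), and that is the point I would watch most carefully.
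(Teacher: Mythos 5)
Your proof is correct and follows exactly the route the paper intends for this theorem (whose proof it leaves implicit as an assembly of the section's results): the equivalence 1.\ $\Leftrightarrow$ 2.\ of Theorem \ref{th:ressayre-pairs} applied to $\tilde{K}\times K$ acting on $\tilde{K}_\Cbb\times V$, the fixed-point classification reducing all candidate pairs to $(\gamma_{\tilde{w}}, \tilde{w}\tilde{K}^\gamma_\Cbb\times V^\gamma)$, Lemma \ref{admissible-horn-exemple-2} for condition a), and the Schubert-calculus proposition identifying $B$-Ressayre's pairs with conditions b) and c). The two steps you flag as needing care are both sound: since $\gamma$ lies in every stabilizer along $\qgot^\gamma\times V^\gamma$, the regularity range $\{0,1\}$ collapses to the equality $\dim_T(\qgot^\gamma\times V^\gamma)=1$, and the constancy of $\langle\Phi,\gamma_{\tilde{w}}\rangle$ on the connected fixed component, evaluated at $(\dot{\tilde{w}},0,0)$, gives the right-hand side $0$.
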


Similar results were obtained by Deltour in his thesis \cite{Deltour-these,Deltour-transf-group}, in the case where $\tilde{K}=K\times K$.


\section{Appendix}

Let $(M,\Omega)$ be a K\"{a}hler Hamiltonian $K$-manifold with proper moment map $\Phi$. The aim of this section is to explain
how we can characterize the subset of analytical semi-stable points in terms of the Kempf-Ness function. 
The result is well-known in the compact case \cite{Teleman04,Mundet10,G-D-S}, and we will explain why it still holds when the moment map is proper.

Let $f:=\frac{1}{2}\|\Phi\|^2$ be the square of the moment map, and let $\nabla f \in {\rm Vect}(M)$ be its gradient vector field. 
We start with the following result  (see Theorem 4.1 in \cite{G-D-S}).

\begin{prop}\label{prop:psi-x-definition}
Fix an element $x\in M$. 
\begin{enumerate}
\item There exists a unique function $\Psi_x:K_\Cbb\to\Rbb$ such as 
\begin{equation}\label{eq:psi-x-definition}
d\Psi_x(g)v:=-\langle \Phi(g^{-1}x),{\rm Im}(g^{-1}v)\rangle,\quad \Psi_x(gk)=\Psi_x(g),\quad \Psi_x(e)=0, 
\end{equation}
for all $g\in K_\Cbb$, $v\in \T_g K_\Cbb$, and all $k\in K$.
\item Define a map $\varphi_x:K_\Cbb\to M$ by $\varphi_x(g)=g^{-1}x$. Then $\varphi_x$ intertwines the gradient vector field $\nabla \Psi_x \in {\rm Vect}(K_\Cbb)$
and the gradient vector field $\nabla f \in {\rm Vect}(M)$ : 
\begin{equation}\label{eq:psi-x-prop}
\T\varphi_x\vert_g\left(\nabla \Psi_x\vert_g\right)=\nabla f\vert_{\varphi_x(g)}.
\end{equation}
\end{enumerate}
\end{prop}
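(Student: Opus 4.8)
The plan is to prove the two assertions separately: for the first I would realize $\Psi_x$ as the primitive of an explicit $1$-form on the symmetric space $K_\Cbb/K$, the crux being to check that this form is closed, while for the second I would compute $\nabla\Psi_x$ by hand and reduce \eqref{eq:psi-x-prop} to the identity $\grad(f)=\Jbb(\kappa_\Phi)$ recalled in \S\ref{sec:square-phi}. For existence, introduce on $K_\Cbb$ the $1$-form $\alpha_g(v):=-\langle\Phi(g^{-1}x),{\rm Im}(g^{-1}v)\rangle$, where $g^{-1}v\in\kgot_\Cbb=\kgot\oplus i\kgot$ and ${\rm Im}$ is the projection onto the second summand; by construction \eqref{eq:psi-x-definition} is precisely the requirement that $d\Psi_x=\alpha$, that $\Psi_x$ be right $K$-invariant, and that $\Psi_x(e)=0$. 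First I would check that $\alpha$ is horizontal for the right $K$-action (if $v$ is tangent to a fibre $gK$ then $g^{-1}v\in\kgot$, so ${\rm Im}(g^{-1}v)=0$) and right $K$-invariant, the latter following from the equivariance $\Phi(k^{-1}m)={\rm Ad}^*(k^{-1})\Phi(m)$ together with the fact that ${\rm Ad}(k)$ preserves the splitting $\kgot\oplus i\kgot$ for $k\in K$. Hence $\alpha$ descends to a $1$-form $\bar\alpha$ on $K_\Cbb/K$.

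The heart of the matter is that $\alpha$ is closed. I would evaluate $d\alpha(V,W)=V(\alpha(W))-W(\alpha(V))-\alpha([V,W])$ on the left-invariant fields $V,W$ generated by $A=A_1+iA_2$ and $B=B_1+iB_2$ in $\kgot_\Cbb$. Differentiating the coefficient $g\mapsto\Phi(g^{-1}x)$ and using \eqref{eq:kostant=rel} together with the holomorphy relation $(iY)_M=\Jbb(Y_M)$ for $Y\in\kgot$, the terms $V(\alpha(W))-W(\alpha(V))$ produce $\Omega\big((B_2)_M,(A_1)_M\big)-\Omega\big((A_2)_M,(B_1)_M\big)$ together with the two ``imaginary'' contributions $\Omega\big((B_2)_M,\Jbb(A_2)_M\big)$ and $-\Omega\big((A_2)_M,\Jbb(B_2)_M\big)$, all evaluated at $g^{-1}x$, whereas $\alpha([V,W])$ contributes $-\langle\Phi,[A_1,B_2]+[A_2,B_1]\rangle$. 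Two cancellations then give $d\alpha=0$: the two imaginary terms are equal because $(u,w)\mapsto\Omega(u,\Jbb w)$ is the symmetric Riemannian metric, and the identity $\Omega(X_M,Y_M)=\langle\Phi,[X,Y]\rangle$ for $X,Y\in\kgot$ (a consequence of \eqref{eq:kostant=rel} and equivariance) turns the remaining terms into exactly $\langle\Phi,[A_1,B_2]+[A_2,B_1]\rangle$. Thus $\bar\alpha$ is closed. Since the Cartan decomposition $K\times\kgot\xrightarrow{\sim}K_\Cbb$, $(k,\xi)\mapsto k\exp(i\xi)$, exhibits $K_\Cbb/K$ as a contractible manifold, $\bar\alpha$ is exact, say $\bar\alpha=d\bar\Psi$ with $\bar\Psi$ unique up to an additive constant; pulling back and normalizing by $\Psi_x(e)=0$ yields the unique $\Psi_x$. (Uniqueness is in any case immediate: two solutions differ by a locally constant function on the connected group $K_\Cbb$ that vanishes at $e$.)

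For the gradient relation, equip $K_\Cbb$ with the left-invariant metric attached to the chosen inner product on $\kgot$, extended to $\kgot_\Cbb=\kgot\oplus i\kgot$ by declaring the two summands orthogonal and isometric to $\kgot$. Then the defining formula for $d\Psi_x$ reads off the gradient as $\nabla\Psi_x\vert_g=\T L_g\big(-i\,\widetilde{\Phi(g^{-1}x)}\big)$, where $\widetilde{\;\cdot\;}:\kgot^*\to\kgot$ is the inner-product identification. Writing $\mu=\Phi(g^{-1}x)$ and using $\varphi_x\big(g\exp(-it\widetilde\mu)\big)=\exp(it\widetilde\mu)\,g^{-1}x$, differentiation at $t=0$ gives $\T\varphi_x\vert_g\big(\nabla\Psi_x\vert_g\big)=(i\widetilde\mu)_M(g^{-1}x)=\Jbb\big(\widetilde\mu_M(g^{-1}x)\big)=\Jbb\big(\kappa_\Phi(g^{-1}x)\big)$, the last step being the definition of the Kirwan vector field. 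By $\grad(f)=\Jbb(\kappa_\Phi)$ this equals $\nabla f\vert_{\varphi_x(g)}$, which is \eqref{eq:psi-x-prop}.

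The main obstacle is the closedness computation, where the bookkeeping must track the Maurer--Cartan bracket, the coadjoint action, and above all the passage between the real fundamental fields $Y_M$ ($Y\in\kgot$) and the holomorphic fields $(iY)_M=\Jbb(Y_M)$; the vanishing hinges on the two structural inputs, namely the Kähler symmetry of $\Omega(\cdot,\Jbb\cdot)$ and the bracket identity for $\Omega(X_M,Y_M)$. The only further points needing care are the two standard facts invoked along the way: the contractibility of $K_\Cbb/K$ via the Cartan decomposition, and the compatibility $(iY)_M=\Jbb(Y_M)$ stemming from holomorphy of the $K_\Cbb$-action.
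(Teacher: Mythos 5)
Your proof is correct, and the notable point of comparison is that the paper gives \emph{no proof} of this proposition at all: it is stated with the pointer ``see Theorem 4.1 in \cite{G-D-S}'', so your argument is a self-contained replacement for that citation rather than a variant of an argument in the paper. Your route is the standard Kempf--Ness construction: encode the prescribed differential as a $1$-form $\alpha$ on $K_\Cbb$, check it is horizontal and right-$K$-invariant so that it descends to $\mathbb{X}=K_\Cbb/K$, prove closedness by evaluating on left-invariant fields, and integrate over the contractible space $\mathbb{X}$. Two things your write-up handles correctly that deserve emphasis: the descent to $K_\Cbb/K$ is essential and not cosmetic, since $K_\Cbb$ retracts onto $K$ and may have nontrivial fundamental group, so a closed $1$-form on $K_\Cbb$ itself need not be exact; and the computation isolates exactly where the K\"ahler hypothesis enters, namely the symmetry of $\Omega(\cdot,\Jbb\cdot)$, which cancels the two terms $\Omega((B_2)_M,\Jbb(A_2)_M)$ and $\Omega((A_2)_M,\Jbb(B_2)_M)$ --- consistent with the author's remark that the K\"ahler framework is the right one. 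Two minor caveats. First, a sign: with the paper's conventions the bracket identity reads $\Omega(X_M,Y_M)=\langle\Phi,[X,Y]\rangle$, under which the ``real'' terms in $V(\alpha(W))-W(\alpha(V))$ come out as $-\langle\Phi,[A_1,B_2]+[A_2,B_1]\rangle$, i.e.\ equal to $\alpha([V,W])$, giving $d\alpha(V,W)=V(\alpha(W))-W(\alpha(V))-\alpha([V,W])=0$; your phrasing puts a $+$ sign on those terms, which matches only the opposite $\Ad^*$-convention, so the bookkeeping should be pinned to one consistent convention (the cancellation itself is genuine). Second, the proposition never specifies the metric on $K_\Cbb$ with respect to which $\nabla\Psi_x$ is taken; your proof implicitly supplies the natural left-invariant metric with $\kgot\perp i\kgot$ and each summand carrying the invariant inner product, and \eqref{eq:psi-x-prop} holds precisely for that choice --- it would be worth stating this explicitly.
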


By equation (\ref{eq:psi-x-definition}) we have for all $X\in\kgot$
$$
\Psi_x(e^{-itX})=\int_0^t\langle \Phi(e^{isX}x),X\rangle ds,\quad \forall\, t\geq 0.
$$

Consider the homogeneous space $\mathbb{X}=K_\Cbb/K$ : it is a complete Riemannian manifold with non-positive sectional curvature. 

\begin{defi}
The function $\Psi_x:K_\Cbb\to\Rbb$ is called the lifted Kempf-Ness function based at $x$. It is $K$-invariant and hence descends to a function 
$\Psi_x:\mathbb{X}\to\Rbb$ denoted by the same symbol and called the Kempf-Ness function.
\end{defi}

Now, let us summarize the properties satisfied by the Kempf-Ness functions. The proof given in \cite{G-D-S}[Theorem 4.3] still works when the moment map is proper.

\begin{theo}\label{theo:kempf-ness}
\begin{itemize}
\item The Kempf-Ness function $\Psi_x:\mathbb{X}\to\Rbb$ is Morse-Bott and is convex along geodesics.
\item Every negative gradient flow line $\theta:\Rbb^{\geq 0}\to \mathbb{X}$ of $\Psi_x$ satisfies 
\begin{equation}\label{eq:psi-x-flow}
\lim_{t\to\infty}\Psi_x(\theta(t))=\inf_{\mathbb{X}}\Psi_x.
\end{equation}
\item The Kempf-Ness functions satisfy
\begin{equation}\label{eq:psi-x-h-g}
\Psi_{h^{-1}x}(g)=\Psi_x(hg)-\Psi_x(h),
\end{equation}
for $x\in M$ and $h,g\in K_\Cbb$.
\end{itemize}
\end{theo}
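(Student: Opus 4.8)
The three assertions are proved in \cite{G-D-S} under the assumption that $M$ is compact, so the plan is to isolate the single place where compactness is used—the long-time existence and convergence of the gradient flow—and to replace it by the properness of $f:=\frac12\|\Phi\|^2$. Everything else is either formal or a local computation on $M$, and these pass through unchanged. I would treat the cocycle relation first, then convexity, and finally the Morse--Bott property together with the flow statement. For the cocycle relation (third bullet) I would argue by uniqueness: fixing $h\in K_\Cbb$, set $F(g):=\Psi_x(hg)-\Psi_x(h)$. From (\ref{eq:psi-x-definition}) one reads off $F(e)=0$ and $F(gk)=F(g)$ for $k\in K$, while for $v\in\T_g K_\Cbb$ the left translate $(L_h)_*v\in\T_{hg}K_\Cbb$ satisfies $(hg)^{-1}(L_h)_*v=g^{-1}v$, whence
$$
dF(g)v=-\langle\Phi(g^{-1}(h^{-1}x)),\,{\rm Im}(g^{-1}v)\rangle .
$$
Thus $F$ obeys the three defining properties of $\Psi_{h^{-1}x}$, and the uniqueness part of Proposition \ref{prop:psi-x-definition} gives $F=\Psi_{h^{-1}x}$.

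For the convexity (first bullet) I would use that $K_\Cbb$ acts transitively by isometries on $\mathbb{X}=K_\Cbb/K$, so every geodesic is an isometric translate $t\mapsto g_0 e^{-itX}K$ of a geodesic $t\mapsto e^{-itX}K$ through $eK$, with $X\in\kgot$; by the cocycle relation $\Psi_x(g_0e^{-itX})=\Psi_{g_0^{-1}x}(e^{-itX})+\Psi_x(g_0)$, so it suffices to treat geodesics through the base point. Writing $u(t):=\Psi_x(e^{-itX})$ and differentiating the formula $\Psi_x(e^{-itX})=\int_0^t\langle\Phi(e^{isX}x),X\rangle\,ds$, I would use $\frac{d}{dt}e^{itX}x=(iX)_M=\Jbb X_M$, the moment map relation (\ref{eq:kostant=rel}), and the fact that $(v,w)\mapsto\Omega(v,\Jbb w)$ is the Riemannian metric to obtain
$$
u'(t)=\langle\Phi(e^{itX}x),X\rangle,\qquad u''(t)=\Omega(X_M,\Jbb X_M)(e^{itX}x)=\|X_M(e^{itX}x)\|^2\ge 0 .
$$
This is the heart of the matter, and the only point where the K\"{a}hler (not merely symplectic) structure intervenes; strict positivity away from the stabilizer directions $\kgot_x$ identifies the flat directions.

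It remains to establish the Morse--Bott property and the flow statement (second bullet), and here properness enters. By (\ref{eq:psi-x-definition}) the critical set of $\Psi_x$ is $\{gK:\Phi(g^{-1}x)=0\}$, which $\varphi_x$ carries into $\Phi^{-1}(0)\cap K_\Cbb x$; the transverse Hessian is exactly the positive form $X\mapsto\|X_M\|^2$ computed above, with kernel $\kgot_x$, so $\Psi_x$ is Morse--Bott. For the flow, the intertwining relation (\ref{eq:psi-x-prop}) shows that a negative gradient trajectory $\theta$ of $\Psi_x$ is sent by $\varphi_x$ to a negative gradient trajectory $m(t)=g(t)^{-1}x$ of $f$; since $\grad f=\Jbb\kappa_\Phi$ is tangent to the $K_\Cbb$-orbits and $f$ is proper, $m(t)$ remains in the compact sublevel set $\{f\le f(x)\}\cap\overline{K_\Cbb x}$. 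Hence the trajectory exists for all $t\ge 0$ and converges, by Duistermaat's result cited through \cite{Lerman05} (this completeness is what compactness provided in \cite{G-D-S}). Finally I would combine completeness with the geodesic convexity: if $c:=\lim_{t\to\infty}\Psi_x(\theta(t))$ exceeded $\inf_{\mathbb{X}}\Psi_x$, I would pick $y$ with $\Psi_x(y)<c$ and differentiate $t\mapsto\tfrac12 d(\theta(t),y)^2$ on the Hadamard manifold $\mathbb{X}$; the first-order convexity inequality bounds this derivative above by $\Psi_x(y)-c<0$, forcing the squared distance to become negative—a contradiction. Therefore $\lim_{t\to\infty}\Psi_x(\theta(t))=\inf_{\mathbb{X}}\Psi_x$.

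I expect the main obstacle to be precisely the bookkeeping of this last step: checking that properness of $\Phi$ (equivalently of $f$) supplies every global input that compactness gave in \cite{G-D-S}—completeness of the flow, attainment of $\inf_{K_\Cbb x}\|\Phi\|$ on orbit closures (Corollary \ref{coro:coro-stratification}), and the global Morse--Bott structure—so that the purely convexity-theoretic argument on the nonpositively curved space $\mathbb{X}$ runs verbatim.
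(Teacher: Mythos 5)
Your proposal is correct and takes essentially the same route as the paper: the paper's proof also observes that the first and third bullets require no compactness, and that the only role compactness plays in \cite{G-D-S} is the long-time existence of negative gradient flow lines, which it recovers exactly as you do, via the intertwining relation of Proposition \ref{prop:psi-x-definition} together with properness of $f$. The only difference is one of detail: where you re-derive the \cite{G-D-S} arguments (the cocycle relation by uniqueness, convexity by the second-derivative computation $u''=\|X_M\|^2$, and the Hadamard-manifold convexity argument for the limit), the paper simply cites \cite{G-D-S} and notes that those arguments go through unchanged.
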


\begin{proof} The first point and the relations (\ref{eq:psi-x-h-g}) are valid without compactness assumption on M. 
We have seen in Proposition \ref{prop:psi-x-definition} that the map $\varphi_x:K_\Cbb\to M$ intertwines the gradient vector fields $\nabla \Psi_x$
and $\nabla f$. As $f$ is proper, every negative gradient flow line of $f$ are well-defined on $\Rbb^{\geq 0}$. Thus every negative gradient flow line of $\Psi_x$ are also 
well-defined on $\Rbb^{\geq 0}$. Then, one checks easily that the rest of the proof given in \cite{G-D-S} works.
\end{proof}

We can now state the generalized Kempf-Ness Theorem (see Theorem 7.3 in \cite{G-D-S}).

\begin{theo}\label{theo:kempf-ness-theorem}
Let $(M,\Omega,\Phi)$ be a K\"{a}hler Hamiltonian $K$-manifold with proper moment map. Let $x\in M$ and denote by $\Psi_x:\mathbb{X}\to\Rbb$ the Kempf-Ness function of $x$. 
Then 
$$
\overline{K_\Cbb\, x}\cap \Phi^{-1}(0)\neq\emptyset\Longleftrightarrow \Psi_x\ {\rm is\ bounded\ below}.
$$
\end{theo}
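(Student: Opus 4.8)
The plan is to analyse $\Psi_x$ along its negative gradient flow and to read off boundedness from below through the limit point of the induced flow on $M$. Let $g(t)\in K_\Cbb$ solve $\dot g=-\nabla\Psi_x(g)$ with $g(0)=e$, and view it as a curve in $\mathbb{X}=K_\Cbb/K$. By the intertwining property of Proposition \ref{prop:psi-x-definition}, its image $x(t):=\varphi_x(g(t))=g(t)^{-1}x$ is exactly the negative gradient trajectory of $f$ issued from $x$; since $f$ is proper it converges to a critical point $x_\infty:=\lim_{t\to\infty}x(t)$ (Duistermaat's theorem, \cite{Lerman05}). The first step I would carry out is the key identity
$$
\frac{d}{dt}\Psi_x(g(t))=-\|\nabla\Psi_x(g(t))\|^2=-\|\Phi(x(t))\|^2=-2f(x(t)).
$$
The middle equality is obtained as follows: the metric on $\mathbb{X}$ is $K_\Cbb$-invariant, and the cocycle relation (\ref{eq:psi-x-h-g}) with $h=g(t)$ gives $\Psi_x\circ L_{g(t)}=\Psi_{x(t)}+\mathrm{const}$, so that under the isometry $L_{g(t)}$ the gradient of $\Psi_x$ at $[g(t)]$ has the same norm as the gradient of $\Psi_{x(t)}$ at $[e]$; and from (\ref{eq:psi-x-definition}) one reads $d\Psi_y([e])=-\langle\Phi(y),\mathrm{Im}(\cdot)\rangle$, whence $\|\nabla\Psi_y([e])\|=\|\Phi(y)\|$. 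Integrating and using the flow property of Theorem \ref{theo:kempf-ness}, I obtain the formula $\inf_{\mathbb{X}}\Psi_x=-2\int_0^\infty f(x(t))\,dt\in[-\infty,0]$. Thus each of the two conditions in the statement will be matched to the vanishing $\Phi(x_\infty)=0$, with finiteness of this integral as the bridge between them.

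The implication \emph{$\Psi_x$ bounded below $\Rightarrow$ semistable} is then immediate. Boundedness below means $\int_0^\infty f(x(t))\,dt<\infty$; but $f(x(t))\geq 0$ is decreasing along the flow (its derivative is $-\|\nabla f\|^2\leq 0$), and a nonnegative decreasing integrable function tends to $0$, so $f(x_\infty)=\tfrac12\|\Phi(x_\infty)\|^2=0$. Since $x_\infty=\lim g(t)^{-1}x\in\overline{K_\Cbb\,x}$, we conclude $x_\infty\in\overline{K_\Cbb\,x}\cap\Phi^{-1}(0)$.

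For the converse \emph{semistable $\Rightarrow$ $\Psi_x$ bounded below}, I would first use Corollary \ref{coro:coro-stratification}: if $\overline{K_\Cbb\,x}\cap\Phi^{-1}(0)\neq\emptyset$ then $\|\Phi(x_\infty)\|=\inf_{y\in K_\Cbb\,x}\|\Phi(y)\|=0$, hence again $f(x_\infty)=0$. It remains to show $\int_0^\infty f(x(t))\,dt<\infty$, i.e. that $f$ decays fast enough along the converging trajectory $x(t)\to x_\infty$. This integrability is the main obstacle, because $f(x(t))\to 0$ alone does not force convergence of the integral (a rate like $t^{-1}$ would fail); what is needed is a \textbf{\L{}ojasiewicz-type decay estimate} near the minimum. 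Here I would invoke the Morse--Bott property of the Kempf--Ness function from Theorem \ref{theo:kempf-ness}, which via the identity above gives $f(x(t))=\tfrac12\|\nabla\Psi_x(g(t))\|^2$ and yields exponential (hence integrable) decay of $f(x(t))$, exactly as in the compact case \cite{G-D-S}; note that the flow $x(t)$ genuinely converges in $M$ even when the trajectory $g(t)$ escapes to infinity in $\mathbb{X}$ (the strictly semistable case), so the estimate must be applied on $M$ at $x_\infty$ rather than on $\mathbb{X}$. An alternative, avoiding rates, is to pass to the polystable representative $z\in\overline{K_\Cbb\,x}\cap\Phi^{-1}(0)$ with closed orbit: since $\Phi(z)=0$ the point $[e]$ is a critical point of the convex function $\Psi_z$, so $\Psi_z\geq\Psi_z([e])=0$, and one then transfers boundedness below from $z$ to $x$ through the cocycle relation (\ref{eq:psi-x-h-g}) and continuity of $y\mapsto\Psi_y$. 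Either way, the decay/transfer step at the minimum is the crux; once it is in place the equivalence with $\Phi(x_\infty)=0$ closes both directions.
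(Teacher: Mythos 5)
Your main argument is essentially the paper's own proof: both run the negative gradient flow (of $f$ on $M$, equivalently of $\Psi_x$ on $\mathbb{X}$ via the intertwining of Proposition \ref{prop:psi-x-definition}), use the identity $\frac{d}{dt}\Psi_x(\theta(t))=-\|\Phi(x(t))\|^2$, obtain $\Phi(x_\infty)=0$ from Corollary \ref{coro:coro-stratification}, apply the \L{}ojasiewicz gradient inequality to $f$ on $M$ near $x_\infty$ to get integrable decay, and invoke the flow property (\ref{eq:psi-x-flow}) to identify the resulting finite limit with $\inf_{\mathbb{X}}\Psi_x$. Only note that your optional alternative (transferring a lower bound from a polystable $z\in\overline{K_\Cbb\,x}\cap\Phi^{-1}(0)$ via the cocycle relation and continuity) does not close as stated, since in the strictly semistable case $z$ is not in the orbit $K_\Cbb\,x$ and the relation (\ref{eq:psi-x-h-g}) only compares points of the same orbit; it is good that your primary route does not rely on it.
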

\begin{proof}
We follow the line of the proof given in \cite{G-D-S}[Theorem 7.3].
Fix a point $x_0\in M$ and denote by $\Psi:\mathbb{X}\to\Rbb$ the Kempf-Ness function of $x_0$. Let $x:\Rbb^{\geq 0}\to M$ be the negative flow line relatively to $f$:
$$
x'=-\mathbb{J}(\Phi(x)\cdot x),\quad {\rm and} \quad x(0)=x_0.
$$
Notice that $x(t)$ is well defined for any $t\geq 0$ because $f:M\to\Rbb$ is proper. Let $g: \Rbb^{\geq 0}\to K_\Cbb$ be the unique solution of the differential equation $g(t)^{-1}g'(t)=i\,\Phi(x(t))$, with initial condition $g(0)=e$. Then $x(t)=g(t)^{-1}x_0$ for all $t\in\Rbb^{\geq 0}$. The limit $x_\infty=\lim_{t\to\infty} x(t)$ exists and 
$\|\Phi(x_\infty)\|=\inf_{K_\Cbb x}\|\Phi\|$ (see Corollary \ref{coro:coro-stratification}).

If $\overline{K_\Cbb\, x}\cap \Phi^{-1}(0)=\emptyset$, then $\|\Phi(x_\infty)\|>0$. Let $\theta:\Rbb^{\geq 0}\to \mathbb{X}$ denotes the composition of $g$ with the projection 
$K_\Cbb\to\mathbb{X}$. By definition of the Kempf-Ness function, we have 
$$
\frac{d}{dt}\left(\Psi\circ \theta\right)=-\langle \Phi(g(t)^{-1}x_0),{\rm Im}(g(t)^{-1}g'(t))\rangle=-\|\Phi(x(t))\|^2\leq -\|\Phi(x_\infty)\|^2 
$$
for all $t\geq 0$. We get then $\Psi(\theta(t))\leq -t\,\|\Phi(x_\infty)\|^2,\, \forall t\geq 0$. Thus $\Psi$ is unbounded below.

Suppose now $\overline{K_\Cbb\, x}\cap \Phi^{-1}(0)\neq \emptyset$. Then $\Phi(x_\infty)=0$. By the Lojasiewicz gradient inequality, we know that there exists 
$t_o>0$, and $a,b >0$ such as $f(x(t))\leq a e^{-bt}$ for $t\geq t_o$. Thus 
$\frac{d}{dt}\left(\Psi\circ \theta\right)=-\|\Phi(x(t))\|^2\geq -2a e^{-bt}$ for $t\geq t_o$. This proves that $\lim_{t\to\infty}\Psi(\theta(t))=\inf_{\mathbb{X}}\Psi$ is finite.
Thus, $\Psi$ is bounded below.
\end{proof}

We state the main result of this appendix which completes the proof of Proposition \ref{prop:N-ss-gamma}. Let us denote by 
$M^{ss}=\{x\in M; \overline{K_\Cbb\, x}\cap \Phi^{-1}(0)\neq\emptyset\}$ the subset of analytical semi-stable points.

\begin{prop}\label{prop:N-ss-gamma-bis}
Let $(\gamma,x)\in\kgot\times M$, such as the limit $x_\gamma:=\lim_{t\to\infty}e^{-it\gamma} x$ exists. 
If $x\in M^{ss}$ and $\langle \Phi(x_\gamma),\gamma\rangle= 0$, then $x_\gamma\in M^{ss}$.
\end{prop}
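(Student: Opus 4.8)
The plan is to translate everything into the language of the Kempf--Ness functions developed in this appendix. By Theorem~\ref{theo:kempf-ness-theorem}, the hypothesis $x\in M^{ss}$ is equivalent to the existence of a constant $C\geq 0$ with $\Psi_x\geq -C$ on $K_\Cbb$, and the conclusion $x_\gamma\in M^{ss}$ is equivalent to $\Psi_{x_\gamma}$ being bounded from below. So the whole matter reduces to manufacturing a lower bound for $\Psi_{x_\gamma}$ out of the lower bound for $\Psi_x$, and the main point will be that no decay-rate estimate is needed for this.

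First I would record the behaviour of $\Psi_x$ along the ray $t\mapsto e^{it\gamma}$. Applying the integral formula $\Psi_x(e^{-itX})=\int_0^t\langle\Phi(e^{isX}x),X\rangle\,ds$ with $X=-\gamma$ gives
$$
\Psi_x(e^{it\gamma})=-\int_0^t \langle\Phi(e^{-is\gamma}x),\gamma\rangle\,ds,\qquad t\geq 0.
$$
Set $h(s):=\langle\Phi(e^{-is\gamma}x),\gamma\rangle$. Since $s\mapsto e^{-is\gamma}x$ is the gradient flow of $-\langle\Phi,\gamma\rangle$, one has $h'(s)=-\|\gamma_M(e^{-is\gamma}x)\|^2\leq 0$ (as in the proof of Proposition~\ref{prop:N-ss-gamma}), and $h(s)\to\langle\Phi(x_\gamma),\gamma\rangle=0$ by continuity of $\Phi$ and the hypothesis. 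A nonincreasing function tending to $0$ is nonnegative, so $h\geq 0$; consequently $t\mapsto\Psi_x(e^{it\gamma})$ is nonincreasing. As it is also bounded below by $-C$, the limit
$$
L:=\lim_{t\to\infty}\Psi_x(e^{it\gamma})\in[-C,0]
$$
exists and is finite. This is the crucial observation: the lower bound on $\Psi_x$ by itself forces $L>-\infty$, with no further information on how fast $h$ decays.

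Next I would exploit the cocycle identity (\ref{eq:psi-x-h-g}), taking $e^{it\gamma}$ as the element $h$ there, which yields
$$
\Psi_{e^{-it\gamma}x}(g)=\Psi_x(e^{it\gamma}g)-\Psi_x(e^{it\gamma}),\qquad g\in K_\Cbb.
$$
Fixing $g$ and letting $t\to\infty$, the left-hand side tends to $\Psi_{x_\gamma}(g)$. On the right-hand side $\Psi_x(e^{it\gamma}g)\geq -C$ for every $t$, while $\Psi_x(e^{it\gamma})\to L$; passing to the limit gives
$$
\Psi_{x_\gamma}(g)\geq -C-L
$$
for all $g\in K_\Cbb$, with $-C-L$ a finite constant independent of $g$. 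Hence $\Psi_{x_\gamma}$ is bounded below and $x_\gamma\in M^{ss}$.

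The only genuinely delicate step is the convergence $\Psi_{e^{-it\gamma}x}(g)\to\Psi_{x_\gamma}(g)$, i.e.\ the continuity of the base-point dependence $y\mapsto\Psi_y(g)$. I would handle it by writing $\Psi_y(g)$ explicitly as a path integral: choosing a fixed path $\sigma$ from $e$ to $g$ in $K_\Cbb$, the defining relation (\ref{eq:psi-x-definition}) gives $\Psi_y(g)=-\int_0^1\langle\Phi(\sigma(u)^{-1}y),\mathrm{Im}(\sigma(u)^{-1}\dot\sigma(u))\rangle\,du$, whose integrand is continuous in $y$ uniformly for $u\in[0,1]$; combined with $e^{-it\gamma}x\to x_\gamma$ this yields the desired limit. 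Everything else is a pure monotonicity-plus-boundedness argument, which is precisely what lets us avoid any {\L}ojasiewicz-type rate estimate of the kind used in the proof of Theorem~\ref{theo:kempf-ness-theorem}.
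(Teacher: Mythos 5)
Your proof is correct and takes essentially the same route as the paper: the Kempf--Ness characterization of semi-stability, the integral formula for $\Psi_x$ along $t\mapsto e^{it\gamma}$, the monotonicity argument giving $\langle\Phi(e^{-is\gamma}x),\gamma\rangle\geq 0$, and the cocycle identity (\ref{eq:psi-x-h-g}) followed by passage to the limit $t\to\infty$. The only differences are cosmetic --- the paper bounds $-\Psi_x(e^{it\gamma})=\int_0^t\langle\Phi(e^{-is\gamma}x),\gamma\rangle\,ds\geq 0$ directly rather than introducing the limit $L$ --- and your explicit verification of the continuity of $y\mapsto\Psi_y(g)$, which the paper uses tacitly in the final limit, is a small but genuine improvement.
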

\begin{proof}Let us suppose that the Kempf-Ness function $\Psi_{x}$ is bounded below by $c\in \Rbb$. Relations (\ref{eq:psi-x-h-g}) give
$$
\Psi_{e^{-it\gamma}x}(g)=\Psi_x(e^{it\gamma}g)-\Psi_x(e^{it\gamma})\geq c +\int_0^t\langle \Phi(e^{-is\gamma}x),\gamma\rangle ds,\quad \forall\, t\geq 0,\quad \forall g\in K_\Cbb.
$$
The function $s\to \langle \Phi(e^{-is\gamma}x),\gamma\rangle$ is decreasing and $\lim_{s\to\infty}\langle \Phi(e^{-is\gamma}x),\gamma\rangle=\langle \Phi(x_\gamma),\gamma\rangle= 0$.
Hence $\langle \Phi(e^{-is\gamma}x),\gamma\rangle\geq 0$ for all $s\geq 0$. We obtain then $\Psi_{e^{-it\gamma}x}(g)\geq c$ for all $g\in K_\Cbb$ and all $t\geq 0$. We get 
finally $\Psi_{x_\gamma}(g)=\lim_{t\to\infty}\Psi_{e^{-it\gamma}x}(g)\geq c$ for all $g\in K_\Cbb$. The Kempf-Ness function $\Psi_{x_\gamma}$ is bounded below, so 
$x_\gamma\in M^{ss}$.
\end{proof}

\bigskip

{\small

}

\end{document}